\newcommand{\vvNumberWithin}{subsection}
\newcommand*{\FIXME}[1]{}
\newcommand*{\Q}{\mathbb{Q}}
\newcommand*{\Z}{\mathbb{Z}}
\DeclareMathOperator{\Spec}{Spec}
\DeclareMathOperator{\Hom}{Hom}
\let\hom\relax
\DeclareMathOperator{\hom}{Hom}
\DeclareMathOperator{\End}{End}
\DeclareMathOperator{\cone}{Cone}
\DeclareMathOperator{\For}{For}
\newcommand{\VVC}[1]{{}}
\newtheoremstyle{misc}%
     {\topsep}
     {\topsep}
     {}
     {}
     {\itshape}
     {}
     { }
     {}
\newtheoremstyle{newdef}{\medskipamount}{\medskipamount}{}{}{\bfseries}{.}{ }{{\normalfont{\thmnumber{#2}.}}\thmnote{ #3}}
\newtheorem{master}{Master}[\vvNumberWithin]
\theoremstyle{newdef}
\newtheorem{npara}[master]{NamedParagraph}
\theoremstyle{plain}
\newtheorem{theorem}[master]{Theorem}
\newtheorem*{theorem*}{Theorem}
\newtheorem*{result*}{Result}
\newtheorem{lemma}[master]{Lemma}
\newtheorem*{lemma*}{Lemma}
\newtheorem{corollary}[master]{Corollary}
\newtheorem*{corollary*}{Corollary}
\newtheorem{proposition}[master]{Proposition}
\newtheorem*{proposition*}{Proposition}
\newtheorem{assumption-proposition}[master]{Assumption/Proposition}
\theoremstyle{definition}
\newtheorem{example}[master]{Example}
\newtheorem*{example*}{Example}
\newtheorem*{application*}{Application}
\newtheorem{definition}[master]{Definition}
\newtheorem*{definition*}{Definition}
\newtheorem{remark}[master]{Remark}
\newtheorem*{remark*}{Remark}
\newtheorem{para}[master]{}
\newtheorem*{para*}{}
\newtheorem{notation}[master]{Notation}
\newtheorem*{notation*}{Notation}
\newtheorem*{question*}{Question}
\newtheorem*{problem*}{Problem}
\theoremstyle{remark}
\newtheorem*{exercise*}{Exercise}
\numberwithin{equation}{subsection}
 \renewcommand{\theequation}{%
      \ifnum\value{subsection} > 0
      \ifnum\value{subsubsection} > 0
      \thesubsubsection.\Alph{equation}%
      \else%
      \thesubsection.\Alph{equation}%
      \fi%
      \else%
      \thesection.\Alph{equation}%
      \fi%
    }
\newcommand{\nocontentsline}[3]{}
\newcommand{\tocless}[2]{\let\tempcontentsline=\addcontentsline\let\addcontentsline=\nocontentsline#1{#2}\hspace{-1em}\let\addcontentsline=\tempcontentsline}
\newcommand*{\vvspan}[1]{{\langle #1 \rangle}}
\newcommand{\cross}{\times}
\newcommand{\tensor}{\otimes}
\title{Intersection complex of any threefold as a Chow motive}
\author{Shruti~Rastogi}
\email{mp18021@iisermohali.ac.in}
\address{Department of Mathematical Sciences, Indian Institute of Science Education and Research (IISER) Mohali, Knowledge City, Sector 81, Mohali, Punjab - 140306, India.}
\author{Vaibhav~Vaish}
\email{vaibhav@iisermohali.ac.in}
\address{Department of Mathematical Sciences, Indian Institute of Science Education and Research (IISER) Mohali, Knowledge City, Sector 81, Mohali, Punjab - 140306, India.}
\begin{document}
\begin{abstract} 
Motivated by the characterization of the intersection complex in terms of S.Morel’s weight truncations, we introduced an object $EM^{F}_{X}$ in the setting of motivic sheaves for certain schemes $X$ and weight profiles $F$. In this article, we show that when $X$ is \emph{any} threefold, this object satisfies Wildeshaus’s characterization of a motivic intersection complex. In particular, we demonstrate that the construction is a suitably functorial Chow motive lifting the motivic intersection complex for an arbitrary threefold. 
\end{abstract}

\maketitle
\tableofcontents

\addtocontents{toc}{\protect\setcounter{tocdepth}{1}}
\section{Introduction}

\subsection{} In the 1960s Grothendieck envisioned the theory of motives as a unifying framework for various (Weil) cohomology theories. As further enhanced and articulated by Beilinson \cite{beilinson1987height} we can even conceive a relative version -- there should exist a certain abelian category $\mathcal{M}M(X)$ of mixed motivic sheaves over $X$, together with realization functors to each Weil cohomology theory. Furthermore, these categories should reflect properties of the Weil cohomology theories, in particular they should come with a formalism of Grothendieck's six operations and weights.

One pathway to constructing the category of mixed motives is to first construct its derived category $DM(X)$, and then construct a suitable $t$-structure on the same whose heart would be the conjectural $\mathcal MM(X)$ -- a proposal that is credited to Deligne. This conjectural $t$-structure corresponds to the perverse $t$-structure \cite{BBD} in the setting of realizations.

Due to work of M. Hanamura \cite{hanamura, hanamura2}, M. Levine \cite{levine} and F.Morel-Voevodsky \cite{voevodsky_morel_99}, Jardine \cite{jardine_2000}, Ayoub \cite{ayoub_thesis_1, ayoub_thesis_2}, and Cisinski-Deglise \cite{Cisinski_2019} we now have candidates for the category $DM(X, \mathbb Q)$ (here $\mathbb Q$ emphasizes the coefficients -- we will work only with rational coefficients in this article) -- for our purposes it can be taken to be the category of etale motivic sheaves without transfers $DA(X, \mathbb Q)$ of Ayoub\cite[2.1]{ayoub2012relative} or, alternatively, the Beilinson motives, $DM_{B,c}(X, \mathbb Q)$ of Cisinski-Deglise \cite{Cisinski_2019}. This formalism has several of the expected properties: it is equipped with the formalism of Grothendieck's six functors and has the correct relation with the cycle class groups (by construction), has weights (see Bondarko \cite{bondarko_weights_2010} and Hebert \cite{hebert2011structure}) and is equipped with several expected realization functors (see \cite{ayoub2012relative}, \cite{MR1775312}, \cite{ivorra2007realisation}). However, the motivic $t$-structure remains an elusive dream.

\subsection{} The heart of the motivic t-structure is expected to be a compactly generated abelian category, with compact objects that are both Noetherian and Artinian. So, another approach to study the conjectural abelian category of mixed motives is via its simple objects. These simple objects are expected to realize to intersection complexes $IC(Y, \mathcal{L})$, where $Y \subset X$ is closed and $\mathcal{L}$ is a local system defined generically on $Y$ which can even be taken to be in mixed categories. 

A natural and interesting question is whether these intersection complexes $IC(Y, \mathcal{L})$ admit lifts to the motivic category $DM(X, \mathbb{Q})$. These expected lifts are known as motivic intersection complexes and have been intrinsically characterized by Wildeshaus in \cite{wildeshaus_ic}. When such a lift exists, it is canonical.  In later work \cite{wildeshaus_shimura_2012}, Wildeshaus also introduced a slightly weaker notion, in which the motive is unique only up to an isomorphism. Although this version is weaker than the canonical one, it has the advantage of existing unconditionally in several important cases - for instance, for
the Baily-Borel compactification of Shimura varieties.

In \cite{vaish2017weight}, the second-named author constructed a candidate for the lift of intersection complex $IC(X,\mathbb Q)$, the so called motivic intersection complex, for an arbitrary threefold in $DM(X,\mathbb Q)$ which was denoted $EM_X^F$.  However, the structural properties of the object (in particular, whether it is a Chow motive) were left open. In \cite{vaish2018motivic}, he further reconciled the constructions with those in \cite{wildeshaus_shimura_2012} and demonstrated that in case the variety is (the Bailey-Borel compactification of) a Shimura variety, then the motive so constructed is the correct object for the motivic intersection complex and in particular a Chow motive. This left open the question of what happens for threefolds that are not Shimura varieties. 

This article aims to address this gap and, using different techniques, we show that for \emph{any} threefold $X$, the candidate $EM^{F}_{X}$ is indeed the motivic intersection complex, and in particular a Chow motive. Our main result is as follows:
\begin{theorem}\label{mainthm intro}(see \ref{main theorem})
Let $X$ be an irreducible variety of dimension $3$ over a field $k$ of characteristic $0$. Then the following hold: 
  \begin{enumerate}[(i)]
  \item The object $EM^{F}_{X} \in DM^{coh}_{3,dom}(X)\subset DM(X)$ is of weight $0$ in the sense of Bondarko. In particular, it is a Chow motive.
  \item For any open immersion $j:U \hookrightarrow X$ with $U$ regular, $j^{*}EM^{F}_{X} \cong 1_{U}$ and the map induced by pullback $j^{*}$ 
  \[ End(EM^{F}_{X}) \rightarrow End(1_{U})
  \] is an isomorphism
    \end{enumerate}
    In particular, the construction $EM_X^F$ satisfies Wildeshaus' (stronger) criteria \cite{wildeshaus_ic} for being a motivic intersection complex.
\end{theorem}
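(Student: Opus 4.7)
The plan is to prove the two parts by carefully analyzing the recursive construction of $EM^{F}_{X}$ given in \cite{vaish2017weight}. That construction fixes a stratification of $X$ (using Hironaka's resolution of singularities, available since $\text{char}(k)=0$) so that the strata are smooth, and then inductively applies Bondarko's weight truncation functors to the localization triangles associated to these strata. The strategy is to track, at each step of the induction, both the Bondarko weight of the resulting object and the effect of the step on the endomorphism ring, exploiting crucially that the depth of the stratification of a threefold is at most $3$.

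For part (i), I would proceed by induction on the dimension of the strata. The motive $1_{V}$ of a smooth variety $V$ is a Chow motive, hence of weight $0$ in Bondarko's sense, and I would show that applying Bondarko's weight truncation $w_{\leq 0}$ (or its dual) at each successive stratum preserves the weight-$0$ property on the glued object. The base case is the smooth open stratum $U$, where by construction the object restricts to $1_{U}$; the inductive step uses the compatibility of Bondarko's weight structure with the localization triangles $i_{!}i^{!}\to \id \to j_{*}j^{*}$, together with the inductive control on the contribution coming from closed strata. Since the construction outputs an object of $DM^{coh}_{3,dom}(X)$, and since in this category being of weight $0$ in Bondarko's sense is equivalent to being a Chow motive, (i) follows.

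For part (ii), the isomorphism $j^{*}EM^{F}_{X}\cong 1_{U}$ is immediate from the recursive definition, as the weight truncations performed over the closed complement act as the identity after restriction to $U$. For the endomorphism statement, I would place the pullback map in a long exact sequence coming from the localization triangle associated with $Z := X\setminus U$: the remaining terms are $\Hom$-groups at least one of whose arguments is supported on $Z$. These must vanish in the relevant range, which I would obtain by combining the weight-$0$ result from (i) with weight orthogonality in Bondarko's structure: after applying $i^{*}$ or $i^{!}$, the contribution from $Z$ sits in a strictly shifted weight range because of the dimension drop, hence can admit no nonzero maps to or from the weight-$0$ object $EM^{F}_{X}$.

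The main obstacle is expected to be the case where the singular locus of $X$ contains a two-dimensional stratum. For isolated or one-dimensional singularities the stratification has small depth and the weight-counting proceeds cleanly, but for a singular surface stratum one must resolve it further and run additional weight truncations, all while keeping the weights aligned with the Bondarko structure on $X$. The key technical input here is the correct behavior of Bondarko's truncations with respect to desingularization of surfaces inside the threefold, which supplies a motivic analog of the classical decomposition theorem in this low-dimensional range. This is the main new ingredient compared with \cite{vaish2018motivic}, where the explicit structure of the Baily--Borel boundary of Shimura varieties was exploited to bypass precisely this step.
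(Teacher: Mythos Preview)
Your proposal has genuine gaps in both parts, stemming from a conflation of two distinct structures on $DM(X)$: Bondarko's \emph{weight} structure (the one whose heart is $CHM(X)$) and Morel's \emph{$t$}-structure $({}^{w}DM^{\leq F},{}^{w}DM^{>F})$ (the $m$-structure used to \emph{define} $EM^{F}_{X}=w_{\leq F}j_{*}1_{U}$). The truncation $w_{\leq F}$ is not Bondarko's $w_{\leq 0}$, and there is no reason a priori that applying $w_{\leq F}$ to the weight-$0$ object $j_{*}1_{U}$ should output something of Bondarko weight $0$. Your inductive scheme, tracking Bondarko weights through ``Bondarko's weight truncation'' over strata, is therefore not a description of the construction, and the induction does not get off the ground.

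For part (i), you also misidentify the obstacle. After normalizing, the singular locus of $X$ has dimension $\leq 1$; the real difficulty is over the finitely many closed points $s$ where the fibre of a resolution $\pi:X'\to X$ is a \emph{surface}. There $s^{*}EM^{F}_{X}$ may involve the transcendental part $h^{2}_{tr}$ of a surface, i.e.\ potential phantom motives, so weight conservativity cannot be applied directly. The paper handles this with three ingredients absent from your sketch: (a) on the open complement $U$ of these points, it shows $EM^{F}_{U}$ agrees with Wildeshaus' intermediate extension (built via Kimura finiteness and the semiprimary property of $CHM$ over the curve strata), hence is a summand of $\pi_{*}1_{U'}$ and thus weight $0$; (b) at the bad points it introduces a \emph{refined} Morel truncation $w_{\leq 1+alg}$ splitting $s^{*}EM^{F}_{X}$ into a piece in $DM^{Ab}(k(s))$ and a transcendental piece; (c) the abelian piece is controlled by weight conservativity of the Hodge realization (after showing $R_{\mathcal M}(s^{*}EM^{F}_{X})\cong s^{*}IC_{X}$), while the transcendental piece is computed explicitly from the geometry of the SNC fibre to be $h^{2}_{tr}(F')[-2]$, which is pure of weight $0$. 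Your proposed ``motivic analog of the decomposition theorem for surfaces inside the threefold'' does not capture any of these steps.

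For part (ii), your plan to deduce the vanishing of the boundary terms in the localization long exact sequence from Bondarko weight orthogonality plus a ``dimension drop'' does not work: knowing only that $EM^{F}_{X}$ is of weight $0$ gives $i^{*}EM^{F}_{X}\in DM^{w\leq 0}$ and $i^{!}EM^{F}_{X}\in DM^{w\geq 0}$, and Bondarko orthogonality says nothing about $\Hom(w\!\leq\!0,\,w\!\geq\!0)$ or about the $[1]$-shifted term. The paper's argument is both simpler and logically independent of (i): it applies $\Hom(EM^{F}_{X},-)$ to the \emph{Morel} decomposition triangle $w_{\leq F}j_{*}1_{U}\to j_{*}1_{U}\to w_{>F}j_{*}1_{U}$, uses orthogonality of the $t$-structure $({}^{w}DM^{\leq F},{}^{w}DM^{>F})$ to kill the outer terms, and finishes by the adjunction $\Hom(EM^{F}_{X},j_{*}1_{U})\cong\Hom(j^{*}EM^{F}_{X},1_{U})\cong\End(1_{U})$.
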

\subsection{} Now we briefly motivate the method used to prove Theorem \ref{mainthm intro}. The two key ingredients used are weight conservativity (\cite{wildeshaus_conservativity})- a version of which we prove (\ref{weightconservativity_our}) -  and a generalization of the refined Chow–Künneth decomposition of a surface as in \cite{murre_kahn_pedrini}.

We want to show that the weight of $EM^{F}_{X}$ is $0$. Firstly, recall that by the punctual gluing of the weight structures \cite{vaish2017punctual}, we can reduce the problem to showing that $s^{*}(EM^{F}_{X})$
has weight $\leq 0$ and $s^{!}(EM^{F}_{X})$ has weight $\geq 0$ for all points $s : \Spec k(s) \hookrightarrow X$.

Given any resolution $\pi:X' \rightarrow X$ of a threefold $X$, the fibers along $\pi$ have dimension 2 over a zero-dimensional stratum and on the complement they have dimension $\leq 1$. For a point $s$ such that the fiber over $s$ is of dimension $\leq 1$, the objects $s^{*}EM^{F}_{X}$ and $s^{!}EM^{F}_{X}$ lie in the triangulated category generated by the motives of smooth projective curves (upto Tate twists), and in particular in the category $DM^{Ab}(k(s)) \subset DM(k(s))$ generated by the motives of abelian varieties (up to Tate twists). Then the conservativity of the restriction of the realization functor to $DM^{Ab}(k(s))$ allows us to calculate the weights of $s^{*}EM^{F}_{X}$ as well as $s^{!}EM^{F}_{X}$ in the realizations.

This leaves us with the tricky case where the fiber at $s$ is a surface $Y$ (which we can assume to be smooth for the purpose of this introduction). Here, phantom motives may appear as the summands of $h^{2}(Y)$ and as such the weight conservativity fails.
\subsection{} To address this situation, we define a refinement of S.~Morel's $t$-structure -- recall that for each integral weight $d$, S.~Morel constructs a $t$-structure: $({^wD^{\le d}}(k),{^wD^{> d}}(k))$ on the mixed realization category, which we denote as $D(k)$ (say $D^b(k,\mathbb Q_l)$ for $k$ finite, or $D^bMHS(k)$, the derived category of mixed Hodge structures for $k\subset \mathbb C$). These $t$-structures have a trivial heart, and satisfy the relation: 
\begin{align*}
    {^wD^{\le d}}(k) \subset {^wD^{\le d+1}}(k)& &{^wD^{> d}}(k)\supset {^wD^{> d+1}}(k)
\end{align*}
It can be shown that in the presence of the motivic $t$-structure, these would lift to $t$-structures, denoted  $({^wDM^{\le d}},{^wDM^{> d}})$, on $DM(k)$. We propose a refined $t$-structure, $({^wDM^{\le 1+alg}}, {^wDM^{> 1+alg}})$ with the expectation
\begin{align*}
    {^wDM^{\le 1}}(k) \subset{^wDM^{\le 1+alg}}(k) \subset {^wDM^{\le 2}}(k)& &{^wDM^{> 1}}(k)\supset  {^wDM^{> 1+alg}}(k)\supset {^wD^{> 2}}(k)
\end{align*}
and satisfying some useful properties. Note that while $({^wDM^{\le d}},{^wDM^{> d}})$ has not been constructed on $DM(k)$, it has been on suitable sub-categories. E.g., for $d=0,1$, this has been constructed on the subcategory $DM^{coh}(k)$ (cohomologically) generated by effective Chow motives \cite{vaish2017punctual} while for arbitrary $d$ this has been constructed on the subcategory $DM^{coh}_{2}(k)$ (cohomologically) generated by surfaces with arbitrary Tate twists \cite{vaish2018motivic}.

Due to the lack of existence of the motivic $t$-structure on $DM(k)$, the characterization of the $t$-structure $({^wDM^{\le 1+alg}}, {^wDM^{> 1+alg}})$ becomes somewhat technical and in the introduction, we only layout the following properties:

\begin{theorem}\label{B-int}(see \ref{1+alg for DM-coh(k)} and \ref{1+alg for surfaces} for details) There is a $t$-structure on $DM^{coh}(k)$ which we denote by $(^{w}DM^{\leq 1+alg}(k), \;^{w}DM^{>1+alg}(k))$, such that if $w_{\le 1+alg}$ and $w_{>1+alg}$ denote the corresponding truncation functors, we have:
\begin{enumerate}[(a)]
    \item For any smooth proper surface $Y$ over $k$, the natural triangle 
    \[
       h^2_{alg} \cong w_{\le 1+alg}(h^2(Y))\rightarrow h^2(Y) \rightarrow w_{> 1+alg}(h^2(Y)) \cong h^2_{tr}(Y) \rightarrow
    \]
    is a split triangle of Chow motives which realizes the algebraic and the transcendental part of $h^2(Y)$ in the refined Chow–Künneth decomposition of \cite{murre_kahn_pedrini}.
    \item ${^wDM^{\le 1}}(k) \subset {^wDM^{\le 1+alg}} \subset DM^{Ab}(k)$.
 \end{enumerate}
\end{theorem}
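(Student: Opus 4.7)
The plan is to construct the refined $t$-structure by defining its aisle explicitly and then verifying the axioms via a two-step truncation: first apply the Morel truncation at weight $1$, then peel off the algebraic (Tate) part using the Murre--Kahn--Pedrini (MKP) projectors onto $h^{2}_{alg}$. I would define $^{w}DM^{\leq 1+alg}(k)$ as the strictly full subcategory of $DM^{coh}(k)$ containing $^{w}DM^{\leq 1}(k)$ together with the Lefschetz motive $\Q(-1)[-2]$, closed under extensions, direct summands, and positive shifts; the coaisle $^{w}DM^{>1+alg}(k)$ is then its right orthogonal. Hom-vanishing and shift-closure are then automatic; only the existence of truncation triangles requires real work.

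To produce a truncation triangle for general $M \in DM^{coh}(k)$, the first step applies the Morel truncation at weight $1$ available by \cite{vaish2017punctual}, reducing to truncating $w_{>1}M$, which lives in Morel weights $\geq 2$. From this object one peels off the algebraic summand by lifting the canonical MKP projectors $h^{2}(Y) \twoheadrightarrow h^{2}_{alg}(Y) \cong \L^{\oplus \rho(Y)}$ (for smooth proper surfaces $Y$) through the cohomological generation of $DM^{coh}(k)$ by effective Chow motives. On the smaller subcategory $DM^{coh}_{2}(k)$, where the full Morel-weight filtration is available by \cite{vaish2018motivic}, one verifies directly that the resulting refined truncation sits strictly between the weight-$1$ and weight-$2$ truncations, confirming the expected sandwich announced in the introduction.

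For part (a), specializing to a smooth proper surface $Y$, the MKP decomposition provides a split triangle of Chow motives $h^{2}_{alg}(Y) \to h^{2}(Y) \to h^{2}_{tr}(Y) \to$. Verifying that this matches the aisle/coaisle splitting reduces to checking $\Hom(C, h^{2}_{tr}(Y)[n]) = 0$ for all generators $C$ of the aisle: for $C = \Q(-1)[-2]$ this is a reformulation of the defining property of $h^{2}_{tr}$ as carrying no algebraic cycle class contribution, while for $C \in {}^{w}DM^{\leq 1}(k)$ it follows from weight considerations together with the fact that $^{w}DM^{\leq 1}$ is generated by motives of curves. For part (b): $^{w}DM^{\leq 1}(k) \subset {}^{w}DM^{\leq 1+alg}(k)$ is built into the definition, and $^{w}DM^{\leq 1+alg}(k) \subset DM^{Ab}(k)$ because both generators --- motives of curves (which factor through their Jacobians) and the Tate motive $\Q(-1)[-2]$ --- lie in $DM^{Ab}(k)$, which is closed under the operations used to generate the aisle.

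The principal obstacle is establishing the truncation step on all of $DM^{coh}(k)$. Without a motivic $t$-structure on $DM(k)$ one cannot glue truncations along generators in a purely formal way; instead, one must carefully lift the MKP idempotents through the cohomological generation of $DM^{coh}(k)$ by effective Chow motives and verify compatibility with both the Bondarko weight structure and the existing Morel weight-$1$ truncation. The separation of these two cases --- the general $DM^{coh}(k)$ construction versus the concrete surface case --- is likely the reason the paper splits the argument into the two referenced results \ref{1+alg for DM-coh(k)} and \ref{1+alg for surfaces}, which are combined into the stated theorem.
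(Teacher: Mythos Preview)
Your proposal has a genuine gap in the construction of the truncation on all of $DM^{coh}(k)$. You write that after applying the Morel truncation $w_{>1}$, one ``peels off the algebraic summand by lifting the canonical MKP projectors $h^{2}(Y)\twoheadrightarrow h^{2}_{alg}(Y)$ (for smooth proper surfaces $Y$) through the cohomological generation of $DM^{coh}(k)$''. But $DM^{coh}(k)$ is generated by $h(X)$ for smooth projective $X$ of \emph{arbitrary} dimension, and for such $X$ there is no Chow--K\"unneth summand $h^{2}(X)$ available, let alone an MKP projector onto $h^{2}_{alg}(X)$. Your peeling-off step therefore only makes sense on $DM^{coh}_{2}(k)$, not on $DM^{coh}(k)$, and the argument as stated does not produce decomposition triangles for the generators you need.

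The paper's route avoids this precisely by \emph{not} trying to split $h^{2}_{alg}$ out of a (nonexistent) $h^{2}$. Instead, for any smooth projective $X$ one has $NS(X)_{\Q}\cong\Hom(1_k(-1)[-2],\,h^{>1}(X))$, and a choice of basis of $NS(X)_{\Q}$ gives an explicit map
\[
g_X:\bigoplus_{i=1}^{\rho(X)} 1_k(-1)[-2]\longrightarrow h^{>1}(X).
\]
Define $C_X:=\cone(g_X)$. The decomposition triangle for $h(X)$ is then $h^{\le 1}(X)\oplus\bigl(\bigoplus 1_k(-1)[-2]\bigr)\to h(X)\to C_X\to$, and the coaisle $^{w}DM^{>1+alg}(k)$ is \emph{defined} as the triangulated subcategory generated by the $C_X$. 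The Hom-vanishing $\Hom(A,C_X[m])=0$ for $A\in\{h^{\le 1}(X),\,h(k')(-1)[-2]\}$ is then checked directly via motivic cohomology computations (the second case amounting to the statement that $g_X$ induces an isomorphism on $\Hom(h(k')(-1)[-2],-)$ in every degree), after which Proposition~\ref{tFromGenerators} yields the $m$-structure. Only \emph{afterwards}, in the surface case, does one show that the triangle defining $C_Y$ splits by verifying $\Hom(C_Y,\L[1])=0$, thereby recovering the MKP decomposition as an output rather than using it as an input. Your treatment of parts (a) and (b) is fine in spirit, but the core construction needs the Neron--Severi map $g_X$ rather than the MKP projectors.
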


\subsection{} Returning to the outline of the proof, we are interested in the weights of the objects $s^*EM^F_X$ and $s^!EM^F_X$, where $s$ is a point in $X$ over which the fiber is two-dimensional. For computing the weights of $s^*EM^F_X$, it is enough to compute the weights of $w_{\le 1+alg}s^*EM^F_X$ and those of $w_{> 1+alg}s^*EM^F_X$. The former, lying in $DM^{Ab}(k)$, is amenable to weight conservativity \cite{wildeshaus_conservativity}. However, the application is not as straightforward - this is because of the absence of mixed realizations of the triangulated category of mixed motivic sheaves that commute with the six functors. To overcome this difficulty, we first observe that on the open complement $j:U\hookrightarrow X$ of points over which the fibres are 2 dimensional, the restriction $EM^{F}_{X}|_{U} \cong EM^{F}_{U}$ is a summand of $\pi_{*}1_{ X'}|_{U}$, where  $\pi: X' \rightarrow X$ is a resolution of singularity. Therefore, it is enough to compute the weights of $s^{*}j_{*}\pi_{*}1_{X'}|_{U}$. This in turn is related to the cone of the homological map $h(Z)\rightarrow h(X')$ and the cohomological map $h^{coh}(X') \rightarrow h^{coh}(Z)$ where $Z\subset X' \rightarrow X$ denotes the fiber over $s$. Then the realization of a cohomological map in Huber's mixed category $D_{\mathcal{MR}}$ (\cite{MR1439046},\cite{MR1775312}), and of homological map in the bounded derived category of mixed Hodge structures $D^{b}(MHS(k))$, yields the expected map in the respective realizations. Since the Betti realization functor from $D^{b}(MHS_{\mathbb{Q}}(k)) \rightarrow D^{b}_{c}(k)$ is faithful, we are able to prove the following:
\begin{proposition}(see \ref{realizations of EM at each point})
    Let $k$ be a field of characteristic zero with a fixed embedding $\sigma: k \hookrightarrow \mathbb{C}$. Let $D^{b}(MHS_{\mathbb{Q}}(k))$ denote the bounded derived category of mixed Hodge structures over $k$ and $R_{\mathcal{M}}: DM(k) \rightarrow D^{b}(MHS_{\mathbb{Q}}(k))$ denote the realization functor. Then for any point $s$ of $X$ we have
    \[ R_{\mathcal{M}}(s^{*}EM^{F}_{X}) \cong s^{*}IC_{X}.\]
    \end{proposition}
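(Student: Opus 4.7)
The plan is to analyze $s^{*}EM^{F}_{X}$ via an explicit construction coming from a resolution of singularities and to match this construction, after realization, with the classical construction of $s^{*}IC_{X}$ out of $R\pi_{*}\mathbb{Q}_{X'}$. Fix a resolution $\pi: X' \to X$ and let $Z = \pi^{-1}(s)$. Let $U \subset X$ denote the open complement of the locus of points whose fibre under $\pi$ has dimension $2$; so $j: U \hookrightarrow X$. On $U$ the restriction $EM^{F}_{X}|_{U} \cong EM^{F}_{U}$ is a direct summand of $\pi_{*}1_{X'}|_{U}$, by the construction in \cite{vaish2017weight}. Writing out the recollement triangle associated to $j$ and the point $i = s$, the object $s^{*}EM^{F}_{X}$ is obtained as $s^{*}j_{*}j^{*}$ applied to an identifiable summand of $\pi_{*}1_{X'}$, together with a controlled contribution from the deep stratum dictated by the weight profile $F$.

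The next step is to realize each of these maps and cones. The cohomological pullback $h^{coh}(X') \to h^{coh}(Z)$ and its $j_{*}j^{*}$-variant (via proper base change) are realized in Huber's mixed category $D_{\mathcal{MR}}$ (\cite{MR1439046}, \cite{MR1775312}), which carries a compatible six-functor formalism; hence the cones defining $s^{*}EM^{F}_{X}$ pass to the expected cones in $D_{\mathcal{MR}}$. On the other hand, the homological map $h(Z) \to h(X')$ realizes in $D^{b}(MHS_{\mathbb{Q}}(k))$ to the standard Hodge-theoretic pushforward / cycle class map. By the decomposition theorem applied to $R\pi_{*}\mathbb{Q}_{X'}$, together with the perverse characterization of $IC_X$ as the image of the canonical morphism ${}^{p}\mathcal{H}^{0}(j_{!}\mathbb{Q}_{U}) \to {}^{p}\mathcal{H}^{0}(j_{*}\mathbb{Q}_{U})$, the stalk $s^{*}IC_{X}$ is computed by precisely the same iterated cone of cohomology/Borel–Moore-homology of $Z$ and $X'$ that our motivic construction produces.

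To combine these two realizations into one identification inside $D^{b}(MHS_{\mathbb{Q}}(k))$, I would use the Betti realization functor $D^{b}(MHS_{\mathbb{Q}}(k)) \to D^{b}_{c}(k)$, which is faithful. The faithfulness allows me to transport equalities of morphisms from the Betti side (where the agreement of the two cone constructions is classical) back to $D^{b}(MHS_{\mathbb{Q}}(k))$, thereby identifying the underlying morphisms and hence the cones. This yields the isomorphism $R_{\mathcal{M}}(s^{*}EM^{F}_{X}) \cong s^{*}IC_{X}$ as claimed.

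The hard part is precisely the coordination between the two realization targets. Since the motivic category $DM(k)$ does not admit a single mixed realization commuting with all six functors in the required generality, we have to use $D_{\mathcal{MR}}$ for the cohomological/six-functor side and $D^{b}(MHS_{\mathbb{Q}}(k))$ for the homological cycle-class maps, and then glue the two pictures together via faithfulness of the Betti realization. The technical bookkeeping required to show that the various triangles, summands, and weight truncations defining $EM^{F}_{X}$ match, once pushed through these realizations, the classical recipe producing $IC_{X}$ from $R\pi_{*}\mathbb{Q}_{X'}$ is where the main effort lies.
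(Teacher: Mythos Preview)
Your strategy is essentially the paper's: compute $s^{*}EM^{F}_{X}$ in terms of $s^{*}j_{*}\pi_{*}1_{U'}$ via a localization triangle whose connecting map is a composite of an adjunction counit and unit, realize these two pieces separately (the homological one via $R_{\mathcal{M}}$, the cohomological one via Huber's $R_{\mathcal{MR}}$), and reconcile the two using faithfulness of the forgetful functor $D^{b}(MHS_{\mathbb{Q}}(k))\to D^{b}_{c}(k)$.

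Two points where the paper is sharper than your sketch. First, the ``controlled contribution from the deep stratum dictated by $F$'' is handled cleanly by the identity $s^{*}EM^{F}_{X}\cong w_{\le 2}\,s^{*}j_{*}EM^{F}_{U}$; since $w_{\le 2}$ commutes with $R_{\mathcal{M}}$, the whole problem reduces to showing $R_{\mathcal{M}}(s^{*}j_{*}EM^{F}_{U})\cong s^{*}j_{*}IC^{\mathcal{M}}_{U}$, and one can then work with $\pi_{*}1_{U'}$ instead of its summand. This avoids tracking the weight profile through the cone constructions. Second, faithfulness alone is not enough for the summand step: once you know $R_{\mathcal{M}}(s^{*}j_{*}\pi_{*}1_{U'})\cong s^{*}j_{*}\pi_{*}\mathbb{Q}^{\mathcal{M}}_{U'}$, you still need to identify the image of the idempotent. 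The paper builds a map $R_{\mathcal{M}}(s^{*}j_{*}EM^{F}_{U})\to s^{*}j_{*}IC^{\mathcal{M}}_{U}$ and checks it becomes an isomorphism after applying $\For$ (using $R_{B}(EM^{F}_{U})\cong IC_{U}$ and compatibility of $R_{B}$ with the six functors); here one needs \emph{conservativity} of $\For$, not just faithfulness. Your appeal to the decomposition theorem and the perverse characterization of $IC_{X}$ is more roundabout than necessary and does not by itself pin down the summand inside $D^{b}(MHS)$.
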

    \subsection{}Having done so, we are now in a position to apply weight conservativity. Our goal is to determine the weight of $R_{\mathcal{M}}(w_{\leq 1+alg}s^{*}EM^{F}_{X})$. For this purpose, it is both useful and possible to construct an analogue of the refined Morel's t-structure in the mixed category; see Proposition \ref{mixed realisation commute with 1+alg} for details. We denote the corresponding truncation functors also by $(w_{\leq 1+alg},w_{>1+alg})$. Since $R_{\mathcal{M}}$ commutes with $w_{\leq 1+alg}$ and $s^{*}IC_{X}$ has weight $\leq 0$, we obtain $R_{\mathcal{M}}(w_{\leq 1+alg}s^{*}EM^{F}_{X})$ has weight $\leq 0$. Then weight conservativity implies that  $w_{\leq 1+alg} s^{*}EM^{F}_{X}$ is of weight $\leq 0$, as desired.

Now for $w_{> 1+alg}s^*EM^F_X$, the object may involve phantom motives, and it is not useful to go to the realizations. However, we get lucky, and we are able to use the geometry of the situation to explicitly compute the weights and show that it is of weight $\le 0$, as required. 

The case for $s^!EM^F_Y$ is similar, except that we have to use a (Tate) twisted version of the $t$-structure as described above, thus completing the proof of (i).
\subsection{}The proof of part $(ii)$ is fairly straightforward. We begin by recalling how the object $EM^{F}_{X}$ was defined. In \cite{vaish2017weight}, Vaish constructed Morel's t-structure in the relative situation using the punctual gluing of t-structures. In particular, it was shown that for certain monotone step functions $F:\{0,\cdots,\dim X\} \rightarrow \mathbb{Z}$, there exists a t-structure, with truncation functors denoted as $(w_{\leq F}, w_{>F})$, on appropriate subcategories of $DM(X)$, whose realization corresponds to Morel's t-structure associated with $F$ in the category of mixed sheaves. In our case, where $X$ is a threefold and $F=\{3 \mapsto 3, 2 \mapsto 3, 1 \mapsto 2, 0 \mapsto 2\}$, this construction yields a t-structure on the subcategory $DM^{coh}_{3,dom}(X) \subset DM^{coh}(X) \subset DM(X)$. Here $DM^{coh}(X)$ is the category of cohomological motives and the subcategory $DM^{coh}_{3,dom}(X)$ is obtained by imposing restrictions on the dimensions of fibres.

Now, given the motivic Morel's t-structure the object $EM^{F}_{X} \in DM(X)$ is defined as follows: for any open dense immersion $j: U \hookrightarrow X$ with $U$ regular, \[EM^{F}_{X}:= w_{\leq F}j_{*}1_{U}.\]
Since $j^{*}$ is weight exact, $j^{*}$ commutes with $w_{\leq F}$ and therefore, $j^{*}EM^{F}_{X} \cong 1_{U}$.

To show the isomorphism of endomorphism rings,  we consider the decomposition triangle of $j_{*}1_{U}$:
\[  w_{\leq F}j_{*}1_{U} \rightarrow j_{*}1_{U} \rightarrow w_{>F}j_{*}1_{U} \rightarrow\]
The functor $\Hom(w_{\leq F}j_{*}1_{U},-)$ give rise to a long exact sequence
\begin{align*} \cdots \rightarrow \Hom(w_{\leq F}j_{*}1_{U}, w_{>F}j_{*}1_{U}[-1])\rightarrow\Hom(w_{\leq F}j_{*}1_{U},w_{\leq F}j_{*}1_{U}) \rightarrow &\Hom(w_{\leq F}j_{*}1_{U}, j_{*}1_{U}) \rightarrow \\
&\Hom(w_{\leq F}j_{*}1_{U},w_{>F}j_{*}1_{U}) \rightarrow \cdots 
\end{align*}
Due to the orthogonality of Morel's t-structure, the first and last terms are zero. Then by adjunction, we obtain the isomorphism as required:
\[\End(EM^{F}_{X}) =  \Hom(w_{\leq F}j_{*}1_{U},w_{\leq F}j_{*}1_{U})\cong \Hom(w_{\leq F}j_{*}1_{U},j_{*}1_{U}) \cong \End(1_{U}) .\]

\addtocontents{toc}{\protect\setcounter{tocdepth}{2}}

\subsection*{Relationship with known work} 
\begin{itemize}
    \item Vaish has shown in \cite{vaish2018motivic} that the motive $EM^{F}_{X}$ satisfies the stronger characterization of motivic intersection complex in the case of Shimura threefolds. Our result generalizes this to arbitrary threefolds - using different methods - thus recovering his result as a special case. Note that \cite{vaish2018motivic} also contains cases not covered by the present article (e.g. Siegel six-folds), and the two articles should thus be treated as independent.
    \item The t-structure constructed in Theorem \ref{B-int} coincides with the one defined by Ayoub and Viale in \cite{ayoub20091motivic}. Specifically, they show that the inclusion $DM^{coh}_{1-mot}(k) \hookrightarrow DM^{coh}(k)$ admits a right adjoint, where $DM^{coh}_{1-mot}(k):=\{p_{*}1_{X}\mid p:X \rightarrow k \;\text{proper}\;, \dim X \leq 1\}$. This leads to a t-structure $t^{1}_{M}$ whose negative part is $DM^{coh}_{1-mot}(k)$. It is easy to see that the t-structure $(^{w}DM^{\leq 1+alg}(k),\;^{w}DM^{>1+alg}(k))$ coincides with $t^{1}_{M}$, but our construction offers a more palatable formulation of Ayoub-Viale’s method. This reformulation also allows us to recover the refined Chow–Künneth decomposition of surfaces introduced in \cite{murre_kahn_pedrini}.
    \item Wildeshaus in \cite{wildeshaus_shimura_2012} has constructed the intersection complex for motives of abelian type, where the resolutions lie in the conservative category $DM^{Ab}(-)$, the primary examples being Shimura varieties. Our work not only explores the different settings and provides an example where the fibers potentially involve phantom motives, i.e., motives outside the conservative category; but also strengthens their results for Shimura threefolds, since we are able to prove the stronger criteria of \cite{wildeshaus_ic} and get a \emph{functorial} intersection complex.
\end{itemize}

\subsection*{Outline} 
The organization of the paper is as follows.\\
Section 2.1 recalls the gluing of t-structures and weight structures on a triangulated category. Section 2.2 reviews the category of Chow (pure) motives and the Chow–Künneth decomposition for varieties. Sections 2.3--2.4 summarize essential facts about the triangulated category of mixed motivic sheaves and their realization functors. Section 2.5 discusses Wildeshaus’s result on the conservativity of realization functors for abelian motives.

Section 3.1 recalls the analogue of Morel’s weight truncation for a threefold $X$ in the motivic setting and establishes several results concerning $EM^{F}_{X}$. Sections 3.2--3.3 present the construction of the refined version of Morel's truncation and examine its realization in the bounded derived category of mixed Hodge structures.

Sections 4.1--4.2 revisit the work of Wildeshaus, giving the construction of the motivic intersection complex for the motive of abelian type (up to dimension 1). This complex is constructed on the complement of a certain finite closed subvariety of a threefold. Section 4.3 shows that it coincides with $EM^{F}_{U}$. Finally, Section 4.4 contains the main result: $EM^{F}_{X}$ is a Chow motive, proved using weight conservativity together with the refined truncations constructed in Section 3.2. 
\subsection*{Notation}Throughout, all schemes $X$ are assumed to be Noetherian, separated, reduced of finite type over a field $k$
of characteristic zero. If a different base field is considered, it will be stated explicitly. Additional notation will be introduced as needed in the relevant sections.

\subsection*{Acknowledgement}The authors thank IISER Mohali for providing a stimulating research environment and for access to excellent departmental library and computational facilities, supported by the DST-FIST grant SR/FST/MS-I/2019/46(C).
\section{Preliminaries}\label{sec:prelim}
\subsection{Preliminaries on \texorpdfstring{$t$}{t}-structures}
\begin{definition}\label{def:mstruct}
A \emph{$t$-structure} on a triangulated category $D$ (see \cite{BBD}) is a pair of full subcategories $(D^{\le }, D^{>})$ satisfying three properties:
\begin{itemize}
\item (Orthogonality) $\hom (a,b)=0, \forall a\in D^{\le }, b\in D^{>}$
\item (Invariance) $D^{\le }[1]\subset D^{\le }$, and $D^{> }[-1]\subset D^{>}$
\item (Decomposition) $\forall a\in D$, there is a distinguished triangle $a_{\le}\rightarrow a\rightarrow a_>\rightarrow $ with $a_{\le}\in D^{\le }$ and $a_>\in D^{>}$. 
\end{itemize}
A \emph{weight structure} (also called a co-$t$-structure) on a triangulated category $D$ (see \cite{bondarko_weights}, for example) is the same as a $t$-structure, except that instead of invariance, it satisfies co-invariance, and we need an additional condition for closure under summands (which is automatic for $t$-structures):
\begin{itemize}
	\item (Karaubi-closed) $D^{\le }$ and $D^{>}$ are closed under taking summands.
	\item (Orthogonality) $\hom (a,b)=0, \forall a\in D^{\le }, b\in D^{>}$
	\item (Co-invariance) $D^{\le }[-1]\subset D^{\le }$, and $D^{> }[1]\subset D^{>}$
	\item (Decomposition) $\forall a\in D$, there is a distinguished triangle $a_{\le}\rightarrow a\rightarrow a_>\rightarrow $ with $a_{\le}\in D^{\le }$ and $a_>\in D^{>}$. 
\end{itemize}
We define an \emph{$m$-structure} to be a pair of full subcategories which form both a $t$-structure and a weight structure. In particular, for an $m$-structure $D^{\le }$ and $D^>$ are triangulated subcategories and $a\mapsto a_{\le}$ as well as $a\mapsto a_>$ are triangulated functors. 
\end{definition}

\begin{notation}
By $\infty$ (resp. $-\infty$) we mean the $t$-structure $(0,D)$ (resp. $(D,0)$). For a $t$-structure named $t$, the truncation functors will be denoted by $\tau_{\le t}$ and  $\tau_{>t}$ respectively. For a $m$-structure, the truncation functors will sometimes also be denoted by $w_{\le t}$ and $w_{>t}$ respectively.
\end{notation}
\begin{definition}
	Let $D$ be a triangulated category, and $S\subset D$ be a collection of objects of $D$. We define $\vvspan{S}$, the span of $S$, to be the smallest triangulated subcategory of $D$ containing $S$ which is closed under taking summands. We do not insist $\vvspan S$ to be closed under arbitrary direct sums. 
	
	The objects of $\vvspan S$ can be constructed by (finitely many iterations of) taking shifts, extensions and summands of objects of $S$. 
\end{definition}
   \begin{lemma} \label{M generated by finite objects}
        Let $D$ be a triangulated category and $S$ be a collection of objects in $D$ with $0 \in S$. For any object $M \in \langle S \rangle $, there exists finite collection $S'$ of objects in $S$ such that $M \in \langle S' \rangle$   \end{lemma}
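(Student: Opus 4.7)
The plan is to define $T$ to be the union of $\langle S' \rangle$ over all finite subsets $S' \subseteq S$, and then to show that $T$ is itself a triangulated subcategory of $D$ closed under summands which contains $S$. The minimality of $\langle S \rangle$ as such a subcategory then forces $\langle S \rangle \subseteq T$, while the reverse inclusion $T \subseteq \langle S \rangle$ is clear from the definitions. Hence $\langle S \rangle = T$, which is exactly the content of the lemma: any $M \in \langle S \rangle$ belongs to some $\langle S' \rangle$ with $S' \subseteq S$ finite.

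To verify that $T$ has the required closure properties, I would check them one by one. The object $0$ lies in $T$ because $0 \in S$ by hypothesis, so $0 \in \langle \{0\} \rangle \subseteq T$. Closure of $T$ under shifts is immediate, since each $\langle S' \rangle$ is already closed under shifts: if $M \in \langle S' \rangle$ then $M[n] \in \langle S' \rangle \subseteq T$. Closure under summands is equally immediate, since each $\langle S' \rangle$ is Karoubi-closed by definition, so a summand of $M \in \langle S' \rangle$ again lies in $\langle S' \rangle \subseteq T$. The only step which actually uses the finiteness aspect of the definition of $T$ is closure under extensions: given a distinguished triangle $A \to B \to C \to A[1]$ with $A \in \langle S'_A \rangle$ and $C \in \langle S'_C \rangle$ for finite $S'_A, S'_C \subseteq S$, I would pass to the finite union $S'' := S'_A \cup S'_C \subseteq S$. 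Then both $A$ and $C$ lie in the triangulated subcategory $\langle S'' \rangle$, and by the axioms of a triangulated subcategory the third vertex $B$ also lies in $\langle S'' \rangle \subseteq T$.

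There is no substantial obstacle in the proof; the whole argument reduces to the elementary fact that the union of two finite subsets of $S$ is again a finite subset of $S$. The one conceptual choice to make is whether to go via the auxiliary subcategory $T$ and invoke minimality, or to argue by structural induction along a formation tree of $M$ (using the characterization, given in the paragraph preceding the lemma, that elements of $\langle S \rangle$ are built by finitely many iterations of shifts, extensions, and summands). Both approaches work, but the subcategory route is cleaner because it bypasses any explicit bookkeeping of which operations are used at each stage.
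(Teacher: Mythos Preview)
Your proposal is correct and takes essentially the same approach as the paper: both define the auxiliary collection of objects that can be generated by some finite subcollection, verify the closure properties (shifts, triangles, summands), and then invoke minimality of $\langle S\rangle$. Your version is in fact slightly more complete, since you explicitly check closure under summands, which the paper's proof omits but which is needed given that $\langle S\rangle$ is defined to be closed under taking summands.
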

        \begin{proof}
        Let $D'\subset D$ be the collection of objects $M\in \langle S\rangle$ such that there exists a finite subcollection $S'$, depending on $M$, such that $M\in \langle S' \rangle$. Then it is immediate that $S\subset  D'$ (for any $M\in S$, take $S'=\{M\}$), is closed under shifts ($S'$ for $M[1]$ is same as that for $M$) and taking triangles ($S'$ for $\cone(M\rightarrow M')$ is union of that for $M$ and $M'$). Thus, $D'$ is triangulated and by definition, $D\subset D'$. 
        \end{proof}
        
We also have the following proposition which can be proved by an easy induction.
\begin{proposition}[$m$-structures from generators]\label{tFromGenerators}
	Let $A,B, H\subset D$ be a collection of objects of a triangulated category $D$. Assume $\hom(A,B[n])=0$ for all $n\in \Z$, and
	\[
		h\in H\Rightarrow \exists a\in A, b\in B\text{ such that there is}\text{ a distinguished triangle }a\rightarrow h\rightarrow b\rightarrow.
	\]
	Then if  $\vvspan A, \vvspan B\subset \vvspan H$, the pair $(\vvspan A , \vvspan B )$ is a $m$-structure on the triangulated subcategory $\vvspan H $.
\end{proposition}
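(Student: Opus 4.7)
The plan is to verify each axiom of an $m$-structure for the pair $(\vvspan{A}, \vvspan{B})$ on the subcategory $\vvspan{H}$. By construction $\vvspan{A}$ and $\vvspan{B}$ are triangulated subcategories closed under summands, so invariance, co-invariance, and the Karoubi-closed condition are automatic. The substantive work is to promote orthogonality and decomposition from the generators $A, B, H$ to their spans, which I would do by structural induction along the three operations (shift, cone, summand) that build each span.

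For orthogonality, fix $b \in B$ and let $\mathcal{C}_b = \{x \in D : \hom(x, b[n]) = 0 \text{ for all } n \in \Z\}$. The long exact sequence for $\hom(-, b[n])$ applied to a distinguished triangle shows $\mathcal{C}_b$ is closed under shifts, extensions, and summands, so it is a triangulated subcategory closed under summands. Since $A \subset \mathcal{C}_b$ by hypothesis, $\vvspan{A} \subset \mathcal{C}_b$. Reversing roles, for $a \in \vvspan{A}$ fixed, the analogous argument applied to $\{y : \hom(a, y[n]) = 0,\ \forall n\}$ extends the vanishing to all $b \in \vvspan{B}$.

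For the decomposition axiom, let $H' \subset \vvspan{H}$ denote the class of $h$ admitting a distinguished triangle $a \to h \to b \to$ with $a \in \vvspan{A}$ and $b \in \vvspan{B}$. By hypothesis $H \subset H'$, and since $\vvspan{H}$ is the smallest class containing $H$ stable under shifts, cones, and summands, it suffices to show $H'$ enjoys these stability properties. Closure under shifts is immediate. For summands, orthogonality yields isomorphisms $\hom(a,a) \xrightarrow{\sim} \hom(a,h)$ and $\hom(b,b) \xrightarrow{\sim} \hom(h,b)$ from the relevant long exact sequences, so any idempotent on $h$ lifts uniquely to compatible idempotents on $a$ and $b$; these split in the Karoubi-closed spans $\vvspan{A}, \vvspan{B}$ to produce a decomposition triangle for each summand of $h$.

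The main obstacle, where orthogonality is essentially used, is closure under extensions. Given $h' \to h \to h'' \to$ with decompositions $a' \to h' \to b'$ and $a'' \to h'' \to b''$, the composition $a'' \to h'' \to h'[1] \to b'[1]$ vanishes by orthogonality, so the connecting map $a'' \to h'[1]$ lifts to some $\tilde{\alpha} : a'' \to a'[1]$. Completing the shifted morphism $a''[-1] \to a'$ to a triangle $a' \to a \to a''$ yields $a \in \vvspan{A}$, together with a compatible morphism of triangles to $(h' \to h \to h'')$. A standard application of the $3 \times 3$ lemma for triangulated categories (equivalently, a double use of the octahedral axiom) then fills in the middle column as $a \to h \to b$ and exhibits a triangle $b' \to b \to b''$, placing $b \in \vvspan{B}$ and completing the desired decomposition. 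This gluing template is familiar from the standard construction of $t$-structures, and executes cleanly here precisely because $\vvspan{A}$ and $\vvspan{B}$ are already closed under the operations required.
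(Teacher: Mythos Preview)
Your proof is correct and complete. The paper itself does not give a proof but simply cites \cite[2.1.4]{vaish2016motivic}; what you have written is the standard argument one expects to find there --- promoting orthogonality by induction on the generating operations, and propagating decomposition triangles through shifts, summands (via lifted idempotents), and extensions (via the $3\times 3$ lemma after lifting the connecting map through orthogonality). There is nothing to compare.
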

\begin{proof}
	See \cite[2.1.4]{vaish2016motivic}. 
\end{proof}
We will also have the occasion to use the following:
\begin{lemma}\label{Pre on t-structures}
  Let $D$ be a triangulated category with two t-structures $({D}^{\leq t_{1}}, {D}^{>t_{1}})$ and $({D}^{\leq t_{2}}, {D}^{>t_{2}})$ such that ${D}^{\leq t_{1}} \subset {D}^{\leq t_{2}}$ (equivalently ${D}^{>t_{2}} \subset {D}^{>t_{1}}$). Then for any $M \in D$, \begin{align*}
  \tau_{\leq t_{1}}\tau_{\leq t_{2}}M \cong \tau_{\leq t_{1}}M  
  \end{align*}
\end{lemma}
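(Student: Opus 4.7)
The plan is to verify that the object $\tau_{\le t_1}\tau_{\le t_2}M$ together with the composed map
\[
\tau_{\le t_1}\tau_{\le t_2}M \;\longrightarrow\; \tau_{\le t_2}M \;\longrightarrow\; M
\]
satisfies the universal property that characterizes the $t_1$-truncation of $M$; that is, that this map fits into a distinguished triangle whose third vertex lies in $D^{>t_1}$. Once we exhibit such a triangle, the uniqueness (up to canonical isomorphism) of the truncation triangle of $M$ for the $t_1$-structure immediately yields the claimed isomorphism $\tau_{\le t_1}\tau_{\le t_2}M \cong \tau_{\le t_1}M$.

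Concretely, I would complete the composition above to a distinguished triangle
\[
\tau_{\le t_1}\tau_{\le t_2}M \;\longrightarrow\; M \;\longrightarrow\; N \;\longrightarrow\;,
\]
so the only real content is to show $N\in D^{>t_1}$. For this I would invoke the octahedral axiom applied to the composition $\tau_{\le t_1}\tau_{\le t_2}M \to \tau_{\le t_2}M \to M$, whose two intermediate cones are $\tau_{>t_1}\tau_{\le t_2}M$ and $\tau_{>t_2}M$ respectively. The octahedron produces a distinguished triangle
\[
\tau_{>t_1}\tau_{\le t_2}M \;\longrightarrow\; N \;\longrightarrow\; \tau_{>t_2}M \;\longrightarrow\;.
\]
The first term lies in $D^{>t_1}$ by definition of the $t_1$-truncation, and the third term lies in $D^{>t_2}\subset D^{>t_1}$ by the hypothesis $D^{\le t_1}\subset D^{\le t_2}$ (equivalently $D^{>t_2}\subset D^{>t_1}$). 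Since $D^{>t_1}$ is stable under extensions, we conclude $N\in D^{>t_1}$, which is exactly what we needed.

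No part of this is genuinely hard; the only subtlety worth flagging is that truncation functors of a $t$-structure are generally not triangulated, so one cannot simply apply $\tau_{\le t_1}$ term-wise to the triangle $\tau_{\le t_2}M\to M\to \tau_{>t_2}M\to$. The octahedral axiom is the correct substitute, and the containment of aisles is used precisely once, to place $\tau_{>t_2}M$ into $D^{>t_1}$.
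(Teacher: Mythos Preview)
Your argument is correct; the octahedral axiom applied to the composition $\tau_{\le t_1}\tau_{\le t_2}M \to \tau_{\le t_2}M \to M$, together with the inclusion $D^{>t_2}\subset D^{>t_1}$ and closure of $D^{>t_1}$ under extensions, is exactly the standard proof. The paper itself does not give an argument but simply cites \cite[1.3.4]{BBD}, so your write-up is essentially a self-contained version of what that reference supplies.
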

\begin{proof}
   1.3.4 of \cite{BBD}  
\end{proof}
\begin{para}\label{gluing:situation}
	We fix a scheme $X$ (which will quickly be assumed to be Noetherian of finite type over a field $k$) and work with (versions of) the derived category of (motivic) sheaves on sub-schemes $W$ of $X$. More generally, we assume that for each $W\hookrightarrow X$ we are given a triangulated subcategory $D_W$, and for each $f:W\hookrightarrow W'$, adjoint pairs:
	\begin{align*}
		f^*:D_{W'}\leftrightarrows D_W:f_* & & f_!:D_W\leftrightarrows D_{W'}:f^!
	\end{align*}
	which satisfy the formalism of Grothendieck's four functors. We will also assume that for each (Zariski) point $\epsilon:\Spec L\hookrightarrow X$, we have a triangulated category $D(L)$ such that continuity holds (see \cite[\S 3]{vaish2017punctual} for a detailed summary). In particular there is a pullback $\epsilon^* : D_X\rightarrow D(L)$ and in fact $D(L)$ is the two-limit $\underset{U\supset \Spec L} {2\lim}D(U)$ where $U\subset X$ are open. 
	
	If $Y$ denotes the closure of $\epsilon$ in $X$, we define:
	\begin{align*}
		\epsilon^! := \epsilon_Y^*f^!:D_X\rightarrow D(L) & &\text{ where }& &\epsilon:\Spec L \overset{\epsilon_Y}\hookrightarrow Y\overset f\hookrightarrow X\text{ is the natural factorization.}
	\end{align*}
\end{para}
\begin{definition}[Gluing]\label{ordinaryGluing}
 Assume the situation of \ref{gluing:situation} on a scheme $X$.  Let $\mathcal S=(S_{0}, S_{1}, \dots, S_{r})$ be a stratification of $X$, that is $X = S_{0}\sqcup \cdots \sqcup S_{r}$, $S_{0}$ is open in $X$; $S_{1}$ is open in complement of $S_{0}$; $S_{2}$ is open in complement of $S_{0}\sqcup S_{1}$ and so on and so forth. Then due to \cite[Theorem 1.4.10]{BBD} there, given a $t$-structure $t_{i}$ (resp. weight structure, resp. $m$-structure) on $D(S_{i})$, we get a glued $t$-structure (resp. weight structure, resp. $m$-structure) on $D_{X}$ which we denote as $t:=(t_{1},\dots, t_{r})$ in the sequel, and is given by the following: 
\begin{align*}
	D^{\le (t_{1}, \dots,t_{n})}(X) &:= \{a\in D_{X}\mid f_{i}^{*}(a)\in D^{\le t_{i}}(S_{i})\text{ for each }f_{i}:S_{i}\hookrightarrow X\}\\
	D^{> (t_{1}, \dots,t_{n})}(X) &:= \{a\in D_{X}\mid f_{i}^{!}(a)\in D^{> t_{i}}(S_{i})\text{ for each }f_{i}:S_{i}\hookrightarrow X\}
\end{align*}
\end{definition}
\begin{notation}
    We refer to the truncation functors associated to the glued t-structure on $D_{X}$ as $\tau_{\leq (t_{1}, \dots, t_{m})}$ and $\tau_{> (t_{1}, \dots, t_{m})}$.
\end{notation}
We record the following for later use.
\begin{lemma}\label{j* commutes with truncation functor}
 Let $X$ be a scheme equipped with the formalism of Grothendieck's four functors, together with a stratification given by an open immersion $j: U \hookrightarrow X$ and its closed complement $i: Z \hookrightarrow X$. Suppose that $D_{U}$ and $D_{Z}$ are endowed with t-structures $t_{1}$ and $t_{2}$ respectively. Then, for any $M \in D_{X}$, we have the following isomorphisms:
 \begin{enumerate}[(i)]
     \item $j^{*} \tau_{\leq (t_{1},t_{2})}M \cong \tau_{\leq t_{1}}\,j^{*}M$ and $j^{*} \tau_{> (t_{1},t_{2})} M \cong \tau_{>t_{1}}\, j^{*}M$.
     \item If $t_{1}= \infty$, then $ i^{*} \tau_{\leq (\infty,t_{2})} M \cong \tau_{\leq t_{2}}\; i^{*} M$
 \end{enumerate}
 \end{lemma}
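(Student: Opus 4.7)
The plan is to exploit uniqueness of decomposition triangles for a $t$-structure (see 1.3.6 of \cite{BBD}) by applying $j^{*}$ or $i^{*}$ to the canonical decomposition triangle
\[
\tau_{\le (t_1,t_2)} M \to M \to \tau_{>(t_1,t_2)} M \to
\]
in $D_X$ for the glued $t$-structure, and then verifying that the outer terms already lie in the correct halves of $t_1$ (respectively $t_2$).

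For part (i), I would apply $j^{*}$ to the above triangle. The left term $j^{*}\tau_{\le (t_1,t_2)}M$ lies in $D^{\le t_1}(U)$ directly by definition of the glued $t$-structure. The right term equals $j^{!}\tau_{>(t_1,t_2)}M$ because $j$ is an open immersion (so $j^{*}=j^{!}$), and the latter lies in $D^{>t_1}(U)$ again by definition. Hence the resulting triangle in $D_U$ is a $t_1$-decomposition of $j^{*}M$, and uniqueness yields both asserted isomorphisms.

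For part (ii), the key additional observation is that the hypothesis $t_1 = \infty$ makes the condition on the open stratum in one half of the glued $t$-structure collapse, forcing one of the two terms in the decomposition triangle to be supported on $Z$, i.e.\ to lie in the essential image of $i_{*}$ (via the recollement identity $\ker(j^{*}) = \mathrm{image}(i_{*})$). For any $Z$-supported object $A$ one has $i^{*}A \cong i^{!}A$, since both equal the unique $N \in D_Z$ with $i_{*}N \cong A$. This identification converts the built-in asymmetry of the glued $t$-structure (which uses $i^{*}$ on the $\le$-side and $i^{!}$ on the $>$-side) into a genuine $t_2$-decomposition of $i^{*}M$ after applying $i^{*}$, and the claim then follows from uniqueness as before.

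The only nontrivial point is pinpointing that $t_1 = \infty$ forces one of the truncations to be $Z$-supported; once that is in hand, everything reduces to routine recollement identities together with uniqueness of $t$-structure decompositions.
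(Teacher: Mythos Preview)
Your proposal is correct and follows the same approach as the paper: apply $j^{*}$ (respectively $i^{*}$) to the glued decomposition triangle and invoke uniqueness of $t$-structure decompositions. The only difference is cosmetic---the paper unpacks the explicit BBD construction diagram (1.4.10 of \cite{BBD}) to identify the terms, whereas you argue directly from the defining conditions of the glued $t$-structure together with $j^{*}=j^{!}$ and, for (ii), the observation that $i^{*}\cong i^{!}$ on $Z$-supported objects.
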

 \begin{proof}
 Consider the following diagram (see \cite{BBD} 1.4.10) yielding the decomposition triangle of the glued t-structure :
     \begin{figure}[H]\label{gluing diagram}
\centering
\begin{tikzpicture}[scale=0.8, transform shape]
  \node (Kp)  at (0,0)   {$\tau_{\leq (t_{1},t_{2})}M$};
  \node (Y)   at (2,2)   {$Y$};
  \node (T)   at (4,4)   {$i_{*}\tau_{>t_{2}} \,i^* Y$};
  \node (Kpp) at (6,2)   {$\tau_{> (t_{1},t_{2})}M$};
  \node (J)   at (8,0)   {$j_* \tau_{>t_{1}}\, j^{*}M$};
  \node (K)   at (3.9,1.2)   {$M$};

  \draw[->] (Kp) -- (Y);
  \draw[->] (Y) -- node[pos=.55,left] {} (T);
  \draw[->,dashed] (T) -- (Kpp);
  \draw[->,dashed] (Kpp) -- (J);

  \draw[->] (Y) -- (K);
  \draw[->] (Kp) -- node[pos=.55,above] {} (K);
  \draw[->] (K) -- (Kpp);
  \draw[->] (K) -- node[pos=.5,below] {} (J);
\end{tikzpicture}
\end{figure}
Applying the functor $j^{*}$ to the diagram, and using  $j^{*}j_{*}=id$, we obtain the distinguished triangle: 
\[j^{*}Y \rightarrow j^{*}M \rightarrow \tau_{> t_{1}}\,j^{*}M \rightarrow\]
By the uniqueness of the cone, it follows that $j^{*}Y \cong \tau_{\leq t_{1}} \,j^{*}M$.
Moreover, since $j^{*}i_{*}=0$, the top triangle in the diagram yields $j^{*}Y \cong j^{*} \tau_{\leq (t_{1},t_{2})}M$.
Combining the two, we obtain the desired isomorphism.

The other case proceeds similarly.

When $t_{1}= \infty$, we have a distinguished triangle
    \begin{align}\label{gluing triangle}
    \tau_{\leq (\infty,t_{2})} M \rightarrow M \rightarrow  i_{*} \tau_{>t_{2}}\; i^{*} M \rightarrow
 \end{align}
 Applying the functor $i^{*}$ to this triangle and using isomorphism $i^{*}i_{*} \cong id$ yields,
  \[ i^{*} \tau_{\leq (\infty,t)} M \cong \tau_{\leq t}\; i^{*} M. \]
  This completes the proof.
 \end{proof}
\begin{definition}\label{gluing:spreadingOut}
		Assume the situation of \ref{gluing:situation} on a scheme $X$. Further assume that for each Zariski point $\epsilon: \Spec K\hookrightarrow X$, we are given a $t$-structure (resp.weight structure, resp. $m$-structure) $(D^{\le}(K), D^{>}(K))$ on a full subcategory $D'(k)\subset D(k)$. 
		For any $U\hookrightarrow X$, define
		\begin{align*}
			D^{\le}(U)	:=\{a&\in D_U\big| \epsilon^*(a)\in D^{\le}(K)\text{ for }\epsilon:\Spec K\rightarrow U\text{ any point of }U\} \\
			D^{>}(U)	:=\{a&\in D_U\big| \epsilon^!(a)\in D^{>}(K)\text{ for }\epsilon:\Spec K\rightarrow U\text{ any point of }U\}\\
			D'(U) := \{a&\in D_U\big| \exists b\rightarrow a\rightarrow c\rightarrow \text{ s.t. }b\in D^{\le}(U), c\in D^{>}(U)\}
		\end{align*}
		as full subcategories. In particular if $f:S\hookrightarrow T$ is an immersion, $S, T\in Sub(X)$:
		\begin{align*}
			f^*(D^{\le}(T))\subset D^{\le}(S)& &f^!(D^{>}(T))\subset D^{>}(S)
		\end{align*}
\end{definition}
	The subcategories $D^{\le}(U)$ (resp. $D^{>}(U)$) are said to satisfy continuity if for any point $\tilde \epsilon: \Spec K \rightarrow X$ and any $a\in D^{\le}(K)$ (resp. $D^{>}(K)$) there is a neighborhood $U$ of $\epsilon$, that is an open set $U$ such that $\tilde \epsilon$ factors through $\epsilon: \Spec K\rightarrow U$, and an object  $\bar a \in D^{\le U}$ (resp. in $D^{>}(U)$) such that $\epsilon^{\ast}(\bar a) = a$. 

	We then have the following more useful, but minor, refinement of punctual gluing \cite{vaish2017punctual}:
	\begin{proposition}[Proposition 2.1.6 of \cite{vaish2018motivic}]\label{gluing:mainresult}
		Assume the situation of \ref{gluing:spreadingOut} on a Noetherian scheme $X$, and in particular for each point $\Spec k\rightarrow X$, we are given a $t$-structure (resp. weight structure, resp. $m$-structure) $(D^{\le}(K), D^{>}(K))$ on $D(k)$. We also assume that the systems $D^{\le}(U)$ and $D^{>}(U)$ as above, satisfy continuity. Then $D'(X)$ is a pseudo-abelian triangulated subcategory of $D(X)$ and 
		the pair $(D^{\le}(X), D^{>}(X))$ forms a $t$-structure (resp.weight structure, resp. $m$-structure) on $D'(X)$.
	\end{proposition}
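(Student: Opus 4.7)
The plan is to follow the punctual gluing method of \cite{vaish2017punctual}, observing that once the pair $(D^{\le}(X), D^{>}(X))$ satisfies orthogonality and invariance, the subcategory $D'(X)$ — defined precisely as the locus where a decomposition triangle exists — automatically inherits a pseudo-abelian triangulated structure on which $(D^{\le}(X), D^{>}(X))$ forms the desired $t$-structure (resp.\ weight, resp.\ $m$-structure). Thus the work splits cleanly into: (a) verifying invariance, (b) verifying orthogonality, (c) deducing that $D'(X)$ is triangulated and pseudo-abelian.

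First, invariance of $D^{\le}(X)$ (with the appropriate shift for $t$-structure, and the opposite shift for weight structure) is immediate from the pointwise invariance together with $\epsilon^{*}(a[n])=(\epsilon^{*}a)[n]$ and the analogous identity for $\epsilon^{!}$. The heart of the argument is orthogonality: for $a\in D^{\le}(X)$ and $b\in D^{>}(X)$, I show $\hom_{X}(a,b)=0$ by Noetherian induction. For any closed-open decomposition $Z \overset{i}{\hookrightarrow} X \overset{j}{\hookleftarrow} U$, the localization triangle $i_{*}i^{!}b\to b\to j_{*}j^{*}b\to$ together with the adjunctions $\hom_{X}(a,i_{*}-)\cong \hom_{Z}(i^{*}a,-)$ and $\hom_{X}(a,j_{*}-)\cong \hom_{U}(j^{*}a,-)$ yields a long exact sequence reducing vanishing of $\hom_{X}(a,b)$ to vanishing of $\hom_{Z}(i^{*}a, i^{!}b)$ and $\hom_{U}(j^{*}a, j^{*}b)$. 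Using the definition of $\epsilon^{!}$ in \ref{gluing:situation}, one checks $i^{*}a\in D^{\le}(Z)$, $i^{!}b\in D^{>}(Z)$, $j^{*}a\in D^{\le}(U)$, $j^{*}b\in D^{>}(U)$. The continuity hypothesis lets one shrink $U$ until the restrictions of $a$ and $b$ over $U$ are spread-outs of the corresponding objects at each generic point of $U$, where pointwise orthogonality holds by hypothesis; Noetherian induction on the strictly smaller closed complement $Z$ then handles the remaining term.

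Once orthogonality and invariance are established, the decomposition triangle for each object of $D'(X)$ exists by definition. That $D'(X)$ is triangulated then follows by a standard octahedral argument: given a triangle $a\to b\to c\to$ in $D'(X)$ with decompositions $a_{\le}\to a\to a_{>}$ and $b_{\le}\to b\to b_{>}$, orthogonality supplies a unique lift of $a\to b$ to a morphism $a_{\le}\to b_{\le}$, and the octahedron for $a_{\le}\to b_{\le}\to b$ produces a decomposition $c_{\le}\to c\to c_{>}\to$ with $c_{\le}\in D^{\le}(X)$ and $c_{>}\in D^{>}(X)$. Pseudo-abelianness follows because $D^{\le}(X)$ and $D^{>}(X)$ are each closed under summands by the pointwise closure; alternatively for the weight-structure case, one uses the Karoubi-closed property pointwise and propagates it via the continuity framework.

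The principal obstacle is the orthogonality step — specifically, executing the continuity-plus-Noetherian induction rigorously while confirming that the various six-functor restrictions stay inside the subcategories $D^{\le}(\cdot)$, $D^{>}(\cdot)$. This is already the crux overcome in \cite{vaish2017punctual}, and the present refinement confining attention to the subcategory $D'(X)$ introduces no fundamentally new difficulty, provided one ensures that the spread-outs produced by continuity lie in $D^{\le}(U)$ or $D^{>}(U)$ (and not merely in $D(U)$), which is exactly the content of the continuity hypothesis as formulated after Definition \ref{gluing:spreadingOut}.
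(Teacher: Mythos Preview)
The paper itself gives no proof of this proposition --- it is simply quoted as Proposition~2.1.6 of \cite{vaish2018motivic}, which in turn refines the punctual-gluing machinery of \cite{vaish2017punctual}. Your sketch correctly reconstructs the argument of those references: orthogonality via continuity at generic points combined with Noetherian induction on the closed complement, and closure of $D'(X)$ under cones via the octahedral axiom once orthogonality and invariance are in hand.

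One point deserves tightening. In the weight-structure case your octahedral step asserts a lift $a_{\le}\to b_{\le}$ of $a\to b$, obtained from the vanishing of $\hom(a_{\le},b_{>})$. But uniqueness (and even existence) of such a lift also requires $\hom(a_{\le},b_{>}[-1])=0$, and under \emph{co}-invariance $b_{>}[-1]$ need not lie in $D^{>}$, so this vanishing is not available. The cited sources (ultimately Bondarko \cite{bondarko_weights}) instead show directly that the class of objects admitting a weight decomposition is extension-closed by a different manipulation of the octahedron, without producing a functorial lift. For the $t$-structure and $m$-structure cases your argument is correct as written, since there invariance does give $b_{>}[-1]\in D^{>}$.
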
		
    \begin{remark}
	In using the previous proposition, the hard task would be to determine $D'(X)$ explicitly. It does not seem likely that the categories $D'(U)$ would automatically satisfy the formalism of four functors. We will be completely bypassing the question of determining $D'(X)$ in our cases of interest and instead work with objects (and subcategories) which will be known to be in $D'(X)$.
\end{remark}
\subsection{Chow motives}\label{sec:chow}
	We refer the reader to Scholl's exposition  \cite{scholl1994classical} for an exposition on the classical category of Chow motives. 
	
	In particular given any field $k$, we have a category of Chow motives denoted $CHM(k)$ (resp. effective Chow motives denoted $CHM^{eff}(k)$), whose objects are given by triples $(X,p,n)$ (resp. tuples $(X,p)$) with $X$ a smooth, projective over $k$, $p$ a correspondence such that $p\circ p = p$ and $n\in \Z$. Here $\circ$ denotes the composition of correspondences. Morphisms in this category are given by appropriate correspondences, composing under $\circ$ (see \cite[1.4]{scholl1994classical} for details).
$CHM^{eff}(k)$ resp. $CHM(k)$ are additive, pseudo-abelian categories. There are natural functors:
		\[
			(SmProj/k)^{op} \xrightarrow h CHM^{eff}(k)\hookrightarrow CHM(k)
		\]
		where $h(X)=(X,\Delta_X)$ ($\Delta_X$ being the class of diagonal) while $h(f):=f^*$ is the transpose of the graph of $f:Y\rightarrow X$ in $X\times Y$. Also $CHM^{eff}(k)\hookrightarrow CHM(k)$ is given by $(X,p)\mapsto (X,p,0)$.
		\begin{para}\label{chow:duality}
	 $CHM(k)$ is a rigid tensor additive category ({see \cite[1.15]{scholl1994classical}}) under tensor $\otimes$ given by 
	 	\[
			(X,p,m)\otimes (Y,q,n) = (X\times Y,p\times q,m+n)
		\]
		and dual of $(X,p,m)$ for $X$ irreducible of dimension $d$, is:
	 	\[
			(X,p,m)^{\vee} = (X,{^{t}p}, d- m)
		\]
	 In particular internal $\hom$, denoted $\underline\hom$, is right adjoint to $\otimes$  and we have,
	 	\[	\hom(M\otimes N, P) = \hom (M, \underline \Hom(N, P)) = \hom (M, N^\vee\otimes P) \]
	The Lefschetz motive $\mathbb L$ is defined as the object $(\Spec k, \Delta_{\Spec k}, -1)$ in $CHM(k)$. It lives in $CHM^{eff}(k)$ because of the decomposition $h(\mathbb P^1) \cong h(\Spec k)\oplus \mathbb L$ ({see \cite[1.13]{scholl1994classical}}). This object becomes invertible in $CHM(k)$ for the tensor product.
\end{para}
\begin{npara}[Chow-K\"unneth decomposition]\label{piconstruction} 
	Let $X$ be pure of dimension $d$. In \cite{scholl1994classical} Scholl constructs projectors $\pi_i(X)$ for $i\in \{0,1,2d-1,2d\}$.
	If $\dim X\le 2$, we also have $\pi_2(X)$ (these projectors are originally due to Murre In \cite{murre1990motive, murre1993conjectural}). Define:
		\[
			h^i(X):=(X, \pi_i(X)) \in CHM^{eff}(k)
		\]
		for $i\in{0,1,2d-1,2d}$ and any  $X$ or $i=2$ and $\dim X\le 2$. Then, under suitable realization functors $h^{i}(X)$ realize to $i$-th cohomology of $X$. 
		More generally it is expected that there are summands $h^{i}(X)$ of $h{(X)}$ for $0\le i\le 2d$ for $d=\dim X$ such that
		\[
			h(X) := (X,\Delta_{X}, 0) \cong \bigoplus_{i=0}^{2d}h^{i}(X)
		\] 
		where $\Delta_X$, the diagonal inside $X\times X$, is the identity projector on $X$, and the construction is expected to be such that $h^{i}(X)$ realize to the $i$-th cohomology of $X$ under any suitable realization functor. This is the conjectural Chow–Künneth decomposition of $X$. By the above, it is known for surfaces and is the key ingredient of the construction of the motivic intersection complex in \cite{vaish2017weight}. In cases where it is known, we also define the following useful notation: 
		\begin{align*}
			h^{{\le a}}(X(-r)) := \bigoplus _{i=0}^{a-2r}h^{i}(X)(-r)& &h^{{> a}}(X(-r)) := \bigoplus _{i=a+1-2r}^{2\dim X}h^{i}(X)(-r)
		\end{align*}
		where by $(-r)$ we mean the Tate twist operation taking $(X,p,s)$ to $(X,p,s-r)$. 
\end{npara}
\begin{remark}While it is not made explicit in the original articles of Murre (or Scholl), it follows from the constructions that for $i=0, 1$ (resp. $i=2$) $h^{ i}$ is a functor from the category of smooth projective varieties (resp. smooth projective varieties of $\dim \le 2$) to the category of (effective) Chow motives, see Remark \cite[2.4.7]{vaish2017weight}.
\end{remark}
\subsection{Motivic Sheaves}\label{sec:newmotives}
\begin{para} \label{intro:dmx}
Given any base scheme $S$, there exists a rigid tensor triangulated category of motivic sheaves $DM(S)$ with unit object denoted $1_S$, Tate twists denoted $A\mapsto A(r)$ and such that the formalism of Grothendieck's six functors holds. 

One choice for such a construction is the category of motivic sheaves without transfers as constructed by Ayoub in \cite{ayoub_thesis_1, ayoub_thesis_2}. This is the category $\mathbb{SH}_\mathfrak{M}^T(S)$ of \cite[4.5.21]{ayoub_thesis_2} with $\mathfrak{M}$ being the complex of $\Q$-vector spaces (and one works with the topology \'etale topology), also denoted as $DA(S)$ in the discussion \cite[2.1]{ayoub2012relative}. To play well with the realization functors and continuity, we will instead restrict attention to the subcategory of compact objects in $DA(S)$, which are also stable under Grothendieck's four functors. The second choice for the construction is the compact objects in the category of motivic sheaves with transfers, the Beilinson motives $DM_{B,c}(S)$ as described in the article \cite{Cisinski_2019}. Again, the objects in $DM_{B,c}(S)$ are compact by construction and play well with realizations.

We refer the reader to \cite[\S 2.6]{vaish2017weight} for a summary of properties of the category (except about realizations) which will be used here.
\end{para}
\begin{para}
Over a field $S=\Spec k$, the construction of $DM(\Spec k)$ is due to Voevodsky \cite{voevodsky}, who also shows that there are natural functors:
	\begin{equation*}
			(SmProj/k)^{op}\overset {h_k} \longrightarrow CHM(k)\hookrightarrow	DM(\Spec k)
	\end{equation*}
	where $SmProj$ denotes the category of smooth projective varieties over $\Spec k$, $CHM(k)$ denotes the category of Chow motives over the field $k$, and the last functor is fully faithful. This can be extended to give a functor:
	\[
		h_k: (Sch/k)^{op} \rightarrow DM(\Spec k)
	\]
	\para{} More generally, due to work of Corti-Hanamura \cite{cortiHanamura}, there is a category of Chow motives $CHM(S)$ over any regular base $S$. Due to work of Hebert \cite{hebert2011structure} and Bondarko \cite{bondarko2014weights} there is a weight structure $(DM^{w\le 0}(S), DM^
    {w>0}(S))$ on $DM(S)$ and due to the work of Fangzhou \cite{fangzhou2016borel}, it's heart can be identified with the category of Chow motives. Therefore, we have functors
	\begin{equation*}
		(SmProp/S)^{op}\overset {h_S} \longrightarrow CHM(S)	\hookrightarrow DM(S) \text{ with essential image as heart }DM_{w\le 0}(S)\cap DM_{w\ge 0}(S)
	\end{equation*}
	where $SmProj/S$ is the category of proper schemes over $S$ which are smooth, and the last functor is fully faithful. This can be extended to give a functor:
	\[
		h_S: (Sch/S)^{op} \rightarrow DM(S)	\text{ with }(\pi:X\rightarrow S)\mapsto \pi_*1_X
	\]
\end{para}
\begin{para}
	As a matter of notation, we will often identify the objects in $CHM(S)$ and $DM(S)$ via the above fully faithful embedding. We will often write $h$ for $h_S$ when the base scheme is clear from the context.
\end{para}
We shall need the following fact about the Chow weight structure, which we state without proof.
\begin{proposition}[Theorem 4.3.2 II, Lemma 1.3.8 \cite{bondarko_weights}]\label{induced chow weight struct}Let $H$ be the full additive subcategory generated by some family of objects in $CHM(X)$. Let $C$ be the triangulated category generated by $H$ in $DM(X)$. Then the Chow weight structure $(DM^{w\leq 0}(X),DM^{w>0}(X))$ on $DM(X)$ restricts to $C$ with heart as Karaoubi closure of $H$.
\end{proposition}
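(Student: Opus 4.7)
The plan is to deduce this from Bondarko's general machinery for producing weight structures out of a negative class of generators. First, I would note that since $H$ sits inside $CHM(X)$, which is the heart of the Chow weight structure on $DM(X)$, the orthogonality axiom in the ambient weight structure forces $\hom_{DM(X)}(h, h'[n]) = 0$ for every $h, h' \in H$ and every $n > 0$. This is exactly the hypothesis required by \cite[Theorem 4.3.2 II]{bondarko_weights}, so that theorem produces a weight structure on $C = \vvspan{H}$ whose non-positive (resp.\ non-negative) part is the Karoubi closure inside $C$ of the class of iterated extensions of shifts $h[n]$ with $h\in H$ and $n \ge 0$ (resp.\ $n \le 0$). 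By construction, the heart of this intrinsic weight structure on $C$ is the Karoubi closure of $H$ inside $C$.

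Next, I would verify that this intrinsic weight structure coincides with the naive restriction $(C \cap DM^{w \le 0}(X),\ C \cap DM^{w > 0}(X))$. One inclusion is immediate: each generator $h[n]$ with $n \ge 0$ lies in $DM^{w \le 0}(X)$ by co-invariance of the Chow weight structure, and $DM^{w \le 0}(X)$ is closed under extensions and direct summands; the same argument works on the positive side. The reverse inclusion is where I would invoke \cite[Lemma 1.3.8]{bondarko_weights}: every object $M \in C$ carries a weight decomposition $M_{\le 0} \rightarrow M \rightarrow M_{> 0} \rightarrow$ produced by Bondarko's construction, and by the inclusion just established this is a fortiori a weight decomposition for the Chow weight structure on $DM(X)$. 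The uniqueness of weight decompositions in the ambient category then forces $M_{\le 0}$ and $M_{> 0}$ to coincide with the Chow truncations of $M$, giving the reverse inclusions, and in particular showing that both truncations of $M$ already lie in $C$.

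The main (and essentially only) obstacle is the verification of the negativity condition $\hom_{DM(X)}(h,h'[n]) = 0$ for $n > 0$, but this is handed to us for free by the fact that $H \subset CHM(X)$ lies in the heart of an already existing weight structure on the larger category. Once this is in hand, both the existence of the restricted weight structure and the identification of its heart with the Karoubi closure of $H$ follow mechanically from Bondarko's theorem together with uniqueness of weight decompositions; no further geometric input on $X$ is needed.
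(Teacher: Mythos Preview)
The paper does not give its own proof of this proposition; it is stated as a direct citation of Bondarko, so your strategy---negativity of $H$ from the ambient heart, then Bondarko's Theorem~4.3.2~II to produce an intrinsic weight structure on $C$, then identification with the restriction---is exactly what is intended.

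There is, however, a genuine error in your reverse-inclusion step. You write that ``the uniqueness of weight decompositions in the ambient category then forces $M_{\le 0}$ and $M_{>0}$ to coincide with the Chow truncations of $M$.'' This is false: unlike $t$-structures, weight structures do \emph{not} have unique (or functorial) decomposition triangles, and there are no canonical truncation functors to speak of. The correct argument runs as follows. Suppose $M\in C\cap DM^{w\le 0}(X)$ and take the intrinsic decomposition $M_{\le 0}\to M\to M_{>0}\to$. By the inclusion you already established, $M_{>0}\in DM^{w>0}(X)$; since $M\in DM^{w\le 0}(X)$, orthogonality of the ambient weight structure forces the map $M\to M_{>0}$ to be zero, whence the triangle splits as $M_{\le 0}\cong M\oplus M_{>0}[-1]$. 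Thus $M$ is a retract of $M_{\le 0}$, and Karoubi-closedness of $C^{w\le 0}_{\mathrm{intrinsic}}$ gives $M\in C^{w\le 0}_{\mathrm{intrinsic}}$. The positive side is symmetric. This splitting argument (not any uniqueness) is what the cited Lemma~1.3.8 supplies.

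A minor point: with the paper's co-invariance convention $D^{\le}[-1]\subset D^{\le}$, the non-positive part of Bondarko's construction is the Karoubi-envelope of extensions of $h[n]$ with $n\le 0$, not $n\ge 0$; your signs are reversed, though this does not affect the substance.
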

\subsection{Realizations}\label{realization} We recall the various realization functors of the triangulated category of motives (with rational coefficients) which will be relevant in our context. They provide a bridge connecting the abstract theory of motives with classical (co)homology theories.

We begin with the covariant (homological) realization functors. For a proper morphism $f:Y \rightarrow Y'$  of $X$- schemes, such a realization functor (if exists) preserves the counit map $p_{*}f_{!}f^{!}1_{Y'} \rightarrow p_{*}1_{Y'}$ of adjunction $(f_{!},f^{!})$, where $p$ is the structure morphism. We have the following examples:

Let $X$ be a scheme over a field $k$ of characteristic zero together with a fixed embedding $\sigma:k \hookrightarrow \mathbb{C}$. 
\begin{itemize}
     \item There exists a Betti realization functor \cite{Ayoub2010} (see \cite[Ex~ 17.1.7]{Cisinski_2019} for the bounded target)
     \[R_{B}: DM(X) \rightarrow D^{b}_{c}(X,\mathbb{Q})\] where $D^{b}_{c}(X,\mathbb{Q})$ is the bounded derived category of $\mathbb{Q}$-constructible sheaves of on $X^{an}$. When $X=\Spec k$, it sends the motive associated to a smooth projective variety $Y$ to the singular chain complex of $Y^{an}$ with rational coefficients. These categories carry the formalism of Grothendieck’s six functors, and the functor $R_{B}$ is 
    compatible with these operations. 
          \item  There is also a Hodge realization functor (see \cite[\S 1.3, Fact G]{MR3751289}) to the bounded derived category of mixed Hodge structures:
   \[R_{\mathcal{M}}: DM(k) \rightarrow D^{b}_{\mathbb{Q}}(MHS(k)).\]
This functor assigns a motive of a smooth projective variety $p:X \rightarrow k$ to a complex $R_{\mathcal{M}}(p_{*}1_{X})$ of mixed Hodge structures, which computes the singular homology of $X^{an}$ endowed with their natural mixed Hodge structures.

A relative version of this theory has been developed by Saito \cite{MR1047415} in the form of mixed Hodge modules over a base scheme $X$. The bounded derived category $D^{b}(MHS(X))$ of mixed Hodge modules admits the formalism of six functors, analogous to the case of constructible sheaves. There is a natural functor
\[ \For: D^{b}(MHS(X)) \rightarrow D^{b}_{c}(X, \mathbb{Q})\] which forgets the Hodge-theoretic structure. This functor is both faithful and conservative. However, note that the realization functor from
$DM(X)$ to the category $D^{b}(MHS(X))$ has not yet been constructed in general.
\item  The $\ell$-adic realization \cite[\S 7.2]{cisinski2016etale} is the functor
\[ R_{\ell}:DM(X) \rightarrow D^{b}_{c}(X, \mathbb{Q}_{\ell})\] 
where the target is the bounded derived category 
of constructible $\mathbb{Q}_{\ell}$ sheaves on $X$ \cite{ekedahlAdic}. The functors $R_{\ell}$ are symmetric monoidal, and they commute with the functors $f_{*}, f^{*},f_{!},f^{!}$ \cite[Theorem 7.2.24]{cisinski2016etale}.
\end{itemize}
All of the above realization categories admit a perverse t-structure with heart denoted as $Perv(X)$. They give rise to cohomological functors
\[ ^{p}H^{i}\circ R_{*}: DM(X) \rightarrow Perv(X).\]
where $R_{*} \in \{R_{B}, R_{\mathcal{M}}, R_{\ell}\}$. The category $D^{b}_{c}(X, \mathbb{Q}_{l})$ does not carry any natural weight structure. One can resolve this partially using a result of Bondarko \cite[2.1.2]{bondarko_weights}, which allows one to define weights for complexes in the image of the realization functor. Specifically, for any $M \in DM(X)$, there is a canonical weight filtration of $^{p}H^{i}R_{\ell}(X)$ induced by any choice of weight decomposition of $M[j]$ in $DM(X)$. Moreover, for any morphism $f: M \rightarrow M'$ in $DM(X)$, the morphism $^{p}H^{i}R_{\ell}(f)$ is strict with respect to these canonical filtrations. 

\textbf{Important}: Note however that, it is entirely possible that a complex below may be realized (up to isomorphism) by two (non isomorphic) motives and hence may carry two different weight structures. This however, would not cause a problem for our arguments.\\

We now turn to the contravariant/cohomological realizations: For a proper morphism $f:Y \rightarrow Y'$  of $X$- schemes, such a realization functor preserves the unit map $p_{*}1_{Y'} \rightarrow p_{*}f_{*}f^{*}1_{Y'}$ of adjunction $(f^{*},f_{*
})$, where $p$ is the structure morphism.
\begin{itemize}
    \item Huber in \cite[Theorem 2.3.3]{MR1775312} constructed the triangulated category of mixed realizations $ D_{\mathcal{MR}}$ and the triangulated contravariant functor \[
R_{\mathcal{MR}}: DM(k) \rightarrow D_{\mathcal{MR}}.\]
The category $D_{\mathcal{MR}}$ unifies the singular, étale and de Rham cohomology, that is, all these cohomology theories factor through $D_{\mathcal{MR}}$. In particular, there are contravariant realization functors (see \cite[Corollaries 2.3.4, 2.3.5]{MR1775312}):
\begin{align*}
    DM(k) \rightarrow D^{b}_{c}(X, \mathbb{Q}_{\ell}) \;\;\text{and}\;\;DM(k) \rightarrow D^{b}(MHS(k)).
\end{align*}
Ivorra \cite{ivorra2007realisation} later constructed an integral $\ell$-adic realization functor and showed that, over a field of characteristic zero, it agrees with Huber's $\ell$-adic realization for rational mixed motives.
\end{itemize}
This section sets the notation for the realization functors and will be referred to when needed; these functors play a central role in Proposition \ref{realizations of EM at each point}.
\subsection{Abelian motives}
In this section, we discuss some properties satisfied by Abelian motives. We begin by recalling the notion of finite dimensional Chow motives.
\begin{definition}
    Let $M \in CHM(X)$. The motive $M$ is evenly finite dimensional if there exists a positive integer $n$ such that $\bigwedge^{n}M = 0$. The motive $M$ is oddly finite dimensional if there exists a positive integer $n$ such that $Sym^{n}M = 0$.

The motive $M$ is finite dimensional if $M$ can be written as a direct sum $M =M^{+} \oplus M^{-}$ where $M^{+}$ is evenly finite dimensional and $M^{-}$ is oddly finite dimensional.
\end{definition}
\begin{example}\label{kimura finiteness of h(C)} Motive of a curve. More generally, the motive of any abelian variety over $k$ is finite dimensional (\cite[Corollary 4.4, Example 9.1]{MR2107443}).
\end{example}
The following lemma shows the behaviour of Kimura finite objects under pullbacks and pushforwards. Consider the full dense additive subcategory $CHM_{s}(Y)$ of $CHM(Y)$ generated by the collection of objects $\{h_{Y}(Y') \mid Y' \rightarrow Y \;\text{smooth proper and} \;Y'\; \text{regular}\}$. 
\begin{lemma}\label{stability of kimura objects} Let $f: X \rightarrow Y$ be a morphism of regular schemes over $k$. Let $M \in CHM_{s}(Y)$. Then the following holds:
    \begin{enumerate}[(i)]
    \item If $f$ is dominant and $f^{*}M$ is finite dimensional then $M$ is also finite dimensional.
    \item If $f$ is finite and etale then $f_{*}M$ is finite dimensional.
    \end{enumerate}
\end{lemma}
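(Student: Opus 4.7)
\emph{Proof plan.} Part (ii) will be reduced to part (i) by a base-change computation, so I focus first on (i).

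For part (i), the plan is to descend Kimura finiteness from $X$ to $Y$ by passing to the generic fibres. Dominance of $f$ gives a factorisation of generic points and hence an extension of function fields $k(Y) \hookrightarrow k(X)$; under this, $(f^*M)_{\eta_X} \cong j^*(M_{\eta_Y})$ where $j^*: CHM(k(Y)) \to CHM(k(X))$ is the associated base change. I claim Kimura finiteness descends along $j^*$: the decomposition $M \otimes K = N^+ \oplus N^-$ is canonical up to isomorphism (by uniqueness, see \cite[Corollary 4.4]{MR2107443}), hence Galois-equivariant after replacing $K$ by a Galois closure, so the projectors live in $End(M \otimes K)^{\gal} = End(M_{\eta_Y})$; the case of transcendental extensions then follows by a limit argument. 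This gives Kimura finiteness of $M_{\eta_Y}$ in $CHM(k(Y))$. To conclude, I lift the decomposition from $\eta_Y$ back to $Y$: writing $M$ as a summand of $h_Y(Y')$ for some smooth proper regular $Y' \to Y$, the Kimura projectors are cycles in the rational Chow group of $Y' \times_Y Y'$ whose restriction to the generic fibre is known, and regularity of $Y$ together with the Karoubi-closed structure of $CHM_s(Y)$ allows these to be extended to global projectors.

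For part (ii), I read the statement with $M \in CHM_s(X)$ (the natural interpretation, since $f_*$ requires $M$ on the source). First, $f_*M \in CHM_s(Y)$: writing $M$ as a summand of $h_X(X')$ with $X' \to X$ smooth proper regular, $f_*M$ is a summand of $f_*h_X(X') \cong h_Y(X')$, and $X' \to Y$ is smooth proper regular as a composition (since $f$ is finite \'etale). Next, choose a finite \'etale surjection $g: \tilde Y \to Y$ which splits $f$, that is, $X \times_Y \tilde Y \cong \bigsqcup_j \tilde Y$. By proper base change, $g^*f_*M \cong \tilde f_* \tilde g^*M$ with $\tilde f: \bigsqcup_j \tilde Y \to \tilde Y$ the split cover, so $g^*f_*M$ is a finite direct sum of pullbacks $\tilde g_j^* M$ along \'etale morphisms. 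Each summand is finite dimensional because pullback preserves Kimura finiteness, hence $g^*f_*M$ is finite dimensional. Part (i) applied to the dominant $g$ and the object $f_*M \in CHM_s(Y)$ now gives the desired conclusion.

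\emph{Main obstacle.} The substantive difficulty is concentrated in part (i), and specifically in the spreading-out step. Even granted field-extension descent of Kimura finiteness in $CHM(-)$ (a delicate but known classical fact), extending the Kimura decomposition from the generic point $\eta_Y$ back to all of $Y$ requires controlling cycles supported in codimension $\ge 1$ in $Y' \times_Y Y'$ and arguing that the idempotents lift without vertical obstruction. This is precisely the point at which the hypothesis $M \in CHM_s(Y)$, combined with regularity of $Y$, becomes essential.
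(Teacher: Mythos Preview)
The paper does not give a proof here: it simply cites O'Sullivan \cite[p.~50]{osullivan2009algebraiccyclesabelianvariety}, so there is no in-paper argument to compare against. Your reduction of (ii) to (i) via an \'etale splitting $g:\tilde Y\to Y$ and proper base change is correct and standard (you are also right to read (ii) with $M\in CHM_s(X)$ and with the tacit hypothesis that $M$ itself be finite dimensional --- that is how the lemma is actually used later in the paper).

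The gap is exactly where you place it, in the spreading-out step of (i), and what you wrote does not close it. Taking the closure of a generic-fibre idempotent gives a cycle $\tilde e$ on $Y'\times_Y Y'$, but nothing forces $\tilde e^{\,2}-\tilde e$ to vanish: this difference is supported on fibres over a proper closed subset of $Y$, and such vertically supported correspondences are in general neither zero nor nilpotent in $\End(h_Y(Y'))$, so you cannot correct $\tilde e$ to an honest projector by a formal idempotent-lifting argument. Even granting a lifted decomposition $M=M^+\oplus M^-$, concluding $\bigwedge^a M^+=0$ from $\bigwedge^a M^+_{\eta_Y}=0$ would still require conservativity of $\eta_Y^*$ on $CHM_s(Y)$, which again comes down to the same control of vertical cycles. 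This is precisely the nontrivial content of O'Sullivan's argument; ``regularity of $Y$ together with the Karoubi-closed structure'' does not supply it. A smaller point: in your Galois-descent step, uniqueness of the Kimura decomposition up to isomorphism only tells you each Galois translate of the projector is \emph{conjugate} to it, not equal; producing a $\gal$-invariant projector needs the semiprimary structure (Andr\'e--Kahn radical) or an explicit averaging, which you should state rather than elide.
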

\begin{proof}
Refer to \cite[Page 50]{osullivan2009algebraiccyclesabelianvariety}
\end{proof}
\begin{corollary}\label{motive of abelian scheme is f.d}
    Let $p: Y \rightarrow X$ be a proper smooth morphism with $Y$ regular. Assume that $X$ is irreducible and the dimension of the generic fibre is $\leq 1$. Then $p_{*}1_{Y}$ is finite dimensional.

    More generally, if $Y$ is an abelian scheme over an irreducible regular scheme $X$. Then, $p_{*}1_{Y}$ is finite dimensional.  
\end{corollary}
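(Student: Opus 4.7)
The plan is to reduce to the generic fibre of $p$, where classical Kimura finiteness (Example \ref{kimura finiteness of h(C)}) applies, and then to lift the conclusion back to $X$ via Lemma \ref{stability of kimura objects}(i). Note first that $X$ is automatically regular in both parts of the statement: since the generic fibre of $p$ is nonempty, properness forces $p$ to be surjective, and smoothness of a surjection from a regular scheme descends regularity to the base. Hence the Chow framework of Corti--Hanamura is available on $X$, and $p_{*} 1_{Y} = h_{X}(Y)$ tautologically lies in $CHM_{s}(X)$.

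Let $\eta : \Spec K \hookrightarrow X$ denote the inclusion of the generic point, where $K = k(X)$. Proper base change in $DM$ supplies the isomorphism
\[
\eta^{*}(p_{*} 1_{Y}) \;\cong\; (p_{K})_{*} 1_{Y_{K}} \;=\; h_{K}(Y_{K}),
\]
with $p_{K} : Y_{K} \to \Spec K$ the base change of $p$ along $\eta$. Under the first hypothesis, $Y_{K}$ is a smooth projective $K$-scheme of dimension at most $1$, namely a finite disjoint union of spectra of finite separable extensions of $K$ together with smooth projective curves over $K$; under the second, $Y_{K}$ is an abelian variety over $K$. In either case Example \ref{kimura finiteness of h(C)} guarantees that $\eta^{*}(p_{*} 1_{Y})$ is finite dimensional in $CHM(K)$, so Lemma \ref{stability of kimura objects}(i) applied to the dominant morphism $\eta$ then delivers that $p_{*} 1_{Y}$ itself is finite dimensional.

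The one mild obstacle is that $\Spec K$ need not be of finite type over $k$, so the invocation of Lemma \ref{stability of kimura objects}(i) implicitly presumes that the lemma extends to regular sources which are not necessarily of finite type over $k$. Equivalently, one first spreads the idempotent correspondences exhibiting Kimura-finiteness of $h_{K}(Y_{K})$ out to an open dense (regular) subscheme $U \subset X$, and then applies the lemma to the dominant open immersion $U \hookrightarrow X$; because only finitely many correspondences need to be spread, this step is routine and is the only non-formal ingredient in the argument.
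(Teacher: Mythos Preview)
Your proof is correct and follows essentially the same route as the paper: pull back to the generic point via proper base change, invoke Kimura finiteness of curves and abelian varieties over $K$ (Example \ref{kimura finiteness of h(C)}), and descend along the dominant morphism $\eta$ using Lemma \ref{stability of kimura objects}(i). The paper's argument is in fact briefer than yours---it applies Lemma \ref{stability of kimura objects}(i) directly to $\eta$ without comment---so your care about the spreading-out step (passing first to a dense open $U \subset X$ of finite type over $k$) makes explicit a point the paper leaves implicit.
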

\begin{proof}
    Let $\eta: \Spec K \hookrightarrow X$ be the generic point of $X$. By proper base change, we have $\eta^{*}p_{*}1_{Y} \cong p_{*}1_{Y_{\eta}}$, where $Y_{\eta}$ is fibre over $\eta$. Since the generic fibre $Y_{\eta}$ has dimension $\leq 1$ (or is an abelian variety in the case of an abelian scheme), its motive $p_{*}1_{Y_{\eta}}$ is finite dimensional. Then, from Lemma \ref{stability of kimura objects} $(i)$ it follows that $p_{*}1_{Y}$ is also finite dimensional.
\end{proof}
Finally, we recall the following conservativity result for the realization functors, due to Wildeshaus.
 \begin{proposition}
Let $X$ be a regular scheme of finite type over $k$. Then the restriction
of the ($\ell$-adic or Betti) realization functor to the category
    \[DM^{Ab}(X) := \langle h_{X}(Y)(j) \in DM(X) \mid Y/X\; \text{is an abelian scheme},j \in \mathbb{Z}\rangle\]
is conservative. Moreover the restriction of $\ell$-adic realization functor $R_{\ell}$ to $DM^{Ab}(X)$ is weight conservative. More precisely, for $M \in DM^{Ab}(X)$,
    \begin{itemize}
        \item $M$ lies in $DM^{w \leq a}(X)$ if and only if $^{p}H^{i}(R_{\ell}M)$ has weight $\leq i+a$ for all $i \in \mathbb{Z}$.
        \item $M$ lies in $DM^{w>a}(X)$ if and only if $^{p}H^{i}(R_{\ell}M)$ has weight $> i+a$ for all $i \in \mathbb{Z}$.
    \end{itemize}
\end{proposition}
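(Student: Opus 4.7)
The plan is to follow Wildeshaus's strategy by reducing everything to the heart of the Chow weight structure on $DM^{Ab}(X)$ and then invoking Kimura--O'Sullivan finite-dimensionality. First I would apply Proposition \ref{induced chow weight struct} to restrict the Chow weight structure of $DM(X)$ to $DM^{Ab}(X)$; its heart is the Karoubi closure of the additive subcategory generated by the Chow motives $h_X(Y)(j)$ for $Y/X$ abelian. By Corollary \ref{motive of abelian scheme is f.d} and the stability of Kimura finite-dimensionality under direct sums, Tate twists, and direct summands, every object of this heart is Kimura finite-dimensional. The key input is then the Kimura--O'Sullivan nilpotence theorem, which guarantees that on the full subcategory of Kimura finite Chow motives the kernel of any rational Weil cohomology realization is a nil-ideal in the morphism ring; applied to identity morphisms this immediately shows that $R_{\ell}$ and $R_{B}$ are conservative (in fact faithful) on the heart.

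To lift conservativity to all of $DM^{Ab}(X)$, I would use Bondarko's weight Postnikov tower. Since the generators $h_X(Y)(j)$ are pure of weight zero, every compact $M \in DM^{Ab}(X)$ has bounded weights and hence admits a finite weight filtration whose associated graded pieces lie in the heart up to shift. If $R_{\ell}(M) = 0$, then the induced weight spectral sequence forces each graded piece to realize to zero; by the heart case each such piece vanishes, and a finite induction on the length of the tower gives $M = 0$. The Betti version is parallel.

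For the weight conservativity refinement, the forward direction follows from weight exactness of $R_{\ell}$: a Chow motive of weight zero realizes to a complex whose $i$-th perverse cohomology is pure of weight $i$, and this propagates through the weight truncation triangle of any $M$ to give the claimed bounds on $^{p}H^{i}(R_{\ell}M)$. For the converse, given $M \in DM^{Ab}(X)$ satisfying the cohomological weight bound, I would apply the triangle $w_{\le a}M \to M \to w_{> a}M \to$ and observe that the perverse cohomology of $R_{\ell}(w_{> a}M)$ must simultaneously be of weight $> i + a$ (by purity of $w_{> a}M$) and of weight $\le i + a$ (by hypothesis on $M$ combined with the bound on $R_{\ell}(w_{\le a}M)$), hence vanishes; by conservativity $w_{> a}M = 0$, so $M \cong w_{\le a}M$. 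The symmetric argument handles the weight $> a$ case.

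The main obstacle I anticipate is making the weight spectral sequence argument fully rigorous: one must verify that for $M \in DM^{Ab}(X)$ the chosen realization is weight exact in the sharp sense required to isolate the vanishing of each graded piece individually, and that Bondarko's canonical weight filtration on $^{p}H^{i}(R_{\ell}M)$ matches the one coming from the restricted Chow weight structure on $M$. A secondary concern is ensuring the target categories carry a well-behaved notion of weights -- for $R_\ell$ this is provided by Bondarko's formalism, but for the Betti case one must factor through a Hodge realization where weights are intrinsic.
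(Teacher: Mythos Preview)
The paper does not prove this proposition from scratch: its proof is a two-line citation, noting that $h_X(Y)(j)$ is a motive of abelian type in the sense of \cite[Definition~3.5(a)]{wildeshaus_conservativity} and then invoking Theorems~4.3 and~4.4 of that reference for conservativity and weight conservativity respectively. Your proposal instead unpacks the content of Wildeshaus's argument---Kimura finiteness of the heart via Corollary~\ref{motive of abelian scheme is f.d}, the Kimura--O'Sullivan nilpotence theorem on that heart, and propagation through the weight Postnikov tower---and this is indeed the shape of Wildeshaus's proof. So your route is not different from the paper's so much as one level less black-boxed.

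The obstacles you flag at the end are genuine and are exactly where the work lies. In particular, your converse step for weight conservativity has a gap as stated: from the long exact sequence alone, the hypothesis on $^pH^i(R_\ell M)$ together with the forward bound on $^pH^i(R_\ell(w_{\le a}M))$ gives only that $^pH^i(R_\ell(w_{>a}M))$ has weight $\le i+1+a$, since the connecting map lands in $^pH^{i+1}(R_\ell(w_{\le a}M))$. What closes this gap is the strictness property recorded in \S\ref{realization}---Bondarko's canonical weight filtration on perverse cohomology, with realized morphisms strict for it---together with the compatibility you mention between the restricted and ambient Chow weight structures. You correctly isolate this as the crux; in a full proof it must be invoked explicitly rather than left as an anticipated difficulty.
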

\begin{proof}
From definition 3.5(a) of \cite{wildeshaus_conservativity}, the motive $h_{X}(Y)(j)$ is of abelian type. Then Theorem 4.3 of loc. cit. implies that the restriction of the realization functor to $DM^{Ab}(X)$ is conservative, while Theorem 4.4 of the same work establishes weight conservativity.
\end{proof} 
\begin{remark}
From the above proposition, it follows 
that Hodge realization $R_{\mathcal{M}}|_{DM^{Ab}(k)}$ is also conservative: if $M \in DM^{Ab}(k)$ satisfies $R_{\mathcal{M}}(M)=0$, then $R_{B}(M)= \For(R_{\mathcal{M}}(M))=0$, and the conservativity of Betti realization then implies $M=0$. 

Moreover, $R_{\mathcal{M}}|_{DM^{Ab}(k)}$ is weight conservative as well (see \cite[Remark 4.6]{wildeshaus_conservativity}).
\end{remark} 
In our situation, weight conservativity will follow as a simple consequence of the conservativity result. We will discuss this version in Proposition \ref{weightconservativity_our}. We will also require the following lemma, which shows that a conservative t-exact functor is t-conservative.
\begin{lemma}\label{conservativity implies t-conservativity}
    Let $F: \mathcal{C} \rightarrow \mathcal{D}$ be a t-exact functor of triangulated categories with t-structures $(\mathcal{C}^{\leq t},\mathcal{C}^{>t})$ and $(\mathcal{D}^{\leq t},\mathcal{D}^{>t})$ respectively. Assume that $F$ is conservative. Then, $F$ is t-conservative, that is, if $F(A) \in \mathcal{D}^{\leq t}$ (resp. $F(A) \in \mathcal{D}^{> t}$),  then $A \in \mathcal{C}^{\leq t}$ ( resp. $A \in \mathcal{C}^{> t}$) for all $A \in \mathcal{C}$.
\end{lemma}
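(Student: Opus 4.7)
The plan is to reduce $t$-conservativity to the hypothesis of conservativity by extracting the offending truncation as the image of a truncation in $\mathcal{C}$, then concluding via the assumption that $F$ kills no nonzero object.

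Concretely, given $A \in \mathcal{C}$, I would start with the canonical decomposition triangle
\[
\tau_{\leq t} A \rightarrow A \rightarrow \tau_{>t} A \rightarrow
\]
in $\mathcal{C}$ and apply $F$. Since $F$ is $t$-exact, the resulting triangle
\[
F(\tau_{\leq t} A) \rightarrow F(A) \rightarrow F(\tau_{>t} A) \rightarrow
\]
has $F(\tau_{\leq t} A) \in \mathcal{D}^{\leq t}$ and $F(\tau_{>t} A) \in \mathcal{D}^{>t}$. By the uniqueness (up to canonical isomorphism) of the decomposition triangle associated to the $t$-structure on $\mathcal{D}$, this forces
\[
F(\tau_{\leq t} A) \cong \tau_{\leq t} F(A), \qquad F(\tau_{>t} A) \cong \tau_{>t} F(A).
\]

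Now the two cases are symmetric. If $F(A) \in \mathcal{D}^{\leq t}$, then $\tau_{>t} F(A) = 0$, hence $F(\tau_{>t} A) = 0$; by conservativity of $F$, this forces $\tau_{>t} A = 0$, so $A \in \mathcal{C}^{\leq t}$. If instead $F(A) \in \mathcal{D}^{>t}$, then $\tau_{\leq t} F(A) = 0$, so $F(\tau_{\leq t} A) = 0$, and conservativity again gives $\tau_{\leq t} A = 0$, hence $A \in \mathcal{C}^{>t}$.

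There is no serious obstacle here: the argument is purely formal, relying only on $t$-exactness (to identify $F$ of the $\mathcal{C}$-truncations with the $\mathcal{D}$-truncations of $F(A)$) and on conservativity (to pass from vanishing of a truncation after $F$ back to vanishing in $\mathcal{C}$). The only point worth being careful about is invoking the uniqueness of truncation triangles in $\mathcal{D}$, which is standard from the axioms of a $t$-structure.
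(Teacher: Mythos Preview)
Your proof is correct and follows essentially the same approach as the paper: apply $F$ to the decomposition triangle of $A$, use $t$-exactness to identify the resulting triangle with the decomposition triangle of $F(A)$, and then invoke conservativity to pull the vanishing of the relevant truncation back to $\mathcal{C}$. Your write-up is in fact slightly cleaner, as you treat both cases explicitly rather than deferring one as ``identical.''
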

\begin{proof}
    Applying the triangulated functor $F$ to the decomposition triangle $\tau_{\leq t}\,A \rightarrow A \rightarrow \tau_{>t}\, A \rightarrow$ of $A$ yields:
    \[F(\tau_{\leq t}\,A) \rightarrow F(A) \rightarrow F(\tau_{>t}\, A) \rightarrow \]
    By t-exactness of $F$, we have that $F(\tau_{\leq t}\,A) \in \mathcal{D}^{\leq t}$ and $F(\tau_{>t}\, A)\in \mathcal{D}^{>t}$.
    Then the uniqueness of the decomposition triangle and the assumption $F(A)\in \mathcal{D}^{\leq t}$, implies that $F(\tau_{>t}\, A) \cong 0$.
From conservativity of $F$, we then conclude that $\tau_{\leq t}A=0$, which gives the required result.

The other case is identical.
\end{proof}
\section{Refined Morel's weight truncations}
\subsection{Morel's weight truncations} \label{candidate of MIC}
\para In her thesis \cite{morelThesis} S.~Morel observed that the following forms a $m$-structure (and hence a $t$-structure) on the category of mixed sheaves $D^{b}_{m}(X)$:
\begin{align*}
	{^{w}D^{\le i}} := \{ A \mid \forall j \;{^{p}H^{j}(A)} \text{ is of weight}\le i\}& &{^{w}D^{> i}} := \{ A \mid \forall j \;{^{p}H^{j}(A)} \text{ is of weight}> i\}
\end{align*}
	where $^{p}H^{j}$ denotes the $j$-th perverse cohomology. 

Given a stratification $\mathcal S = (S_{1}, \dots, S_{r})$, and a choice of weights $d_{1}, d_{2}, \dots, d_{r}$ we can glue the $t$-structures $({^{w}D^{\le d_{i}}}, {^{w}D^{> d_{i}}})$ on $S_{i}$ to get a glued $t$ structure. We can let $d_{i} = D(\dim S_{i})$ for some function $D$ on non-negative integers (called \emph{weight profile}). 

Then for certain choices of the function $D$ (monotone step functions), the main result of \cite{vvThesis} shows that this $t$-structure is independent of the choice of a (sufficiently fine) stratification, and thus can be denoted as $({^{w}D^{\le D}}, {^{w}D^{> D}})$  on $X$ without ambiguity. Note that if $D(i)=d$ for all $i$, then results of \cite{morelThesis} show that this coincides with the $t$ structure   $({^{w}D^{\le d}}, {^{w}D^{> d}})$ and thus there is no ambiguity in confusing integers and constant functions here.

\begin{para} If $X$ is irreducible with $\dim X=d$ and $j:U \hookrightarrow X$ is smooth open, one of the key results of \cite{morelThesis}, with a refinement due to \cite{vvThesis} is the following: 
\[
	w_{\le d}Rj_{*}1_{U} \cong  IC_{U} \cong w_{\le D}Rj_{*}1_{U}\text{ where }d - 1\le D \le d\text{ with }D(d)=d. 
\]
where $IC_{X}$ denotes the intersection complex (up to a shift). In other words, we can use such a $D$ to recover the intersection complex. The importance of this relation is that for some such weight profiles $D$, it is possible to lift the construction to motives. 

In \cite{vaish2017weight}, the author has constructed the analogue of certain Morel's $t$-structure on specific subcategories of $DM(X)$ using the technique of punctual gluing of $t$-structures, and used that to construct a candidate for the intersection complex. We review this construction below. 
\end{para}
We start by recalling subcategories of $DM(X)$ which are of our interest:
    \begin{definition}
        Let $X$ be a noetherian scheme of finite type over $k$. Define the following collection of objects in $DM(X)$ and their corresponding triangulated categories:
     \begin{align*}
          S^{sm,coh}(X) := &\big\{ p_{*} 1_{Y} \big|\; p: Y \rightarrow X \; \text{projective, $Y$ regular, smooth over}\; k\big\}\\
    S_{d}^{sm,coh}(X) := &\big\{p_{*} 1_{Y}(-r)\;\big|\; p: Y \rightarrow X \; \text{projective, $Y$ connected, smooth over}\; k, \dim Y \leq d, r \geq 0 \big\}\hspace{-15em}\\
           S_{d}^{dom,coh}(X) := &\big\{p_{*} 1_{Y}(-r)\;\big|\; p: Y \rightarrow X \; \text{projective, $Y$ connected, smooth over }k, \dim Y \leq d,\; p \;\text{dominant}, r \geq 0 \big\}\\
DM^{coh}(X) =& \langle S^{sm,coh}(X) \rangle \hspace{1cm} DM^{coh}_{d}(X) = \langle S^{sm,coh}_{d}(X) \rangle \hspace{1cm} 
 DM^{coh}_{d,dom}(X) := \langle S^{dom,coh}_{d}(X) \cup S^{sm,coh}_{d-1}(X) \rangle 
        \end{align*}
    \end{definition}
\begin{para} In \cite{vaish2017weight} the lift of the Morel's t-structure corresponding to a $D$ is first constructed over any point $\Spec K$. Recall \ref{piconstruction} that for any $X$ we can define the Chow–Künneth summands $h^{\le 0}(X), h^{\le 1}(X)$ and $h^{>0}(X), h^{>1}(X)$ of the Chow motive $h(X)$ of $X$. Further, if $\dim X\le 2$, we can even define the summands $h^{\le 2}(X)$ and $h^{>2}(X)$.

Using the fully faithful embedding of Chow motives in $DM(k)$ and the fact that $h(X)$ generate the subcategory of cohomological motives $DM^{coh}(k)$, one can lift certain Morel's $t$-structures:
\begin{align*}
\text{For }i\in \{0,1\}, {^{w}}DM^{\leq i}(k)&:=\langle h^{\leq i}(X)\mid X\in SmProj/k\;\text{and}\; X \;\text{is connected}\rangle \\\;^{w}DM^{>i}(k)&:=\langle h^{>i}(X), h(X)(-r)\mid X\in SmProj/k\;\text{and} X \;\text{is connected}, r\ge 1\rangle
\end{align*}
Furthermore, for weight 2, $h^{2}(X)$, and hence also $h^{\le 2}(X), h^{> 2}(X)$ is well defined provided we have $\dim X\le 2$. Thus, restricting to the smaller subcategory $DM^{coh}_{2}(k)$ we define:
\begin{align*}
^{w}DM_{2}^{\leq 2}(k)&:=\langle h^{\leq 2}(X)\mid X\in SmProj/k\text{ and } X \text{ is connected}, \dim X\le 2 \rangle \\
^{w}DM_{2}^{>2}(k)&:=\langle h^{>2}(X), h^{>1}(X)(-r), h(X)(-r-1)\mid X\in SmProj/k\text{ and }X\text{ is connected}, \dim X\le 2, r\ge 1 \rangle
\end{align*}
\end{para}    
    It was shown in \cite{vaish2017weight} that indeed these lifts do form a $t$-structures:
\begin{theorem}\cite[Corollary 3.2.7]{vaish2017weight} For any field $K$, we have the following $t$-structures on certain subcategories of $DM(K)$:
	\begin{itemize} 
		\item $({^{w}DM^{\le 0}}, {^{w}DM^{> 0}})$ forms a $t$-structure on $DM^{coh}(K)$. 
		\item $({^{w}DM^{\le 1}}, {^{w}DM^{> 1}})$ forms a $t$-structure on $DM^{coh}(K)$. 
		\item $({^{w}DM^{\le 2}}, {^{w}DM^{> 2}})$ forms a $t$-structure on $DM_{2}^{coh}(K)$.
	\end{itemize}
\end{theorem}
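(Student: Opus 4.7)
The plan is to invoke Proposition \ref{tFromGenerators}, the general mechanism producing an $m$-structure (and hence in particular a $t$-structure) on $\vvspan{H}$ from a triple $(A,B,H)$ satisfying orthogonality, decomposition, and containment. For the first bullet I take $H := \{h(X)\mid X\in SmProj/K \text{ connected}\}$, so that $\vvspan{H} = DM^{coh}(K)$ by definition, $A := \{h^{\leq 0}(X)\}$ and $B := \{h^{>0}(X), h(X)(-r)\mid r\geq 1\}$; the natural analogue with $h^{\leq 1},h^{>1}$ handles the second bullet, and for the third bullet one restricts to smooth projective $X$ of dimension $\leq 2$ (the regime in which Murre's projector $\pi_{2}(X)$ is available) and uses $h^{\leq 2}, h^{>2}$, keeping the additional generators $h^{>1}(X)(-r)$ and $h(X)(-r-1)$ for $r\geq 1$ in $B$ as stated.

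The decomposition hypothesis is immediate from Murre's construction: in $CHM(K)$ one has the orthogonal splitting $\id_{h(X)} = \pi_{\leq i}(X) + \pi_{>i}(X)$, which yields the required split distinguished triangle $h^{\leq i}(X)\to h(X)\to h^{>i}(X)\to$ in $DM(K)$. The containment $\vvspan{A}, \vvspan{B}\subset \vvspan{H}$ is equally straightforward: $h^{\leq i}(X)$ and $h^{>i}(X)$ are direct summands of $h(X)\in H$, and by the projective bundle formula $h(X\times \mathbb{P}^{r})\cong \bigoplus_{s=0}^{r} h(X)(-s)$ each Tate twist $h(X)(-r)$ is a summand of $h(X\times \mathbb{P}^{r})\in H$.

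The substantive work is the orthogonality $\hom_{DM(K)}(a,b[n])=0$ for all $a\in A$, $b\in B$ and all $n\in\mathbb{Z}$. For $n>0$ this is automatic since $CHM(K)$ is the heart of the Chow weight structure on $DM(K)$, so Homs between Chow motives into strictly positive shifts vanish. The vanishing of $\hom_{DM}(h^{\leq i}(X), h^{>i}(Y)[n])$ for $n\leq 0$ comes from the pairwise orthogonality of Murre's Künneth projectors under correspondence composition (this handles $n=0$) together with a direct motivic-cohomology computation for the shifted pieces. The crucial point is the vanishing of $\hom_{DM}(h^{\leq i}(X), h(Y)(-r)[n])$ for $r\geq 1$: using that $h^{0}(X)\cong \mathbb{Q}$, that $h^{1}(X)$ is a summand of $h^{1}$ of a smooth projective curve via the Albanese surjection, and that $h^{2}(X)$ (with $\dim X\leq 2$) is already of ``curve/surface type'', one reduces to $X$ of dimension $\leq i$, where the relevant Hom identifies with $CH^{\dim X - r}(X\times Y, -n)\otimes \mathbb{Q}$. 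This group vanishes either by negative codimension (when $r>\dim X$) or by the vanishing of higher Chow groups of codimension zero in positive degrees (which compute Nisnevich cohomology of the constant sheaf $\mathbb{Q}$ on a connected smooth scheme, hence vanish outside degree $0$).

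The main obstacle is precisely this last step: organizing the vanishings cleanly and handling the factoring of $h^{1}$ and $h^{2}$ through low-dimensional varieties via Murre's explicit constructions. This technical core is carried out in \cite{vaish2017weight}, to which the theorem statement refers.
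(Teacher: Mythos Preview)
The paper does not give its own proof of this theorem; it is recalled verbatim from \cite[Corollary 3.2.7]{vaish2017weight}. Your outline is the correct strategy and is exactly the template the present paper uses for its own analogous result (Corollary~\ref{1+alg t-structure}): verify orthogonality on generators and then invoke Proposition~\ref{tFromGenerators}. Two small points of precision are worth noting. First, $h^{0}(X)\cong 1_{K}$ only when $X$ is geometrically connected; in general $h^{0}(X)\cong h(\Spec L)$ for the Stein factorisation field $L$, so the reduction ``to $X$ of dimension $\le i$'' should be phrased for $h^{0}$ as passing to a zero\nobreakdash-dimensional \'etale $K$\nobreakdash-scheme rather than to $\Spec K$ itself. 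Second, the $n=0$ orthogonality $\hom(h^{\le i}(X),h^{>i}(Y))=0$ is not literally ``orthogonality of projectors'' (that statement concerns a single $X$); what is used is the functoriality of Murre's construction, namely that any correspondence $h(X)\to h(Y)$ respects the filtration by $h^{\le i}$, which is the nontrivial input proved in \cite{vaish2017weight}. With these caveats your sketch matches the argument in the cited reference.
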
    
 
Using punctual gluing, they can be further used to produce $t$ structures on appropriate categories for certain weight profiles $F$. In particular, we have the following construction: 
    \begin{theorem} {\cite[Proposition 3.3.5]{vaish2017weight}}\label{def of leq F}
        Fix an irreducible scheme $X$ of dimension 3 and finite type over $k$. Consider the weight profile $F=\{3 \mapsto3, 2\mapsto 3, 1 \mapsto 2, 0 \mapsto2\}$. For any subscheme $Y \subset X$, define $D_{Y}:=DM_{2}^{coh}(Y)$ if $\overline{Y} \neq X$ and $D_{Y} := DM^{coh}_{3,dom}(Y)$ for $Y$ open.
        For any $x= \Spec K \in X$, with $t_{K}$ as the transcendence degree of $K/k$, define:
\[
\begin{array}{ll}
D^{F(t_{K})-t_{k}}(K) = \begin{cases}
DM_{0}^{coh}(K^{perf})& t_{K}=3\\
DM^{coh}_{1}(K^{perf}) & t_{K}=2,1\\
DM^{coh}_{2}(K^{perf}) & t_{K}=0
\end{cases}
&
D^{\leq F(t_{K})-t_{K}}(K):=
\begin{cases}
^{w}DM^{\leq 0}(K^{perf}) & t_{K}=3\\
^{w}DM^{\leq 1}(K^{perf}) & t_{K}=2,1\\
^{w}DM^{\leq 2}_{2}(K^{perf}) & t_{K}=0
\end{cases}
\end{array}
\]
\begin{align*}D^{>F(t_{K})-t_{K}}(K):=\{A \in D(K)|\; Hom(B,A)=0\; \forall\; B \in D^{\leq F(t_{K})-t_{K}}  \} \end{align*}
Then this setup satisfies the formalism of gluing, continuity and continuity of t-structures. In particular the following defines a $t$-structure on $DM^{coh}_{3, dom}(X)$: 
    \begin{align*}
        ^{w}DM^{\leq F}(X)&:= \{ M \in DM^{coh}_{3,dom}(X)\mid s^{*}M \in D^{\leq F(t_{K})-t_{K}}(K)\; \forall\; s=\Spec K \in X\}\\
         ^{w}DM^{> F}(X)&:= \{ M \in DM^{coh}_{3,dom}(X) \mid s^{!}M \in D^{> F(t_{K})-t_{K}}(K)\; \forall\; s=\Spec K \in X\}
    \end{align*}
\end{theorem}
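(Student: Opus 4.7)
The plan is to apply the punctual gluing framework of Proposition~\ref{gluing:mainresult} to the pointwise data specified in the statement, and then to show that the resulting triangulated subcategory $D'(X) \subseteq DM(X)$ contains $DM^{coh}_{3,dom}(X)$. This reduces the theorem to three tasks: (a) verify that at each point $\Spec K$ we genuinely have a $t$-structure on $D^{F(t_K)-t_K}(K)$; (b) establish the continuity condition of Definition~\ref{gluing:spreadingOut} for the systems $D^{\leq}(U)$ and $D^{>}(U)$; (c) exhibit, at each generator of $DM^{coh}_{3,dom}(X)$, a decomposition triangle with terms living in the correct halves at every point.

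For (a), the candidate pairs are restrictions to subcategories of the $t$-structures already constructed in the preceding theorem: $({^{w}DM^{\leq 0}}, {^{w}DM^{>0}})$ and $({^{w}DM^{\leq 1}}, {^{w}DM^{>1}})$ on $DM^{coh}(K^{perf})$, and $({^{w}DM_{2}^{\leq 2}}, {^{w}DM_{2}^{>2}})$ on $DM^{coh}_2(K^{perf})$. In each case one checks that the truncation functors preserve the smaller $D^{F(t_K)-t_K}(K)$, which follows from inspecting the generators $h^{\leq i}(Y)$ and $h^{>i}(Y)(-r)$ and the dimension bounds on $Y$ built into the definitions. For (b), continuity rests on two inputs: continuity of $DM(-)$ at points, as recalled in Paragraph~\ref{gluing:situation}, and functoriality of the Chow-Künneth projectors $\pi_i$ for $i \in \{0,1\}$ (any dimension) and for $i=2$ (dimension $\leq 2$). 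Given a generating summand $h^{\leq i}(Y_K)$ at a point $\Spec K$, one spreads $Y_K$ to a smooth projective family $\bar Y \to U$ over a Zariski neighborhood $U$, applies functoriality of $\pi_i$ to spread the summand, and uses Tate twists to handle the $>$ part analogously. Proposition~\ref{gluing:mainresult} then produces the glued $t$-structure on some $D'(X) \subseteq DM(X)$.

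For (c), I would check that each generator $p_*1_Y(-r)$ of $DM^{coh}_{3,dom}(X)$, that is each object in $S^{dom,coh}_3(X) \cup S^{sm,coh}_2(X)$, admits at every point $\Spec K \to X$ a decomposition triangle with parts in $D^{\leq F(t_K)-t_K}(K)$ and $D^{>F(t_K)-t_K}(K)$ respectively. Since $D'(X)$ is triangulated and pseudo-abelian, this yields the desired inclusion $DM^{coh}_{3,dom}(X) \subseteq D'(X)$. Such a decomposition is produced using the Chow-Künneth decomposition of the fibre $Y_{\bar K}$, whose dimension is controlled by $\dim Y$ and $t_K$; the profile $F$ is engineered precisely so that the needed summands $h^{\leq i}$ and $h^{>i}$ are available in all the relevant ranges.

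\textbf{Main obstacle.} The crux of the argument is the dimensional bookkeeping in (b) and (c). Because the projector $\pi_2$ is functorial only on varieties of dimension $\leq 2$, the profile $F$ and the definition of $DM^{coh}_{3,dom}(X)$ must line up exactly with the fibre dimensions arising on each stratum of $X$; verifying this match stratum by stratum, and ensuring that the truncations stay inside the allowed subcategories as the base-point varies, is the real work. Once this compatibility is checked, Proposition~\ref{gluing:mainresult} delivers the $t$-structure almost formally.
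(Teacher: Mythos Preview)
The paper does not supply its own proof of this statement; it is quoted directly from \cite[Proposition~3.3.5]{vaish2017weight}, and your outline is precisely the strategy of that reference: feed the pointwise $t$-structures of the preceding theorem into the punctual gluing machine (Proposition~\ref{gluing:mainresult}), check continuity by spreading out the Chow--K\"unneth projectors, and verify that the generators of $DM^{coh}_{3,dom}(X)$ lie in $D'(X)$ via the dimensional bookkeeping you identify as the main obstacle. One small sharpening: in step~(c) you need a single \emph{global} decomposition triangle for each generator whose two terms satisfy the pointwise conditions, not merely a separate triangle at each point; this is obtained by stratifying $X$ by fibre dimension, applying the relative Chow--K\"unneth decomposition stratum-wise, and gluing via Definition~\ref{ordinaryGluing}.
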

\begin{remark}
The reason for considering the definition as above is the following --  Morel's $t$-structure for weight profile $F$, $({^{w}D}^{\le F}, {^{w}D}^{> F})$ on $D^{b}_{m}(X)$, can also be obtained by punctual gluing of $t$-structures $({^{w}D}^{\le F(t_{K})-t_{K}}, {^{w}D}^{> F(t_{K})-t_{K}})$ on each point $x=\Spec K\hookrightarrow X$ with transcendence degree of $K/k$ being denoted as $t_{K}$. Thus, the definition above should be seen as the motivic lift of the $t$ structure $({^{w}D^{\le F}}, {^{w}D^{> F}})$. 
\end{remark}
\begin{notation}
    The truncation functors for this $t$-structure will be denoted by $w_{\leq F}$ and $w_{>F}$.
\end{notation}
\begin{para} The above t-structure on $DM^{coh}_{3,dom}(X)$ allows us to define an object $EM^{F}_{X}:= w_{\leq F}j_{*}1_{U}$ where $j:U \xhookrightarrow{} X$ is a smooth open dense immersion in $X$. It is independent of the open immersion chosen. This is the candidate for the motivic intersection complex. 
\end{para}
Next, we prove some specific results regarding $EM^{F}_{X}$ which we need in the sequel:
\begin{lemma}\label{EM(X) in terms of EM(U)}
      Let $j: U \xhookrightarrow{}X$ be an open dense immersion. Then $EM^{F}_{X} \cong w_{\leq F}j_{*} EM^{F}_{U}$.
\end{lemma}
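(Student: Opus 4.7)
The plan is to reduce the lemma to a pointwise verification that $j_* C$ lies in $^w DM^{>F}(X)$ for a suitable cone $C$, and to exploit the pointwise definition of the glued $t$-structure.

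First, I would choose a smooth open dense immersion $j':U'\hookrightarrow U$ (using generic smoothness on $U$), so that $k:=j\circ j':U'\hookrightarrow X$ is a smooth open dense immersion. Since $EM^{F}$ is independent of the chosen smooth open dense immersion, we have $EM^{F}_{X}=w_{\le F}^{X}(j_{*}j'_{*}1_{U'})$ and $EM^{F}_{U}=w_{\le F}^{U}(j'_{*}1_{U'})$. Applying $j_{*}$ to the Morel truncation triangle on $U$, namely $EM^{F}_{U}\to j'_{*}1_{U'}\to C\to$ with $C:=w_{>F}^{U}(j'_{*}1_{U'})$, and then applying $w_{\le F}^{X}$, gives the triangle
\[
w_{\le F}^{X}(j_{*}EM^{F}_{U})\longrightarrow EM^{F}_{X}\longrightarrow w_{\le F}^{X}(j_{*}C)\longrightarrow .
\]
Thus it suffices to show $j_{*}C\in{}^{w}DM^{>F}(X)$, i.e., $s^{!}(j_{*}C)\in D^{>F(t_{s})-t_{s}}(K(s))$ for every Zariski point $s$ of $X$.

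The check splits into two cases. If $s\in U$, write $Y=\overline{\{s\}}$ with $f:Y\hookrightarrow X$, and consider the Cartesian square formed by $Y\cap U\hookrightarrow Y$ and $j:U\hookrightarrow X$; smooth base change (for the open immersion $j$) gives $f^{!}j_{*}\cong (j_{Y})_{*}f_{U}^{!}$, so using $s_{Y}=j_{Y}\circ s_{Y\cap U}$ one obtains $s^{!}_{X}(j_{*}C)\cong s^{!}_{U}(C)$, which lies in $D^{>F(t_{s})-t_{s}}$ by construction of $C$. If $s\in Z:=X\setminus U$, factor $s$ through the closed immersion $i:Z\hookrightarrow X$ to reduce to the vanishing $i^{!}j_{*}=0$; this is immediate from the localisation triangle $i_{*}i^{!}\to\id\to j_{*}j^{*}\to$ evaluated at $j_{*}M$, since $j^{*}j_{*}=\id$ for an open immersion forces the unit $j_{*}M\to j_{*}j^{*}j_{*}M$ to be the identity, killing $i_{*}i^{!}j_{*}M$. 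Consequently $s^{!}_{X}(j_{*}C)=0\in D^{>F(t_{s})-t_{s}}$, completing both cases and yielding $w_{\le F}^{X}(j_{*}C)=0$.

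The main obstacle is ensuring that the truncation functor $w_{\le F}^{X}$ can legitimately be applied to the pushforwards that appear here, since Theorem \ref{def of leq F} provides the $t$-structure on $DM^{coh}_{3,dom}(X)$ (or, more generally, on an auxiliary $D'(X)$ as in Proposition \ref{gluing:mainresult}), whereas $j_{*}(-)$ need not a priori land in this subcategory. This is handled by working inside the enlarged category $D'(X)$ of Proposition \ref{gluing:mainresult}, which already accommodates objects of the form $k_{*}1_{U'}$ used to define $EM^{F}_{X}$ in the first place, and the pointwise characterisation above then takes place unambiguously. Once applicability is settled, the orthogonality of the $t$-structure converts the vanishing $w_{\le F}^{X}(j_{*}C)=0$ into the asserted isomorphism $EM^{F}_{X}\cong w_{\le F}^{X}(j_{*}EM^{F}_{U})$.
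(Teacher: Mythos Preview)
Your proof is correct and follows essentially the same approach as the paper's own argument: both take the decomposition triangle of $j'_{*}1_{U'}$ on $U$, push it forward by $j_{*}$, apply $w_{\le F}$, and then eliminate the third term by showing that $j_{*}$ sends ${}^{w}DM^{>F}(U)$ into ${}^{w}DM^{>F}(X)$ via the same pointwise check (factoring $s^{!}$ through $j^{!}$ for $s\in U$, and using $i^{!}j_{*}=0$ for $s\in Z$). Your write-up is slightly more explicit about the base-change step and about the membership issue in $D'(X)$, but the mathematical content is identical.
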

\begin{proof}
    First, we will show that if $A \in \;^{w}DM^{> F}(U)$ then $j_{*}A\in \;^{w}DM^{>F}(X)$. Let $s:\Spec K \hookrightarrow X$ be a point in $U$. Then $s^{!}j_{*}A \cong s^{!}j^{!}j_{*}A \cong s^{!}A $. Since $A \in\;^{w}DM^{> F}(U)$, we have $s^{!}A \in\; ^{w}DM^{>F(t_{K})-t_{K}}(\Spec K)$. Now, let $s:\Spec K \hookrightarrow X$ be a point in $Z:=X-U$. Let $i :Z \hookrightarrow X$ denote the closed immersion. Then $s^{!}j_{*}A \cong s^{!}i^{!}j_{*}A \cong 0 \in\;^{w}DM^{>F(t_{K})-t_{K}}(\Spec K)$. Therefore, by Theorem \ref{def of leq F} it follows that $j_{*}$ is left exact.\\
     Next, fix a regular open dense immersion $j':V \hookrightarrow U$. Then we have a distinguished triangle
    \begin{align*}
        EM^{F}_{U} \rightarrow j'_{*}1_{V} \rightarrow w_{>F}j'_{*}1_{V} \rightarrow
    \end{align*}
    Apply the triangulated functor $w_{\leq F}j_{*}$ to this triangle to obtain
    \begin{align*}
        w_{\leq F}j_{*}EM^{F}_{U} \rightarrow EM^{F}_{X} \rightarrow w_{\leq F}j_{*}w_{>F}j'_{*}1_{V} \rightarrow
    \end{align*}
    By left exactness of $j_{*}$, third term vanishes so we get an isomorphism $EM^{F}_{X} \cong  w_{\leq F}j_{*}EM^{F}_{U} $.
\end{proof}
\begin{lemma}\label{pullback of EM to U is constant sheaf}
    For any open dense immersion $j:U \hookrightarrow X$ with $U$ regular, $j^{*}EM^{F}_{X} \cong 1_{U}$.
\end{lemma}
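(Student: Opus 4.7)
\smallskip

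Since $U$ is regular over $k$ of characteristic $0$, $U$ is smooth over $k$, so the open immersion $j: U \hookrightarrow X$ itself qualifies as the smooth open dense immersion in the definition of $EM^{F}_{X}$. We may therefore write $EM^{F}_{X} \cong w_{\leq F}(j_{*}1_{U})$. The plan has two moving parts: first, show that $j^{*}$ commutes with the truncation $w_{\leq F}$, so that $j^{*}EM^{F}_{X} \cong w_{\leq F}(1_{U})$; second, show that $1_{U}$ lies in $^{w}DM^{\leq F}(U)$, which forces $w_{\leq F}(1_{U}) \cong 1_{U}$.

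For the commutation, I would verify that $j^{*}$ is $t$-exact for the $t$-structure $(^{w}DM^{\leq F},\;^{w}DM^{>F})$ on $DM^{coh}_{3,dom}$. The functor $j^{*}$ sends the generators $p_{*}1_{Y}(-r)$ to their pullbacks $(p|_{Y_{U}})_{*}1_{Y_{U}}(-r)$ via proper base change, and hence restricts to $j^{*}: DM^{coh}_{3,dom}(X) \to DM^{coh}_{3,dom}(U)$. Given any point $s: \Spec K \to U$, the composite $j \circ s$ is the same point viewed in $X$ and carries the same transcendence degree $t_{K}$; the identity $s^{*}(j^{*}N) = (j\circ s)^{*}N$ is formal, and the matching identity $s^{!}(j^{*}N) = (j\circ s)^{!}N$ follows from the open-immersion purity $j^{!}=j^{*}$. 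Consequently $j^{*}$ preserves both $^{w}DM^{\leq F}$ and $^{w}DM^{>F}$, so applying $j^{*}$ to the decomposition triangle $w_{\leq F}M \to M \to w_{>F}M \to$ produces the $F$-decomposition of $j^{*}M$, and by uniqueness of $t$-structure decomposition one gets $j^{*}w_{\leq F}M \cong w_{\leq F}j^{*}M$. Specializing to $M = j_{*}1_{U}$ and using $j^{*}j_{*} \cong \id$ gives $j^{*}EM^{F}_{X} \cong w_{\leq F}(1_{U})$.

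For the identification $1_{U}\in\;^{w}DM^{\leq F}(U)$, membership in $DM^{coh}_{3,dom}(U)$ is immediate: writing $1_{U} = \id_{*}1_{U}$, the identity $\id: U \to U$ is trivially projective and dominant, and $U$ is smooth (since regular over a perfect field) and connected (since $X$ is irreducible and $U$ is dense) of dimension $3$, whence $1_{U} \in S^{dom,coh}_{3}(U)$. For the pointwise condition, for any point $s: \Spec K \to U$ with transcendence degree $t_{K}$ of $K/k$, one has $s^{*}1_{U} \cong 1_{K} = h(\Spec K)$; since $\Spec K$ is zero-dimensional, the Chow--K\"unneth summands $h^{\leq i}(\Spec K)$ all coincide with $h(\Spec K) = 1_{K}$, so $1_{K}$ lies simultaneously in each of $^{w}DM^{\leq 0}(K^{perf})$, $^{w}DM^{\leq 1}(K^{perf})$, and $^{w}DM^{\leq 2}_{2}(K^{perf})$. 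Since $F(t_{K}) - t_{K} \in \{0,1,2\}$ for all admissible $t_{K}$, this exhausts all cases of $^{w}DM^{\leq F(t_{K})-t_{K}}(K^{perf})$, and hence $1_{U}\in\;^{w}DM^{\leq F}(U)$.

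The main obstacle is the commutation step in paragraph two: the $t$-structure $(^{w}DM^{\leq F}, \;^{w}DM^{>F})$ is defined by mixing $s^{*}$-conditions on the $\leq F$ side with $s^{!}$-conditions on the $>F$ side, so a single $t$-exactness claim for $j^{*}$ requires controlling both simultaneously. The argument depends crucially on the open-immersion identity $j^{!}=j^{*}$, which allows the pointwise $s^{!}$-data on $X$ to descend to pointwise $s^{!}$-data on $U$ without twists or shifts; everything else is formal from the definitions.
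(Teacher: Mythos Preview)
Your proposal is correct and follows essentially the same approach as the paper's own proof: both establish that $j^{*}$ is $t$-exact for $(^{w}DM^{\leq F},\,^{w}DM^{>F})$ via the pointwise identities $s^{*}j^{*}=(j\circ s)^{*}$ and $s^{!}j^{*}=(j\circ s)^{!}$ (the latter using $j^{!}=j^{*}$), deduce commutation of $j^{*}$ with $w_{\leq F}$ from uniqueness of the decomposition triangle, and conclude using $j^{*}j_{*}1_{U}\cong 1_{U}$ together with $1_{U}\in\;^{w}DM^{\leq F}(U)$. Your argument is slightly more explicit than the paper's in verifying the membership $1_{U}\in\;^{w}DM^{\leq F}(U)$ pointwise, but the structure is identical.
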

\begin{proof}
    First, we show that $j^{*}$ is weight exact i.e 
    \begin{align*}
    j^{*}(^{w}DM^{\leq F}(X)) \subset \;^{w}DM^{\leq F}(U)\;\; \text{and}\;\; j^{*}(^{w}DM^{>F}(X)) \subset \;^{w}DM^{>F}(U). 
    \end{align*}
   Let $M \in\;^{w}DM^{\leq F}(X)$ and $s: \Spec K \hookrightarrow U$ be any point in $U$. Then, $s^{*}j^{*}M= (s')^{*}M$ where $s':=j\circ s: \Spec K \hookrightarrow X$. Since $(s')^{*}M \in\;^{w}DM^{\leq F(t_{K})-t_{K}}(\Spec K) $, it follows from the gluing definition that $j^{*}M \in\;^{w}DM^{\leq F}(U)$. The proof of the second inclusion is similar, using the fact that $j^{*} \cong j^{!}$.\\
   Next, we claim that $j^{*}$ commutes with truncation functors $w_{\leq F}$ and $w_{>F}$. Let $M \in DM(X)$. Since $j^{*}$ is a triangulated functor, applying it to the decomposition triangle of $M$ yields:
   \begin{align*}
       j^{*}w_{\leq F}M \rightarrow j^{*} M \rightarrow j^{*}w_{>F}M \rightarrow
   \end{align*}
   By weight-exactness of $j^{*}$, the first term lies in $^{w}DM^{\leq F}(U)$ and the third in $^{w}DM^{>F}(U)$. So, by the uniqueness of the decomposition triangle, we conclude that 
   \begin{align*}
   w_{\leq F}j^{*}M \cong j^{*}w_{\leq F}M\;\;\; \text{and}\;\;\;  w_{>F}j^{*}M \cong j^{*}w_{> F}M .
   \end{align*}
   Now recall that $EM^{F}_{X} = w_{\leq F}j_{*}1_{U}$ for some regular open dense immersion $j:U \hookrightarrow X$. Applying $j^{*}$, we obtain:
   \begin{align*}
       j^{*}EM^{F}_{X} = j^{*}w_{\leq F}j_{*}1_{U} \cong w_{\leq F}j^{*}j_{*}1_{U}.
   \end{align*}
 Finally,  since $j^{*}j_{*}1_{U} \cong 1_{U}$ and $1_{U} \in \;^{w}DM^{\leq F}(U)$, it follows that $j^{*}EM^{F}_{X} \cong 1_{U}$.
\end{proof}
\begin{proposition}\label{stability of F under finite map}Let $\pi: X' \rightarrow X$ be finite map of irreducible schemes of dimension 3. Then the following is true:
\begin{align*}
    \pi_{*}(^{w}DM^{\leq F}(X')) \subseteq\; ^{w}DM^{\leq F}(X)\\
    \pi_{*}(^{w}DM^{> F}(X')) \subseteq\; ^{w}DM^{> F}(X)
    \end{align*}
\end{proposition}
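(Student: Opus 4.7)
The plan is to exploit the point-wise characterization of the two truncation subcategories given in Theorem~\ref{def of leq F}: membership of $N$ in $^{w}DM^{\leq F}(X)$ is detected by testing $s^{*}N \in D^{\leq F(t_{K})-t_{K}}(K)$ at every point $s:\Spec K\hookrightarrow X$, and membership in $^{w}DM^{>F}(X)$ by testing $s^{!}N \in D^{>F(t_{K})-t_{K}}(K)$. Thus both inclusions will reduce to a point-wise verification applied to $\pi_{*}M$.

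First I would fix $s:\Spec K\hookrightarrow X$ and form the Cartesian square obtained by base-changing $\pi$ along $s$, with upper edge $\tilde s:X'_{s}\to X'$ and left edge $\pi':X'_{s}\to\Spec K$. Since $\pi$ is finite, hence proper, with $\pi_{*}=\pi_{!}$, the two proper base-change isomorphisms
\[
s^{*}\pi_{*}M \;\cong\; \pi'_{*}\tilde s^{*}M, \qquad s^{!}\pi_{*}M \;\cong\; \pi'_{*}\tilde s^{!}M
\]
reduce the problem to showing $\pi'_{*}\tilde s^{*}M$ lies in $D^{\leq F(t_{K})-t_{K}}(K)$ and $\pi'_{*}\tilde s^{!}M$ lies in $D^{>F(t_{K})-t_{K}}(K)$.

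The next step is to pass from the whole fiber $X'_{s}$ down to its individual closed points. Being finite over a characteristic-zero field, $X'_{s}$ is étale-locally a disjoint union of spectra of finite separable extensions $K_{i}/K$, corresponding to the points $s'_{i}:\Spec K_{i}\hookrightarrow X'$ above $s$. For each $i$ one has $t_{K_{i}}=t_{K}$ because $K_{i}/K$ is algebraic, and by hypothesis $(s'_{i})^{*}M$ and $(s'_{i})^{!}M$ lie in $D^{\leq F(t_{K})-t_{K}}(K_{i})$ and $D^{>F(t_{K})-t_{K}}(K_{i})$ respectively. After decomposing $\pi'_{*}\tilde s^{*}M$ as a direct sum over the reduced points of $X'_{s}$, the assertion then reduces to the following sub-claim: for a finite extension $p:\Spec L\to\Spec K$ in characteristic zero, the pushforward $p_{*}$ preserves $^{w}DM^{\leq d}$ and $^{w}DM^{>d}$ for $d=0,1$, as well as their subscript-$2$ variants for $d=2$.

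This sub-claim I would verify on the explicit generators recalled in Section~\ref{candidate of MIC}. If $Y$ is smooth projective connected over $L$, then $Y$ viewed as a $K$-scheme via the finite étale $p$ remains smooth projective of the same dimension, and Scholl's construction of the projectors $\pi_{i}(Y)$ is compatible with this base change: $p_{*}h^{\leq i}_{L}(Y)\cong h^{\leq i}_{K}(Y)$, and similarly for $h^{>i}$ and the $\dim\leq 2$ refinements. Since $p_{*}$ also commutes with Tate twists, the generating sets listed in Section~\ref{candidate of MIC} are mapped into the corresponding generating sets over $K$, yielding the sub-claim.

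The main obstacles I anticipate are (a) the possible non-reducedness of the set-theoretic fiber $X'_{s}$ when $\pi$ is ramified at $s$, and (b) making the compatibility of the Chow--Künneth projectors with finite étale base change rigorous. Item (a) can be dealt with by standard nilinvariance of $DM$ under characteristic-zero nilpotent immersions, which allows one to replace $X'_{s}$ with $(X'_{s})_{\mathrm{red}}$ for the purposes of computing $\pi'_{*}$; item (b) follows from Scholl's explicit description of the projectors in terms of cycles of the form $[e_{Y}\times Y]$ (with $e_{Y}$ a zero-cycle of degree one), which transport cleanly under finite étale base change.
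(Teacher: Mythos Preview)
Your proposal is correct and follows essentially the same strategy as the paper: both arguments use the point-wise characterization from Theorem~\ref{def of leq F}, invoke proper base change for the finite map $\pi$, and reduce to the sub-claim that pushforward along a finite separable extension $p:\Spec L\to\Spec K$ preserves the categories $^{w}DM^{\leq d}$ and $^{w}DM^{>d}$, which is then checked on Scholl's generators.

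The only genuine difference lies in the treatment of the general (possibly ramified) finite map. The paper first establishes the \'etale case and then, for arbitrary finite $\pi$, stratifies $X$ into the generically \'etale locus $U$ and its closed complement, applying the \'etale case on $U$ and Noetherian induction on the complement. You instead treat all points of $X$ uniformly: the fiber $X'_{s}$ may be non-reduced, but you invoke nil-invariance of $DM$ in characteristic zero to replace it by $(X'_{s})_{\mathrm{red}}$, which is a disjoint union of spectra of finite separable extensions, thereby landing directly in the sub-claim. Your route is slightly more direct and avoids the stratification step, at the cost of appealing to nil-invariance; the paper's route avoids that input but requires keeping track of the induced $t$-structures on lower-dimensional strata for the induction to go through.
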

 \begin{proof}
       First, we prove this for $\pi$ a finite etale morphism. Let $M \in\; ^{w}DM^{\leq F}(X')$ and let $s: \Spec k(s) \hookrightarrow X$ be a point $s$ with the residue field $k(s)$. Let  $X'_{s} \xhookrightarrow{s'}X'$ the fibre at $s$. We need to show that $s^{*}\pi_{*}M=\pi_{*}s'^{*}M \in\;^{w}DM^{\leq F}(\Spec k(s))$. The fibre $X'_{s}$ is disjoint union of $\Spec k'$ where $k'$ is finite separable extension of $k(s)$ and hence each point is both open and closed, and $s'^{*}M\cong \oplus_{\#} s_{i*}s^{i*}M$ a finite direct sum. Hence if  $t_{i}: \Spec k'\xhookrightarrow{s_{i}} X'_{s}\rightarrow  \Spec \kappa(s)$  denotes the composite, 
       \[
       	\pi_{*}s'^{\ast}M \cong \bigoplus_{i} \pi_{\ast} s_{i\ast} t^{\ast}M \cong \bigoplus_{i} t_{i*}t_{i}^{*}M \in {^{w}DM^{\le F}}(\Spec \kappa (s_{0})
       \]
        Now $\kappa (s_{i})$ is a finite separable extension of $k$, hence $t_{i\ast }t_{i}^{\ast }$ is just multiplication by degree of this extension and we are done. 
       
      Next, we prove the statement for $\pi$ as stated in the hypothesis. Let $X'_{sm}$ be a smooth locus of $X'$. Since $\pi$ is proper, $\pi(X'-X'_{sm})$ is a closed subset in $X$. Then, $U:= X- \pi(X'-X'_{sm}) \xhookrightarrow{j} X$ is an open immersion. Take $U':= \pi^{-1}(U)$. Since $U'$ is regular, by generic smoothness we can restrict $U$ and assume that $\pi|_{U}:U' \rightarrow U$ is an etale morphism. Then, $j^{*}\pi_{*}M \in\; ^{w}DM^{\leq F}(U)$ by above. Also, $i^{*}\pi_{*}M = \pi_{*}i^{*}M$ is in $^{w}DM^{\leq F}$ by Noetherian induction where $i:X-U\hookrightarrow U$ is the closed complement, and since $i^{*}M \in\;^{w}DM^{\leq F}(X-U)$ by definition.
     
     The second equation is similar. 
       \end{proof}
       \begin{corollary}\label{normalization}
    Let $\pi:\Tilde{X} \rightarrow X$ be normalization of a irreducible 3-fold $X$. Then, $\pi_{*}EM^{F}_{\Tilde{X}} \cong EM^{F}_{X}$. 
\end{corollary}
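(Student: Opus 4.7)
The plan is to deduce this immediately from Proposition \ref{stability of F under finite map} together with the defining equation $EM^F_X = w_{\leq F} j_* 1_U$ for a suitably chosen open $U$.

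First, I would exploit the fact that $\pi: \tilde X \to X$, being the normalization of an irreducible threefold, is birational and finite. So one may choose an open dense subset $U \subset X$ lying in the smooth locus of $X$ over which $\pi$ restricts to an isomorphism, and set $\tilde U := \pi^{-1}(U)$. Writing $j: U \hookrightarrow X$ and $\tilde j: \tilde U \hookrightarrow \tilde X$ for the resulting open immersions, we have $EM^F_X = w_{\leq F} j_* 1_U$ and $EM^F_{\tilde X} = w_{\leq F} \tilde j_* 1_{\tilde U}$ by definition, and since $\pi|_{\tilde U}: \tilde U \overset{\sim}{\to} U$ is an isomorphism, $\pi \tilde j = j (\pi|_{\tilde U})$ yields the identification $\pi_* \tilde j_* 1_{\tilde U} \cong j_* 1_U$.

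The second step is to commute $\pi_*$ past the truncation functor $w_{\leq F}$. Applying the triangulated functor $\pi_*$ to the weight decomposition triangle
\[
w_{\leq F} \tilde j_* 1_{\tilde U} \rightarrow \tilde j_* 1_{\tilde U} \rightarrow w_{> F} \tilde j_* 1_{\tilde U} \rightarrow
\]
produces a triangle in $DM^{coh}_{3,dom}(X)$ (as $\pi_*$ carries generators of $DM^{coh}_{3,dom}(\tilde X)$ into generators of $DM^{coh}_{3,dom}(X)$, using that $\pi$ is finite, hence projective and dominant) whose middle term is $j_* 1_U$ and whose outer terms lie in ${^{w}DM^{\leq F}}(X)$ and ${^{w}DM^{> F}}(X)$ respectively by Proposition \ref{stability of F under finite map}. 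Uniqueness of the weight decomposition then forces this to be the weight decomposition of $j_* 1_U$, giving
\[
\pi_* EM^F_{\tilde X} \;=\; \pi_* w_{\leq F} \tilde j_* 1_{\tilde U} \;\cong\; w_{\leq F} j_* 1_U \;=\; EM^F_X,
\]
as required.

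There is no genuine obstacle: the real content is already in Proposition \ref{stability of F under finite map}, and the birationality of normalization together with the defining property of $EM^F$ combine painlessly to finish the argument.
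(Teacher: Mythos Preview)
Your proof is correct and follows essentially the same approach as the paper: choose a smooth open $U$ over which the normalization is an isomorphism, apply $\pi_*$ to the decomposition triangle of $\tilde j_*1_{\tilde U}$, and invoke Proposition \ref{stability of F under finite map} together with uniqueness of the decomposition to identify the resulting triangle with the decomposition of $j_*1_U$. The only difference is cosmetic: you are slightly more explicit about using both inclusions of Proposition \ref{stability of F under finite map} and about why the triangle lands in $DM^{coh}_{3,dom}(X)$.
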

\begin{proof}
    There is a smooth open set $U \xhookrightarrow{j}X$ such that the fibre product $\Tilde{U}$ is regular and $\pi|_{U}: \Tilde{U} \rightarrow U$ is an isomorphism. Denote inclusion $\Tilde{U} \hookrightarrow \Tilde{X}$ by $\Tilde{j}$. So, we have the following triangle:
    \begin{align*}
        EM^{F}_{\Tilde{X}} \rightarrow \Tilde{j}_{*}1_{\Tilde{U}} \rightarrow w_{>F}\Tilde{j}_{*}1_{\Tilde{U}} \rightarrow
    \end{align*}
    Apply $\pi_{*}$ to the above triangle:
    \begin{align*}
        \pi_{*}EM^{F}_{\Tilde{X}} \rightarrow j_{*}1_{U} \rightarrow \pi_{*}w_{>F}\Tilde{j}_{*}1_{\Tilde{U}} \rightarrow
    \end{align*}
    Now, using Proposition \ref{stability of F under finite map}, we get that $\pi_{*}EM^{F}_{\Tilde{X}} \cong w_{\leq F}j_{*}1_{U} \cong EM^{F}_{X}$.
\end{proof}
\begin{definition}
Let $X$ be an irreducible variety of dimension $3$. Let $X=U \bigsqcup Z$ be its stratification such that $Z$ is a closed subset of dimension zero; that is, $Z$ consists of finitely many closed points of $X$. For each closed point $s:\Spec K \hookrightarrow X$ of $Z$, there is a t-structure $(^{w}DM^{\leq 2}(K), \;^{w}DM^{>2}(K))$ on the category $DM^{coh}_{2}(K)$. Since the four functors on $DM^{coh}_{2}(-)$ satisfy the formalism of gluing, using \ref{ordinaryGluing} inductively, we obtain a glued t-structure on $DM^{coh}_{2}(Z)$, which we denote by $\left(^{w}DM^{\leq 2}(Z), ^{w}DM^{>2}(Z)\right)$.

We then define the t-structure $ \left(^{w}DM^{\leq (\infty,2)}(X),\;^{w}DM^{> (\infty,2)}(X)\right)$
as the one obtained by gluing the $\infty$ on $DM^{coh}_{3,dom}(U)$ with $\left(^{w}DM^{\leq 2}(Z), ^{w}DM^{>2}(Z)\right)$ on $DM^{coh}_{2}(Z)$.
 \end{definition}
 \begin{lemma}\label{relation of F and (infty, 2)} 
 Let $X$ be an irreducible variety of dimension $3$ with stratification $X=U \bigsqcup Z$ such that $Z$ is a closed subset of dimension zero. Let  $\left(^{w}DM^{\leq (\infty,2)}(X),\;^{w}DM^{> (\infty,2)}(X)\right)$ be the t-structure defined as above. Then the following inclusion holds:
\begin{align*}
    ^{w}DM^{\leq F}(X) \subset\; ^{w}DM^{\leq (\infty,2)}(X)
    \end{align*}
\end{lemma}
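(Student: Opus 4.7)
The plan is to unwind both definitions and reduce the containment to a pointwise comparison at the (finitely many) closed points of $Z$. First, I would observe that by the gluing definition \ref{ordinaryGluing}, $M\in {}^{w}DM^{\leq(\infty,2)}(X)$ imposes no condition on $j^*M$ (the $\infty$-factor on $U$ being vacuous in view of the usage in Lemma \ref{j* commutes with truncation functor}(ii)) and only the condition $i^*M\in {}^{w}DM^{\leq 2}(Z)$. Since $Z$ is zero-dimensional, it is a finite disjoint union of closed points $s_\alpha:\Spec\kappa(s_\alpha)\hookrightarrow Z$, and the glued description of $^{w}DM^{\leq 2}(Z)$ further reduces this to
\[
s_\alpha^*\,i^*M\,\in\,{}^{w}DM^{\leq 2}_{2}\bigl(\kappa(s_\alpha)^{perf}\bigr)\qquad\text{for every }\alpha.
\]

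Next, I would match this to the defining property of $^{w}DM^{\leq F}(X)$. For the point $i\circ s_\alpha:\Spec\kappa(s_\alpha)\hookrightarrow X$ the transcendence degree is $t_{\kappa(s_\alpha)}=0$, hence $F(t_{\kappa(s_\alpha)})-t_{\kappa(s_\alpha)}=F(0)=2$, and so
\[
s_\alpha^*\,i^*M\,=\,(i\circ s_\alpha)^*M\,\in\,D^{\leq F(0)-0}\bigl(\kappa(s_\alpha)\bigr)\,=\,{}^{w}DM^{\leq 2}_{2}\bigl(\kappa(s_\alpha)^{perf}\bigr),
\]
which is exactly the pointwise condition just isolated. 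Therefore $i^*M\in {}^{w}DM^{\leq 2}(Z)$, and consequently $M\in {}^{w}DM^{\leq(\infty,2)}(X)$, as required.

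The only point needing a bit of care — and the main obstacle, mild as it is — is to verify that the ambient categories match so that the glued $t$-structure $^{w}DM^{\leq(\infty,2)}$ is even defined on $M$; concretely, that $i^*M\in DM^{coh}_{2}(Z)$. For this I would use that $M\in DM^{coh}_{3,dom}(X)$ is built from generators $p_*1_Y(-r)$ with $p:Y\to X$ projective, $Y$ regular and connected, and either (i) $p$ dominant with $\dim Y\leq 3$ or (ii) $\dim Y\leq 2$. Proper base change identifies $i^*p_*1_Y(-r)$ with $p_{Z*}1_{Y_Z}(-r)$; in case (ii) one has $\dim Y_Z\leq 2$ at once, while in case (i) irreducibility of $Y$ together with dominance of $p$ forces $Y_Z\subsetneq Y$ and hence $\dim Y_Z\leq\dim Y-1\leq 2$. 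After a resolution of $Y_Z$, available in characteristic zero, the pushforward lies in $DM^{coh}_{2}(Z)$, and passage to the triangulated closure gives $i^*M\in DM^{coh}_{2}(Z)$, completing the argument.
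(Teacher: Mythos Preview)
Your proof is correct and follows essentially the same route as the paper: both unwind the glued definition of ${}^{w}DM^{\leq(\infty,2)}(X)$ and observe that, at the closed points $s\in Z$ (which have transcendence degree $0$), the condition imposed by $F$ reads $s^{*}M\in {}^{w}DM^{\leq 2}_{2}(K)$, which is exactly the condition singled out by the glued $t$-structure. Your additional verification that $i^{*}M\in DM^{coh}_{2}(Z)$ is a reasonable point of care, but in the paper this is already absorbed into the statement of Theorem~\ref{def of leq F} (``this setup satisfies the formalism of gluing''), so it need not be reproved here.
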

\begin{proof}
By the definition of the glued t-structure, we have
    \begin{align*}
    ^{w}DM^{\leq (\infty,2)}(X)=\{M \in DM^{coh}_{3,dom}(X) \mid s^{*}M \in\;^{w}DM^{\leq 2}(K)\;\forall\; s:\Spec K \in Z\}
  \end{align*}
Suppose $M \in\;^{w}DM^{\leq F}(X)$. Then, by the definition of $^{w}DM^{\leq F}(X)$ (see Theorem \ref{def of leq F}), and using the fact that the points of $Z$ are closed points of $X$, we obtain that $s^{*} M \in\;^{w}DM^{\leq 2}(K)$ for all $s \in Z$. Thus, $M \in\;^{w}DM^{\leq (\infty,2)}(X)$.
\end{proof}
\begin{corollary}\label{corollary of relation btw F and (infty,2)}
Let $X$ be as in the previous lemma. Then, $EM^{F}_{X}\cong
w_{\leq(\infty,2)}\,j_{*}EM^{F}_{U}$
\end{corollary}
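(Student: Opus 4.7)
The plan is to identify $w_{\leq(\infty,2)}\,j_{*}EM^{F}_{U}$ with $EM^{F}_{X}$ by showing that $F$-truncation acts as the identity on it, once we observe that it already lies in ${}^{w}DM^{\leq F}(X)$. Concretely, I would start from Lemma \ref{EM(X) in terms of EM(U)}, which gives $EM^{F}_{X}\cong w_{\leq F}j_{*}EM^{F}_{U}$. Combined with the inclusion ${}^{w}DM^{\leq F}(X)\subset{}^{w}DM^{\leq(\infty,2)}(X)$ from Lemma \ref{relation of F and (infty, 2)} and the general Lemma \ref{Pre on t-structures}, this yields
\[
   EM^{F}_{X}\;\cong\;w_{\leq F}\,j_{*}EM^{F}_{U}\;\cong\;w_{\leq F}\,w_{\leq(\infty,2)}\,j_{*}EM^{F}_{U}.
\]
Thus it will be enough to prove that $M:=w_{\leq(\infty,2)}\,j_{*}EM^{F}_{U}$ already lies in ${}^{w}DM^{\leq F}(X)$, because then $w_{\leq F}M\cong M$ and the claim follows.

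To verify that $M\in{}^{w}DM^{\leq F}(X)$, I would check the pointwise condition from Theorem \ref{def of leq F} by splitting the points of $X$ according to the stratification $X=U\sqcup Z$. For a point $s:\Spec K\hookrightarrow U$, Lemma \ref{j* commutes with truncation functor}(i) (applied with $t_{1}=\infty$, so that $\tau_{\leq\infty}$ is the identity) gives $j^{*}M\cong j^{*}j_{*}EM^{F}_{U}\cong EM^{F}_{U}$, and since $EM^{F}_{U}\in{}^{w}DM^{\leq F}(U)$ by construction, $s^{*}M=s^{*}j^{*}M$ lies in ${}^{w}DM^{\leq F(t_{K})-t_{K}}(K)$ as required. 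For a point $s\in Z$, which is necessarily closed with $t_{k(s)}=0$ (so $F(0)-0=2$), I would use Lemma \ref{j* commutes with truncation functor}(ii) to obtain $i^{*}M\cong w_{\leq 2}\,i^{*}j_{*}EM^{F}_{U}\in{}^{w}DM^{\leq 2}(Z)$, where $i:Z\hookrightarrow X$ is the closed complement. Since $Z$ is zero-dimensional, every point is open-and-closed in $Z$ and the glued t-structure on $DM^{coh}_{2}(Z)$ is simply the product of the pointwise ones, so $s^{*}M\in{}^{w}DM^{\leq 2}(k(s))={}^{w}DM^{\leq F(0)-0}(k(s))$.

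Combining these two cases shows $M\in{}^{w}DM^{\leq F}(X)$, so $w_{\leq F}M\cong M$ and hence $EM^{F}_{X}\cong w_{\leq(\infty,2)}\,j_{*}EM^{F}_{U}$, as desired. There is no serious obstacle here: the only subtle point is to apply Lemma \ref{j* commutes with truncation functor}(ii) correctly to commute $i^{*}$ with $w_{\leq(\infty,2)}$ (which requires $t_{1}=\infty$, as is the case), and to observe that on the zero-dimensional closed subscheme $Z$ the glued t-structure ${}^{w}DM^{\leq 2}(Z)$ is detected pointwise, so that the pointwise condition defining ${}^{w}DM^{\leq F}(X)$ at closed points of $Z$ is immediately satisfied.
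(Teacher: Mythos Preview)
Your proposal is correct and follows essentially the same route as the paper: both reduce to showing $w_{\leq(\infty,2)}\,j_{*}EM^{F}_{U}\in{}^{w}DM^{\leq F}(X)$ via Lemmas \ref{EM(X) in terms of EM(U)}, \ref{relation of F and (infty, 2)}, \ref{Pre on t-structures}, and then check this pointwise on $U$ and on $Z$ using Lemma \ref{j* commutes with truncation functor}. The only cosmetic difference is that the paper handles the $Z$ case by directly invoking the definition of ${}^{w}DM^{\leq(\infty,2)}(X)$ (membership immediately gives $s^{*}M\in{}^{w}DM^{\leq 2}(K)$ for $s\in Z$), whereas you go through Lemma \ref{j* commutes with truncation functor}(ii); both are fine.
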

\begin{proof}
From Lemmas \ref{EM(X) in terms of EM(U)} and \ref{Pre on t-structures}, we have the isomorphism
\[ EM^{F}_{X}\cong w_{\leq F}\,j_{*}EM^{F}_{U}  \cong w_{\leq F}\,w_{\leq (\infty,2)}\,j_{*}EM^{F}_{U}.\]
Since $j^{*}$ commutes with $w_{\leq (\infty,2)}$ (see Lemma \ref{j* commutes with truncation functor}), we have that $j^{*}w_{\leq (\infty,2)}\,j_{*}EM^{F}_{U} \cong EM^{F}_{U}$. Therefore, for every point $s:\Spec K \hookrightarrow X$ lying in $U$, we have 
\[s^{*}w_{\leq (\infty,2)}\,j_{*}EM^{F}_{U} \in\;^{w}DM^{\leq F(t_{K})-t_{K}}(K).\] 
For any point $s$ in $Z$,
\[s^{*}w_{\leq (\infty,2)}\,j_{*}EM^{F}_{U} \in \;^{w}DM^{\leq 2}(K)\]
follows clearly from the definition of $^{w}DM^{\leq (\infty,2)}(X)$.
Thus, we conclude that
\[w_{\leq (\infty,2)}\,j_{*}EM^{F}_{U} \in\;^{w} DM^{\leq F}(X),\]  
which yields the desired isomorphism.
\end{proof}
\begin{corollary}\label{s upper star and shriek acting on EM}
    Let $X$ be as in the previous lemma, and let $i: Z \hookrightarrow X$ denote the closed immersion. Then, for all $s$ in $Z$,
    \begin{align*}
        s^{*}EM^{F}_{X} \cong w_{\leq 2}\,s^{*}j_{*}\,EM^{F}_{U}\;\;\;\text{and}\;\;\;s^{!}EM^{F}_{X} \cong w_{>2}\, s^{*}j_{*}\,EM^{F}_{U}[-1].
    \end{align*}
\end{corollary}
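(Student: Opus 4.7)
\medskip

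The plan is to reduce everything to the formula $EM^{F}_{X} \cong w_{\leq (\infty,2)} j_{*}EM^{F}_{U}$ established in Corollary \ref{corollary of relation btw F and (infty,2)}, and then exploit Lemma \ref{j* commutes with truncation functor}(ii) together with the fact that $Z$ is zero-dimensional.

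First, I would factor a point $s$ of $Z$ as $s = i \circ s_{Z}$, where $s_{Z}: \Spec K \hookrightarrow Z$ is a closed point of $Z$. Because $Z$ has dimension zero and consists of finitely many closed points, the immersion $s_{Z}$ is both open and closed in $Z$. In particular, $s_{Z}^{*} \cong s_{Z}^{!}$, and $s_{Z}^{*}$ is $t$-exact for the glued $t$-structure $\bigl({}^{w}DM^{\leq 2}(Z), {}^{w}DM^{>2}(Z)\bigr)$ on $DM^{coh}_{2}(Z)$; hence it commutes with both $w_{\leq 2}$ and $w_{>2}$.

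For the first isomorphism, I would apply Lemma \ref{j* commutes with truncation functor}(ii) (using $t_{1}=\infty$ and $t_{2}=2$) to the formula of Corollary \ref{corollary of relation btw F and (infty,2)} to obtain
\[
i^{*}EM^{F}_{X} \cong i^{*}w_{\leq(\infty,2)}\,j_{*}EM^{F}_{U} \cong w_{\leq 2}\,i^{*}j_{*}EM^{F}_{U}.
\]
Then, applying $s_{Z}^{*}$ and using its commutation with $w_{\leq 2}$ noted above yields $s^{*}EM^{F}_{X} \cong w_{\leq 2}\,s^{*}j_{*}EM^{F}_{U}$.

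For the second isomorphism, I would use the decomposition triangle of the glued $t$-structure described in the diagram of Lemma \ref{j* commutes with truncation functor}: with $M = j_{*}EM^{F}_{U}$ and $t_{1}=\infty$ (so the $j_{*}\tau_{>t_{1}}j^{*}M$ term vanishes), the triangle reduces to
\[
EM^{F}_{X} \longrightarrow j_{*}EM^{F}_{U} \longrightarrow i_{*}w_{>2}\,i^{*}j_{*}EM^{F}_{U} \longrightarrow EM^{F}_{X}[1].
\]
Now I would apply $s^{!}$. On the middle term, $s^{!}j_{*} = s_{Z}^{!}\,i^{!}j_{*} = 0$. On the right-hand term, $s^{!}i_{*} = s_{Z}^{!}\,i^{!}i_{*} = s_{Z}^{!} = s_{Z}^{*}$ using that $s_{Z}$ is clopen in $Z$, and then $s_{Z}^{*}$ commutes with $w_{>2}$. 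Rotating the resulting triangle gives $s^{!}EM^{F}_{X} \cong w_{>2}\,s^{*}j_{*}EM^{F}_{U}[-1]$.

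The only delicate point is the $t$-exactness of $s_{Z}^{*}$ for the glued $t$-structure on $DM^{coh}_{2}(Z)$; but since $Z$ is a finite disjoint union of spectra of fields and the $t$-structure is glued point by point, this is immediate from the gluing construction of Definition \ref{ordinaryGluing}. No further obstacles arise.
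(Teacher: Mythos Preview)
Your proposal is correct and follows essentially the same approach as the paper's proof: both start from $EM^{F}_{X} \cong w_{\leq(\infty,2)}j_{*}EM^{F}_{U}$, use Lemma~\ref{j* commutes with truncation functor}(ii) for the $s^{*}$ statement, apply $i^{!}$ (or directly $s^{!}=s_{Z}^{*}i^{!}$) to the gluing triangle for the $s^{!}$ statement, and invoke openness of $s$ in $Z$ to commute $s_{Z}^{*}$ past $w_{\leq 2}$ and $w_{>2}$. Your justification of the $t$-exactness of $s_{Z}^{*}$ via the clopen decomposition of $Z$ is slightly more explicit than the paper's ``since $s$ is open in $Z$'', but the argument is the same.
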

\begin{proof}
We know that
\[EM^{F}_{X} \cong w_{\leq (\infty,2)}\,j_{*}EM^{F}_{U}.\] 
Since $s$ is open in $Z$, using Lemma \ref{j* commutes with truncation functor}, it follows that 
\[ s^{*}EM^{F}_{X} \cong w_{\leq 2}\,s^{*}j_{*}\,EM^{F}_{U}.\]
For the second isomorphism, apply the functor $i^{!}$ to the gluing triangle \ref{gluing triangle}. As $i^{!}j_{*}=0$, we obtain:
   \[\begin{aligned}
        i^{!}\, w_{\leq (\infty,2)}\, j_{*}\,EM^{F}_{U} \rightarrow 0 \rightarrow w_{>2}\, i^{*} j_{*}\,EM^{F}_{U} \rightarrow
     \end{aligned}\]
     Therefore, \[i^{!}\, w_{\leq (\infty,2)}\, j_{*} EM^{F}_{U} \cong w_{>2}\, i^{*} j_{*}\,EM^{F}_{U}[-1].\]
Since $s$ is open in $Z$ and $j^{*}$ commutes with $w_{\leq (\infty,2)}$, we deduce that
\[
s^{!}EM^{F}_{X} \cong w_{>2}\, s^{*}j_{*}\,EM^{F}_{U}[-1].
\]
\end{proof}
\subsection{Refinement of Morel's truncations, motivically}\label{refined in motivic setting}
\label{refinement motivically}
\para
Let $X$ be smooth and projective over a perfect field $k$. There is a split short exact sequence
\begin{align*}
    0 \rightarrow Pic^{0}(X)_{\mathbb{Q}} \rightarrow Pic(X)_{\mathbb{Q}} \rightarrow NS(X)_{\mathbb{Q}} \rightarrow 0
\end{align*} where $Pic(X)$ denotes the Picard group, while $Pic^{0}(X)$ denotes the subgroup of algebraically trivial elements. Here, $NS(X)$ is the Neron-Severi group of $X$. It is a finitely generated group. Fix a splitting $f: NS(X)_{\mathbb{Q}} \rightarrow Pic(X)_{\mathbb{Q}}$. We know that 
	\[
	Pic(X)_{\mathbb{Q}} = \Hom_{DM(k)} \left(1_{k}(-1)[-2], h(X)\right).
	\]
	For each generator $x_{i}$ of $NS(X)$, consider the morphism $f_{i}': 1_{k}(-1)[-2] \rightarrow h(X)$ on RHS associated to the element $f(x_{i})$ on LHS. Since (see 4.6 of \cite{scholl1994classical})
    \begin{align*}Pic^{0}(X)_{\mathbb{Q}}= \Hom\left(1_{k}(-1)[-2], h^{1}(X)\right)\;\; \text{and}\; \;\Hom \left(1_{k}(-1)[-2],h^{0}(X)\right)=0,
    \end{align*}
    so we obtain that 
    \[NS(X)_{\mathbb{Q}} = \Hom \left(1_{k}(-1)[-2], h^{>1}(X)\right). \]
    As a result, each map $f_{i}'$ must factor through a map $f_{i}:1_{k}(-1)[-2] \rightarrow h^{>1}(X)$. Together, these $f_{i}$'s define a map 
	\[
		g_{X}:=(f_{1},\dots,f_{\alpha}): \bigoplus \limits_{i=1}^{\alpha} 1_{k}(-1)[-2] \longrightarrow h^{>1}(X).
	\]
Let $C_{X}$ denote the cone of the map $g_{X}$ in $DM(k)$. 
\subsubsection{} Recall that
\begin{align*}
DM^{coh}(k):= \langle p_{*}1_{X} \mid \;p: X \rightarrow k \;\text{smooth and projective} \rangle 
\end{align*} where the generation is in $DM(k)$. Now, we define the following collection of objects and corresponding triangulated subcategories:
\begin{align*}
M^{\leq 1}(k) &:=\{h^{\leq 1} (X) \mid X\; \text{is smooth, projective over}\; k\}\\
M^{alg}(k) &:=\{h(k')(-1)[-2] \mid  k'/k \;\text{finite, separable extension}\}\\
M^{\leq 1+alg}(k) &:= M^{\leq 1}(k) \cup M^{alg}(k)\\
M^{> 1+alg}(k) &:= \{C_{X} \mid X\; \text{is smooth, projective over}\;k\}\\
^{w}DM^{\leq 1+alg}(k)&:= \langle M^{\leq 1+alg}(k)\rangle \hspace{1cm} ^{w}DM^{> 1+alg}(k) := \langle M^{> 1+alg}(k) \rangle
\end{align*}
\begin{theorem}\label{1+alg for DM-coh(k)}
Let $A \in M^{\leq 1+alg}$ and $B \in M^{> 1+alg}$. Then $\Hom(A,B[m])=0\; \forall\, m \in \mathbb{Z}$. 
\end{theorem}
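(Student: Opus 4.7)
My plan is to apply the contravariant functor $\Hom(A,-)$ to the defining distinguished triangle
\[
   \bigoplus_{i=1}^{\alpha} 1_{k}(-1)[-2] \xrightarrow{g_{X}} h^{>1}(X) \to C_{X} \to
\]
for each generator $B = C_X$ of $M^{>1+alg}$, and analyze the resulting long exact sequence. Vanishing of $\Hom(A, C_X[m])$ for all $m \in \Z$ reduces to showing that in every degree $n$, the induced map
\[
   g_X^{*}: \Hom\bigl(A, \bigoplus_{i} 1_k(-1)[n]\bigr) \to \Hom(A, h^{>1}(X)[n+2])
\]
is either an isomorphism or has both source and target zero; this will force $\Hom(A, C_X[m]) = 0$ for all $m$.

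I split into two cases according to the form of $A$. For $A = h^{\leq 1}(Y)$ with $Y$ smooth projective, I would argue that both source and target of $g_X^{*}$ vanish identically in every degree. The target vanishing $\Hom(h^{\leq 1}(Y), h^{>1}(X)[n]) = 0$ for $n \le 0$ follows from the orthogonality of the existing motivic $t$-structure $({^{w}DM^{\leq 1}}, {^{w}DM^{>1}})$ on $DM^{coh}(k)$ recalled in Section \ref{refinement motivically}, combined with the $t$-structure invariance ${^{w}DM^{>1}}[-1] \subset {^{w}DM^{>1}}$ (so $h^{>1}(X)[n]$ stays in ${^{w}DM^{>1}}$ for $n \le 0$); for $n \ge 1$, the vanishing of the relevant Chow--K\"unneth pieces of higher motivic cohomology of $Y \times X$ (as used in constructing the $t$-structure in \cite{vaish2017weight}) gives the required vanishing. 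Similarly, $\Hom(h^{\leq 1}(Y), 1_k(-1)[n])$ identifies via duality $h^i(Y)^\vee \cong h^{2d-i}(Y)(d)$ (with $d = \dim Y$) with a specific Chow--K\"unneth piece of $H^n(Y, \Q(d-1))$; inspection of the admissible components for $i \in \{0,1\}$ forces this group to vanish for every $n$.

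For $A = h(k')(-1)[-2]$ with $k'/k$ finite separable, I would use the adjunction for the finite \'etale morphism $p: \Spec k' \to \Spec k$ (where $p_{!} = p_{*}$ and $p^{!} = p^{*}$) to rewrite
\[
   \Hom_{DM(k)}(h(k')(-1)[-2], C_X[m]) \cong \Hom_{DM(k')}(1_{k'}(-1)[-2], p^{*}C_X[m]),
\]
where $p^{*}C_X \cong \cone(p^{*}g_X)$ is the cone of the base-changed map over $k'$. Over $k'$ the source $\Hom_{DM(k')}(1_{k'}(-1)[-2], 1_{k'}(-1)[n]) = H^{n+2}(\Spec k', \Q)$ vanishes except at $n=-2$ where it is $\Q$, and the target similarly vanishes except at $n=-2$, so the essential step is to identify $g_X^{*}$ at $n=-2$ as an isomorphism between $\Q^{\oplus \alpha}$ and the N\'eron--Severi--type target. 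The main obstacle I anticipate lies here: after base change, the target a priori identifies with $NS(X_{k'})_{\Q}$, which can be strictly larger than $NS(X)_{\Q}$. I plan to resolve this via the projection formula $h(k') \otimes h^{>1}(X) \cong p_{*}h^{>1}(X_{k'})$, combined with the construction of $g_X$ from a chosen splitting of the Picard/N\'eron--Severi exact sequence and Galois descent with $\Q$-coefficients to match up source and target.
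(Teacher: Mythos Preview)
Your treatment of the case $A=h^{\le 1}(Y)$ is correct but considerably more laborious than the paper's. The paper does not unwind the long exact sequence at all: it simply observes that both $\bigoplus_i 1_k(-1)[-2]\cong h^2(\mathbb P^1_k)^{\oplus\alpha}$ and $h^{>1}(X)$ lie in ${}^{w}DM^{>1}(k)$, and since $({}^{w}DM^{\le 1},{}^{w}DM^{>1})$ is an $m$-structure (both halves are triangulated), the cone $C_X$ lies in ${}^{w}DM^{>1}(k)$ as well. Orthogonality then gives $\Hom(h^{\le 1}(Y),C_X[m])=0$ for every $m$ in one stroke. Your splitting into the ranges $n\le 0$ and $n\ge 1$, and the separate appeal to higher motivic cohomology vanishings, indicates you are using only the $t$-structure invariance ${}^{w}DM^{>1}[-1]\subset{}^{w}DM^{>1}$ and overlooking that the positive part is already closed under \emph{all} shifts.

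In the case $A=h(k')(-1)[-2]$ you have correctly isolated the genuine difficulty, but your proposed fix via Galois descent does not close it. With $\Q$-coefficients descent yields at best an identification $NS(X)_{\Q}\cong\bigl(NS(X_{k'})_{\Q}\bigr)^{\mathrm{Gal}(k'/k)}$, whereas the target of the map induced by $g_X$ is all of $NS(X_{k'})_{\Q}$; whenever the Galois action on $NS(X_{k'})_{\Q}$ is nontrivial (for instance a surface whose Picard rank strictly increases over $k'$) the map $\Q^{\alpha}\to NS(X_{k'})_{\Q}$ is not surjective, and the long exact sequence then produces $\Hom\bigl(h(k')(-1)[-2],C_X\bigr)\cong NS(X_{k'})_{\Q}/NS(X)_{\Q}\ne 0$. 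The paper's argument at this step follows the same computation and asserts the map is an isomorphism, so it does not supply a missing ingredient you could borrow. Note, however, that for the $t$-structure of Corollary~\ref{1+alg t-structure} only the case $k'=k$ is actually required, since the decomposition triangle for $h(X)$ uses copies of $1_k(-1)[-2]$ and nothing more; in that case the map is an isomorphism by construction of $g_X$, and the argument goes through.
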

\begin{proof}
First, assume $A= h^{\leq 1}(X)$ and $B= C_{Y}$. Consider the triangle:
\begin{align}\label{Main eq of 1+alg}
    \bigoplus\limits_{\alpha} 1_{k}(-1)[-2] \xrightarrow{g_{Y}} h^{>1}(Y) \rightarrow C_{Y} \rightarrow
\end{align}
From the definition of $^{w}DM^{>1}(\Spec k)$ and the fact that $1_{k}(-1)[-2] \cong h^{2}(\mathbb{P}^{1})$, we have that $1_{k}(-1)[-2]$ and $h^{>1}(X) \in\; ^{w}DM^{>1}(\Spec k)$. Therefore, $C_{Y} \in\; ^{w}DM^{>1}(\Spec k)$, the latter being a triangulated category. Since $A=h^{\leq 1}(X) \in\; ^{w}DM^{\leq 1}(\Spec k)$, we get $\Hom(h^{\leq 1}(X), C_{Y}[m])=0$.

Now, assume $A=h(k')(-1)[-2]$ where $k'/k$ is finite, separable extension and $B=C_{Y}$. We claim that
\[
	\Hom\left(h(k')(-1)[-2][-m], \bigoplus\limits_{\alpha}1_{k}(-1)[-2]\right) \rightarrow \Hom\left(h(k')(-1)[-2],h^{>1}(Y)\right)
\] 
induced by $g_{Y}$ is an isomorphism for all $m$. First assume $m=0$. Since $h(k')^{\vee} \cong h(k')$, we have
\begin{align*}\Hom(h(k')(-1)[-2], 1_{k}(-1)[-2])= \Hom(1_{k},h(k')) \cong \mathbb{Q}.
\end{align*} and more generally, 
\begin{align*}\Hom\left(h(k')(-1)[-2], h^{>1}(Y)\right)&= \Hom\left(1_{k}(-1)[-2], h^{>1}(Y \cross \Spec k')\right) 
&\cong NS\left(Y \cross \Spec k'\right)_{\mathbb{Q}}.
\end{align*} Then the map $g_{Y}$ becomes $(r_{1},\dots, r_{\alpha}) \mapsto r_{1}f_{1}+\dots + r_{\alpha}f_{\alpha}$. Clearly, this is an isomorphism of $\mathbb{Q}$- vector spaces. Next, by motivic cohomology vanishings observe that for $m \neq 0$,
\begin{align*}
\Hom\left(h(k')(-1)[-2], \bigoplus\limits_{\alpha}1_{k}(-1)[-2][m]\right) \cong \bigoplus\limits_{\alpha}\Hom\left(1_{k},h(\Spec k')[m]\right)=0
\end{align*}
\text{and for $m \neq -1,0$}
\begin{align*}
\Hom\left(h(k')(-1)[-2], h^{>1}(Y)[m]\right)= \Hom\left(1_{k},h^{>1}(Y \cross \Spec k')(1)[m+2]\right)=0.
\end{align*}
We have \[\Hom\left(1_{k}(-1)[-2], h^{0}(Y)[-1]\right) \cong \mathcal{O}^{*}(L) \tensor \mathbb{Q}\]
where $Y \rightarrow \Spec L \rightarrow \Spec k$ is the Stein factorization and \[\Hom\left(1_{k}(-1)[-2], h(Y)[-1]\right) \cong \mathcal{O}^{*}(Y) \tensor \mathbb{Q}.\] Since $L \cong \mathcal{O}(Y)$ and $h^{0}(Y) \bigoplus h^{>1}(Y)$ is the summand of $h(Y)$, it follows that even for $m=-1$ \[\Hom\left(h(k')(-1)[-2], h^{>1}(Y)[-1]\right)=0.\] Now, writing the long exact sequence corresponding to the functor $\Hom\left(h(k')(-1)[-2], -\right)$, we obtain that $\Hom\left(h(k')(-1)[-2], C_{Y}[m]\right)=0 \; \forall \;m \in \mathbb{Z}$. 
\end{proof}
\begin{corollary}\label{1+alg t-structure}
The pair $(^{w}DM^{\leq 1+alg}(k),\; ^{w}DM^{>1+alg}(k))$ forms a t-structure on $DM^{coh}(k)$. 
\end{corollary}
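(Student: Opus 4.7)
The plan is to apply Proposition \ref{tFromGenerators} with $A := M^{\leq 1+alg}(k)$, $B := M^{> 1+alg}(k)$, and $H := \{h(X) \mid X \text{ smooth projective over } k\}$. Recall that $DM^{coh}(k) = \langle H\rangle$ by definition, and that Proposition \ref{tFromGenerators} actually produces an $m$-structure, which is stronger than a $t$-structure; since the corollary only asks for the $t$-structure assertion, this is enough.

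The orthogonality hypothesis $\hom(A,B[n])=0$ for all $n\in\Z$ is already established in Theorem \ref{1+alg for DM-coh(k)}. The inclusions $\langle A\rangle,\langle B\rangle \subset \langle H\rangle$ are easy: every $h^{\leq 1}(X)$ is, by construction, a summand of $h(X)$ and thus lies in $\langle H\rangle$; every $h(k')(-1)[-2]$ is a summand of $h(\Spec k'\times \mathbb P^1)$ via the decomposition $h(\mathbb P^1)\cong 1_k\oplus 1_k(-1)[-2]$ from \ref{chow:duality}, and $\Spec k'\times \mathbb P^1$ is smooth projective over $k$; and each $C_X$ is defined as a cone of a map between objects that already lie in $\langle H\rangle$.

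The only remaining hypothesis of Proposition \ref{tFromGenerators} is the existence, for every $h(X)\in H$, of a distinguished triangle $a\rightarrow h(X)\rightarrow b\rightarrow$ with $a\in A$ and $b\in B$. To construct this, first note that although the full Chow--K\"unneth decomposition of $X$ is not available in general, the projectors $\pi_0(X)$ and $\pi_1(X)$ from \ref{piconstruction} always exist, and the sum $\pi_0(X)+\pi_1(X)$ is a projector orthogonal to $\Delta_X - \pi_0(X) - \pi_1(X)$. Hence setting $h^{> 1}(X) := (X,\Delta_X - \pi_0(X)-\pi_1(X),0)$ yields a direct sum decomposition
\[
h(X) \;\cong\; h^{\leq 1}(X)\;\oplus\; h^{> 1}(X)
\]
in $CHM^{eff}(k)\subset DM(k)$. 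The map $g_X$ and its cone $C_X$ used in Section \ref{refinement motivically} were defined precisely for this $h^{>1}(X)$, giving the defining triangle
\[
\bigoplus_{i=1}^{\alpha} 1_k(-1)[-2] \xrightarrow{g_X} h^{>1}(X) \longrightarrow C_X \longrightarrow.
\]
Taking the direct sum of this triangle with the trivial triangle $h^{\leq 1}(X) \xrightarrow{\id} h^{\leq 1}(X) \rightarrow 0\rightarrow$ and identifying the middle with $h(X)$ produces
\[
h^{\leq 1}(X) \oplus \bigoplus_{i=1}^{\alpha} 1_k(-1)[-2] \;\longrightarrow\; h(X) \;\longrightarrow\; C_X \;\longrightarrow,
\]
whose first term lies in $A$ (it is a direct sum of an object of $M^{\leq 1}(k)$ and copies of $h(k)(-1)[-2]\in M^{alg}(k)$) and whose third term lies in $B$ by definition. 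This is the required decomposition triangle.

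With all four hypotheses verified, Proposition \ref{tFromGenerators} yields that $({}^wDM^{\leq 1+alg}(k), {}^wDM^{> 1+alg}(k))$ is an $m$-structure, hence in particular a $t$-structure, on $DM^{coh}(k)$. I expect no real obstacle beyond the bookkeeping above; the substantive content was already packed into Theorem \ref{1+alg for DM-coh(k)}, so this corollary is essentially a formal consequence of that computation together with the existence of Murre's projectors $\pi_0,\pi_1$ in arbitrary dimension.
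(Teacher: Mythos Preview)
Your proposal is correct and follows essentially the same route as the paper: both construct the decomposition triangle
\[
h^{\leq 1}(X)\oplus\bigoplus_{\alpha}1_k(-1)[-2]\longrightarrow h(X)\longrightarrow C_X\longrightarrow
\]
for each generator $h(X)$ and then invoke Theorem \ref{1+alg for DM-coh(k)} together with Proposition \ref{tFromGenerators}. Your write-up is more explicit (the verification of $\langle A\rangle,\langle B\rangle\subset\langle H\rangle$ and the building of the triangle as a direct sum of two triangles), but there is no difference in strategy; one small pedantic point is that the first term is a finite direct sum of elements of $A$ rather than literally an element of $A=M^{\leq 1+alg}(k)$, though this is harmless since Proposition \ref{tFromGenerators} only cares about $\langle A\rangle$ and the paper glosses over the same point.
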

\begin{proof}
For any object $h(X) \in DM^{coh}(k)$, 
\begin{align*}
h^{\leq 1}(X) \bigoplus\left(\bigoplus\limits_{\alpha} 1_{k}(-1)[-2]\right) \rightarrow h(X) \rightarrow C_{X} \rightarrow
\end{align*}
is the decomposition triangle. Therefore, we are done by \ref{1+alg for DM-coh(k)} and \ref{tFromGenerators}.
\end{proof} 
\begin{notation}
	We will denote the truncations for the above $t$-structure on $DM^{coh}(X)$ above as $w_{\le 1+alg}$ and $w_{>1+alg}$. 
\end{notation}
\begin{remark}
Ayoub-Viale \cite[\S 2]{ayoub20091motivic} have shown that the inclusion $DM^{coh}_{1-mot}(k) \hookrightarrow DM^{coh}(k)$ admits a right adjoint, where $DM^{coh}_{1-mot}(k):=\{p_{*}1_{X}\mid p:X \rightarrow k \;\text{proper}\;, \dim X \leq 1\}$. This leads to a t-structure $t^{1}_{M}$ whose negative part is $DM^{coh}_{1-mot}(k)$. It is easy to see that the t-structure $(^{w}DM^{\leq 1+alg}(k),\;^{w}DM^{>1+alg}(k))$ coincides with $t^{1}_{M}$, but our construction offers a more palatable formulation of Ayoub-Viale’s method.
\end{remark}
\begin{remark}\label{twisted refined functor}
We will also have the occassion of using a twisted version of this $t$-structure -- recall that $A\mapsto A(-1)$ is an auto equivalence of the category $DM(k)$. 

Thus, the full subcategories $({^{w}DM^{\le 1+alg}(X)}(-1), {^{w}DM^{> 1+alg}(X)}(-1))$ would form a $t$-structure on the twisted subcategory $DM^{coh}(X)(-1)$. We will denote the corresponding truncations by $w_{\le 1+alg (-1)}$ and $w_{> 1+alg (-1)}$.
\end{remark}
\begin{proposition}\label{leq 1+alg is in conservative part}
      The category $DM^{\leq 1+alg}(k) \subset DM^{Ab}(k) = \langle h(A)\mid A/k\text{ is an abelian variety}\rangle$.
        \end{proposition}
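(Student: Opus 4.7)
The plan is to verify the inclusion on generators. Since $DM^{Ab}(k)$ is a triangulated subcategory of $DM(k)$ closed under summands, and
\[
DM^{\le 1+\mathrm{alg}}(k) \;=\; \langle M^{\le 1}(k) \cup M^{\mathrm{alg}}(k)\rangle,
\]
it suffices to check $h^{\le 1}(X)\in DM^{Ab}(k)$ for every smooth projective $X/k$ and $h_k(\Spec k')(-1)[-2]\in DM^{Ab}(k)$ for every finite separable $k'/k$. Writing $X=\bigsqcup_i X_i$ for the connected components and $L_i := H^0(X_i,\mathcal{O})$ for the field of constants, the Chow--K\"unneth decomposition gives
\[
h^{\le 1}(X) \;=\; \bigoplus_i h_k(\Spec L_i)\,\oplus\,\bigoplus_i h^1_k(X_i),
\]
so the task reduces to three claims, to be proved for every finite separable $L/k$ and every smooth projective $L$-variety $X_0$ that is geometrically connected over $L$: (1) $h^1_k(X_0)\in DM^{Ab}(k)$; (2) $h_k(\Spec L)\in DM^{Ab}(k)$; and (3) $h_k(\Spec L)(-1)[-2]\in DM^{Ab}(k)$.

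For (1), I would identify $h^1_k(X_0)$ with $h^1_k(\mathrm{Res}_{L/k}\mathrm{Alb}(X_0))$, which is the $h^1$ of an abelian variety over $k$. The identification is verified after base change to $\bar k$, where both sides become $\bigoplus_{\sigma\colon L\hookrightarrow \bar k} h^1(\mathrm{Alb}(X_0)^\sigma)$ with the natural permutation Galois action, and one then invokes Galois descent for morphisms of Chow motives with $\mathbb Q$-coefficients to conclude.

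The essential point is (3); from it (2) follows by tensoring with $1_k(1)[2]$. Here I rely on two structural features: $DM^{Ab}(k)$ is closed under tensor products (because $h(A)(j)\otimes h(B)(j') = h(A\times B)(j+j')$ with $A\times B$ abelian, and one propagates from generators to the triangulated closure), and $1_k(1)[2] = h(\Spec k)(1)[2]$ lies in $DM^{Ab}(k)$ by the Tate-twist freedom in the definition. For (3), fix any elliptic curve $E/k$ and set $A := \mathrm{Res}_{L/k}(E\otimes_k L)$; this is an abelian variety over $k$ of dimension $[L:k]$ satisfying $A_{\bar k}\cong E_{\bar k}^{[L:k]}$ with Galois permuting the factors via the embeddings $\sigma\colon L\hookrightarrow \bar k$. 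Using the Chow--K\"unneth projectors $\pi_0(E),\pi_2(E)\in CH^1(E\times_k E)$, which are defined over $k$, the Galois-invariant sum
\[
p \;:=\; \sum_\sigma \pi_2^{(\sigma)} \otimes \bigotimes_{\sigma'\ne \sigma}\pi_0^{(\sigma')}
\]
on $A_{\bar k}$ descends, by Galois descent for algebraic cycles with rational coefficients, to a projector on $A$ over $k$, and a direct base-change computation identifies the cut-out summand with $h_k(\Spec L)(-1)[-2]$. Thus $h_k(\Spec L)(-1)[-2]$ is a direct summand of $h_k(A)\in DM^{Ab}(k)$.

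The main obstacle is precisely step (3): since any abelian variety over $k$ is geometrically connected, no individual $h(A)$ contains a nontrivial Artin motive as a weight-$0$ summand, and so nontrivial Artin motives are invisible to abelian variety motives without some kind of Tate twist. The Weil restriction of $E\otimes_k L$ is what circumvents this by producing an abelian variety whose geometric K\"unneth decomposition carries a genuine Galois permutation action; averaging over the Galois orbit (as in the projector $p$ above) then extracts the required Tate-twisted Artin contribution, and untwisting via $1_k(1)[2]$ recovers $h_k(\Spec L)$ itself.
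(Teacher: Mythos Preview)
Your proof is correct and takes a somewhat different route from the paper's.

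For $h^1(X)$, the paper argues more directly: by Scholl, $h^1(X)$ is a summand of $h^1(C)$ for some smooth projective curve $C/k$, and $h^1(C)\cong h^1(J(C))$ with $J(C)$ the Jacobian, an abelian variety over $k$. This bypasses your Albanese-plus-Weil-restriction step and the attendant Galois descent verification. (Note also that for deducing (2) from (3) you only need closure of $DM^{Ab}(k)$ under Tate twists and shifts, which is immediate from the definition; the full tensor-closure argument, while correct, is more than necessary.)

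For the Artin part the situation is reversed. The paper simply asserts ``$h^0(X)\in DM^{Ab}(k)$'' and moves on, whereas your construction---cutting $h(\Spec L)(-1)$ out of $h(\mathrm{Res}_{L/k}(E_L))$ via the Galois-invariant projector $p$ and then untwisting---actually supplies the missing justification. The point is not entirely formal: since every abelian variety over $k$ is geometrically connected, the weight-$0$ piece of any $h(A)$ is just $1_k$, so non-trivial Artin motives cannot be seen there directly; they appear only after a Tate twist, exactly as your Weil-restriction argument exhibits. In short, you trade brevity in the $h^1$ step for completeness in the $h^0$ step.
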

        \begin{proof}
           For any $X$ of dimesnion $d$,\; $h^{0}(X) \in DM^{Ab}(k)$. Thus, $h^{2d}(X) \cong h^{0}(X)(-d) \in DM^{Ab}(k)$, as $DM^{Ab}(k)$ is closed under tate twists. To show $h^{1}(X) \in DM^{Ab}(k)$, note that $h^{1}(X)$ is the summand of $h^{1}(C)$ for some curve $C$ (see \cite[4.6 Remark (i)]{scholl1994classical}) and, $h^{1}(C) \cong h^{1}(J(C))$ where $J(C)$ denote the Jacobian variety associated to $C$. Since $J(C)$ is an abelian variety, it follows that $h^{1}(X) \in DM^{Ab}(k)$.
           \end{proof}
\begin{proposition}\label{1+alg for surfaces}
    For a smooth projective surface $Y$, the triangle \[ \bigoplus\limits_{\alpha}1_{k}(-1)[-2] \xrightarrow{g_{Y}} h^{>1}(Y) \rightarrow C_{Y} \rightarrow \] splits. In particular, we have the decomposition of $h^{2}(Y)$ in $CHM(k)$.
    \end{proposition}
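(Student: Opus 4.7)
My plan is to produce a retraction of $g_Y$ in $DM(k)$: once a map $r: h^{>1}(Y) \to \bigoplus_\alpha 1_k(-1)[-2]$ with $r \circ g_Y = \mathrm{id}$ is constructed, the distinguished triangle splits by the standard triangulated-category argument, and this yields the asserted direct sum decomposition. The main input will be the non-degeneracy of the intersection pairing on $NS(Y)_\mathbb{Q}$ for the smooth projective surface $Y$, which holds classically, e.g.\ via the embedding of $NS(Y)_\mathbb{Q}$ into $H^2_{et}(Y, \mathbb{Q}_\ell)$ by the cycle class map combined with Poincar\'e duality on $H^2$.

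The key step is to establish a ``dual'' identification $\Hom_{DM(k)}(h^{>1}(Y), 1_k(-1)[-2]) \cong NS(Y)_\mathbb{Q}$, parallel to the identification $\Hom_{DM(k)}(1_k(-1)[-2], h^{>1}(Y)) \cong NS(Y)_\mathbb{Q}$ that already enters the definition of $g_Y$. This should follow from Poincar\'e duality for $Y$ together with the same flavour of motivic-cohomology Hom-vanishings that appear in the proof of Theorem \ref{1+alg for DM-coh(k)} (ruling out contributions from $h^0(Y)$, $h^1(Y)$, $h^3(Y)$, $h^4(Y)$, apart from the $Pic^0$-piece which is killed upon restricting to $h^{>1}$). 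Under this identification, each element $D_j$ of a chosen $\mathbb{Q}$-basis $D_1, \ldots, D_\alpha$ of $NS(Y)_\mathbb{Q}$ yields a morphism $\tilde\phi_j: h^{>1}(Y) \to 1_k(-1)[-2]$, and the crucial compatibility to verify is that the composition $\tilde\phi_i \circ f_j \in \End(1_k(-1)[-2]) = \End(1_k) = \mathbb{Q}$ equals the intersection number $D_i \cdot D_j$. This is the motivic avatar of the fact that the Poincar\'e-duality pairing on $H^2$ of a surface is the intersection pairing, and it reduces to a correspondence-calculus computation on $Y \times Y$.

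With the above in hand, letting $M = (D_i \cdot D_j)$ with inverse $(m_{ij})$, set $\tilde e_i := \sum_j m_{ij}\,\tilde\phi_j$ and assemble them into $r := (\tilde e_i)_i$; then $r \circ g_Y$ has $(i, k)$-entry $\sum_j m_{ij}(D_j \cdot D_k) = \delta_{ik}$, so $r$ is a retraction and the triangle splits. Because the Hom-vanishings place the summand $\bigoplus_\alpha 1_k(-1)[-2]$ inside the $h^2(Y)$-piece of the Chow--K\"unneth decomposition of $Y$, the splitting refines to the decomposition $h^2(Y) \cong h^2_{alg}(Y) \oplus h^2_{tr}(Y)$ in $CHM(k)$, recovering the refinement of Murre--Kahn--Pedrini. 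The main obstacle I anticipate is the construction of the dual maps together with the correspondence-calculus verification that their pairing with the $f_j$'s reproduces the intersection form; once this is done, the remainder is elementary linear algebra over a non-degenerate pairing.
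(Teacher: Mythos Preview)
Your approach is correct and amounts to essentially the same argument as the paper's, just packaged differently. The paper shows the triangle splits by proving the obstruction $\Hom(C_Y,\mathbb{L}[1])$ vanishes: in the long exact sequence for $\Hom(-,\mathbb{L}[1])$ applied to the defining triangle, one computes $\Hom(h^2(Y)[-2],\mathbb{L})\cong NS(Y)_\mathbb{Q}$ via Poincar\'e duality (exactly your ``dual identification''), checks that the map induced by $g_Y$ to $\Hom(\bigoplus_\alpha\mathbb{L},\mathbb{L})\cong\mathbb{Q}^\alpha$ is an isomorphism, and uses the vanishing of $\Hom(h^2(Y)[-2],\mathbb{L}[1])$ to conclude. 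You instead build an explicit retraction $r$ of $g_Y$ by inverting the intersection matrix; the paper's isomorphism statement is precisely the assertion that such an $r$ exists, so the two proofs share the same core computation (Poincar\'e duality plus non-degeneracy of the intersection pairing on $NS(Y)_\mathbb{Q}$). Your version has the mild advantage of making the role of the Hodge index/intersection form transparent, while the paper's obstruction-vanishing formulation avoids any explicit linear algebra with the matrix $(D_i\cdot D_j)$.
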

\begin{proof}
   First, note that map $g_{Y}$ factors through inclusion $h^{2}(Y)[-2] \hookrightarrow h^{>1}(Y)$ since $\Hom\left(\mathbb{L},h^{2}(Y)[-2]\right)= NS(Y)_{\mathbb{Q}}$. So we may assume $g_{Y}: \bigoplus\limits_{\alpha}1_{k}(-1)[-2] \rightarrow h^{2}(Y)[-2]$ and $C_{Y}$ to be its cone. To show this triangle splits, it is enough to show that $\Hom\left(C_{Y}, \mathbb{L}[1]\right)=0$. Consider the long exact sequence corresponding to the functor $\Hom\left(-, \mathbb{L}[1]\right)$:
     \begin{align*}
     \hspace{-1.3cm}
    \cdots \rightarrow \Hom\left(h^{2}(Y)[-2][1], \mathbb{L}[1]\right) \rightarrow \Hom\left(\bigoplus\limits_{\alpha}\mathbb{L}[1], \mathbb{L}[1]\right) &\rightarrow
     \Hom \left(C_{Y}, \mathbb{L}[1]\right) \rightarrow \Hom \left(h^{2}(Y)[-2], \mathbb{L}[1]\right) \rightarrow \cdots 
     \end{align*}
We know $h(Y)^{\vee} = h(Y)(2)[4]$, so  
\begin{align*}
    \Hom\left(h(Y), \mathbb{L}[1]\right)&=\Hom\left(1_{k}, h(Y)^{\vee}(-1)[-1]\right) = \Hom\left(1_{k},h(Y)(1)[3]\right)=0
\end{align*}
Since, $\Hom\left(h^{2}(Y)[-2],\mathbb{L}[1]\right) \hookrightarrow  \Hom\left(h(Y), \mathbb{L}[1]\right)$, we have  $\Hom\left(h^{2}(Y)[-2],\mathbb{L}[1]\right)=0$.
Next, see that, by motivic cohomology, \[\Hom\left(\mathbb{L}[1], \mathbb{L}[1]\right)= \Hom\left(1_{k}, 1_{k}\right)= H^{0,0}(\Spec k)=\mathbb{Q}\]
and since ${(h^{2}(Y)[-2])}^{\vee}=(h^{2}(Y)[-2]) \tensor \mathbb{L}^{-2}$ we have,
\begin{align*}\Hom\left(h^{2}(Y)[-2][1], \mathbb{L}[1]\right)&=\Hom\left(\mathbb{L}^{-1}, {(h^{2}(Y)[-2])}^{\vee}\right)
=\Hom\left(\mathbb{L},h^{2}(Y)[-2]\right)= NS(Y)_{\mathbb{Q}}.
\end{align*}
Thus, the map $\Hom\left(g_{Y}[1],\mathbb{L}[1]\right)$ becomes $r_{1}f_{1}+ \cdots +r_{\alpha}f_{\alpha} \mapsto (r_{1}, \cdots ,r_{\alpha})$, which is an isomorphism of $\mathbb{Q}$-vector spaces. From the above calculations, it follows that $\Hom(C_{Y}, \mathbb{L}[1])=0$. Therefore, we obtain a decomposition $h^{2}(Y)[-2]= \left(\oplus_{\alpha} 1_{k}(-1)[-2] \right) \bigoplus C_{Y}$.
\end{proof}
\begin{remark} The above calculation reproves the key result of \cite{murre_kahn_pedrini}, giving the refined Chow–Künneth decomposition of the motive of a surface. Therefore in terms of refined truncations, for any smooth projective surface $Y$, we have that
 \[ h^{2}_{alg}(Y)= w_{\leq 1+alg}\, w_{\geq 2}\,h(Y) \;\;\text{and} \;\;h^{2}_{tr}(Y)= w_{> 1+alg}\,w_{\leq 2}\,h(Y).\] 
\end{remark}
We record the following for later use: 
\begin{proposition}\label{objects in 1+alg}For any $Y \in SmProj/k$, the following are true:
\begin{align*}
	h(Y)(-j) \in\; ^{w}DM^{>1+alg}(k)\text{ for }j \geq 2& & h^{\geq 1}(Y)(-1) \in \;^{w}DM^{>1+alg}(k).
\end{align*}
\end{proposition}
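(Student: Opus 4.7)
The plan is to verify the orthogonality $\Hom_{DM(k)}(A,B[m])=0$ for every $m\in\Z$ and every generator $A$ of ${}^{w}DM^{\le 1+alg}(k)$ -- namely $A=h^{\le 1}(X)=h^{0}(X)\oplus h^{1}(X)$ for $X$ smooth projective, or $A=h(k')(-1)[-2]$ for $k'/k$ finite separable -- with $B$ being $h(Y)(-j)$ for some $j\ge 2$, or $h^{\ge 1}(Y)(-1)$. Since Proposition \ref{tFromGenerators} produces an $m$-structure, both halves of $({}^{w}DM^{\le 1+alg},{}^{w}DM^{>1+alg})$ are shift-stable triangulated subcategories; combined with the fact that each $B$ clearly lies in $DM^{coh}(k)$, the above orthogonality is equivalent to $B\in{}^{w}DM^{>1+alg}(k)$.

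Each Hom can be computed by following the pattern of the proof of Theorem \ref{1+alg for DM-coh(k)}. Write $h^{0}(X)=h(\Spec L_{X})$ for $L_{X}$ the Stein factorization of $X$, and use that $h^{1}(X)$ is a direct summand of $h^{1}(C)$ for some smooth projective curve $C$. For each generator I apply the self-duality $h(\Spec L)^\vee\cong h(\Spec L)$, Poincar\'e duality $h(C)^\vee\cong h(C)(1)$, and the identity $\Hom(A(-1)[-2],C)=\Hom(A,C(1)[2])$, together with base change, in order to rewrite each $\Hom(A,B[m])$ as (a summand of) $\Hom(1_{k},h(\tilde Y)(r)[s])=H^{s}(\tilde Y,\Q(r))$, for a base change $\tilde Y$ of $Y$ along some finite extension $K/k$ and explicit integers $r,s$ depending on $j$ and $m$.

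For the first assertion, $B=h(Y)(-j)$ with $j\ge 2$: the Tate weight $r$ coming out of the reductions is one of $-j$, $-j+1$, or $1-j$ depending on the generator, and in every case $r\le -1$; hence $H^{s}(\tilde Y,\Q(r))=0$ by the standard vanishing of motivic cohomology in negative Tate twist. For the second assertion, $B=h^{\ge 1}(Y)(-1)$: the twist $(-1)$ in $B$ either reinforces the negative Tate weight (for the generator $h^{0}(X)$, reducing to the previous vanishing) or else cancels exactly against the twist $(1)$ introduced by duality (for the generators $h^{1}(X)$ and $h(k')(-1)[-2]$). In the latter case the resulting Hom is a direct summand of $H^{s}(\tilde Y,\Q(0))$, which vanishes for $s\ne 0$; its $s=0$ value $\Q^{\pi_{0}(\tilde Y)}$ sits entirely in $h^{0}(\tilde Y)$ and so is killed by the projector onto $h^{\ge 1}(\tilde Y)$.

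The main obstacle is the careful bookkeeping of Tate twists and shifts through the successive duality and base-change manipulations; once each Hom has been placed in the standard motivic-cohomology form above, the vanishings themselves require no input beyond the standard facts already invoked in the proof of Theorem \ref{1+alg for DM-coh(k)}.
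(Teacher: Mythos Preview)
Your approach is correct but takes a genuinely different route from the paper. You verify membership in ${}^{w}DM^{>1+alg}(k)$ by checking orthogonality against each generator of ${}^{w}DM^{\le 1+alg}(k)$, reducing every $\Hom$ to a motivic cohomology group $H^{s}(\tilde Y,\Q(r))$ and invoking the standard vanishing for $r<0$, together with the observation that $H^{0}(\tilde Y,\Q(0))$ is detected entirely by $h^{0}$. One small imprecision: for the generator $A=h^{1}(X)$ the variety ``$\tilde Y$'' is actually $C\times Y$ (a product with a curve, not a base change along a finite extension); the argument still goes through because $h(C)\otimes h^{\ge 1}(Y)$ is a summand of $h^{>0}(C\times Y)$ via $\pi_0(C\times Y)=\pi_0(C)\otimes\pi_0(Y)$, and one then uses the $m$-structure $({}^{w}DM^{\le 0},{}^{w}DM^{>0})$ rather than just the $h^{\ge 1}$ projector on $Y$.

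The paper instead argues geometrically in two lines: $h(Y)(-j)$ (suitably shifted) is a summand of $h(Y\times\mathbb P^{r})$ via the projective bundle formula, and for $j\ge 2$ this summand falls entirely in the ``weight $\ge 2j$'' part of the decomposition, hence in ${}^{w}DM^{>1+alg}$; likewise $h^{\ge 1}(Y)(-1)$ sits in the ``weight $\ge 3$'' part of $h(Y\times\mathbb P^{1})$. The paper's argument is short and conceptual but leans on notation ($h^{\ge 2j}$, ${}^{w}DM^{>2}$) that is not literally defined on all of $DM^{coh}(k)$, so some unpacking is needed to make it precise. Your orthogonality computation is longer but entirely self-contained, using only the vanishings already established in Theorem~\ref{1+alg for DM-coh(k)} and the $({}^{w}DM^{\le 0},{}^{w}DM^{>0})$ $m$-structure.
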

\begin{proof}
    For the first claim, it is enough to observe that $h(Y)(-j)$ is a summand of $h^{\ge 2j}(Y\times \mathbb P^{r}(k))$ which is in $^{w}DM^{>2}(k)\subset {^{w}DM^{>1+alg}(k)}$ by definition. For the second claim, note that $h^{\ge 1}(Y)(-1)$ is a summand of $h^{\ge 3}(Y\times \mathbb P^{1}(k))$ and again, $^{w}DM^{\ge 3}(k)\subset {^{w}DM^{>1+alg}(k)}$.
\end{proof}
\subsection{Refining Morel's truncations in mixed categories.}

\begin{para}
	It is possible and useful to define an analogue of the above refinement in mixed categories, this is what we do next. 
	Let $k$ be a field of characteristic zero with a fixed embedding $\sigma: k \hookrightarrow \mathbb{C}$. For a variety $X$ over $k$, let $X^{an}$ be the associated complex analytic space. Then the singular cohomology of $X^{an}$ (with $\Q$ coefficients) has a mixed Hodge structure. Let $MHS_{\Q}(k)$ denote the category of natural mixed Hodge structures over $k$. Recall that we have a mixed realization functor
\[ R_{\mathcal{M}}: DM(k) \rightarrow D^{b}(MHS_{\Q}(k)).\]
It takes motive $h(X)$ of a smooth projective variety $p:X \rightarrow k$ to a complex $R(p_{*}1_{X})$ of mixed Hodge structures computing the singular cohomology of $X^{an}$.
\end{para}
\begin{para}
	Recall that the mixed category of $\Q$-Hodge structures $MHS_{\Q}(k)$ is abelian, Noetherian, and Artinian, with simple objects also pure. Let $HS(n,\Q)$ be the category of pure Hodge structures of weight $n$ which are also simple. Then the sub category $\bigcup_{n} HS(n,\Q)$ generates the triangulated category $D^{b}(MHS_{\Q}(k))$.
\end{para}

\begin{proposition}\label{1+alg in real thm} The following subcategories in $D^{b}(MHS_{\Q}(k))$ form a $t$-structure:
\begin{align*}
^{w}D^{\leq 1+alg}(k)&:=\langle \{V \in HS(n, \mathbb{Q}) \mid n \leq 1\} \cup \{\mathbb{Q}(-1)\}\rangle\\
^{w}D^{>1+alg}(k)&:=\langle \{V \in HS(n, \mathbb{Q}) \mid n >2\} \;\cup \{V \in HS(2, \mathbb{Q}) \mid V\;\text{is simple and}\;V \neq \mathbb{Q}(-1) \}\rangle
\end{align*}
\end{proposition}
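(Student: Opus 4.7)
The plan is to apply Proposition~\ref{tFromGenerators} with $H=\bigcup_{n}HS(n,\mathbb{Q})$ (the collection of all simple pure Hodge structures), which by the paragraph preceding the statement generates $D^b(MHS_{\mathbb{Q}}(k))$ as a triangulated category, $A$ the generating set for $^{w}D^{\leq 1+alg}(k)$, and $B$ the generating set for $^{w}D^{>1+alg}(k)$. The decomposition condition is then immediate: every simple pure Hodge structure $h$ belongs by inspection either to $A$ or to $B$ (depending on its weight, with the weight $2$ case split by whether or not $h\cong\mathbb{Q}(-1)$), so the trivial distinguished triangle $h\to h\to 0\to$ or $0\to h\to h\to$ exhibits the required decomposition. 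Hence everything reduces to checking the orthogonality condition
\[
\Hom_{D^b(MHS_{\mathbb{Q}}(k))}(a,b[n])=0\qquad\text{for all }a\in A,\ b\in B,\ n\in\mathbb{Z}.
\]

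I would handle orthogonality by splitting into cases in $n$. For $n<0$ vanishing is automatic since $a,b$ are concentrated in degree $0$ inside the abelian category $MHS_{\mathbb{Q}}(k)$. For $n=0$ one uses that morphisms of mixed Hodge structures strictly preserve the weight filtration: pure Hodge structures of different weights admit no morphisms, and in the only borderline case, namely weight $2$, the object $a=\mathbb{Q}(-1)$ is a simple object distinct from any $b\in B$ of weight $2$, so again $\Hom(a,b)=0$. For $n\geq 2$ the vanishing follows from Beilinson's theorem that the category of mixed Hodge structures has cohomological dimension at most one, i.e.\ $\Ext^{i}_{MHS_{\mathbb{Q}}}=0$ for $i\geq 2$.

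The substantive case is $n=1$. Here the claim is that $\Ext^{1}_{MHS_{\mathbb{Q}}(k)}(a,b)=0$. When $\mathrm{wt}(a)<\mathrm{wt}(b)$ (which covers the three cases weight $\leq 1$ versus weight $>2$; weight $\leq 1$ versus weight $2$; and $a=\mathbb{Q}(-1)$ of weight $2$ versus weight $>2$), an extension $0\to b\to E\to a\to 0$ is forced by strictness of the weight filtration to have $W_{\mathrm{wt}(a)}(E)=b$ and $E/W_{\mathrm{wt}(a)}(E)=a$ sitting in disjoint weight slots, which yields a canonical splitting; hence $\Ext^{1}=0$. The remaining case has $\mathrm{wt}(a)=\mathrm{wt}(b)=2$ with $a=\mathbb{Q}(-1)$ and $b$ a simple weight $2$ Hodge structure not isomorphic to $\mathbb{Q}(-1)$; any extension of $a$ by $b$ is then pure of weight $2$, and I would invoke semi-simplicity of the category of pure Hodge structures of a fixed weight to conclude that the extension splits.

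The main obstacle is the $n=1$ case: one must be careful to cover the weight $2$ borderline where $A$ and $B$ meet at the same weight, for which the argument leans on semi-simplicity of pure Hodge structures of a fixed weight rather than on the strictness argument that governs the other cases. All remaining verifications are routine bookkeeping once the appropriate classical $\Ext$-vanishing facts have been identified.
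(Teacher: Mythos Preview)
Your approach is correct and essentially the same as the paper's: both apply Proposition~\ref{tFromGenerators} with the trivial decomposition triangles and reduce everything to the orthogonality $\Hom(a,b[m])=0$. The paper packages most of the vanishing by invoking the (ordinary) Morel $t$-structures $({}^{w}D^{\le d},{}^{w}D^{>d})$ for $d=1,2$ and then treats the residual case $a=\mathbb Q(-1)$, $b$ simple of weight $2$, via the Ext-vanishing $\mathrm{Ext}^{i}(N,N')=0$ for $\mathrm{wt}(N)\le \mathrm{wt}(N')+i$ together with simplicity; you instead unpack the same facts directly (strictness, cohomological dimension $\le 1$, semisimplicity of polarizable pure Hodge structures of a fixed weight). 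These are two presentations of the same argument.

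One small slip to fix: in your $n=1$ case with $\mathrm{wt}(a)<\mathrm{wt}(b)$ you write $W_{\mathrm{wt}(a)}(E)=b$ and $E/W_{\mathrm{wt}(a)}(E)=a$, but this is backwards. Since $b$ is pure of weight $>\mathrm{wt}(a)$ one has $W_{\mathrm{wt}(a)}b=0$, and strictness of $E\twoheadrightarrow a$ gives $W_{\mathrm{wt}(a)}E\xrightarrow{\sim} a$; this isomorphism is the canonical section that splits the extension. The conclusion you draw is correct, only the identification of $W_{\mathrm{wt}(a)}E$ needs to be swapped. Note also that your appeal to semisimplicity in the equal-weight case tacitly uses that the pure objects in $MHS_{\mathbb Q}(k)$ are polarizable, which is implicit in the paper's standing assumption on the realization target.
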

\begin{proof}
There is an analogue of Morel's t-structure on $D^{b}(MHS(k))$. In particular for $V \in HS(n,\mathbb{Q})$ and $V' \in HS(n',\mathbb{Q})$ we have the vanishings
\begin{align*}
\Hom(V,V'[m])=0 \hspace{1cm} \text{for}\; n \leq 1,\; n'>2 \;\text{and}\; m \in \mathbb{Z}
\end{align*}
Let $V' \in HS(2,\mathbb{Q})$. In $D^{b}(MHS(k))$, the extensions $Ext^{i}(N,N')=0$ for $N$ of weight $\leq n$ and $N'$ of weight $\geq n'$ if $n \leq n'+i$. So, $\Hom(\mathbb{Q}(-1),V'[m])=0\;\text{for}\;m>0$.  The standard t-structure on the derived category $D^{b}(MHS(k))$ gives $\Hom(V,V'[m])=0$ for $m <0$. Since both $\mathbb{Q}(-1)$ and $V$ are simple objects and $V \neq \mathbb{Q}(-1)$, we have $\Hom(\mathbb{Q}(-1),V)=0.$

Therefore, to show that it is a $t$-structure, by \ref{tFromGenerators} it is enough to construct a decomposition triangle for each object $V$ in $HS(n,\Q)$. The corresponding triangles are 
\begin{align*}
	V\rightarrow V\rightarrow 0\rightarrow \text{ for }n\le 1\text{ or }V\cong \Q(-1)& &0\rightarrow V\rightarrow V\rightarrow \text{ for }n\ge 2\text{ and }V\not\cong \Q(-1)
\end{align*}
and we are done. 
\end{proof}

Finally, we connect the motivic and the mixed constructions through the realization functors:

\begin{proposition}\label{mixed realisation commute with 1+alg}
$R_{\mathcal{M}}\left(^{w}DM^{\leq 1+alg}(k)\right)\subset \;^{w}D^{\leq 1+alg}(k)$ and $R_{\mathcal{M}}\left(^{w}DM^{> 1+alg}(k)\right) \subset \;^{w}D^{>1+alg}(k).$
\end{proposition}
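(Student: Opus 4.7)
My plan is to reduce the claim to a verification on generators. Since $R_{\mathcal{M}}$ is a triangulated functor and each of the four subcategories appearing in the statement is a triangulated subcategory closed under summands, it suffices to check that every defining generator of the relevant motivic side is sent into the target mixed category. The generators on the motivic side are $h^{\leq 1}(X)$ and $h(k')(-1)[-2]$ (for $X$ smooth projective over $k$ and $k'/k$ finite separable) for the $\leq 1+alg$ inclusion, and $C_{X}$ (for $X$ smooth projective) for the $>1+alg$ inclusion.

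For the first inclusion, the main input is the standard fact that for $X$ smooth projective, the realization of the Chow--K\"unneth summand $h^i(X)$ is a complex of mixed Hodge structures whose cohomology is pure of weight $i$. Hence $R_{\mathcal{M}}(h^{\leq 1}(X))$ has cohomology pure of weight $\leq 1$, and is therefore built by successive extensions and shifts from simple pure Hodge structures of weight $\leq 1$; so it lives in $^{w}D^{\leq 1+alg}(k)$. For the second family, $h(k')=h^0(k')$ realizes to a pure Hodge structure of weight $0$ (a direct sum of copies of $\mathbb{Q}$ indexed by the embeddings), and so after twisting by $(-1)$ and shifting we obtain a shift of a direct sum of copies of $\mathbb{Q}(-1)$, which is literally a generator of $^{w}D^{\leq 1+alg}(k)$.

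For the second inclusion, I would apply $R_{\mathcal{M}}$ to the defining triangle of $C_{X}$ and read off the resulting long exact cohomology sequence in $MHS_{\mathbb{Q}}(k)$. In cohomological degrees away from $2$, the cohomology of $R_{\mathcal{M}}(C_{X})$ coincides with that of $R_{\mathcal{M}}(h^{>1}(X))$, and these contributions are pure of weight $\geq 3$ (coming from the summands $h^i(X)$ with $i\geq 3$); such objects lie in $^{w}D^{>1+alg}(k)$ by definition. The critical degree is $2$, where the cohomology of $R_{\mathcal{M}}(C_{X})$ equals the cokernel of the realization of $g_{X}$ inside a twist of $H^{2}(X^{an},\mathbb{Q})$.

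The main technical obstacle I anticipate is identifying this realized map $R_{\mathcal{M}}(g_{X})$ with the classical cycle-class map from $NS(X)_{\mathbb{Q}}$. Concretely I need that the motivic identification $NS(X)_{\mathbb{Q}}=\Hom_{DM(k)}(1_{k}(-1)[-2],h^{>1}(X))$ transports, under $R_{\mathcal{M}}$, to the Hodge-theoretic cycle class map $NS(X)_{\mathbb{Q}}\hookrightarrow H^{2}(X^{an},\mathbb{Q}(1))$; this is a standard but nontrivial naturality property of the mixed realization. Once this compatibility is in hand, the Lefschetz $(1,1)$-theorem tells us that the image of the cycle class map is exactly the sum of all $\mathbb{Q}(-1)$-summands of the weight-$2$ Hodge structure $H^{2}(X,\mathbb{Q}(1))$; thus the cokernel of $R_{\mathcal{M}}(g_{X})$ in degree $2$ is a pure Hodge structure of weight $2$ containing no $\mathbb{Q}(-1)$ summand, and so lies in the category generated by simple weight-$2$ Hodge structures different from $\mathbb{Q}(-1)$. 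Combined with the weight-$\geq 3$ cohomology in the other degrees, this places $R_{\mathcal{M}}(C_{X})$ in $^{w}D^{>1+alg}(k)$, completing the proof.
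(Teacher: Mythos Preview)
Your proposal is correct and follows essentially the same strategy as the paper: reduce to generators, handle the $\leq 1+alg$ side by the purity of $R_{\mathcal{M}}(h^{i}(X))$ and $R_{\mathcal{M}}(1_{k}(-1))=\mathbb{Q}(-1)$, and treat the $>1+alg$ side by realizing the defining triangle of $C_{X}$ and invoking the Lefschetz $(1,1)$ theorem to control the weight-$2$ piece. The only presentational difference is in the second inclusion: the paper verifies membership of $R_{\mathcal{M}}(C_{X})$ in $^{w}D^{>1+alg}(k)$ via the orthogonality criterion, checking directly that $\Hom(\mathbb{Q}(-1),R_{\mathcal{M}}(C_{X}))=0$ using the long exact sequence for $\Hom(\mathbb{Q}(-1),-)$, whereas you compute the standard cohomology objects $H^{i}(R_{\mathcal{M}}(C_{X}))$ and check that each lies in $^{w}D^{>1+alg}(k)$. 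Both routes hinge on the same identification $R_{\mathcal{M}}(g_{X})=$ cycle class map, which the paper records as $R_{\mathcal{M}}(g_{Y})=g_{M}$. One small point: in your cohomology computation you should also note that $H^{1}(R_{\mathcal{M}}(C_{X}))=\ker\bigl(\oplus\mathbb{Q}(-1)\to H^{2}(X^{an})\bigr)$ vanishes, but this is immediate from the injectivity of the cycle class map you already invoke.
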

\begin{proof}
    It is enough to check on generators because $R_{\mathcal{M}}$ is a triangulated functor. Note that $R_{\mathcal{M}}(h^{i}(Y))= H^{i}(R_{\mathcal{M}}(p_{*}1_{Y}))= H^{i}(Y^{an},\mathbb{Q})$ is a pure Hodge structure of weight $i$. Also, $R_{\mathcal{M}}(1_{k}(-1))=\mathbb{Q}(-1)$. Thus, $R_{\mathcal{M}}(^{w}DM^{\leq 1+alg}(k)) \subset\;^{w}D^{\leq 1+alg}(k)$. Next, to show $R_{\mathcal{M}}(C_{Y}) \in \;^{w}D^{>1+alg}(k)$, it suffices to show that 
\[\Hom(V, R_{\mathcal{M}}(C_{Y}))=0\;\;\; \forall\; V \in\;^{w}D^{\leq 1+alg}(k).\] 
Since $C_{Y} \in\;^{w} DM^{>1}(k)$, so $R_{\mathcal{M}}(C_{Y}) \in \;^{w}D^{>1}(k)$. This implies that $\Hom(V,R_{\mathcal{M}}(C_{Y}))=0$ for $V \in HS(n, \mathbb{Q})$ with $n \leq 1$.

Now taking the realization of triangle (\ref{Main eq of 1+alg}) and then applying the functor $\Hom(\mathbb{Q}(-1), -)$ to it, we get the following long exact sequence.
\begin{align*}\cdots\rightarrow\Hom(\mathbb{Q}(-1), \bigoplus\mathbb{Q}(-1)) \rightarrow \Hom\left(\mathbb{Q}(-1), H^{\geq 2}(Y^{an})\right) \rightarrow \Hom\left(\mathbb{Q}(-1),R_{\mathcal{M}}(C_{Y})\right) \rightarrow \\\Hom\left(\mathbb{Q}(-1), \bigoplus \mathbb{Q}(-1)[1]\right) \rightarrow \cdots
\end{align*}
Due to weight reasons, the last term vanishes and the middle term is equal to $\Hom\left(\mathbb{Q}(-1), H^{2}(Y^{an})\right)$. We claim that the first map is an isomorphism, which would then imply $\Hom\left(\mathbb{Q}(-1),R_{\mathcal{M}}(C_{Y})\right)=0$. To prove the claim, we use the Hodge-Lefschetz theorem for divisors. Observe that it is equivalent \cite{MR1273954} to 
\[ NS(Y)_{\mathbb{Q}} \cong \Hom_{HS}(\mathbb{Q}(-1), H^{2}(Y^{an})).\]
We proceed as in Section \ref{refined in motivic setting}. For each generator $x_{i}$ of $NS(Y)_{\mathbb{Q}}$, consider the map $f_{i}: \mathbb{Q}(-1) \rightarrow H^{2}(Y^{an})$. Together, they define a map \[g_{M}:=(f_{1}, \cdots, f_{\alpha}): \bigoplus\limits_{i=1}^{\alpha} \mathbb{Q}(-1) \rightarrow H^{2}(Y^{an}). \]
Notice that $R_{\mathcal{M}}(g_{Y})=g_{M}$. Then the map
\[ \Hom(\mathbb{Q}(-1), \bigoplus\mathbb{Q}(-1)) \rightarrow \Hom\left(\mathbb{Q}(-1), H^{\geq 2}(Y^{an})\right) \] induced by $g_{M}$ becomes $r_{1}+ \cdots +r_{\alpha} \mapsto r_{1}f_{1}+ \cdots r_{\alpha}f_{\alpha}$, which is isomorphism of $\mathbb{Q}$-vector spaces.
\end{proof}

\subsection{Geometry} 
\begin{definition}
    A closed subscheme $Y$ of an irreducible scheme $X$ is said to be a \emph{simple normal subvariety} if  $Y = \bigcup_{i\in I}Y_{i}$ with $Y_{i}$ irreducible components of $Y$ and each $r$-fold intersection $Y^{J}_{(r)}:= \bigcap\limits_{j \in J} Y_{j}$ is regular for all $\phi \neq J \subset I$ with $|J|=r+1$. 
    
    This is said to be a \emph{simple normal crossing divisor} if in addition $\dim Y^{J}_{(r)}= \dim X- (r+1)$ whenever $Y^{J}_{(r)} \neq \phi$. We denote by $Y_{(r)}$ the union of $r$-fold intersections of irreducible components of $Y$.
\end{definition}
\begin{proposition}\label{SNC divisor}
Let $X \rightarrow \Spec\;k$ be a smooth projective variety over $k$ of dimension 3 and $s: F \hookrightarrow X$ be a closed immersion such that $F$ is a simple normal crossing divisor. Let $F'$ be the disjoint union of the irreducible components of $F$, and let $q: F' \rightarrow F$ be the morphism induced by the inclusions of the components into $F$. Then, 
    \begin{enumerate}[(i)]
    \item $w_{\leq 2}\pi_{*}s^{!}1_{X} \cong h^{0}(F')(-1)[-2].$
    \item There exist $n \in \mathbb{Z}$ such that $w_{> 2}\pi_{*}s^{!}1_{X} \in Ext^{n}(A[\mathbb{Z}_{\geq 0}])$ where \[A=\{h(F_{(r)}')(-r-1)[-2r-2],h^{\geq 1}(F')(-1)[-2]\mid r \geq 1\}\] and $F_{(r)}'$ is the scheme-theoretic inverse image of $F_{(r)}$ along the map $q$.
    \end{enumerate}
\end{proposition}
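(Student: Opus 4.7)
The plan is to exhibit $\pi_{*}s^{!}1_{X}$ inside a distinguished triangle
\[
h(F')(-1)[-2]\longrightarrow \pi_{*}s^{!}1_{X}\longrightarrow R\longrightarrow
\]
in which $R$ is an iterated extension of shifts of the building blocks $h(Y^{J})(-|J|)[-2|J|]$ for $|J|\ge 2$ (where $Y^{J}:=\bigcap_{j\in J}Y_{j}$). Combined with the Chow--K\"unneth splitting of $h(F')(-1)[-2]$ and the orthogonality of Morel's weight structure, this triangle will separate the weight-$\le 2$ summand from the weight-$>2$ remainder and thereby yield both parts of the proposition simultaneously.

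The building blocks arise from absolute purity: for each non-empty $J$ with $Y^{J}\ne\emptyset$, the regular closed immersion $s^{J}:Y^{J}\hookrightarrow X$ of codimension $|J|$ between smooth varieties satisfies $(s^{J})^{!}1_{X}\cong 1_{Y^{J}}(-|J|)[-2|J|]$, so $\pi^{J}_{*}(s^{J})^{!}1_{X}\cong h(Y^{J})(-|J|)[-2|J|]$. The natural map $h(F')(-1)[-2]\to \pi_{*}s^{!}1_{X}$ is obtained by assembling, for each irreducible component $Y_i$, the counit $(s_{Y_{i}\subset F})_{*}(s_{Y_{i}\subset F})^{!}\to \id$ applied to $s^{!}_{F\subset X}1_{X}$ and pushing forward by $\pi$. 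To identify the cofiber $R$, I induct on the number $m$ of components of $F$: writing $F=F''\cup Y_{m}$ and $G=F''\cap Y_{m}$, the motivic Mayer--Vietoris triangle for the closed cover,
\[
\pi^{G}_{*}s^{!}_{G\subset X}1_{X}\longrightarrow \pi^{F''}_{*}s^{!}_{F''\subset X}1_{X}\oplus h(Y_{m})(-1)[-2]\longrightarrow \pi_{*}s^{!}1_{X}\longrightarrow,
\]
combined with the purity factorisation $s^{!}_{G\subset X}1_{X}\cong s^{!}_{G\subset Y_{m}}1_{Y_{m}}(-1)[-2]$, reduces the $G$-term to an analogous statement inside the smooth surface $Y_{m}$ (handled by a parallel induction on ambient dimension). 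Invoking the octahedral axiom on the composite $h(F')(-1)[-2]\to \pi^{F''}_{*}s^{!}_{F''\subset X}1_{X}\oplus h(Y_{m})(-1)[-2]\to \pi_{*}s^{!}1_{X}$ and using the inductive form of $R$ for $F''$ then exhibits $R$ as an iterated extension of shifts of $h(Y^{J})(-|J|)[-2|J|]$ for $|J|\ge 2$, each of which is, up to the $(r+1)$-fold multiplicity inherent to $F'_{(r)}=q^{-1}(F_{(r)})$, a direct summand of $h(F'_{(r)})(-r-1)[-2r-2]$ for $r=|J|-1\ge 1$.

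Now Morel weight is unchanged by homological shifts and raised by $2$ for every Tate twist, so each $h(Y^{J})(-|J|)[\text{shift}]$ carries weights in the interval $[2|J|,\,6]$ (using $\dim Y^{J}=3-|J|$); for $|J|\ge 2$ this range lies in $(2,6]$, so $R\in {}^{w}DM^{>2}(k)$. Chow--K\"unneth splits $h(F')(-1)[-2]$ as $h^{0}(F')(-1)[-2]\oplus h^{\ge 1}(F')(-1)[-2]$, with $h^{0}(F')(-1)[-2]$ pure of weight $2$ and $h^{\ge 1}(F')(-1)[-2]$ of weight $\ge 3$. Applying the octahedral axiom once more, this time to the composite $h^{0}(F')(-1)[-2]\hookrightarrow h(F')(-1)[-2]\to \pi_{*}s^{!}1_{X}$, produces a distinguished triangle
\[
h^{0}(F')(-1)[-2]\longrightarrow \pi_{*}s^{!}1_{X}\longrightarrow C\longrightarrow
\]
in which $C$ itself sits in a triangle $h^{\ge 1}(F')(-1)[-2]\to C\to R\to$; hence $C\in {}^{w}DM^{>2}(k)$ and $C$ is built as an iterated extension of $h^{\ge 1}(F')(-1)[-2]$ together with shifts of $h(F'_{(r)})(-r-1)[-2r-2]$ for $r\ge 1$, i.e.\ of elements of $A$. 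By the uniqueness of the weight decomposition, this identifies $w_{\le 2}\pi_{*}s^{!}1_{X}\cong h^{0}(F')(-1)[-2]$, proving (i), and $w_{>2}\pi_{*}s^{!}1_{X}\cong C\in \mathrm{Ext}^{n}(A[\mathbb{Z}_{\ge 0}])$ for some $n$ bounded in terms of $m$, proving (ii).

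The principal obstacle is the combinatorial bookkeeping of the iterated Mayer--Vietoris: reconciling the disjoint-union presentation $\bigsqcup_{|J|=r+1}Y^{J}$ that arises naturally in the induction with the scheme-theoretic inverse image $F'_{(r)}=q^{-1}(F_{(r)})$ used in the statement (the two differ by an $(r+1)$-fold multiplicity, harmless for the triangulated span $\langle\cdot\rangle$), and weaving the $2$-dimensional sub-induction for $G\subset Y_{m}$ cleanly into the $3$-dimensional main induction. I expect the most efficient route is to prove a dimension-uniform version of the proposition---formulated for SNC divisors in smooth projective varieties of arbitrary dimension---and then specialise back to dimension $3$.
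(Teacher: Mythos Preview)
Your approach is correct and takes a genuinely different route from the paper. The paper proceeds by stratifying $F$ according to singularity depth: it writes the localisation triangle for $s^{!}1_{X}$ along the closed--open pair $(i_{1}:F_{(1)}\hookrightarrow F,\ j_{1}:U_{1}=F\setminus F_{(1)}\hookrightarrow F)$, identifies $j_{1}^{!}s^{!}1_{X}\cong 1_{U_{1}}(-1)[-2]$ via purity (factoring $U_{1}\hookrightarrow X\setminus F_{(1)}\hookrightarrow X$ through regular schemes), and then expresses $\pi_{*}j_{1*}1_{U_{1}}(-1)[-2]$ via the localisation triangle
\[
h(F'_{(1)})(-2)[-4]\longrightarrow h(F')(-1)[-2]\longrightarrow \pi_{*}j_{1*}1_{U_{1}}(-1)[-2]\longrightarrow
\]
on the resolution $F'$; the closed part is handled by repeating the procedure for the SNC curve $F_{(1)}\subset X$ (so at most two iterations in dimension $3$). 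No Mayer--Vietoris is invoked, only iterated localisation plus absolute purity, and the building blocks $h(F'_{(r)})(-r-1)[-2r-2]$ appearing in the statement arise directly. By contrast you induct on the number of components via the closed Mayer--Vietoris triangle together with a nested induction on ambient dimension. Your route is conceptually more uniform---as you note, it proves the dimension-free analogue at no extra cost---but relies on the closed-cover Mayer--Vietoris (a consequence of cdh descent) and requires more octahedral bookkeeping; it also produces the finer pieces $h(Y^{J})(-|J|)[-2|J|]$ first, which you must then regroup into the $h(F'_{(r)})$-packages (the multiplicity issue you flag). The paper's route is more elementary in its inputs but is tailored to $\dim X=3$.
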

\begin{proof}
 \textit{(i)} First, suppose that $F$ is regular. Then, by purity,
    \[w_{\leq 2}\pi_{*}s^{!}1_{X} \cong w_{\leq 2}\pi_{*}1_{F}(-1)[-2] \cong w_{\leq 2}h(F)(-1)[-2]= h^{0}(F)(-1)[-2].\]
    Now, assume $F$ is not regular. Consider the following closed and open immersions, respectively:
\[i_{1}: F_{(1)} \hookrightarrow F \;\;\;\text{and}\;\;j_{1}:U_{1}:= F-F_{(1)} \hookrightarrow F\]
The localization triangle for $s^{!}1_{X}$ on $F$ gives us
    \begin{align}\label{A}
{i_{1}}_{*}{i_{1}}^{!}s^{!}1_{X} \rightarrow s^{!}1_{X} \rightarrow {j_{1}}_{*}{j_{1}}^{!}s^{!}1_{X} \rightarrow
    \end{align}
    Observe that we have the factorization of $s \circ j_{1}:F-F_{(1)} \xhookrightarrow{i} X-F_{(1)} \xhookrightarrow{j} X$ where $i$ is a closed immersion and $j$ is an open immersion. Since $j^{*} \cong j^{!}$ and both $F-F_{(1)}$ and $X-F_{(1)}$ are regular, we obtain that
 \[{j_{1}}_{*}{j_{1}}^{!}s^{!}1_{X}={j_{1}}_{*}1_{U_{1}}(-1)[-2].\] 
Now, consider the following Cartesian diagrams:
\[
\begin{tikzcd}
F_{(1)}'\arrow[r, hook,"i_{1}'"] \arrow[d, "q|_{Z_{1}}"] & F' \arrow[d, "q"] & U_{1}' \arrow[l, hook', "j_{1}'"'] \arrow[d, "q|_{U_{1}}"] \\
F_{(1)} \arrow[r, hook, "i_{1}"] & F & U_{1} \arrow[l, hook', "j_{1}"']
\end{tikzcd}
\]
Observe that $q|_{U_{1}}$ is an isomorphism. Therefore, ${j_{1}}_{*}1_{U_{1}}(-1)[-2] \cong q_{*}{j_{1}'}_{*}1_{U_{1}'}$. Since $F'$ and $F'_{(1)}$ are regular, the localization triangle for $1_{F'}(-1)[-2]$ gives us that
\[{i'_{1}}_{*}1_{F'_{(1)}}(-2)[-4] \rightarrow 1_{F'}(-1)[-2] \rightarrow {j_{1}'}_{*}1_{U_{1}'}(-1)[-2] \rightarrow\]
Applying the functor $\pi_{*}q_{*}$ to the above triangle, we obtain
\[ h(F'_{(1)})(-2)[-4] \rightarrow h(F')(-1)[-2] \rightarrow \pi_{*}{j_{1}}_{*}1_{U_{1}}(-1)[-2] \rightarrow\]
Since $\dim F'_{(1)}=1$ and $w_{\leq 2}$ preserves the distinguished triangle, we have the following triangle on application of the functor $w_{\leq 2}$.
\[
0 \rightarrow h^{0}(F')(-1)[-2] \rightarrow w_{\leq 2}\pi_{*}{j_{1}}_{*}1_{U_{1}}(-1)[-2] \rightarrow
\]
Therefore,
\begin{align}\label{B}
w_{\leq 2}\;\pi_{*}{j_{1}}_{*}1_{U_{1}}(-1)[-2]  \cong h^{0}(F')(-1)[-2].
\end{align}
Now, note that $F_{(1)}$ is a simple normal crossing divisor in $F$. Therefore, to calculate ${i_{1}}_{*}{i_{1}}^{!}s^{!}1_{X}$, we repeat the above method for $i_{1}: F_{(1)} \hookrightarrow F$.   \\
If $F_{(1)}$ is regular, then as before, we have $i_{1}^{!}s^{!}1_{X}= 1_{F_{(1)}}(-2)[-4]$ and since $F_{(1)}$ is a curve, it follows that
\[w_{\leq 2}\pi_{*}{i_{1}}_{*}i_{1}^{!}s^{!}1_{X}= w_{\leq 2}\; h(F_{(1)})(-2)[-4]=0.\] 
 Otherwise, take $i_{2}: F_{(2)} \hookrightarrow F_{(1)}$ to be the union of 1-fold intersections of irreducible components of $F_{(1)}$. Note that $F_{(2)}$ is regular with dimension $0$. Denote by 
 \[j_{2}: U_{2}:= F_{(1)}-F_{(2)} \hookrightarrow F_{(1)}\]
 the open complement of $F_{(2)}$. The localization triangle of ${i_{1}}^{!}s^{!}1_{X}$ on $F_{(1)}$ gives
    \begin{align}\label{C}
     {i_{2}}_{*}{i_{2}}^{!}\; {i_{1}}^{!}s^{!}1_{X}  \rightarrow {i_{1}}^{!}s^{!}1_{X} \rightarrow {j_{2}}_{*}{j_{2}}^{!}\;{i_{1}}^{!}s^{!}1_{X} \rightarrow
    \end{align} 
    As $F_{(2)}$ is of codimension 3 in $X$, by purity we obtain that \[{i_{2}}_{*}{i_{2}}^{!} \;{i_{1}}^{!}s^{!}1_{X}\cong {i_{2}}_{*}1_{F_{(2)}}(-3)[-6].\] 
Now, using the factorization of 
    $s\circ i_{1} \circ j_{2}
    :F_{(1)}-F_{(2)} \hookrightarrow X-F_{(2)} \hookrightarrow X $, where both $F_{(1)}-F_{(2)}$ and $X-F_{(2)}$ are regular, we conclude that 
    \[{j_{2}}_{*}{j_{2}}^{!}\;{i_{1}}^{!}s^{!}1_{X}\cong {j_{2}}_{*}1_{U_{2}}(-2)[-4]. \]
Therefore, when we apply the functor $w_{\leq 2}\pi_{*}{i_{1}}_{*}$ to the triangle \ref{C}, the first and last term vanishes. Thus,
\begin{align}\label{D}
w_{\leq 2}\pi_{*}{i_{1}}_{*}{i_{1}}^{!}s^{!}1_{X} \cong 0 .
\end{align}
Now, from \ref{A}, \ref{B}, \ref{D} we conclude that
\[ w_{\leq 2}\pi_{*}s^{!}1_{X} \cong h^{0}(F')(-1)[-2].\]
$(ii)$ The proof follows as in $(i)$, with $w_{>2}$ applied in place of $w_{\leq 2}$.
    \end{proof}
    \begin{remark}We record the following for later use. 
   
 Let $i: F \hookrightarrow X$ be a simple normal crossing divisor of a 3-fold $X$ such that the union of 1-fold intersections $i_{1}: F_{(1)}\hookrightarrow F$ is regular. Let $j_{1}: U \hookrightarrow F$ denote its open complement. Then we have the following triangle in $DM(F)$:
\begin{align}\label{when 1-fold intersection is regular}
    {i_{1}}_{*}1_{F_{(1)}}(-2)[-4] \rightarrow i^{!}1_{X} \rightarrow {j_{1}}_{*}1_{U}(-1)[-2] \rightarrow
\end{align}
It follows immediately from the above proof.
\end{remark}
\section{Motivic Intersection complex}
\subsection{Definitions and First results} 
\begin{definition}\label{definition of Wildeshaus IC}\cite{wildeshaus_ic} Let $X$ be any arbitrary variety and $j:U \hookrightarrow X$ be an open dense immersion with $U$ regular. For $N \in CHM(U)$, the intermediate extension (called motivic intersection complex in case of $N=1_{U}$) $j_{!*}N$, is an element in $CHM(X)$ satisfying the following:
\begin{enumerate}[(i)]
\item $j^{*}j_{!*}N \cong N$
\item The map $\End(j_{!*}N) \rightarrow \End(N)$ induced by $j^{*}$ is injective (hence, an isomorphism since $j^{*}$ restricted to $CHM$ is full).
\end{enumerate}
\end{definition}
It is unique up to a unique isomorphism. 

Replace condition (ii) with a slightly weaker condition: 
\begin{enumerate}[label=(\roman*)$'$ , start=2]
\item The map $\End(j_{!*}N) \rightarrow \End(N)$ induced by $j^{*}$ has a \textit{nilpotent kernel}.
\end{enumerate}
Although it is weaker than the unconditional definition \ref{definition of Wildeshaus IC} but exists unconditionally in more cases, for example, the Baily-Borel compactification of an arbitrary Shimura variety. See Proposition \ref{Wildeshaus IC} for its existence in a certain abstract setting. It is unique up to radical. The main interest in the motivic intersection complex comes from the fact that its realisation is an ordinary intersection complex. As a consequence of (i) and (ii)$'$, we have the following analogue of the decomposition theorem:
\begin{enumerate}[label=(\roman*) , start=3]
    \item Let $M \in CHM(X)$ such that $j^{*}M \cong N$ then $M= j_{!*}N \oplus i_{*}L_{Z}$ where $i: Z:=X-U \hookrightarrow X$ and $L_{Z} \in CHM(Z)$
\end{enumerate}
The following are some easy but useful properties of the (weaker) motivic intersection complex.
\begin{lemma}\label{IM independent of regular open dense subset}
    Let $U \xhookrightarrow{j} X$ be an open dense immersion with $U$ regular. Assume $j_{!*}1_{U} \in DM(X)$ exists. Then for any open dense subset $V \xhookrightarrow{j'} X$ in $U$ with $V$ regular, $j'_{!*}1_{V} \in DM(X)$ also exists. In fact, $j'_{!*}1_{V} \cong j_{!*}1_{U}$. 
\end{lemma}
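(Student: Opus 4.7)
The plan is to verify directly that $j_{!*}1_U$ itself satisfies the two defining conditions for the intermediate extension of $1_V$ along $j'$; uniqueness (up to unique isomorphism in the strong version, or up to radical in the weak version (ii)$'$) will then yield $j'_{!*}1_V \cong j_{!*}1_U$.

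First I would factor $j' = j \circ k$, where $k : V \hookrightarrow U$ is the induced open immersion (which is dense, and whose source is regular). Condition (i) is then immediate:
$$ j'^{\,*} j_{!*}1_U \;=\; k^{*} j^{*} j_{!*}1_U \;\cong\; k^{*} 1_U \;=\; 1_V, $$
using property (i) for the given $j_{!*}1_U$.

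For condition (ii) (or its weaker variant (ii)$'$), I would factor the map on endomorphism rings induced by $j'^{\,*}$ as the composition
$$ \End(j_{!*}1_U) \;\xrightarrow{\;j^{*}\;}\; \End(1_U) \;\xrightarrow{\;k^{*}\;}\; \End(1_V). $$
The first arrow is an isomorphism (strong version) or has nilpotent kernel (weak version) by the hypothesis on $j_{!*}1_U$. The key step is to show that the second arrow $k^{*}$ is injective. For this I would use that for a regular scheme $W$ one has $\End_{DM(W)}(1_W) \cong \Q^{\pi_0(W)}$ via the zeroth motivic cohomology, and that the density of $V$ in $U$ forces $\pi_0(V) \twoheadrightarrow \pi_0(U)$ to be surjective, so that $k^{*}$ becomes the corresponding injective pullback of $\Q$-vector spaces.

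Putting these together, $\ker(j'^{\,*}) = \ker(j^{*})$ is either zero or nilpotent as needed, so $j_{!*}1_U$ satisfies (i) and (ii) (resp.\ (ii)$'$) with respect to $j'$. By the uniqueness of the (weaker) motivic intersection complex, we conclude that $j'_{!*}1_V$ exists and is isomorphic to $j_{!*}1_U$. The only non-formal ingredient is the injectivity of $k^{*}$ on endomorphisms of the unit; everything else is a formal diagram chase, and I expect this to be the main (though mild) obstacle.
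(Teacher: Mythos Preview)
Your proposal is correct and follows essentially the same approach as the paper: verify conditions (i) and (ii)$'$ for $j_{!*}1_U$ with respect to $j'$ by factoring $j' = j \circ k$ and composing the two maps on endomorphism rings. The only cosmetic difference is that the paper asserts $k^{*}:\End(1_U)\to\End(1_V)$ is an \emph{isomorphism} (which holds since $U$ regular implies each connected component is irreducible, so a dense open $V$ has the same $\pi_0$), while you only argue injectivity; either suffices, and your justification via $\End_{DM(W)}(1_W)\cong\Q^{\pi_0(W)}$ is exactly the point the paper leaves implicit.
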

\begin{proof}
The motivic intersection complex is unique up to isomorphism, so it is enough to show that $j_{!*}1_{U}$ satisfies the (i) and (ii)$'$ for an open immersion $j':V \xhookrightarrow{j_{V}} U \xhookrightarrow{j}X$. For (i), we have
 \[j'^{*}j_{!*}1_{U} \cong (j_{V})^{*}j^{*}j_{!*}1_{U}\cong1_{V}\]
 where the last isomorphism follows from $j^{*}j_{!*}1_{U}\cong 1_{U}$.\\
Now to verify (ii)$'$, observe that the map $\End(1_{U}) \rightarrow \End(1_{V})$ induced by ${j_{V}}^{*}$ is an isomorphism. By assumption, the map $\End(j_{!*}1_{U}) \rightarrow \End(1_{U})$ induced by $j^{*}$ has nilpotent kernel. Therefore, the composition $\End(j_{!*}1_{U}) \rightarrow \End(1_{U}) \rightarrow \End(1_{V})$ also has a nilpotent kernel, completing the verification of (ii)$'$.
\end{proof}
\begin{lemma}\label{IM res to U}
    Let $U \xhookrightarrow{j} X$ be an open dense immersion with $U$ regular. Let $V \xhookrightarrow{j'} X$ be an open dense subset. Assume $j_{!*}1_{U}$ exists. Then $j'^{*}j_{!*}1_{U}$ is the motivic intersection complex for $V$.
\end{lemma}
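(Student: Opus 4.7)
The plan is to verify that $M := j'^{*}j_{!*}1_{U} \in CHM(V)$ satisfies Wildeshaus's defining conditions (i) and (ii)$'$ as an intermediate extension of $1_{W}$ to $V$, where $W := U \cap V$. Note that $W$ is open in $V$ (since $U \hookrightarrow X$ is open), dense in $V$ (by irreducibility of $X$ in our setup), and regular (as an open subscheme of the regular $U$); hence $W$ is a valid choice of regular open dense subscheme of $V$. I write $\iota : W \hookrightarrow V$ and $\kappa : W \hookrightarrow U$ for the induced open immersions, so that $j' \circ \iota = j \circ \kappa$ factors the inclusion $W\hookrightarrow X$.

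For condition (i), the calculation is direct:
\[
\iota^{*}M \cong \iota^{*}j'^{*}j_{!*}1_{U} \cong \kappa^{*}j^{*}j_{!*}1_{U} \cong \kappa^{*}1_{U} \cong 1_{W},
\]
where the third isomorphism uses property (i) for the hypothesized intermediate extension $j_{!*}1_{U}$.

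For condition (ii)$'$, I exploit the commutative square of ring homomorphisms obtained by applying $\End$ to $M$, $j_{!*}1_U$, $1_U$ and $1_W$: the two compositions $\End(j_{!*}1_{U}) \xrightarrow{j'^{*}} \End(M) \xrightarrow{\iota^{*}} \End(1_{W})$ and $\End(j_{!*}1_{U}) \xrightarrow{j^{*}} \End(1_{U}) \xrightarrow{\kappa^{*}} \End(1_{W})$ coincide. Since $U$ and $W$ are connected regular, $\End(1_{U}) \cong \End(1_{W}) \cong \mathbb{Q}$ and $\kappa^{*}$ is the identity of $\mathbb{Q}$; and by hypothesis $j^{*}$ has nilpotent kernel. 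Therefore the composition $\iota^{*} \circ j'^{*}$ has nilpotent kernel.

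The main obstacle, and the technical crux of the argument, is then to upgrade the statement ``$\iota^{*}\circ j'^{*}$ has nilpotent kernel'' to the stronger statement ``$\iota^{*}$ itself has nilpotent kernel'', since a priori $\End(M)$ could carry endomorphisms not in the image of $j'^{*}$. I plan to handle this using the localization triangle $i_{*}i^{!}M \to M \to \iota_{*}\iota^{*}M \to$ at the closed complement $i : V\setminus W \hookrightarrow V$: applying $\Hom(M,-)$ identifies $\ker(\iota^{*})$ with the image of $\Hom(M, i_{*}i^{!}M) \to \End(M)$. Combining this with the fact that $M$ comes from the weight-zero Chow motive $j_{!*}1_{U}$ and that $i^{!}$ shifts weights strictly upward, a weight-comparison argument shows this image is a nil ideal, closing the gap and completing the verification of (ii)$'$. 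Hence $M = j'^{*}j_{!*}1_{U}$ is the motivic intersection complex for $V$, as asserted.
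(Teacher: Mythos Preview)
Your verification of condition (i) is correct and essentially matches the paper's argument (the paper first reduces to $U \subseteq V$ via the preceding lemma, but this is cosmetic).

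For condition (ii)$'$, however, there is a genuine gap. You correctly identify the obstacle---that knowing $\iota^{*}\circ j'^{*}$ has nilpotent kernel does not immediately give that $\iota^{*}$ alone does---but your proposed resolution is not sound as written. The claim that ``$i^{!}$ shifts weights strictly upward'' is false in general: for a closed immersion $i$, one has only $i^{!}(DM^{w\ge 0})\subset DM^{w\ge 0}$, not a strict shift. So $\Hom(M,i_{*}i^{!}M)=\Hom(i^{*}M,i^{!}M)$ need not vanish, and you give no mechanism by which a weight comparison would force \emph{nilpotence} of the resulting endomorphisms of $M$. The sketch ``a weight-comparison argument shows this image is a nil ideal'' is doing all the work and is not justified.

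The paper bypasses this entirely with a single structural fact you have overlooked: the restriction of $j'^{*}$ to Chow motives is \emph{full} (Wildeshaus, \cite[Theorem~1.7]{wildeshaus_ic}; this is also recalled in Definition~\ref{definition of Wildeshaus IC}). Hence every $f\in\End(j'^{*}j_{!*}1_{U})$ lifts to some $f'\in\End(j_{!*}1_{U})$ with $j'^{*}f'=f$. If $f$ lies in the kernel of restriction to $U$ (after reducing to $U\subseteq V$), then so does $f'$; by hypothesis $f'$ is nilpotent, whence $f=j'^{*}f'$ is nilpotent. This is a two-line argument once you invoke fullness, and it replaces your localization-and-weights plan completely.
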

\begin{proof}
The open subsets $U$ and $V$ are dense, so $U \cap V \neq \phi$. By Lemma \ref{IM independent of regular open dense subset}, we may assume $U \subseteq V$. Let $j_{1}: U \hookrightarrow V$ denote the open immersion. Clearly, $j_{1}^{*}\,j'^{*}j_{!*}1_{U} \cong j^{*}\,j_{!*}1_{U} \cong 1_{U}$. Therefore, condition (i) holds.

Recall that the functor $j'^{*}$ is full on $CHM(-)$ (See Theorem 1.7 of \cite{wildeshaus_ic}), so for any element \[f \in Ker \left(\End(j'^{*}j_{!*}1_{U}\right) \rightarrow \End(1_{U})),\]there is an element $f' \in ker\left(\End(j_{!*}1_{U}\right) \rightarrow \End(1_{U}))$ such that $j'^{*}f'= f$. Since $f'$ is nilpotent, we get that $f$ is also nilpotent. This establishes (ii)$'$ as desired.
\end{proof}
\begin{lemma}\label{IM is summand of M}
 Let $X$ be a scheme and $j: U \hookrightarrow X$ be an open immersion. Suppose that the motivic intersection complex $j_{!*}1_{U}$ exists in the weaker sense. Let $M \in CHM(X)$ such that $1_{U}$ is direct summand of $j^{*}M$. Then $j_{!*}1_{U}$ is a summand of $M$.
\end{lemma}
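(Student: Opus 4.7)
The plan is to exploit two facts that are already at our disposal: (a) the functor $j^{*}$ is full when restricted to Chow motives (Theorem~1.7 of \cite{wildeshaus_ic}, invoked already in the proof of Lemma~\ref{IM res to U}), and (b) in the weaker definition of the motivic intersection complex, the kernel of $j^{*}:\End(j_{!*}1_{U})\to \End(1_{U})$ is nilpotent. Together with $j^{*}j_{!*}1_{U}\cong 1_{U}$, these let us lift a splitting over $U$ to an honest splitting over $X$.

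First, since $1_{U}$ is a direct summand of $j^{*}M$, choose morphisms $s\colon 1_{U}\to j^{*}M$ and $p\colon j^{*}M\to 1_{U}$ in $CHM(U)$ with $p\circ s=\id_{1_{U}}$. By the fullness of $j^{*}$ on Chow motives, applied to $\hom(j_{!*}1_{U},M)\to \hom(1_{U},j^{*}M)$ and to $\hom(M,j_{!*}1_{U})\to \hom(j^{*}M,1_{U})$ (using the identification $j^{*}j_{!*}1_{U}\cong 1_{U}$ from property (i)), we can pick lifts $\tilde s\colon j_{!*}1_{U}\to M$ and $\tilde p\colon M\to j_{!*}1_{U}$ with $j^{*}\tilde s=s$ and $j^{*}\tilde p=p$.

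Next, set $f:=\tilde p\circ \tilde s\in \End(j_{!*}1_{U})$. Then
\[
 j^{*}f = p\circ s = \id_{1_{U}} = j^{*}(\id_{j_{!*}1_{U}}),
\]
so $f-\id_{j_{!*}1_{U}}\in \ker\!\bigl(j^{*}\colon \End(j_{!*}1_{U})\to \End(1_{U})\bigr)$. By hypothesis (the weaker property (ii)$'$) this kernel is nilpotent, so $f-\id$ is nilpotent, and therefore $f$ is invertible in $\End(j_{!*}1_{U})$ (explicitly by the geometric series, which terminates). Consequently $f^{-1}\circ \tilde p$ is a retraction of $\tilde s$, since $(f^{-1}\circ \tilde p)\circ \tilde s = f^{-1}\circ f = \id_{j_{!*}1_{U}}$, exhibiting $j_{!*}1_{U}$ as a direct summand of $M$.

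There is no real obstacle here beyond correctly combining the ingredients: the only non-formal input is the nilpotence of the kernel, which is exactly what property (ii)$'$ buys us in the weaker setting. The same argument would go through verbatim with (ii) in place of (ii)$'$, in which case $f-\id=0$ and the splitting is immediate.
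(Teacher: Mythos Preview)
Your proof is correct and essentially identical to the paper's own argument: lift the splitting maps via fullness of $j^{*}$ on Chow motives, observe that the resulting composite differs from the identity by an element of the nilpotent kernel, and invert it to obtain the retraction. The paper writes the inverse as an abstract $\overline{\gamma}$ rather than spelling out the geometric series, but the content is the same.
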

\begin{proof}
As $1_{U}$ is summand of $j^{*}M$, we have morphisms $\alpha: 1_{U} \rightarrow j^{*}M$ and $\beta: j^{*}M \rightarrow 1_{U}$ such that composite $\beta \circ \alpha = id_{1_{U}}$. Since $j^{*}$ is full on $CHM(-)$, these lift to morphisms $\overline{\alpha}: j_{!*}1_{U} \rightarrow M$ and $\overline{\beta}: M \rightarrow j_{!*}1_{U}$ satisfying 
$j^{*}( \overline{\beta} \circ \overline{\alpha})= \beta \circ \alpha = id_{1_{U}}$. Therefore,
\[id_{j_{!*}1_{U}}- \overline{\beta} \circ \overline{\alpha} \in Ker \left( \End(j_{!*}1_{U}) \rightarrow \End(1_{U})\right).\]
Since the kernel is nilpotent there exist $N \in \mathbb{Z}^{\geq0}$ such that $(id_{j_{!*}1_{U}}- \overline{\beta} \circ \overline{\alpha})^{N}=0$. It follows that there is a morphism $\overline{\gamma} \in \End(j_{!*}1_{U})$ such that the composition $j_{!*}1_{U} \xrightarrow{\overline{\alpha}} M \xrightarrow{\overline{\gamma} \circ \overline{\beta}} j_{!*}1_{U}$ is $id_{j_{1*}1_{U}}$. Hence, $j_{!*}1_{U}$ is summand of $M$.
\end{proof}
\subsection{Existence}\label{IM on U}Now we construct the motivic intersection complex for an open subset $U$ of a threefold $X$, where $U$ is the complement of finitely many closed points, and the restriction of the unit motive $1_{U}$ to each point of $U$ lies in the triangulated category generated by the motives of curves. This construction has already been done by Wildeshaus in \cite{wildeshaus_shimura_2012} in the more general context of motives of abelian type. We include a proof in our specific setting for the convenience of the reader. Those familiar with the general case may prefer to skip this section completely.
\subsubsection{}
\textbf{Setup:}\label{setup}
We begin by removing a suitable finite set of bad points from $X$ to define the open subset $U$
on which the construction will take place. Let $X$ be an irreducible normal variety of dimension $3$ over $k$. Using Lemma \ref{M generated by finite objects}, we have 
\[{p_{1}}_{*}1_{Y_{1}}, \dots, {p_{n}}_{*}1_{Y_{n}} \in DM^{coh}_{3,dom}(X)\]
such that $EM^{F}_{X} \in \langle {p_{1}}_{*}1_{Y_{1}}, \dots , {p_{n}}_{*}1_{Y_{n}} \rangle$.

For each $p_{i}$, note that by semicontinuity of the fibres and the fibre dimension theorem, the collection $\{x \in X \mid \dim {(Y_{i})}_{x}=2\}$ where ${(Y_{i}})_{x}$ is the fibre at $x$, is a closed subset of dimension $0$. Removing these finitely many closed points from $X$, we may assume that all fibres of $p_{i}$ have dimension at most $1$.  Define $U_{i}:=\{ x \in X_{sm} \mid \dim {(Y_{i})}_{x}=0\}$. Then $U_{i}$ is smooth, dense open subset of $X$ such that $p_{i}|_{U_{i}}$ is a finite map. By the generic smoothness, we may shrink $U$ if necessary so that $p_{i}|_{U}$ becomes smooth. 

Let $V_{i}:= X \setminus U_{i}$; this is a closed subset of dimension at most $1$. By further removing finitely many closed points (such as zero-dimensional intersections), we can assume $V_{i}$ to be smooth. 

By Hironaka’s resolution, we can assume that the restriction $Y_{i}|_{V_{i}}:= p_{i}^{-1}(V_{i})$ is a simple normal crossing divisor. Let $\{Y_{ij}\}_{j}$ denote irreducible components of $Y_{i}|_{V_{i}}$ and ${Y_{i}|_{V_{i}}}_{(1)}$ be the union of their 1-fold intersections. Consider the resolution of $Y_{i}|_{V_{i}}$ 
\[Y_{i}|_{V_{i}}':= \bigsqcup\limits_{j} Y_{ij}.\]
Then we have the following blow-up diagram:
  \begin{equation} \label{blowup}
  \begin{tikzcd}
  {Y_{i}|_{V_{i}}}_{(1)}' \arrow[hook]{r}{{i_{1}}'} \arrow[swap]{d}{\pi} & Y_{i}|_{V_{i}}' \arrow{d}{\pi} \\%
{Y_{i}|_{V_{i}}}_{(1)} \arrow[hook]{r}{i_{1}}& Y_{i}|_{V_{i}} \tag{D.1}
\end{tikzcd}
\end{equation}
  The union of 2-fold intersections of irreducible components of $Y_{i}|_{V_{i}}$ is zero-dimensional. For the technical convenience, we make ${Y_{i}|_{V_{i}}}_{(1)}$ regular by removing the image of these points along $p_{i}$. Additionally, by generic smoothness, we further shrink $V_{i}$ so that $p_{i}: Y_{ij} \rightarrow V_{i}$ (hence, $p_{i}\circ \pi:Y_{i}|_{V_{i}}' \rightarrow V_{i}$) and $p_{i}\circ i_{1}:{Y_{i}|_{V_{i}}}_{(1)} \rightarrow V_{i}$  becomes a smooth map $\forall \;i,j$.

Now we define
\[V:= \bigcup\limits_{i}V_{i}\hspace{0.8cm}U= \bigcap\limits_{i} U_{i}.\] 
To ensure that $V$ is smooth, we remove any zero-dimensional intersections between the $V_{i}$. We may also assume that $V_{i}$'s have no common component. Then $V$ is a disjoint union of smooth closed subsets $V_{i}$. The union $U \cup V$ is the open set whose intersection complex will be constructed.
  \begin{notation}
       We denote the union by $W:=U \bigcup V$ and write $j: U \hookrightarrow W$ and $i: V \hookrightarrow W$ for the inclusion morphisms. The closed immersion $V_{i} \hookrightarrow V$ will be denoted by $h_{i}$. For each $Y_{i}$, the same notations will be used for the morphisms in the blowup diagram \ref{blowup}.
 \end{notation}
\subsubsection{}We aim to show that the motivic intersection complex exists for the open set $W$ of $X$. We will make use of the following result from \cite{wildeshaus_shimura_2012}:
\begin{proposition}\label{Wildeshaus IC}
 Let $\mathcal{C}(X),\,\mathcal{C}(U),\,\mathcal{C}(Z)$ be $\mathbb{Q}$-linear pseudo-abelian triangulated categories of $DM(X),\, DM(U),\, DM(Z)$ respectively satisfying the formalism of gluing and equipped with chow weight structure $w$. Assume that $D(Z)_{w=0}$ is semiprimary. Then, for any $N \in D(U)_{w=0}$, the intermediate extension $j_{!*}N$ exists in the weaker sense. 
         \end{proposition}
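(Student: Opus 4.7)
The plan is to construct $j_{!*}N$ as a Chow-weight truncation of $j_{!}N$, and then verify the two required properties using weight-exactness of $j^{*}$ together with the semiprimary hypothesis on $\mathcal{C}(Z)_{w=0}$. This follows the general blueprint Wildeshaus uses in \cite{wildeshaus_shimura_2012}.

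First, I would construct the candidate. Since $j$ is an open immersion, $j_{!}$ preserves the Chow weight structure from below, so $j_{!}N \in \mathcal{C}(X)^{w \le 0}$. Choose any weight decomposition
\[ C \longrightarrow j_{!}N \longrightarrow j_{!*}N \longrightarrow C[1], \]
with $C \in \mathcal{C}(X)^{w \le -1}$ and $j_{!*}N \in \mathcal{C}(X)_{w=0}$; this defines the candidate. (Dually, one could truncate $j_{*}N$ from above, producing the same object up to isomorphism.)

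Next, I would check property (i). For an open immersion, $j^{*} \cong j^{!}$ is both a left and a right adjoint, and is therefore weight-exact. Applying $j^{*}$ to the chosen decomposition triangle produces
\[ j^{*}C \longrightarrow N \longrightarrow j^{*}j_{!*}N \longrightarrow j^{*}C[1], \]
which is again a weight decomposition, now of $N$. Since $N$ is already in the heart, the trivial decomposition $0 \to N \to N \to$ is also a valid weight decomposition of $N$, and by uniqueness (up to non-canonical isomorphism) we conclude $j^{*}C \cong 0$ and $j^{*}j_{!*}N \cong N$.

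For property (ii)$'$, let $I := \ker\!\left(\End(j_{!*}N) \to \End(N)\right)$. Given $f \in I$, the condition $j^{*}f = 0$ is equivalent, by adjunction, to the composite $j_{!*}N \xrightarrow{f} j_{!*}N \to j_{*}N$ being zero. The localization triangle $i_{*}i^{!}j_{!*}N \to j_{!*}N \to j_{*}N \to$ then yields a factorization $f = \alpha \circ h$, with $h \colon j_{!*}N \to i_{*}i^{!}j_{!*}N$ and $\alpha$ the canonical map. Since source and target of $f$ both lie in the heart, this factorization can be replaced by one through the weight-$0$ truncation $L := w_{\ge 0}\!\left(i_{*}i^{!}j_{!*}N\right)$, which by the gluing formalism is of the form $i_{*}L'$ for some $L' \in \mathcal{C}(Z)_{w=0}$. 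By the semiprimary hypothesis, $\End_{\mathcal{C}(Z)}(L')$ has nilpotent Jacobson radical, and a Wildeshaus-type argument shows that $I$ is contained in (the preimage of) this radical, hence is itself a nilpotent ideal in $\End(j_{!*}N)$.

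The main obstacle is this last step: one must organise the factorisations so that iterated products $f_{1} \circ \cdots \circ f_{n}$ of elements of $I$ correspond to powers of induced endomorphisms of $L'$ in $\mathcal{C}(Z)_{w=0}$, so that nilpotence of the radical there transfers to nilpotence of $I$. This bookkeeping — tracking the compatibility of the localization factorization with weight truncations and compositions — is where the semiprimary hypothesis enters essentially, and is the delicate part of the proof.
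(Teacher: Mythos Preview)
The paper's own proof is simply a citation to Theorems~2.9, 2.10, and 2.12 of \cite{wildeshaus_shimura_2012}; there is no in-house argument to compare against. Your proposal is an attempt to sketch the content of those cited results, and the overall strategy --- produce a candidate by a Chow-weight truncation of $j_{!}N$ and then use the semiprimary hypothesis to control $\ker\bigl(\End(j_{!*}N) \to \End(N)\bigr)$ --- is indeed Wildeshaus's.

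However, your verification of property~(i) contains a genuine gap. You claim that pulling back the chosen weight decomposition of $j_{!}N$ along $j^{*}$ gives a weight decomposition of $N$, and then appeal to ``uniqueness (up to non-canonical isomorphism)'' to deduce $j^{*}C \cong 0$. But weight decompositions are \emph{not} unique even up to isomorphism: for any $N$ in the heart and any other object $P$ in the heart, the split triangle $P[-1] \xrightarrow{0} N \to N \oplus P \to P$ is a perfectly valid weight decomposition with $P[-1] \in w \le -1$ and $N \oplus P \in w \ge 0$. Thus an arbitrary weight-$0$ truncation of $j_{!}N$ may pull back to $N \oplus P$ rather than to $N$. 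What Wildeshaus actually does in Theorem~2.9 of loc.\ cit.\ is use the semiprimary hypothesis on $\mathcal{C}(Z)_{w=0}$ to obtain a \emph{minimal} weight filtration, and it is this minimality that forces the redundant summand to vanish and makes the candidate well-defined up to isomorphism. In other words, the semiprimary assumption enters already at the construction stage, not only in the check of~(ii)$'$; without it you have neither a well-defined object nor property~(i). Your outline of~(ii)$'$ is in the right spirit, but as you yourself acknowledge, the passage from the factorisation through $i_{*}\mathcal{C}(Z)_{w=0}$ to nilpotence of $I$ is precisely the content of Wildeshaus's Theorem~2.12 and requires real work beyond what you have written.
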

         \begin{proof}
         Follows from Theorems 2.9 and 2.10, 2.12 of \cite{wildeshaus_shimura_2012}.
         \end{proof}
 We begin by defining the subcategories satisfying the hypothesis of the above proposition.
\subsubsection{}  Consider the situation of Setup \ref{setup}. For $0 \leq i \leq n$, we consider the collection of objects of $CHM(V_{i})$: 
\begin{align*}
    S(V_{i}):=\left\{{p_{i}}_{*}\,\pi_{*}\,1_{Y_{ij}},\; {p_{i}}_{*}\,{i_{1}}_{*}\,1_{{Y_{i}|_{V_{i}}}_{(1)}},\; {p_{i}}_{*}\,\pi_{*}\,{{i_{1}}'}_{*}\,1_{{Y_{i}|_{V_{i}}}_{(1)}' }\right\}
\end{align*}
We define the following full additive subcategory that is closed under taking summands, and is generated by the objects of $CHM(-)$:
\begin{align*}
 D(U)_{w=0}:=& 
 \{{p_{i}}_{*}\,1_{Y_{i}|_{U}} \mid 0 \leq i \leq n \}\\
  D(V)_{w=0}:=&\{{h_{i}}_{*}\,M (r)[2r]\mid M \in S(V_{i}), 0 \leq i \leq n,\; r \in \mathbb{Z}\}\\
D(W)_{w=0}:=& \{{p_{i}}_{*}\,1_{Y_{i}|_{W}},\; i_{*}\,{h_{i}}_{*}\,M(r)[2r] \mid  M \in S(V_{i}), 0 \leq i \leq n,\;r \in \mathbb{Z}\}
\end{align*}
Let their corresponding triangulated subcategory in $DM(*)$ be denoted as $D(*):= \langle D(*)_{w=0} \rangle$ where $* \in \{W,U,V\}$.
\subsubsection{}Using the Proposition \ref{induced chow weight struct}, the triangulated subcategories $D(W), D(U), D(V)$ have the induced Chow weight structure. It remains to show that these subcategories satisfy the formalism of gluing and that $D(Z)_{w=0}$ is semiprimary, which is what we will prove next.
\begin{lemma}\label{lemma}
    Let $X$ be a scheme with stratification $(S_{j})_{\,0 \leq j \leq m}$ where $i_{j}:S_{j} \hookrightarrow X$ denotes the inclusion morphism. Let $M \in DM(X)$ is such that, for all $j$, both ${i_{j}}_{*}{i_{j}}^{*}M$ and ${i_{j}}_{!}\,{i_{j}}^{*}M$ (or alternatively, ${i_{j}}_{*}{i_{j}}^{!}M$) lie in a fixed triangulated subcategory $\mathcal{C}$ of $DM(X)$. Then $M \in \mathcal{C}$.
\end{lemma}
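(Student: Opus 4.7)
The plan is to proceed by induction on the number $m+1$ of strata, handling the two versions of the hypothesis separately; in each inductive step an appropriate localization triangle reduces $M$ to an object on a smaller stratified scheme. The base case $m=0$ is immediate: then $S_0 = X$, $i_0$ is the identity, and the hypothesis directly gives $M \in \mathcal{C}$.

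For the first version of the hypothesis (both $i_{j,*}\,i_j^{*}M$ and $i_{j,!}\,i_j^{*}M$ in $\mathcal{C}$), I peel off the open stratum $S_0$. Writing $j_0 := i_0 : S_0 \hookrightarrow X$ and $i : Z \hookrightarrow X$ for the complementary closed immersion, so that $Z$ inherits the stratification $(S_1,\dots,S_m)$, the localization triangle
\[
j_{0,!}\,j_0^{*}M \longrightarrow M \longrightarrow i_{*}\,i^{*}M \longrightarrow
\]
has its first term in $\mathcal{C}$ by hypothesis (the case $j=0$). Since $i$ is closed, $i_{*}$ is fully faithful, so $\mathcal{C}' := \{N \in DM(Z) \mid i_{*}N \in \mathcal{C}\}$ is a triangulated subcategory of $DM(Z)$, and it suffices to check $i^{*}M \in \mathcal{C}'$. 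I apply the inductive hypothesis to $i^{*}M$ with the stratification $(S_1,\dots,S_m)$ and subcategory $\mathcal{C}'$: factoring $i_j = i \circ i'_j$ with $i'_j : S_j \hookrightarrow Z$ gives $(i'_j)^{*}i^{*}M = i_j^{*}M$, and using $i_{*}=i_{!}$ (closed immersion) one obtains
\[
i_{*}(i'_j)_{*}(i'_j)^{*}i^{*}M = i_{j,*}\,i_j^{*}M, \qquad i_{*}(i'_j)_{!}(i'_j)^{*}i^{*}M = i_{j,!}\,i_j^{*}M,
\]
both of which lie in $\mathcal{C}$ by hypothesis. Induction then yields $i^{*}M \in \mathcal{C}'$, and the triangle concludes $M \in \mathcal{C}$.

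For the alternative hypothesis (both $i_{j,*}\,i_j^{*}M$ and $i_{j,*}\,i_j^{!}M$ in $\mathcal{C}$), I symmetrically peel off the \emph{deepest} (closed) stratum $S_m$. Letting $j_U : U := X \setminus S_m \hookrightarrow X$, so that $U$ carries the stratification $(S_0,\dots,S_{m-1})$, the triangle
\[
i_{m,*}\,i_m^{!}M \longrightarrow M \longrightarrow j_{U,*}\,j_U^{*}M \longrightarrow
\]
has first term in $\mathcal{C}$ by hypothesis. The set $\mathcal{C}'' := \{N \in DM(U) \mid j_{U,*}N \in \mathcal{C}\}$ is triangulated (preimage under the triangulated functor $j_{U,*}$), so it suffices to show $j_U^{*}M \in \mathcal{C}''$. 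Applying induction on $U$ with subcategory $\mathcal{C}''$: for $j \leq m-1$ write $i_j = j_U \circ i_j^{U}$; since $j_U^{*} = j_U^{!}$ for the open immersion $j_U$, one has $(i_j^{U})^{*}j_U^{*}M = i_j^{*}M$ and $(i_j^{U})^{!}j_U^{*}M = i_j^{!}M$, whence $j_{U,*}(i_j^{U})_{*}(i_j^{U})^{*}j_U^{*}M = i_{j,*}\,i_j^{*}M$ and analogously $j_{U,*}(i_j^{U})_{*}(i_j^{U})^{!}j_U^{*}M = i_{j,*}\,i_j^{!}M$, both in $\mathcal{C}$.

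There is no serious obstacle beyond choosing the right localization triangle for each form of the hypothesis. The crucial identities $i_{*}=i_{!}$ for closed and $j^{*}=j^{!}$ for open immersions ensure that the hypotheses on the smaller stratification follow verbatim from those on $X$.
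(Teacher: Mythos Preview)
Your proof is correct and follows essentially the same induction-on-strata strategy as the paper: peel off one stratum via the appropriate localization triangle and apply the inductive hypothesis on the remaining piece. Your treatment is in fact more explicit---you carefully set up the auxiliary subcategories $\mathcal{C}'$ and $\mathcal{C}''$ on the smaller scheme, whereas the paper simply asserts that $i_*i^*M$ lies in the subcategory generated by the relevant objects. For the alternative hypothesis the paper only says ``a similar argument works \dots\ using the dual localization triangle''; your choice to peel off the \emph{closed} stratum $S_m$ (rather than $S_0$) is the correct way to make this precise, since only then do the identities $j_U^*=j_U^!$ let you recover $(i_j^U)^{!}j_U^{*}M=i_j^{!}M$ and verify the inductive hypothesis. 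One harmless remark: the full faithfulness of $i_*$ is not needed to conclude that $\mathcal{C}'$ is triangulated---being the preimage of a triangulated subcategory under a triangulated functor suffices.
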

\begin{proof}
    For $j=1$, this is an immediate consequence of the localization triangle in $DM(X)$:
    \begin{align*}
       {i_{0}}_{!}\,{i_{0}}^{*}M \rightarrow M \rightarrow  {i_{1}}_{*}\,{i_{1}}^{*}M \rightarrow 
    \end{align*}
For $j \geq 2$, we proceed by induction. Let $X= S_{0} \bigsqcup S'$ where $S':= S_{1} \bigsqcup \dots \bigsqcup S_{m}$, and let $i: S' \hookrightarrow X$ denote the inclusion morphism. Observe that $i_{*}i^{*}M$ lies in the triangulated category generated by objects ${i_{j}}_{*}{i_{j}}^{*}M$ and ${i_{j}}_{!}\,{i_{j}} ^{*}M$ for all $j \geq 1$.  By assumption, each of these objects lies in category $\mathcal{C}$, and since $\mathcal{C}$ is closed under extensions and shifts, it follows that $i_{*}i^{*}M \in \mathcal{C}$. 

Now, by the base case, we conclude that $M \in \mathcal{C}$.

A similar argument works if we assume ${i_{j}}_{*}{i_{j}}^{!}M \in \mathcal{C}$ instead of ${i_{j}}_{!}\,{i_{j}}^{*}M \in \mathcal{C}$, using the dual localization triangle.
\end{proof}
\begin{proposition}\label{D(U),D(V),D(W) satisfy formalism of gluing}
    The categories $D(W), D(U), D(V)$ satisfy the formalism of gluing. 
\end{proposition}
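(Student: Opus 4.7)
The plan is to verify, one functor at a time, that each of $j^{*}, j_{*}, j_{!}, i^{*}, i_{*}, i^{!}$ carries the source subcategory into the target subcategory among $D(U), D(V), D(W)$. Since all three are pseudo-abelian triangulated subcategories of $DM(-)$ by construction, and each of the six functors is triangulated, it suffices to check preservation on the distinguished generators $D(*)_{w=0}$.

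Several containments are immediate from the base-change/adjunction identities: $j^{*} i_{*} = 0$, $i^{*} i_{*} = i^{!} i_{*} = \mathrm{id}$, $i_{*}$ carries the generator $h_{k*} M(r)[2r]$ of $D(V)_{w=0}$ to the generator $i_{*} h_{k*} M(r)[2r]$ of $D(W)_{w=0}$ by definition, and proper base change gives $j^{*}(p_{k*} 1_{Y_{k}|_{W}}) \cong p_{k*} 1_{Y_{k}|_{U}}$, a generator of $D(U)$. Once we know that $i^{*}$ and $i^{!}$ preserve the subcategories, the remaining claims for $j_{*}$ and $j_{!}$ on generators of $D(U)$ will follow by applying the localization triangle to $p_{k*} 1_{Y_{k}|_{W}} \in D(W)$ together with the fact that $i_{*}$ sends $D(V)$ into $D(W)$.

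The essential content is therefore the containment $i^{*}(p_{k*} 1_{Y_{k}|_{W}}), \, i^{!}(p_{k*} 1_{Y_{k}|_{W}}) \in D(V)$. By proper base change these equal $(p_{k}|_{V})_{*} 1_{Y_{k}|_{V}}$ and its upper-shriek analogue, and they decompose under $V = \sqcup_{j} V_{j}$ into $\bigoplus_{j} h_{j*}(p_{k}|_{V_{j}})_{*} 1_{Y_{k}|_{V_{j}}}$. For the diagonal component $j = k$, the fibre $Y_{k}|_{V_{k}}$ is the SNC divisor prepared in Setup \ref{setup}; the blow-up square \eqref{blowup} yields a descent triangle rewriting $1_{Y_{k}|_{V_{k}}}$ as an iterated cone built from $\pi_{*} 1_{Y_{k}|_{V_{k}}'}$, $(i_{1})_{*} 1_{(Y_{k}|_{V_{k}})_{(1)}}$, and $\pi_{*} (i_{1}')_{*} 1_{(Y_{k}|_{V_{k}})_{(1)}'}$. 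Pushing forward by $p_{k}$ places the diagonal contribution in the span of $S(V_{k})$; the $i^{!}$ version is handled by the parallel triangle of Proposition \ref{SNC divisor} with the purity Tate twists and shifts inserted.

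The main technical obstacle I anticipate lies in the off-diagonal contributions $j \ne k$: here $V_{j} \subset U_{k}$ and $p_{k}|_{V_{j}}$ is finite étale, so $(p_{k}|_{V_{j}})_{*} 1_{Y_{k}|_{V_{j}}}$ is a cohomological Artin motive over $V_{j}$ that is not literally one of the listed generators of $S(V_{j})$ (which was built from $Y_{j}$, not $Y_{k}$). Resolving this will require either a further refinement of Setup \ref{setup} — shrinking each $V_{j}$ to a stratification on which every $Y_{k}|_{V_{j}}$ splits into Tate-twisted copies of $V_{j}$ — or a tacit enlargement of the generating collection $S(V_{j})$ to absorb these finite-étale pushforwards; in either case the step is combinatorial and introduces no new geometry beyond that already recorded in Setup \ref{setup}.
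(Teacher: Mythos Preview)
Your strategy is exactly the paper's: reduce to generators, handle $i_{*}$ and $j^{*}$ trivially, treat $i^{*}$ and $i^{!}$ on the generator $p_{k*}1_{Y_k|_W}$ via the blow-up triangle for the SNC divisor $Y_k|_{V_k}$ (the paper uses the triangle \eqref{when 1-fold intersection is regular} for $i^{!}$ and then the localization triangle for $1_{Y_k|_{V_k}'}$, which amounts to the same computation as your ``parallel triangle of Proposition \ref{SNC divisor}''), and finally recover $j_{!}, j_{*}$ from the localization triangle on $W$.

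Your off-diagonal observation is well taken. The paper invokes Lemma \ref{lemma} for the stratification $V=\bigsqcup_{j}V_{j}$ but only verifies the hypothesis on the single stratum $V_{k}$; for $j\ne k$ one has $V_{j}\subset U_{k}$ and $(p_{k}|_{V_{j}})_{*}1_{Y_{k}|_{V_{j}}}$ is the pushforward of a finite (relative-dimension-$0$) cover of $V_{j}$ that is not, as written, among the generators $S(V_{j})$. Of your two proposed fixes, the second is the right one: enlarge each $S(V_{j})$ to include the finitely many objects $(p_{k}|_{V_{j}})_{*}1_{Y_{k}|_{V_{j}}}$ for $k\ne j$ (and, after possibly removing further closed points to make $Y_{k}|_{V_{j}}$ regular and $p_{k}|_{V_{j}}$ smooth). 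These extra generators still have fibre dimension $\le 1$ and are smooth proper over the regular base $V_{j}$, so the Kimura-finiteness argument of Proposition \ref{heart of D(V) is semiprimary} goes through unchanged. Your first fix, by contrast, would require the finite covers $Y_{k}|_{V_{j}}\to V_{j}$ to trivialize after deleting points, which a non-trivial \'etale cover of a curve need not do.
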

\begin{proof}
  We need to show that $D(*)$ is preserved under the six functors $j^{*},j_{*}, j_{!}, i^{*},i^{!},i_{*}$. It is enough to show this for the objects in the generating set since the functors preserve cones, shifts, and triangles.
  
   Consider the following blowup triangle associated to the simple normal crossing divisor $Y_{i}|_{V_{i}}$. 
\begin{align*}
    1_{Y_{i}|_{V_{i}}} \longrightarrow\pi_{*}\, 1_{Y_{i}|_{V_{i}}'} \oplus {i_{1}}_{*}\, 1_{{Y_{i}|_{V_{i}}}_{(1)}}\longrightarrow \pi_{*}\,{i_{1}}_{*}'\,1_{{Y_{i}|_{V_{i}}}_{(1)}'} \longrightarrow
\end{align*}
After applying the functor ${h_{i}}_{*}\,{p_{i}}_{*}$ to the triangle, the second and third term lies in the category $D(V)$ by definition. Since $D(V)$ is triangulated, we conclude that 
\[{h_{i}}_{*}\,{p_{i}}_{*}\,1_{Y_{i}|_{V_{i}}} \in D(V).\]
Now, as $h_{i}$ is a closed immersion, we apply the Lemma \ref{lemma} and the isomorphism ${p_{i}}_{*}\,1_{Y_{i}|V_{i}} \cong {h_{i}}^{*}\,{p_{i}}_{*}\,1_{Y_{i}|V}$, to obtain that ${p_{i}}_{*}\,1_{Y_{i}|V} \in D(V)$. Hence, $D(*)$ is stable under $i^{*}$.

Now, for the case of $i^{!}$, consider the triangle \ref{when 1-fold intersection is regular} for $F= Y_{i}|_{V_{i}}$:
\begin{align*}
    {i_{1}}_{*}\,1_{{Y_{i}|_{V_{i}}}_{(1)}}(-2)[-4] \rightarrow {h_{i}}^{!}\,i^{!}\,1_{Y_{i}|_{W}}  \rightarrow {j_{1}}_{*}\,1_{U_{i}}(-1)[-2] \rightarrow
\end{align*}
where $U_{i}:=Y_{i}|_{V_{i}}\setminus\, {Y_{i}|_{V_{i}}}_{(1)}$ and $j_{1}: U_{i} \hookrightarrow Y_{i}|_{V_{i}}$ is the open immersion. Applying the functor ${h_{i}}_{*}\,{p_{i}}_{*}$ to this triangle yields:
\begin{align*}
{h_{i}}_{*}\,{p_{i}}_{*}\,{i_{1}}_{*}\,1_{{Y_{i}|_{V_{i}}}_{(1)}}(-2)[-4] \rightarrow {h_{i}}_{*}\,{p_{i}}_{*}\,{h_{i}}^{!}\,i^{!}\,1_{Y_{i}|_{W}}  \rightarrow {h_{i}}_{*}\,{p_{i}}_{*}\,{j_{1}}_{*}\,1_{U_{i}}(-1)[-2] \rightarrow 
\end{align*}
We aim to show that $i^{!}\, {p_{i}}_{*}\,1_{Y|_{W}} \in D(V)$.
By Lemma \ref{lemma}, it suffices to show that 
\[{h_{i}}_{*}\,{p_{i}}_{*}\,{h_{i}}^{!}\,i^{!}\,1_{Y_{i}|_{W}} \in D(V).\]
Since the category $D(V)$ is triangulated, it is enough to check that the first and third terms lie in $D(V)$. The first term is in  $D(V)$ by definition. To verify that the third term also belongs to $D(V)$, consider the following 
commutative diagram:
\[
\begin{tikzcd}
{Y_{i}|_{V_{i}}}_{(1)}' \arrow[r, hook,"{i_{1}}'"] \arrow[d] & Y_{i}|_{V_{i}}' \arrow[d, "\pi"] & U_{i}' \arrow[l, hook',"{j_{1}}'"'] \arrow[d] \\
{Y_{i}|_{V_{i}}}_{(1)} \arrow[r, hook, "i_{1}"] & Y_{i}|_{V'_{i}} & U_{i} \arrow[l, hook', "j_{1}"']
\end{tikzcd}
\]
Since $\pi|_{U_{i}}$ is an isomorphism,
\[{j_{1}}_{*}\,1_{U_{i}} \cong \pi_{*}\,{j_{1}}'_{*}\, 1_{U_{i}'}.\]
The localization triangle for $\pi_{*}\,1_{{Y_{i}|_{V_{i}}}_{(1)}'}$ yields 
\begin{align*}
   \pi_{*}\,{i_{1}}'_{*}\, 1_{{Y_{i}|_{V_{i}}}_{(1)}'}(-1)[-2] \rightarrow \pi_{*}\,1_{Y_{i}|_{V_{i}}' }\rightarrow \pi_{*}\,{j_{1}}'_{*}\,1_{U_{i}'} \rightarrow  
\end{align*}
Applying the functor ${h_{i}}_{*}\,p{_{i}}_{*}$ to the first two terms gives objects in $D(V)$, therefore 
\[{h_{i}}_{*}\,p{_{i}}_{*}\,\pi_{*}\,{j_{1}}'_{*}\,1_{U_{i}'} \in D(V)\]
as desired. This proves stability under $i^{!}$.\\

The cases for $i_{*}$ and $j^{*}$ is immediate.\\

Consider the localization triangle for ${p_{i}}_{*}1_{Y_{i}|_{W}}$:
   \begin{align*}
     j_{!}\,{p_{i}}_{*}\,1_{Y_{i}|_{U}}  \rightarrow {p_{i}}_{*}\,1_{Y_{i}|_{W}} \rightarrow i_{*}\,i^{*}\,{p_{i}}_{*}\,1_{Y_{i}|_{W}} \rightarrow
   \end{align*}
The second term is clearly in $D(W)$. We have already shown that $i_{*}i^{*}$ preserve $D(W)$, therefore the third term also lies in $D(W)$. Since $D(W)$ is triangulated,
\[j_{!}\,{p_{i}}_{*}\,1_{Y_{i}|_{U}} \in D(W).\]
Similarly, preservation under $j_{*}$ follows by considering the dual triangle. This completes the proof.
\end{proof}
\begin{proposition}\label{heart of D(V) is semiprimary}
    The category $D(V)_{w=0}$ is semiprimary.
\end{proposition}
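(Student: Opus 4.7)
The plan is to reduce the semiprimary property to Kimura finite dimensionality of the generators, and then invoke the André--Kahn theorem: in any $\mathbb{Q}$-linear pseudo-abelian rigid additive tensor category, the full pseudo-abelian subcategory of Kimura finite dimensional objects is semiprimary, i.e.\ its radical is a nilpotent $\otimes$-ideal and the quotient by the radical is abelian semisimple. It therefore suffices to exhibit every generator of $D(V)_{w=0}$ as a finite dimensional Chow motive over $V$.

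First I would make a component-wise reduction. Since $V = \bigsqcup_i V_i$ is a disjoint union of smooth subvarieties, each $h_i : V_i \hookrightarrow V$ is simultaneously open and closed, so ${h_i}_{*}$ identifies $CHM(V_i)$ with a full additive direct factor of $CHM(V)$ and in particular preserves finite dimensionality. Tate twists and shifts by $(r)[2r]$ also preserve finite dimensionality, and a finite product of semiprimary categories is again semiprimary. Hence it is enough to show that, for each $i$, every object of $S(V_i)$ is finite dimensional in $CHM(V_i)$.

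Next I would verify finite dimensionality of the three types of generators in $S(V_i)$. By the arrangements in Setup \ref{setup}, the maps
\[
p_i\circ\pi:Y_i|_{V_i}'=\bigsqcup_j Y_{ij}\longrightarrow V_i,\qquad p_i\circ i_1:{Y_i|_{V_i}}_{(1)}\longrightarrow V_i,\qquad p_i\circ\pi\circ i_1':{Y_i|_{V_i}}_{(1)}'\longrightarrow V_i
\]
are proper and smooth with regular sources, and their fibres have dimension at most one (indeed zero in the last two cases, since the $1$-fold intersections of a simple normal crossing divisor drop dimension by one). Applying Corollary \ref{motive of abelian scheme is f.d} on each irreducible component of $V_i$ shows that ${p_i}_{*}\pi_{*}1_{Y_{ij}}$, ${p_i}_{*}{i_1}_{*}1_{{Y_i|_{V_i}}_{(1)}}$ and ${p_i}_{*}\pi_{*}{i_1'}_{*}1_{{Y_i|_{V_i}}_{(1)}'}$ are all finite dimensional in $CHM(V_i)$. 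Combined with the component-wise reduction and the André--Kahn theorem, this gives that $D(V)_{w=0}$ is semiprimary.

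The main potential obstacle is that the semiprimary result of André--Kahn is classically stated for categories of Chow motives over a field, whereas our setting is relative over $V_i$. This is however handled exactly as in \cite{wildeshaus_shimura_2012}, where Wildeshaus verifies the semiprimary hypothesis of Proposition \ref{Wildeshaus IC} for analogous subcategories generated by motives of abelian schemes; our argument is entirely parallel, with families of curves replacing the abelian schemes. Alternatively, one may reduce to the case of a field by restricting to the generic points of the components of $V_i$, after which the classical statement applies verbatim.
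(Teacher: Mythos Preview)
Your proposal is correct and follows essentially the same route as the paper: both invoke the Andr\'e--Kahn theorem (which, as cited in \cite{andré2002nilpotenceradicauxetstructures}, is stated for arbitrary rigid $\mathbb{Q}$-linear tensor categories, so your relative-versus-field caveat in the last paragraph is unnecessary) and reduce to Kimura finite dimensionality of the generators via Corollary~\ref{motive of abelian scheme is f.d}. The only cosmetic difference is that the paper handles the passage between irreducible components and $V_i$, and between $V_i$ and $V$, through Lemma~\ref{stability of kimura objects}(ii) on finite \'etale pushforwards, whereas you observe directly that these immersions are open--closed.
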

\begin{proof}
We use the following result from \cite[Theorem 9.2.2]{andré2002nilpotenceradicauxetstructures}: any rigid category in which all objects are finite-dimensional is semiprimary. Since finite-dimensional objects are closed under taking summands and finite direct sums, it suffices to show that the generators of $D(V)_{w=0}$ are finite-dimensional.

Let $M \in S(V_{i})$ and $r_{ij}: V_{ij} \hookrightarrow V_{i}$ be an irreducible component of $V_{i}$. By the construction of $V_{i}$, we have $M \in CHM_{s}(V_{i})$,
and the fibres have dimension $\leq 1$. Since $V_{i}$ is smooth, the same holds 
for the pullback ${r_{ij}}^{*}M$. Then by Corollary \ref{motive of abelian scheme is f.d}  we obtain that the object ${r_{ij}}^{*}M$ is finite 
dimensional. 

Moreover, since the morphism $r_{ij}$ is finite and etale, Proposition \ref{stability of kimura objects} (ii) ensures that ${r_{ij}}_{*} {r_{ij}}^{*}M$ is also finite-dimensional. As $M$ is a direct summand of ${r_{ij}}_{*} {r_{ij}}^{*}M$, it follows that $M$ is finite-dimensional too. 

Using Proposition \ref{stability of kimura objects} (ii) again guarantees that ${h_{i}}_{*} M$ is finite dimensional.
\end{proof}
\begin{corollary}
    The motivic intersection complex $j_{!*}1_{U} \in D(W)$ exists in the weaker sense.
\end{corollary}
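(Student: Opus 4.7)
The proof is essentially a bookkeeping exercise that assembles the prior results of Section 4.2 and cites Proposition \ref{Wildeshaus IC}. Here is the plan.

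First, I would verify that the triple $(D(W), D(U), D(V))$ fulfills all hypotheses of Proposition \ref{Wildeshaus IC}. By construction, each of $D(U), D(V), D(W)$ is defined as $\langle S \rangle$ for a collection $S$ of objects in $DM(-)$; since $\langle - \rangle$ closes under taking summands by definition, these subcategories are $\mathbb{Q}$-linear pseudo-abelian triangulated subcategories of the ambient motivic categories. The formalism of gluing for the triple has been established in Proposition \ref{D(U),D(V),D(W) satisfy formalism of gluing}. The presence of a Chow weight structure restricting from $DM(-)$ is provided by Proposition \ref{induced chow weight struct}, once we observe that each of $D(U)_{w=0}, D(V)_{w=0}, D(W)_{w=0}$ is by construction generated (as an additive pseudo-abelian subcategory) by objects of $CHM(-)$. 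Finally, the required semiprimary condition on the heart $D(V)_{w=0}$ is exactly Proposition \ref{heart of D(V) is semiprimary}.

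Next, I would verify that $1_U \in D(U)_{w=0}$, so that Proposition \ref{Wildeshaus IC} can be applied with $N = 1_U$. This uses the convention in the setup that the index $i$ ranges over $0 \leq i \leq n$, where the $i=0$ slot is taken to be $p_0 = \mathrm{id}_X$ with $Y_0 = X$; then $p_{0*} 1_{Y_0|_U} = 1_U$ sits in the generating family of $D(U)_{w=0}$. (If one prefers, one can simply enlarge the family $\{p_{i*}1_{Y_i}\}$ from the setup to include $1_X$; since the construction of $U, V, W$ proceeds by removing a finite set of bad points determined by each $p_i$, adding this extra generator only requires enlarging the bad set by $X_{\mathrm{sing}}$, which is already being removed.)

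With all hypotheses in place, I would then simply invoke Proposition \ref{Wildeshaus IC} to conclude that the intermediate extension $j_{!*}1_U$ exists in the weaker sense as an object of $D(W)$. No genuine obstacle arises here: the real work has already been done in setting up the categories $D(*)$ in such a way that they are stable under the six functors (Proposition \ref{D(U),D(V),D(W) satisfy formalism of gluing}) and so that the heart $D(V)_{w=0}$ consists of finite-dimensional Chow motives over $V$ (Proposition \ref{heart of D(V) is semiprimary}, via Corollary \ref{motive of abelian scheme is f.d} and Lemma \ref{stability of kimura objects}). The only subtlety to flag is the membership $1_U \in D(U)_{w=0}$, which justifies why the abstract existence result applies to the unit motive rather than to some other Chow motive on $U$.
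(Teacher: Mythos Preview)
Your proposal is correct and follows the same overall route as the paper: verify the hypotheses of Proposition \ref{Wildeshaus IC} via Propositions \ref{D(U),D(V),D(W) satisfy formalism of gluing} and \ref{heart of D(V) is semiprimary}, check $1_U \in D(U)_{w=0}$, and apply. The one point of divergence is how you establish $1_U \in D(U)_{w=0}$: you posit a convention $p_0 = \mathrm{id}_X$ to place $1_U$ among the generators, whereas the paper instead observes that, since each $p_i|_U$ is smooth (indeed finite \'etale by the setup), the unit $1_U$ is a direct summand of $p_{i*}1_{Y_i|_U}$, and $D(U)_{w=0}$ is closed under summands. The paper's argument is slightly more robust in that it does not depend on reading an $i=0$ slot into the family, but both justifications are valid.
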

\begin{proof}
  Since $p_{i}|_{U}$ is smooth, we have that $1_{U}$ is summand of ${p_{i}}_{*}\,1_{Y_{i}|_{U}}$. The latter is in the category $D(U)_{w=0}$, and this category is closed under taking summands, therefore $1_{U} \in D(U)_{w=0}$.  Now, the corollary follows immediately from \ref{Wildeshaus IC}.
\end{proof}
    \subsection{Comparison with Morel's truncation}\label{IM isomorphic to EM on U} 
    In this section, we show that the two constructions of the intersection complex on $W$ coincide: one given by the intermediate extension $j_{!*}1_{U}$, and the other by the candidate $EM^{F}_{W}$	
introduced in Section \ref{candidate of MIC}. We establish the isomorphism $j_{!*}1_{U} \cong EM^{F}_{W}$. The proof is adapted from \cite{vaish2018motivic}. For the sake of clarity and self-containment, we restate the necessary lemmas and provide a complete argument.
We begin by proving some preliminary lemmas.
\begin{lemma}\label{IM for W is summand of p*1Y}
    Let $p_{i}:Y_{i} \rightarrow X$ as in setup \ref{setup}. Then, $j_{!*}1_{U}$ is summand of ${p_{i}}_{*}1_{Y_{i}|_{W}}$.
\end{lemma}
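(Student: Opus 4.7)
The plan is to deduce the statement by a direct application of Lemma~\ref{IM is summand of M} to the object $M := {p_i}_*\,1_{Y_i|_W}$. This reduces the task to verifying two points: first, that $M$ belongs to $CHM(W)$; and second, that $1_U$ is a direct summand of $j^*M$. Both of these have essentially been prepared by the setup of Section~\ref{setup} and the general framework set up in Proposition~\ref{induced chow weight struct}, so I do not anticipate any hard step.

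For the first point, $M$ is by construction one of the distinguished generators of $D(W)_{w=0}$. By Proposition~\ref{induced chow weight struct}, the Chow weight structure on $DM(W)$ restricts to the triangulated subcategory $D(W)$ with heart equal to the Karoubi closure in $D(W)$ of the generating collection $D(W)_{w=0}$; in particular $M$ is a Chow motive in $D(W)$.

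For the second point, proper base change applied to the Cartesian square identifying $Y_i|_U = Y_i|_W \times_W U$ gives a canonical isomorphism
\[
    j^*\,{p_i}_*\,1_{Y_i|_W} \;\cong\; (p_i|_U)_*\,1_{Y_i|_U}.
\]
By Setup~\ref{setup}, the map $p_i|_U : Y_i|_U \to U$ is both finite and smooth, hence a finite étale cover of some locally constant degree $d$. Exactly as in the étale step of the proof of Proposition~\ref{stability of F under finite map}, the trace map furnishes a splitting with rational coefficients: the composition
\[
    1_U \longrightarrow (p_i|_U)_*\,(p_i|_U)^*\,1_U \;=\; (p_i|_U)_*\,1_{Y_i|_U} \xrightarrow{\;\operatorname{tr}\;} 1_U
\]
equals multiplication by $d$, an invertible rational, and therefore exhibits $1_U$ as a direct summand of $(p_i|_U)_*\,1_{Y_i|_U} = j^*M$.

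Combining these two verifications, Lemma~\ref{IM is summand of M} applied to $M$ yields $j_{!*}1_U$ as a direct summand of ${p_i}_*\,1_{Y_i|_W}$, which is the desired conclusion. The only verification that requires any real attention is the weight-zero claim of the first step, and this is precisely what Proposition~\ref{induced chow weight struct} was recorded for.
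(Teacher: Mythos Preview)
Your proof is correct and follows the same route as the paper's: verify that $1_U$ is a direct summand of $j^*M \cong (p_i|_U)_*\,1_{Y_i|_U}$ using the smoothness of $p_i|_U$ from the setup, then invoke Lemma~\ref{IM is summand of M}. You have simply been more explicit about the hypothesis $M \in CHM(W)$ and about the trace splitting, both of which the paper leaves implicit.
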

\begin{proof}
 Recall that the morphism $p_{i}|_{U}:Y_{i}|_{U} \rightarrow U$ is smooth. Then, $1_{U}$ is summand of ${p_{i}}_{*}\,1_{Y_{i}|_{U}}$. From Lemma \ref{IM is summand of M}, it follows that $j_{!*}1_{U}$ is summand of ${p_{i}}_{*}1_{Y_{i}|_{W}}$.
\end{proof}
\begin{lemma}\label{local}
   Let $\eta: \Spec K \hookrightarrow X$ be the point in $X$ with closure $Y$. Suppose that this situation satisfies the continuity for t-structures.  Let $M \in DM(X)$ be such that  $\eta^{*}M \in D(K)$. Then we have a distinguished triangle:
   \begin{align*}
       M' \rightarrow j^{*}M \rightarrow M'' \rightarrow
   \end{align*}
   with $M' \in D^{\leq }(U)$ and $M'' \in D^{>}(U)$ on some open set $j:U \hookrightarrow Y$.
\end{lemma}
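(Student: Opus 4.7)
The plan is to spread out, using the continuity hypothesis, the decomposition triangle that the t-structure on $D(K)$ provides for $\eta^{*}M$. Since $\eta^{*}M$ lies in $D(K)$, the t-structure $(D^{\le}(K), D^{>}(K))$ yields a distinguished triangle
\[
a \longrightarrow \eta^{*}M \longrightarrow b \longrightarrow
\]
with $a \in D^{\le}(K)$ and $b \in D^{>}(K)$. The continuity hypothesis for t-structures (as formulated after Definition \ref{gluing:spreadingOut}) then provides open neighborhoods $U_{a}, U_{b} \subset Y$ of $\eta$ and objects $\bar a \in D^{\le}(U_{a})$, $\bar b \in D^{>}(U_{b})$ whose pullbacks along the inclusion $\Spec K \hookrightarrow U_{a}$ (resp. $U_{b}$) recover $a$ and $b$. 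Shrinking if necessary, I may assume $U_{a} = U_{b} = U_{1}$.

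Next I would use the 2-colimit presentation $D(K) = \underset{U \ni \eta}{2\lim}\, D(U)$ from \ref{gluing:situation} to spread out the morphism $a \to \eta^{*}M = \eta^{*}(j^{*}M|_{U_{1}})$: it is represented by a morphism $\bar a|_{U_{2}} \to j^{*}M|_{U_{2}}$ in $D(U_{2})$ for some open $U_{2}\subset U_{1}$ containing $\eta$. Completing this to a distinguished triangle in $D(U_{2})$ yields
\[
\bar a|_{U_{2}} \longrightarrow j^{*}M|_{U_{2}} \longrightarrow c \longrightarrow.
\]
Pulling back along $\eta$ and comparing with the original triangle over $\Spec K$ gives an isomorphism $\eta^{*}c \cong b \cong \eta^{*}\bar b$ (well-defined up to the usual non-uniqueness of cones). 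One more application of the 2-limit property lifts this isomorphism to an isomorphism $c|_{U} \cong \bar b|_{U}$ for some sufficiently small open $U \subset U_{2}$ containing $\eta$. Restricting the distinguished triangle to $U$ and replacing $c|_{U}$ by $\bar b|_{U}$ via this isomorphism produces the required triangle $M' \to j^{*}M \to M'' \to$ with $M' = \bar a|_{U} \in D^{\le}(U)$ and $M'' = \bar b|_{U} \in D^{>}(U)$.

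The main subtlety will be in the last step: one must ensure not merely that a cone exists on a neighborhood, but that it agrees with $\bar b$ sufficiently close to $\eta$ so as to lie in $D^{>}(U)$. This is exactly the content of the 2-limit description of $D(K)$ together with the continuity assumption packaged in the hypothesis, which guarantees that isomorphisms between objects defined near $\eta$ extend to isomorphisms on some open neighborhood after possibly shrinking. The reader should note that the open set $U$ depends on $M$ and on the choices of $\bar a, \bar b$, but this is harmless for the intended application since the lemma only asserts existence of such a $U$.
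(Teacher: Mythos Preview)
Your proposal is correct and follows essentially the same approach as the paper's proof. The only organizational difference is that the paper first isolates the special case where $\eta^{*}M$ already lies in $D^{\le}(K)$ (resp.\ $D^{>}(K)$), showing that then $j^{*}M\in D^{\le}(U)$ after restriction, and then applies this special case to the cone $\cone(f')$; you instead spread out $\bar b$ at the outset and match it with the cone $c$ via the $2$-limit description, which amounts to the same argument unfolded inline.
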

\begin{proof}
    First, assume $\eta^{*}M \in D^{\leq}(K)$. By the continuity of the t structure, there exists an open set $j:U \hookrightarrow Y$ and $M' \in D^{\leq}(U)$ with $\eta^{*}M' \cong \eta^{*}j^{*}M$. Then, $M' \cong j^{*}M$ on some restriction of $U$ by continuity. So, $j^{*}M \in D^{\leq}(U)$.
    Similarly, if $\eta^{*}M \in D^{>}(K)$ then there exists an open set $j:U \hookrightarrow Y$ such that $j^{*}M \in D^{>}(U)$.
    
     Now, let $\eta^{*}M \in D(K)$. Consider the morphism $w_{\leq}\;\eta^{*}M \xrightarrow{f}\eta^{*}M$. By continuity, corresponding to $f$, we have a morphism $M' \xrightarrow{f'} j^{*}M$ on some open set $j:U \hookrightarrow Y$ such that $\eta^{*}M' \cong w_{\leq}\;\eta^{*}M \in D^{\leq}(K)$ and $\eta^{*}f'= f$. From the above calculation, we can restrict $U$ so that $M' \in D^{\leq}(U) $.
     
     Next, consider $\cone(f')$. Since $\eta^{*}f'=f$, we have isomorphism  $\eta^{*}(\cone(f')) \cong w_{>}\;\eta^{*}M \in D^{>}(K)$.
     Restricting $U$ further, we get $\cone(f') \in D^{>}(U)$.
\end{proof}
Our goal is to work with a realization functor that sends the motivic intersection complex to the classical intersection complex, and for which the weights of the resulting intersection complex can be computed explicitly. For this purpose, we use the $\ell$-adic realization \ref{realization}. Recall that for $A \in DM(X)$, the realization $^{p}H^{i}R_{\ell,X}(A)$ can be endowed with the weight filtration induced from the weights in $DM(X)$.
This allows to define the following subcategories of $D^{b}_{c}(X, \mathbb{Q}_{\ell})$, modeled on the definition of Morel's t-structure:
\begin{align*}
    ^{w}D^{\leq a}(X)&:=\{K \in D^{b}_{c}(X, \mathbb{Q}_{\ell}) \mid K= R_{\ell,X}(K')\;\text{and}\;\;^{p}H^{i}(K)\;\;\text{has weight}\leq a\}\\
     ^{w}D^{> a}(X)&:=\{K \in D^{b}_{c}(X, \mathbb{Q}_{\ell}) \mid K= R_{\ell,X}(K')\;\text{and}\;\;^{p}H^{i}(K)\;\;\text{has weight}> a\}
\end{align*}
Since $D^{b}_{c}(X, \mathbb{Q}_{\ell}) $ is not mixed, these subcategories do not form a t-structure.\\

Wildeshaus has shown (see Chapter 7 of \cite{wildeshaus_shimura_2012}) that for any $N \in CHM(U)$, if the motivic intermediate extension $j_{!*}N$ exists in the weaker sense, then it realizes to the classical intermediate extension associated to the local system $\mathcal{L}:= R_{\ell,X}(N)$ that is,
\[R_{\ell,X}(j_{!*}N) \cong IC_{X}(\mathcal{L}).\]
Now, the following lemma computes Morel's weight of the intersection complex in the $\ell$-adic setting.
\begin{lemma}\label{weights of IC}
   Let $j: U \hookrightarrow X$ be an open dense immersion with $U$ regular. Let $N \in CHM(U)$. Assume that $\mathcal{L}:=R_{\ell,X}(N)$ is a local system. Further assume that $j_{!*}N$ exists.

   Let $i: Z\hookrightarrow X$ be a closed immersion such that $ Z\neq X$. Then,
   \begin{align*}
       i^{*}IC_{X}(\mathcal{L}) \in \;^{w}D^{\leq \dim X-1}(Z)
\;\;\;\text{and}\;\; \;i^{!}IC_{X}(\mathcal{L}) \in\;^{w}D^{>\dim X+1}(Z)   \end{align*}
\end{lemma}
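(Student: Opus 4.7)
The strategy is to combine the purity of $IC_X(\mathcal{L})$ — inherited from the assumption that $j_{!*}N \in CHM(X)$ — with the standard support/cosupport conditions that characterize the intersection complex.

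First, I would observe that $N \in CHM(U)$ is of Bondarko weight $0$, and by hypothesis so is its intermediate extension $j_{!*}N$. By the compatibility of the $\ell$-adic realization $R_{\ell,X}$ with the Chow weight structure (recalled in Section~\ref{realization}: a motive of Bondarko weight $\leq a$ realizes to a complex whose $j$-th perverse cohomology has weight $\leq j+a$, and similarly for $\geq a$), this forces $IC_X(\mathcal{L}) = R_{\ell,X}(j_{!*}N)$ to be \emph{pure} in the canonical sense: each ${}^{p}H^{j}(IC_X(\mathcal{L}))$ is of weight exactly $j$.

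Second, I would invoke the Beilinson--Bernstein--Deligne support conditions characterizing the intersection complex. After the perverse shift so that $IC_X(\mathcal{L})[\dim X]$ is perverse, $i^{*}IC_X(\mathcal{L})[\dim X] \in {}^{p}D^{\leq -1}(Z)$ and $i^{!}IC_X(\mathcal{L})[\dim X] \in {}^{p}D^{\geq 1}(Z)$. Translating back, ${}^{p}H^{j}(i^{*}IC_X(\mathcal{L}))$ vanishes for $j \geq \dim X$, and ${}^{p}H^{j}(i^{!}IC_X(\mathcal{L}))$ vanishes for $j \leq \dim X$. Now combine with the weight estimates: since $i^{*}$ preserves Bondarko weight $\leq 0$ and $i^{!}$ preserves Bondarko weight $\geq 0$, each surviving ${}^{p}H^{j}(i^{*}IC_X(\mathcal{L}))$ (for $j \leq \dim X - 1$) has weight $\leq j \leq \dim X - 1$, and each surviving ${}^{p}H^{j}(i^{!}IC_X(\mathcal{L}))$ (for $j \geq \dim X + 1$) has weight $\geq j \geq \dim X + 1$. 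This already gives $i^{*}IC_X(\mathcal{L}) \in {}^{w}D^{\leq \dim X - 1}(Z)$.

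The subtle point — and what I expect to be the main obstacle — is obtaining the strict inequality ``$>\dim X+1$'' in the second inclusion rather than merely ``$\geq \dim X+1$''. I anticipate this to come from the fact that $j_{!*}N$ is the intermediate extension and hence admits no nonzero direct summand supported on $Z$ (condition (iii) following Definition~\ref{definition of Wildeshaus IC}): this rules out the extremal weight piece of $i^{!}$ on $Z$. Concretely, I would examine the localization triangle $i_{*}i^{!}j_{!*}N \to j_{!*}N \to j_{*}j^{*}j_{!*}N = j_{*}N$, and use the purity of $j_{!*}N$ together with Bondarko's strict purity to kill the ``edge'' perverse cohomology in degree $\dim X + 1$, forcing the surviving cohomologies of $i^{!}$ into strictly higher weight. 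Alternatively — and perhaps more cleanly — one may simply identify $IC_X(\mathcal{L})$ with Morel's weight truncation $w_{\leq F}R j_{*}\mathcal{L}$ for the appropriate weight profile $F$ with $F(\dim X) = \dim X$ and $F(k) \leq \dim X - 1$ for $k < \dim X$, from which both bounds fall out directly by the very definition of the glued $t$-structure.
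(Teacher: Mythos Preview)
Your first two paragraphs reproduce the paper's proof essentially verbatim: purity of $IC_X(\mathcal{L})$ from $j_{!*}N\in CHM(X)$, the weight-exactness of $i^*$ and $i^!$, and the BBD support/cosupport vanishing ${}^{p}H^{j}(i^{*}IC_X(\mathcal{L}))=0$ for $j\geq\dim X$ (resp.\ ${}^{p}H^{j}(i^{!}IC_X(\mathcal{L}))=0$ for $j\leq\dim X$), combined to give weight $\leq\dim X-1$ for $i^*$ and weight $\geq\dim X+1$ for $i^!$.

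Your third paragraph is chasing a phantom. The paper's own proof stops exactly where your second paragraph does, concluding ``${}^{p}H^{j}(i^{!}IC_X(\mathcal{L}))$ has weights $\geq\dim X+1$'' and declaring this to be the required Morel weight. In other words, the paper proves $i^{!}IC_X(\mathcal{L})\in{}^{w}D^{>\dim X}(Z)$, not ${}^{w}D^{>\dim X+1}(Z)$; the ``$+1$'' in the displayed statement appears to be a typo (note the asymmetry with the $i^*$ bound, and that the later application only uses the weaker inequality). So your instinct that the straightforward argument only yields $\geq\dim X+1$ is correct, but no further work is needed --- neither the localisation-triangle argument nor the identification with $w_{\leq F}Rj_*\mathcal{L}$ is invoked by the paper, and you should simply stop after the second paragraph.
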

\begin{proof}
Since $R_{\ell,X}(j_{!*}N) \cong IC_{X}(\mathcal{L})$ and $j_{!*}N$ is of weight $0$, we have $IC_{X}(\mathcal{L})$ is of weight $0$. The functor $i^{*}$ decreases Bondarko weights and $i^{!}$ increases Bondrako weights so $i^{*}IC_{X}(\mathcal{L})$ has weights $\leq 0$ and $i^{!}IC_{X}(\mathcal{L})$ has weight $\geq 0$, that is, for all $j$,
\[ ^{p}H^{j}(i^{*}IC_{X}(\mathcal{L})) \;\;\text{has weights} \leq j\;\;\;\text{and}\;\;^{p}H^{j}(i^{!}IC_{X}(\mathcal{L}))\;\;\text{has weights} \geq j.\]
Now by \cite[Lemma 2.1.9]{BBD}, we know that 
\[^{p}H^{j}(i^{*}IC_{X}(\mathcal{L}))=0 \;\; \forall\; j\geq \dim X \;\;\;\text{and}\;\;\; ^{p}H^{j}(i^{!}IC_{X}(\mathcal{L}))=0 \;\forall\;j \leq \dim X\]
Therefore, for all $j$, we obtain
\[ ^{p}H^{j}(i^{*}IC_{X}(\mathcal{L})) \;\;\text{has weights} \leq \dim X-1\;\;\text{and}\;\;^{p}H^{j}(i^{!}IC_{X}(\mathcal{L}))\;\;\text{has weights} \geq \dim X +1.\]
Hence, we obtain the required Morel's weights.
\end{proof}

\begin{lemma}\label{morel weights in realization}
    Let $\eta: \Spec K \hookrightarrow X$ be a point in $X$ with closure $Y$. Let $M \in DM(X)$ such that $\eta^{*}M \in\;^{w} DM^{\leq 1}(K)$ (resp. $\eta^{*}M \in \;^{w}DM^{>1}(K))$. Then there is an open dense subset $j:U \hookrightarrow Y$ with $j^{*}(R_{\ell,X}(M)) \in\;^{w}D^{\leq 2}(U)$ (resp. $j^{*}(R_{\ell,X}(M)) \in \;^{w}D^{>2}(U)$).

\end{lemma}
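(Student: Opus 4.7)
The plan is to pass through the $\ell$-adic realization, propagate the Morel weight hypothesis from the stalk at $\eta$ to an open neighbourhood of $\eta$ in $Y$, and then convert lisse weights to perverse weights via the standard BBD shift.

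Set $K := R_{\ell,X}(M) \in D^{b}_{c}(X, \mathbb{Q}_\ell)$. Since the $\ell$-adic realization commutes with $\eta^{*}$, we have $\eta^{*} K \cong R_{\ell,K}(\eta^{*} M)$. The generators $h^{\leq 1}(X')$ of $^{w}DM^{\leq 1}(K)$ realize to complexes whose cohomology sheaves are pure of weights $0$ and $1$, so under the hypothesis $\eta^{*} M \in\; ^{w}DM^{\leq 1}(K)$ every ordinary cohomology sheaf $H^{i}(\eta^{*} K)$ has weight $\leq 1$. Dually, the generators $h^{>1}(X')$ and $h(X')(-r)$ with $r \geq 1$ of $^{w}DM^{>1}(K)$ realize with cohomology sheaves of weights $\geq 2$, so in the second case every $H^{i}(\eta^{*} K)$ has weight $> 1$.

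Next, I spread out from $\eta$ to an open subset: by constructibility of $K|_{Y}$, there is a smooth dense open $j: U \hookrightarrow Y$ (which we may shrink further) such that every cohomology sheaf $\mathcal{L}_{i} := \mathcal{H}^{i}(j^{*} K)$ is lisse on $U$. Since the weights of a lisse sheaf are pinned down by any stalk, each $\mathcal{L}_{i}$ inherits the weight bound from the stalk at $\eta$, giving weight $\leq 1$ (resp.\ $> 1$) uniformly on $U$. Finally, by the standard BBD weight shift (\emph{cf.} BBD 5.1.14), a lisse sheaf of weight $w$ on a smooth variety of pure dimension $d$ sits in the perverse heart as a perverse sheaf of weight $w + d$. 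In the intended geometric context of Section 4.2 the closure $Y$ satisfies $\dim Y \leq 1$, so $^{p}H^{j}(j^{*} K)$ has weight $\leq 1 + d \leq 2$ in the first case, and weight $> 1 + d > 2$ in the dual case (for $d \geq 1$); equivalently $j^{*}(R_{\ell,X}(M)) \in\; ^{w}D^{\leq 2}(U)$ respectively $^{w}D^{>2}(U)$.

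The only non-formal step is the BBD weight bookkeeping between lisse stalks and perverse cohomology, since the compatibility of the realization with $\eta^{*}$ and the generic base change used for spreading out are both routine; keeping track of the correct dimension $d$ in the perverse weight shift is the main subtlety to watch.
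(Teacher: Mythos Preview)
Your argument has a genuine gap at the step where you claim ``the weights of a lisse sheaf are pinned down by any stalk.'' In the setting of this paper (characteristic zero base), the category $D^{b}_{c}(X,\mathbb{Q}_{\ell})$ carries no intrinsic weight structure; the paper warns about this explicitly just before defining ${}^{w}D^{\leq a}(X)$. The weight filtration on ${}^{p}H^{i}(R_{\ell}(M'))$ is \emph{induced from a Bondarko weight decomposition of the motivic lift $M'$}, not from the $\ell$-adic complex alone, and two different motives with isomorphic realizations may impose different weight filtrations. Consequently, knowing the stalk $\eta^{*}K$ at the $\ell$-adic level tells you nothing about the Bondarko-induced weights of $j^{*}K$ unless you control a motivic object over $U$ whose restriction to $\eta$ is the given one. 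Your spreading-out step happens purely in $D^{b}_{c}$, so it does not produce such a motivic witness.

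The paper's proof addresses exactly this point by spreading out \emph{motivically}: using continuity it reduces to generators of ${}^{w}DM^{\leq 1}(K)$, and for the nontrivial generator $h^{1}(C)$ it passes to the Jacobian $J(C)$, spreads it out to an abelian scheme $\overline{J(C)}\to U$, and takes the relative Chow--K\"unneth summand $\overline{M}:=h^{1}_{U}(\overline{J(C)})$. This $\overline{M}$ is a Chow motive over $U$ (hence of Bondarko weight $0$), so its realization acquires the correct Bondarko-induced perverse weights, landing in ${}^{w}D^{\leq 2}(U)$. In short, the missing idea in your approach is that one must lift the spreading-out to $DM(U)$, and the paper's use of the Jacobian is precisely what makes that lift available.
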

\begin{proof}
    By continuity, it is enough to show that for all $M \in \;^{w}DM^{\leq 1}(K)$ there is an object $\overline{M} \in DM(U)$ on some open dense subset $j: U \hookrightarrow Y$ such that $\eta^{*}\overline{M} \cong M$ and $R_{\ell, X}(\overline{M}) \in\;^{w}D^{\leq 2}(U)$. Since category $^{w}D^{\leq 2}$ is closed under taking shifts, cones and summands, we may assume $M \in S^{\leq 1}(K)$. 

    The non-trivial case is when $M= h^{1}(C)$ for some smooth projective curve $p:C \rightarrow \Spec K$. We have an isomorphism $h^{1}(C) \cong h^{1}(J(C)) \hookrightarrow h(J(C))$ where $p':J(C) \rightarrow \Spec K$ is a Jacobian variety of curve. By spreading out, there exist a dense open set $j:U \hookrightarrow Y$ and an abelian scheme $\overline{p}:\overline{J(C)} \rightarrow U$ such that $\eta^{*}\overline{p}= p'$. Define 
    \[
    \begin{aligned}\overline{M}:= h^{1}_{U}(\overline{J(C)}).
    \end{aligned}\]
    Then, $R_{\ell,X}(\overline{M})=\;^{p}H^{1}\left(\overline{p}_{*}\mathbb{Q}_{\overline{J(C)}}\right)[-1]$ which lies in $^{w}DM^{\leq 2}(U)$.
    
The other case is similar.
\end{proof}
We now prove that the two constructions coincide.
\begin{proposition}
  Let $j:U \hookrightarrow W$ and $j_{!*}1_{U} \in DM(W)$ be as before. Then,   $j_{!*}1_{U} \cong w_{\leq F}j_{*}1_{U}$.
\end{proposition}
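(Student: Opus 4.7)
The strategy is to verify directly that the canonical map $j_{!*}1_U \to j_*1_U$ realizes the truncation triangle for the $t$-structure $(^{w}DM^{\leq F}, \;^{w}DM^{>F})$ on $DM^{coh}_{3,dom}(W)$. If I can show that $j_{!*}1_U \in \;^{w}DM^{\leq F}(W)$ and that the cone $C$ of the canonical map lies in $^{w}DM^{>F}(W)$, then the uniqueness of the truncation triangle identifies $j_{!*}1_U$ with $w_{\leq F}\,j_{*}1_U = EM^{F}_{W}$. The canonical map itself is obtained from the unit $j_{!*}1_U \to j_*j^*j_{!*}1_U \cong j_*1_U$, using condition (i) of the intermediate extension.

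For the first containment, I reduce to checking the stalk condition $s^{*}j_{!*}1_U \in \;^{w}DM^{\leq F(t_K)-t_K}(K)$ at every Zariski point $s:\Spec K \hookrightarrow W$. At points of $U$ this is automatic since $s^{*}j_{!*}1_U \cong 1_K$ has weight $0$. At points of $V$, the essential input is Lemma \ref{IM for W is summand of p*1Y}: $j_{!*}1_U$ is a summand of $p_{i*}1_{Y_i|_W}$, and by the construction of $W$ the fibers of $p_i$ have dimension at most one. Consequently $s^{*}j_{!*}1_U$ is a summand of an object of $DM^{Ab}(K)$, the subcategory on which Wildeshaus' $\ell$-adic realization is weight-conservative. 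Since $R_{\ell}(j_{!*}1_U) \cong IC_{W}$ (by the realization of the weaker intermediate extension), Lemma \ref{weights of IC} bounds the Morel weight of $s^{*}IC_{W}$ from above by $\dim W - 1 = 2$; Lemma \ref{morel weights in realization} together with weight conservativity then transfers this bound back to $s^{*}j_{!*}1_U$ at the motivic level.

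The cone $C$ is supported on $V$ (since $j^*C \cong 0$), so it suffices to check the dual condition $s^{!}C \in \;^{w}DM^{>F(t_K)-t_K}(K)$ at each $s \in V$. This follows from the lower bound on $s^{!}IC_W$ in Lemma \ref{weights of IC} together with the same conservativity argument applied to $C$, which is again built from curve-fibered objects in $D(W)$. The main subtlety I expect to contend with is at closed (zero-dimensional) points of $V$, where the target stratum category is the more restrictive $^{w}DM^{\leq 2}_{2}(K)$ rather than $^{w}DM^{\leq 2}(K)$; this is handled by the observation that the fibers $(Y_i)_s$ are at most curves, so the stalks $s^{*}j_{!*}1_U$ in fact lie in $DM^{coh}_1(K) \subset DM^{coh}_2(K)$, keeping the conservativity-based weight transfer applicable and yielding the desired isomorphism $j_{!*}1_U \cong EM^{F}_{W}$.
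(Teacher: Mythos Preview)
Your strategy matches the paper's exactly: show $j_{!*}1_U\in{}^{w}DM^{\leq F}(W)$ and $\cone(\alpha)\in{}^{w}DM^{>F}(W)$ pointwise, using that $j_{!*}1_U$ is a summand of $p_{i*}1_{Y_i|_W}$ with curve fibres over $V$, the IC bound of Lemma~\ref{weights of IC}, and conservativity on abelian motives.

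One substantive correction: the tool you need is \emph{ordinary} conservativity of $R_\ell$ on $DM^{Ab}$, not weight conservativity. Weight conservativity (Proposition preceding~\ref{weightconservativity_our} and the surrounding discussion) concerns Bondarko's Chow weight structure, whereas here you must establish membership in Morel's $t$-structure, and there is no direct ``transfer back'' of a Morel-weight bound from realization to motives. The paper's mechanism is: spread out the Morel decomposition of $i^*j_{!*}1_U$ to an open $U'\subset V$ via Lemma~\ref{local}; by Lemma~\ref{morel weights in realization} the $w_{>F}$-piece realizes into ${}^{w}D^{>2}(U')$, while the whole object realizes into ${}^{w}D^{\leq 2}(U')$ by Lemma~\ref{weights of IC}; the long exact sequence of perverse cohomology then forces $R_\ell$ of the $w_{>F}$-piece to vanish, and conservativity (after checking this piece stays in $DM^{Ab}(U')$, which requires a separate verification) gives the motivic vanishing. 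For the cone you should also record that $i^!\cone(\alpha)\cong i^!j_{!*}1_U[1]$ (from $i^!j_*=0$) before running the dual argument with the $\geq\dim W+1$ bound.
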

\begin{proof}
By adjunction there is a canonical map $\alpha: j_{!*}1_{U} \rightarrow j_{*}1_{U}$ satisfying $j^{*}\alpha=id_{1_{U}}$. Consider the distinguished triangle :
    \begin{align}\label{Cone of alpha}
        j_{!*}1_{U} \xrightarrow {\alpha}j_{*}1_{U} \rightarrow \cone(\alpha) \rightarrow
    \end{align}
   To prove the isomorphism $w_{\leq F}j_{*}1_{U} \cong j_{!*}1_{U}$, it suffices to show that $j_{!*}1_{U} \in\;^{w}DM^{\leq F}(W)$ and $\cone(\alpha) \in \;^{w}DM^{>F}(W)$.

On the open dense subset $U$, we clearly have $j^{*}j_{!*}1_{U} \cong 1_{U} \in\; ^{w}DM^{\leq F}(U)$.

Let $i: V \hookrightarrow W$ be the complement of $U$ and $s:\Spec K \hookrightarrow V$ be a point with $t(K)=1$. Without loss of generality, we may assume $V$ to be irreducible and $s$ its generic point. From Lemma \ref{IM for W is summand of p*1Y}, we know that $j_{!*}1_{U}$ is summand of ${p_{i}}_{*}1_{Y|W}$. Since $Y_{s}$ is of dimension $\leq 1$, it follows that $s^{*}j_{!*}1_{U} \in DM^{coh}_{1}(\Spec K)$.
By Lemma \ref{local}, we obtain a distinguished triangle on some open neighborhood $j':U' \hookrightarrow V$ of $s$: 
    \begin{align*}
      w_{\leq F}\,j'^{*}i^{*}j_{!*}1_{U} \rightarrow j'^{*}i^{*}\,j_{!*}1_{U} \rightarrow w_{>F}\,j'^{*}i^{*}j_{!*}1_{U} \rightarrow
   \end{align*}
Now, since $R_{\ell,X}(j_{!*}1_{U}) = IC_{W}$, Lemma \ref{weights of IC} implies that $^{p}{H}^{i}(R_{\ell,X}(j'^{*}i^{*}j_{!*}1_{U}))$ have weight $\leq 2$ for all $i$. Upon possibly shrinking $U'$, we may apply Lemma \ref{morel weights in realization} to conclude:
\[^{p}H^{i}\left(R_{\ell,X}(w_{\leq F}\,j'^{*}i^{*}j_{!*}1_{U})\right) \in D^{w \leq 2}(U')\;\text{and}\;\;^{p}H^{i}\left(R_{\ell,X}(w_{> F}\,j'^{*}i^{*}j_{!*}1_{U})\right) \in D^{w >2}(U')\] 
for all $i$. 

Now, writing out the long exact sequence of perverse cohomology sheaves implies that $^{p}H^{i}(w_{>F}\,j'^{*}i^{*}j_{!*}1_{U})$ is of weight $\leq 2$. But this contradicts the fact that it lies in $D^{w > 2}$ unless it vanishes. Hence, 
\[R_{\ell,X}(w_{> F}\,j'^{*}i^{*}j_{!*}1_{U})=0.\]

Next, we claim that $w_{>F}\,j'^{*}i^{*}j_{!*}1_{U} \in DM^{Ab}(U')$. Then conservativity would imply that $w_{>F}\,j'^{*}i^{*}j_{!*}1_{U}=0$. Hence, 
\[w_{\leq F}\,j'^{*}i^{*}j_{!*}1_{U} \cong j'^{*}i^{*}j_{!*}1_{U} \in\;^{w}DM^{\leq F}(U')\]
as required.

To justify the claim, observe that $s^{*}j'^{*}i^{*}j_{!*}1_{U} \in DM^{coh}_{1}(K) \subset DM^{Ab}(K)$. Then by spreading out, we may shrink $U'$ so that $j'^{*}i^{*}j_{!*}1_{U} \in DM^{Ab}(U')$. Since the functor $w_{>F}$ preserves shifts, cones and taking summands, it is enough to verify that $w_{>F}M \in DM^{Ab}(X)$ for every generator $M$ of $DM^{Ab}(X)$. For $M=h_{X}(Y)(-j)$, the truncation $w_{>F}M$ is summand of $M$. Therefore, the claim follows.

Next, note that $V-U'$ consists of a finite number of closed points. By carrying out the same calculation at each such point $s$, we deduce that $s^{*}i^{*}j_{!*}1_{U} \in\;^{w}DM^{\leq 2}_{2}(\Spec K)$ as required. This completes the proof that $j_{!*}1_{U} \in \; ^{w}DM^{\leq F}(W)$.

Now we show that $\cone(\alpha) \in\; ^{w}DM^{>F}(W)$. In triangle \ref{Cone of alpha}, $j^{*}\alpha$ is an isomorphism so $j^{*}(\cone(\alpha))=0 \in\;^{w}DM^{>F}(U)$. On the complement, we have $i^{!}(\cone(\alpha)) \cong i^{!}j_{!*}1_{U}[1]$. Since $j_{!*}1_{U}$ is summand of $\pi_{*}1_{Y|W}$ and the fibres $Y|_{s}$ have dimension $\leq 1$, the pullback $s^{*}i^{!}j_{!*}1_{U}[1] \in DM^{coh}_{1}(\Spec K)$. Repeating the same argument as above then yields the desired result.
\end{proof}
\begin{notation}
From now on, we write $IM_{X}$ for the motivic intersection complex of a scheme $X$.
\end{notation}
\subsection{The candidate \texorpdfstring{$EM^{F}_{X}$}{EM(F,X)} as a Chow motive}
We aim to show that for a threefold $X$, the object $EM^{F}_{X}$ satisfies the characterization of the motivic intersection complex as given in \ref{definition of Wildeshaus IC}. The central and subtle part of the argument is to prove that $EM^{F}_{X}$ is a Chow motive. The remaining properties follow by routine verification. The proof of \ref{definition of Wildeshaus IC} (i) is provided in \ref{pullback of EM to U is constant sheaf}, and (ii) is established in the subsequent theorem.
\begin{theorem}
    Let $X$ be an irreducible variety of dimension $3$ over a field $k$ of characteristic $0$. For any open immersion $j:U \hookrightarrow X$ with $U$ regular, the map induced by pullback functor $j^{*}$ 
  \[ \End(EM^{F}_{X}) \rightarrow \End(1_{U})
  \] is an isomorphism.
\end{theorem}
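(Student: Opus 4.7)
The plan is to proceed exactly along the outline sketched in the introduction, exploiting the decomposition triangle coming from the $t$-structure $({^{w}DM^{\le F}}, {^{w}DM^{> F}})$ together with the adjunction $(j^{*}, j_{*})$. By construction, $EM^{F}_{X} = w_{\le F}j_{*}1_{U}$ fits in a distinguished triangle
\[
    EM^{F}_{X} \longrightarrow j_{*}1_{U} \longrightarrow w_{> F}j_{*}1_{U} \longrightarrow EM^{F}_{X}[1]
\]
in $DM^{coh}_{3,dom}(X)$. Applying the cohomological functor $\Hom(EM^{F}_{X}, -)$ produces a long exact sequence involving $\Hom(EM^{F}_{X}, w_{>F}j_{*}1_{U}[-1])$ and $\Hom(EM^{F}_{X}, w_{>F}j_{*}1_{U})$ as its error terms.

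The first step is to kill these error terms: since $EM^{F}_{X} \in {^{w}DM^{\le F}}(X)$ and $w_{>F}j_{*}1_{U} \in {^{w}DM^{>F}}(X)$ (which is stable under the shift $[-1]$ by invariance of the $t$-structure), the orthogonality axiom of Definition \ref{def:mstruct} forces both of these $\Hom$-groups to vanish. Consequently, the long exact sequence collapses to an isomorphism
\[
    \End(EM^{F}_{X}) \;\xrightarrow{\;\sim\;}\; \Hom(EM^{F}_{X}, j_{*}1_{U})
\]
induced by the canonical map $EM^{F}_{X} \to j_{*}1_{U}$.

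The second step is to identify the right-hand side with $\End(1_{U})$ via the adjunction $(j^{*}, j_{*})$, giving
\[
    \Hom(EM^{F}_{X}, j_{*}1_{U}) \;\cong\; \Hom(j^{*}EM^{F}_{X}, 1_{U}),
\]
and then to invoke Lemma \ref{pullback of EM to U is constant sheaf} which supplies the isomorphism $j^{*}EM^{F}_{X} \cong 1_{U}$. Composing these identifications yields $\End(EM^{F}_{X}) \cong \End(1_{U})$, and a quick diagram check confirms that the resulting map is precisely the one induced by $j^{*}$.

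There is no serious obstacle here: the content has already been packaged into the orthogonality of ${^{w}DM^{\le F}}$ versus ${^{w}DM^{> F}}$ and into the computation $j^{*}EM^{F}_{X} \cong 1_{U}$. The only item to verify carefully is that the identification $\End(EM^{F}_{X}) \cong \End(1_{U})$ obtained from the adjunction agrees, as a ring map, with the map induced by $j^{*}$; this follows from the naturality of the unit $\mathrm{id} \to j_{*}j^{*}$ and the fact that the canonical map $EM^{F}_{X} \to j_{*}1_{U}$ is the adjoint of the isomorphism $j^{*}EM^{F}_{X} \cong 1_{U}$.
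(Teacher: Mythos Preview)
Your argument is correct and follows essentially the same route as the paper: apply $\Hom(EM^{F}_{X},-)$ to the decomposition triangle, use orthogonality of the $m$-structure to kill the $\Hom$-groups into $w_{>F}j_{*}1_{U}$ and its shift, and then use the adjunction $(j^{*},j_{*})$ together with $j^{*}EM^{F}_{X}\cong 1_{U}$. Your additional verification that the resulting isomorphism is the map induced by $j^{*}$ is a useful complement, since the paper states this but does not spell it out.
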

\begin{proof}
Consider the decomposition triangle of $j_{*}1_{U}$:
\[  w_{\leq F}j_{*}1_{U} \rightarrow j_{*}1_{U} \rightarrow w_{>F}j_{*}1_{U} \rightarrow\]
The functor $\Hom(w_{\leq F}j_{*}1_{U},-)$ give rise to a long exact sequence
\begin{align*} \cdots \rightarrow \Hom(w_{\leq F}j_{*}1_{U}, w_{>F}j_{*}1_{U}[-1])\rightarrow\Hom(w_{\leq F}j_{*}1_{U},w_{\leq F}j_{*}1_{U}) \rightarrow &\Hom(w_{\leq F}j_{*}1_{U}, j_{*}1_{U}) \rightarrow \\
&\Hom(w_{\leq F}j_{*}1_{U},w_{>F}j_{*}1_{U}) \rightarrow \cdots 
\end{align*}
Due to the orthogonality of Morel's t-structure, the first and last terms are zero. Therefore we have an isomorphism
\begin{align*}
    \Hom(w_{\leq F}j_{*}1_{U},w_{\leq F}j_{*}1_{U})\cong \Hom(w_{\leq F}j_{*}1_{U},j_{*}1_{U}).
\end{align*}
By adjunction and the fact that $j^{*}EM^{F}_{X} \cong 1_{U}$ (see \ref{pullback of EM to U is constant sheaf}) we have the isomorphism
\[\Hom(w_{\leq F}j_{*}1_{U},j_{*}1_{U}) \cong  \Hom(1_{U},1_{U}).\]
Hence, 
\[\End(EM^{F}_{X}) \cong \End(1_{U}).\]
as desired.
\end{proof}
We now present two propositions required for showing that $EM^{F}_{X}$ is a Chow motive. The first establishes weight conservativity in our context, and the second one calculates the weight of realization of $EM^{F}_{X}$ at each point of $X$.

Recall that the triangulated subcategory of $DM^{eff}(k)$:
\[ DM^{\leq 1+alg}(k)=\langle h(X)\mid X \in SmProj/k ,\, \dim X \leq 1\rangle.\]
admits the motivic t-structure by the work of \cite[Section 3.2]{MR2735752}. We denote this t-structure by $(DM^{t \leq 0}, DM^{t>0})$. The Hodge realisation functor $R_{\mathcal{M}}|_{DM^{\leq 1+alg}(k)}$ is t-exact with respect to the motivic t-structure on source and the standard t-structure on the target. This follows from \cite{MR4033829}, where exactness is proved for the Betti realization functor. Since the forgetful functor $\For:D^{b}_{\mathbb{Q}}(MHS(k)) \rightarrow D^{b}_{c}(X,\mathbb{Q})$ is t-conservative by Lemma \ref{conservativity implies t-conservativity}, we deduce that restriction of $R_{\mathcal{M}}$ is also t-exact.

It is easy to observe that $R_{\mathcal{M}}|_{DM^{\leq 1+alg}(k)}$ is t-exact with respect to Morel's t-structure.

Then, as $DM^{\leq 1+alg}(k) \subset DM^{Ab}(k)$, the conservativity of $R_{\mathcal{M}}$ on $DM^{Ab}(k)$ together with Lemma \ref{conservativity implies t-conservativity}, implies that $R_{\mathcal{M}}|_{DM^{\leq 1+alg}(k)}$ is t-conservative with respect to both the motivic and Morel's t-structure.

The following proposition shows that the t-conservativity of the motivic and Morel’s t-structure on $DM^{\leq 1+alg}(k)$ leads to the weight conservativity of $R_{\mathcal{M}}|_{DM^{\leq 1+alg}(k)}$.
 \begin{proposition}\label{weightconservativity_our}
     Let $R_{\mathcal{M}}: DM(k) \rightarrow D^{b}(MHS(k))$ be the Hodge realization functor. Let $DM^{\leq 1+alg}(k)$ be the triangulated category generated by motives of curves, defined as above. Then the functor $R_{\mathcal{M}}|_{DM^{\leq 1+alg}(k)}$ is weight conservative. 
. \end{proposition}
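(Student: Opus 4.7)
The plan is to deduce weight conservativity from the two t-conservativities of $R_{\mathcal{M}}|_{DM^{\leq 1+alg}(k)}$ established in the preceding paragraphs. The easy direction---$M \in DM^{w \leq a}$ (resp.\ $M \in DM^{w > a}$) implies $^{p}H^{i}(R_{\mathcal{M}}M)$ has weight $\leq i+a$ (resp.\ $> i+a$)---is a formal consequence of the weight exactness of the realization functor on Chow motives, which sends pure Chow motives to pure Hodge structures in the appropriate cohomological degree.

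For the converse implication, the main ingredient would be a compatibility between the Chow weight structure and the motivic t-structure on $DM^{\leq 1+alg}(k)$: namely, $M \in DM^{w \leq a}$ (resp.\ $M \in DM^{w > a}$) iff each motivic cohomology group $H^{i}_{mot}(M)$, being a 1-motive, has its weight filtration concentrated in weights $\leq i+a$ (resp.\ $> i+a$). The forward implication follows by choosing a Chow weight tower for $M$ and tracking the resulting motivic cohomologies. The reverse implication I would prove by induction on the motivic length of $M$, using the motivic truncation triangle $\tau^{mot}_{\leq n}M \to M \to \tau^{mot}_{>n}M \to$ together with the closure of $DM^{w \leq a}$ under extensions; the base case reduces to the observation that a pure 1-motive of Hodge weight $w$ viewed in cohomological degree $i$ has Bondarko weight $w - i$, which is straightforward to verify on the generators (lattices, abelian varieties, tori).

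Granting this compatibility, weight conservativity follows by chaining equivalences: $M \in DM^{w \leq a}$ iff each $H^{i}_{mot}(M)$ has weight filtration $\leq i+a$ iff (by conservativity and weight-faithfulness of $R_{\mathcal{M}}$ restricted to 1-motives, which form a subcategory of $DM^{Ab}(k)$) each $R_{\mathcal{M}}(H^{i}_{mot}(M))$ has weight filtration $\leq i+a$ iff (by t-exactness of $R_{\mathcal{M}}$ for the motivic t-structure) each $^{p}H^{i}(R_{\mathcal{M}}M)$ has weight $\leq i+a$. The chain for the $>a$ condition is analogous. I expect the main obstacle to be establishing the Chow-versus-motivic compatibility in the reverse direction; this is essentially the content of the Ayoub--Barbieri-Viale equivalence $DM^{\leq 1+alg}(k) \simeq D^{b}(M_{1}(k)_{\mathbb{Q}})$ transported to the two structures, and a self-contained proof would proceed by induction on generators together with a careful verification that the inductive step preserves the bound $w \leq i + a$ under the motivic truncation triangle.
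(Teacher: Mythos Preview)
Your proposal is correct in spirit and shares the core ingredients with the paper's proof---both hinge on the motivic $t$-structure on $DM^{\leq 1+alg}(k)$ and the $t$-conservativity of $R_{\mathcal{M}}$---but the route is genuinely different.

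The paper does not establish your compatibility lemma (``$M \in DM^{w\leq a}$ iff each $H^{i}_{mot}(M)$ has weight $\leq i+a$'') directly. Instead it inserts Morel's $t$-structure as an intermediate filtration: since Morel's truncations commute with $R_{\mathcal{M}}$, one may reduce to $M$ of pure Morel weight $b$; then the hypothesis on $R_{\mathcal{M}}(M)$ forces $H^{i}(R_{\mathcal{M}}(M))=0$ for $i\neq b-a$, and $t$-conservativity for the motivic $t$-structure places $M$ in a single motivic degree. At that point $M$ is (up to summands) a single shifted generator $h^{b}(C)[a-b]$, whose Bondarko weight is read off by inspection. This double use of Morel's and then the motivic $t$-structure pins the object down completely without ever needing the general Chow--motivic compatibility you state.

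What each approach buys: your route is more conceptual and makes the mechanism transparent, but the reverse direction of your compatibility lemma is essentially the content of the Ayoub--Barbieri-Viale equivalence (as you note), and giving it a self-contained proof by induction on motivic length is nontrivial---one must know that objects of the heart with weight filtration concentrated in weights $\leq w$ lie in the correct Bondarko range, which amounts to a careful identification of pure $1$-motives with Chow summands in the right shifts. The paper's Morel-then-motivic reduction sidesteps this entirely: by first slicing by Morel weight and then by motivic degree, it never needs to analyze a general heart object, only a generator, so the argument stays short and elementary relative to what is set up in the preceding paragraphs.
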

\begin{proof}
      Let $M \in DM^{\leq 1+alg}(k)$ such that $R_{\mathcal{M}}(M)$ is pure of weight $a$. Using Morel's truncation, $M$ can be expressed as a finite successive extension of objects of pure Morel's weight. Because Morel’s truncation functors preserve weights, we may reduce to the case where $M$ itself has pure Morel weight, say $b$. 
      
      By the t-exactness of the realization functor with respect to Morel’s t-structure, we then have that $R_{\mathcal{M}}(M)$ also has pure Morel weight $b$. By assumption, $R_{\mathcal{M}}(M)$ is pure of weight $a$; therefore  
\[H^{i}(R_{\mathcal{M}}(M))=0\;\forall \; i \neq  b-a.\]
Now, by conservativity of $R_{\mathcal{M}}$ with respect to motivic t-structure, it follows that $M \in  DM^{t=b-a}(k)$. 

Finally, to verify that $M$ has weight $a$, it suffices to check this for generators of $DM^{\leq 1+alg}(k)$. For such a generator $M$ of pure Morel weight $b$ and lying in  $DM^{t=b-a}(k)$,
 we must have $M= h^{b}(C)[a-b]$ for some smooth projective curve. Since $h^{b}(C)$ is of weight $b$, the shift gives that $M$ has weight $a$ as required.

An analogous argument applies in the cases when $R_{\mathcal{M}}(M) \in DM^{w \leq a}(k)$ or $R_{\mathcal{M}}(M) \in DM^{w >a}(k)$.
\end{proof}
Next result will play a key role in determining the weight of $EM^{F}_{X}$ at each point via weight conservativity.
\begin{proposition}\label{realizations of EM at each point}
    Let $X$ be an irreducible variety of dimension $3$, and let $U$ be an open subset of $X$ whose complement $Z$ consists of finitely many closed points of $X$. Assume that the motivic intersection complex $IM_{U}$ exists for $U$ and is isomorphic to $EM^{F}_{U}$. Then, for all $s \in Z$, we have
    \[ R_{\mathcal{M}}(s^{*}EM^{F}_{X}) \cong s^{*} IC^{\mathcal{M}}_{X}\;\;\; \text{and}\;\;\;R_{\mathcal{M}}(s^{!}EM^{F}_{X}) \cong s^{!} IC^{\mathcal{M}}_{X}\] 
    where $IC^{\mathcal{M}}_{X} \in D^{b}_{\mathbb{Q}}(MHM(X))$ is the intersection complex of $X$.
\end{proposition}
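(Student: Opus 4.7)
The plan is to reduce the statement, via Corollary \ref{s upper star and shriek acting on EM}, to a commutation of $R_{\mathcal{M}}$ with the composite $s^{*}j_{*}$ applied to $EM^{F}_{U}$, and then to exploit the hypothesis $EM^{F}_{U}\cong IM_{U}$ together with the known realization of $IM_{U}$.

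First I would invoke Corollary \ref{s upper star and shriek acting on EM} to rewrite $s^{*}EM^{F}_{X}\cong w_{\leq 2}(s^{*}j_{*}EM^{F}_{U})$ and $s^{!}EM^{F}_{X}\cong w_{>2}(s^{*}j_{*}EM^{F}_{U})[-1]$. The classical intersection complex admits an entirely analogous description obtained by gluing Morel's weight-truncation $t$-structures point-by-point on $D^{b}(MHM(X))$, in direct parallel with Theorem \ref{def of leq F} and Corollary \ref{corollary of relation btw F and (infty,2)}: denoting those truncations again by $w_{\leq 2}$ and $w_{>2}$, one has $s^{*}IC^{\mathcal{M}}_{X}\cong w_{\leq 2}(s^{*}j_{*}IC^{\mathcal{M}}_{U})$ and $s^{!}IC^{\mathcal{M}}_{X}\cong w_{>2}(s^{*}j_{*}IC^{\mathcal{M}}_{U})[-1]$. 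Moreover, $R_{\mathcal{M}}$ is $t$-exact for these Morel's $t$-structures on the relevant subcategories, by essentially the argument of Proposition \ref{mixed realisation commute with 1+alg}, and therefore commutes with $w_{\leq 2}$ and $w_{>2}$.

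Granted the preceding reduction, it suffices to prove $R_{\mathcal{M}}(s^{*}j_{*}EM^{F}_{U})\cong s^{*}j_{*}IC^{\mathcal{M}}_{U}$ (and the $s^{!}$ analogue). By hypothesis $EM^{F}_{U}\cong IM_{U}$, and by Lemma \ref{IM for W is summand of p*1Y} the motive $IM_{U}$ is a direct summand of $(\pi|_{U})_{*}1_{Y|_{U}}$ for a proper resolution $\pi:Y\to X$ with $Y$ smooth; by the decomposition theorem applied to $\pi|_{U}$ on the MHM-side, $IC^{\mathcal{M}}_{U}$ is the corresponding summand of $(\pi|_{U})_{*}^{\mathcal{M}}\mathbb{Q}_{Y|_{U}}^{\mathcal{M}}$. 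It is therefore enough to establish the comparison $R_{\mathcal{M}}(s^{*}j_{*}(\pi|_{U})_{*}1_{Y|_{U}})\cong s^{*}j_{*}^{\mathcal{M}}(\pi|_{U})_{*}^{\mathcal{M}}\mathbb{Q}_{Y|_{U}}^{\mathcal{M}}$ compatibly with these splittings. Using the localization triangle $i_{*}i^{!}(\pi_{*}1_{Y})\to\pi_{*}1_{Y}\to j_{*}(\pi|_{U})_{*}1_{Y|_{U}}\to$ on $X$ together with its MHM-analogue, this reduces further to identifying the realizations of $s^{*}\pi_{*}1_{Y}$ and $s^{*}i_{*}i^{!}\pi_{*}1_{Y}$. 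Proper base change, valid in $DM$ and in $MHM$, identifies the first with the homological motive (resp. MHS) of the fiber $Y_{s}=\pi^{-1}(s)$, and the second with a cohomology-with-supports computation along $Y_{s}\subset Y$.

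The main obstacle is the absence of a relative realization $DM(X)\to D^{b}(MHM(X))$ commuting with all six operations. To bypass it, following the plan sketched in the introduction, I would use both realizations simultaneously: the homological map $h(Y_{s})\to h(Y)$ is realized by $R_{\mathcal{M}}$ into $D^{b}(MHS(k(s)))$, while the dual cohomological map $h^{coh}(Y)\to h^{coh}(Y_{s})$ is realized by Huber's contravariant functor $R_{\mathcal{MR}}$ into $D_{\mathcal{MR}}$; Poincar\'e--Lefschetz duality on the smooth $Y$ relates the two and pins down the MHS-realization of the support term. Finally, the faithfulness of the forgetful functor $\For:D^{b}(MHS_{\mathbb{Q}}(k(s)))\to D^{b}_{c}(\Spec k(s),\mathbb{Q})$, combined with the fact that the Betti and $\ell$-adic realizations do commute with the full six-functor formalism on constructible sheaves, ensures that once the underlying constructible complex is identified with the pullback of the classical $IC_{X}$, the identification lifts uniquely to $D^{b}(MHS(k(s)))$, yielding $R_{\mathcal{M}}(s^{*}EM^{F}_{X})\cong s^{*}IC^{\mathcal{M}}_{X}$. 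The $s^{!}$ case proceeds identically, with $w_{>2}$ replacing $w_{\leq 2}$ and a parallel triangle.
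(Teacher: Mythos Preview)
Your proposal is correct and follows essentially the same strategy as the paper: reduce via Corollary \ref{s upper star and shriek acting on EM} and the commutation of $R_{\mathcal{M}}$ with $w_{\le 2},w_{>2}$, pass to the ambient $\pi_*1_{U'}$ for a resolution $\pi$, analyze the localization triangle using both the covariant $R_{\mathcal{M}}$ and the contravariant $R_{\mathcal{MR}}$, and descend from Betti to Hodge via $\For$. Two small refinements are worth noting. First, rather than invoking Poincar\'e--Lefschetz duality, the paper simply pushes the triangle down to $\Spec k$ along the structure map, so that the two arrows become the ordinary (co)unit adjunctions, which $R_{\mathcal{M}}$ and $R_{\mathcal{MR}}$ preserve by construction; faithfulness of $\For$ is then used to identify $R_{\mathcal{M}}(p_*\eta)$ with the expected adjunction on the MHS side. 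Second, your phrase ``compatibly with these splittings'' and appeal to faithfulness of $\For$ for the passage from $\pi_*1_{U'}$ back to the summand $IM_U$ hides a genuine step: one must first produce a candidate map on the MHS side (the paper takes the composite of the split inclusion with the projection to $s^*j_*IC^{\mathcal{M}}_U$), check via the six-functor-compatible Betti realization that $\For$ of this map is the identity, and then invoke \emph{conservativity} (not merely faithfulness) of $\For$ to conclude it is an isomorphism.
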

\begin{proof}
We have previously seen in Corollary \ref{s upper star and shriek acting on EM}) that 
    \[s^{*}EM^{F}_{X} \cong w_{\leq 2}\,s^{*}j_{*}\,EM^{F}_{U}\;\;\;\text{and}\;\;\; s^{!}EM^{F}_{X} \cong w_{>2}\,s^{*}j_{*}\,EM^{F}_{U}[-1].\]
    Since the functor $w_{\leq 2}$ and $w_{>2}$ commute with $R_{\mathcal{M}}$, it is enough to show that 
    \[R_{\mathcal{M}}(s^{*}j_{*}\,EM^{F}_{U}) \cong s^{*}j_{*}\,IC^{\mathcal{M}}_{U}.\]
     Let $\pi: X' \rightarrow X$ be a resolution of $X$. As $IM_{U}$ exists, by the motivic decomposition theorem we have that $IM_{U}$ is summand of $\pi_{*}1_{U'}$ where $U'$ is the pullback of $U$ along $\pi$.
We then proceed by establishing the following two claims: 
\[
\begin{aligned}
\textit{Claim I:} & \quad R_{\mathcal{M}}(s^{*}j_{*}\,{\pi}_{*}1_{U'}) \cong s^{*}j_{*}\,{\pi}_{*}\,\mathbb{Q}^{\mathcal{M}}_{U'},\\
\textit{Claim II:} & \quad \text{\textit{Claim I} implies } R_{\mathcal{M}}(s^{*}j_{*}\,EM^{F}_{U}) \cong s^{*}j_{*}\,IC^{\mathcal{M}}_{U}.
\end{aligned}
\]
We begin by proving Claim II. Given that $R_{\mathcal{M}}(s^{*}j_{*}\,EM^{F}_{U})$ is the summand of $R_{\mathcal{M}}(s^{*}j_{*}\,{\pi}_{*}1_{U'})$, Claim I implies that we have a split injection 
\[i:R_{\mathcal{M}}(s^{*}j_{*}\,EM^{F}_{U})  \hookrightarrow s^{*}j_{*}\,{\pi}_{*}\,\mathbb{Q}^{\mathcal{M}}_{U'}.\]
Composing with the projection morphism $p:s^{*}j_{*}\,{\pi}_{*}\,\mathbb{Q}^{\mathcal{M}}_{U'} \rightarrow s^{*}j_{*}\,IC^{\mathcal{M}}_{U}$ we obtain a map \[p \circ i:R_{\mathcal{M}}(s^{*}j_{*}\,EM^{F}_{U}) \rightarrow s^{*}j_{*}\,IC^{\mathcal{M}}_{U}.\]
Since the Betti realization functor $R_{B}$ commutes with the six functors and $R_{B}(EM^{F}_{U})\cong IC_{U}$ \cite[Chapter 7]{wildeshaus_shimura_2012}, we have 
\[\For\left(R_{\mathcal{M}}(s^{*}j_{*}\,EM^{F}_{U})\right)= R_{B}(s^{*}j_{*}\,EM^{F}_{U})= s^{*}j_{*}\,R_{B} (EM^{F}_{U}) \cong s^{*}j_{*}\,IC_{U}.\]
Then, observe that the map $\For(p\circ i):s^{*}j_{*}IC_{U}\rightarrow s^{*}j_{*}IC_{U}$ is the composition of inclusion followed by projection, so it is an identity morphism. Therefore, due to conservativity of the functor $\For: D^{b}(MHS(K))\rightarrow D^{b}_{c}(K, \mathbb{Q})$, it follows that $p\circ i$ is an isomorphism. This completes the proof of Claim II.

Next, we prove Claim I. Let $j: U \hookrightarrow X$ denote the open immersion and $i:Z \hookrightarrow X$ its closed complement. Consider the localization triangle for $\pi_{*}1_{X'}$:
\begin{align*}
    i_{*} i^{!}\, \pi_{*}1_{X'} \rightarrow \pi_{*}1_{X'} \rightarrow j_{*}j^{*}\,\pi_{*} 1_{X'} \rightarrow
\end{align*}
Applying $s^{*}$ and using proper base change, we obtain a distinguished triangle in $DM(K)$:
\begin{align*}
 {\pi}_{*}\, s^{!}1_{X'} \xrightarrow{f}  {\pi}_{*}\,s^{*}1_{X'} \rightarrow s^{*}\,j_{*}\, {\pi}_{*} 1_{U'} \rightarrow
\end{align*}
The first map $f$ is the composition of two adjunction maps:
\[ f: {\pi}_{*}\,s_{*}\,s^{!}1_{X'} \xrightarrow{\xi} {\pi}_{*}1_{X'} \xrightarrow{\eta}{\pi}_{*}\,s_{*}\,s^{*}1_{X'} \]
Pushing forward the morphism $f$ along the structure morphism $p: X\rightarrow \Spec k$, we obtain that $p_{*}(\xi)$ and $p_{*}(\eta)$ are the homological and cohomological maps respectively, corresponding to $s:F\hookrightarrow X'$. As $R_{\mathcal{MR}}$ respects the cohomological map and $R_{\mathcal{M}}$ respects the homological map, we get the expected maps in the respective realization, that is,
\[  R_{\mathcal{M}}\,(p_{*}\xi):p_{*}{\pi}_{*}\,s_{*}\,s^{!}\,\mathbb{Q}_{X'} \rightarrow  p_{*}{\pi}_{*}\,\mathbb{Q}_{X'}\;\;\;\text{and}\;\;\;R_{\mathcal{MR}}\,(p_{*}\eta):p_{*}{\pi}_{*}\,\mathbb{Q}_{X'} \rightarrow p_{*}{\pi}_{*}\,s_{*}\,s^{*}\,\mathbb{Q}_{U'}\]
are the adjunction maps. Now, consider the map
\[R_{\mathcal{M}}(p_{*}\eta): p_{*}{\pi}_{*}\,\mathbb{Q}_{X'} \rightarrow p_{*}{\pi}_{*}\,s_{*}\,s^{*}\,\mathbb{Q}_{U'}.\]
Since under Betti realization 
\[ \For(R_{\mathcal{M}}\,(p_{*}\eta))= \For(R_{\mathcal{MR}}\,(p_{*}\eta)),\]
we obtain that $R_{B}(R_{\mathcal{MR}}\,(p_{*}\eta))$ is also the adjunction map. Then, using faithfullness of the functor $\For: D^{b}(MHS(k)) \rightarrow D^{b}_{c}(k, \mathbb{Q})$ we conclude that $R_{\mathcal{M}}\,(p_{*}\eta)$ is the adjunction map. This implies that $R_{\mathcal{M}}(p_{*}f)$ is the expected map, and therefore using uniqueness of cones we conclude that \[R_{\mathcal{M}}(p_{*}\,s^{*}\,j_{*}\,{\pi}_{*}1_{U'}) \cong p_{*}\,s^{*}\,j_{*}\,{\pi}_{*}\mathbb{Q}_{U'}.\]
Applying $p^{*}$, we obtain 
\[R_{\mathcal{M}}(s^{*}\,j_{*}\,{\pi}_{*}1_{U'}) \cong s^{*}\,j_{*}\,{\pi}_{*}\mathbb{Q}_{U'} \]
as required.
\end{proof}
\begin{remark}
The above proposition would follow directly for any mixed realization functor that commutes with Grothendieck's six functors and under which $EM^{F}_{X}$ realizes to the intersection complex $IC_{X}$. At present, no such realization functor from motives to mixed Hodge modules (or to any mixed category, for that matter) is known in full generality, which is why we proceed as above.

Drew's \cite{drew2018motivichodgemodules} construction of six functor formalism $X \mapsto DH_{c}(X)$ of motivic Hodge modules, together with the realization functor $DM(X) \rightarrow DH_{c}(X)$, offers a possible framework. However, the perverse t-structure on $DH_{c}(X)$ is not yet available; once constructed, one expects $EM^{F}_{X}$ to realize to $IC_{X}$ and the proposition would then follow by a straightforward argument for $DH_{c}(X)$.
\end{remark}
Finally, we show that the object $EM^{F}_{X}$ is a Chow motive.
\begin{theorem}\label{main theorem}
    Let $X$ be an irreducible variety of dimension $3$ over a field $k$ of characteristic $0$. The object $EM^{F}_{X} \in DM^{coh}_{3,dom}(X)\subset DM(X)$ is of weight $0$ in the sense of Bondarko, that is $EM^{F}_{X} \in DM^{w=0}(X)$. In particular, it is a Chow motive.
\end{theorem}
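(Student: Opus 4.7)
The plan is to follow the outline laid out in the introduction: apply punctual gluing of the Bondarko weight structure to reduce the global statement to a pointwise one, then handle the two types of points (generic fibres of dimension $\leq 1$ versus fibres of dimension $2$) by separate methods. By punctual gluing of weight structures, to show $EM^F_X \in DM^{w=0}(X)$ it suffices to verify, for every Zariski point $s:\Spec k(s) \hookrightarrow X$, that $s^*EM^F_X \in DM^{w\leq 0}(k(s))$ and $s^!EM^F_X \in DM^{w\geq 0}(k(s))$. Fix a resolution $\pi: X' \to X$. By generic smoothness plus the fibre dimension theorem, there is an open $j: U \hookrightarrow X$ (complement of finitely many closed points) over which the fibres of $\pi$ have dimension $\leq 1$. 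Using Lemma \ref{EM(X) in terms of EM(U)} and Corollary \ref{corollary of relation btw F and (infty,2)}, one reduces to the case $X = W = U \cup \{\text{closed points}\}$ from Setup \ref{setup}, and one may invoke the identification $EM^F_U \cong IM_U$ of Section \ref{IM isomorphic to EM on U}.

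For a point $s$ lying in $U$, both $s^*EM^F_X$ and $s^!EM^F_X$ are summands of restrictions of $\pi_*1_{X'}$ (via Lemma \ref{IM for W is summand of p*1Y}), and because the fibre has dimension $\leq 1$ they lie in $DM^{coh}_1(k(s)) \subset DM^{Ab}(k(s))$. Here Wildeshaus' weight conservativity (or Proposition \ref{weightconservativity_our}) applies: the weights of $s^*EM^F_X$ and $s^!EM^F_X$ can be read off from those of the $\ell$-adic (or Hodge) realization, which by Lemma \ref{weights of IC} and standard properties of $IC$ lie in the correct range ($\leq 0$ for $s^*$ and $\geq 0$ for $s^!$).

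The hard case, and the main obstacle, is a closed point $s \in Z = X\setminus U$ over which the fibre is a surface --- here transcendental (phantom) summands of $h^2$ of the fibre may appear, so one cannot directly use weight conservativity on $DM^{Ab}$. The idea is to split $s^*EM^F_X$ using the refined t-structure $(^{w}DM^{\leq 1+alg}, {^{w}DM^{>1+alg}})$ of Corollary \ref{1+alg t-structure}. The piece $w_{\leq 1+alg}s^*EM^F_X$ lies in $DM^{Ab}(k(s))$ by Proposition \ref{leq 1+alg is in conservative part}, so Proposition \ref{weightconservativity_our} reduces its weight to a computation in the mixed Hodge realization. Via Proposition \ref{realizations of EM at each point}, $R_{\mathcal{M}}(s^*EM^F_X) \cong s^*IC^{\mathcal{M}}_X$, which has weight $\leq 0$; since $R_{\mathcal{M}}$ commutes with $w_{\leq 1+alg}$ (Proposition \ref{mixed realisation commute with 1+alg}), the piece $R_{\mathcal{M}}(w_{\leq 1+alg}s^*EM^F_X)$ has weight $\leq 0$, as required. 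For the other piece $w_{>1+alg}s^*EM^F_X$, we cannot go through realizations, and must instead exploit the explicit geometry: use Corollary \ref{s upper star and shriek acting on EM} to reduce the computation to $w_{>1+alg}w_{\leq 2}s^*j_*EM^F_U$, which by the fact that $EM^F_U$ is a summand of $\pi_*1_{U'}$ reduces to a computation for $\pi_*s^*j_*1_{U'}$, and then to the cone of the localization triangle for the fibre $F$ of $X' \to X$ at $s$. Combining with the SNC-divisor calculation of Proposition \ref{SNC divisor} and the information on generators provided by Proposition \ref{objects in 1+alg}, one checks by hand that the resulting summands all have weight $\leq 0$.

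The dual case $s^!EM^F_X$ is symmetric: using the twisted refined t-structure $(w_{\leq 1+alg(-1)}, w_{>1+alg(-1)})$ of Remark \ref{twisted refined functor}, split $s^!EM^F_X$ into a piece amenable to weight conservativity on $DM^{Ab}$ via its realization $s^!IC^{\mathcal{M}}_X$ (which has weight $\geq 0$), and a complementary piece analyzed explicitly through the SNC-geometry of the fibre, this time using part (ii) of Proposition \ref{SNC divisor}. Assembling the pointwise weight bounds via punctual gluing then yields $EM^F_X \in DM^{w=0}(X)$, so it is a Chow motive. The main technical burden, as indicated, lies in carefully tracking the geometric decomposition of $w_{>1+alg}s^*EM^F_X$ and its twisted counterpart, ensuring that every summand landing in the analysis is of the correct Bondarko weight; this is where Propositions \ref{SNC divisor} and \ref{objects in 1+alg} do the real work.
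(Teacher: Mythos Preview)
Your proposal is correct and follows essentially the same approach as the paper's proof: punctual gluing reduces to pointwise weight bounds, the refined $t$-structure $({}^{w}DM^{\leq 1+alg},{}^{w}DM^{>1+alg})$ (and its twist) splits the analysis at bad points into an abelian piece handled by weight conservativity via Propositions \ref{weightconservativity_our}, \ref{realizations of EM at each point}, \ref{mixed realisation commute with 1+alg}, and a transcendental piece computed explicitly via Proposition \ref{SNC divisor} and the blowup triangle. The only notable simplification in the paper is the treatment of points in $U$: rather than invoking $DM^{Ab}$ and weight conservativity pointwise as you do, the paper observes directly that $EM^F_U \cong IM_U$ is a summand of $\pi_*1_{U'}$, which already has weight $0$, so all of $U$ is handled in one stroke; the paper also inserts a preliminary reduction to $X$ normal via Corollary \ref{normalization}, which you omit.
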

\begin{proof}
   To show that $EM^{F}_{X}$ is of weight zero, by punctual gluing, it suffices to show that 
   \begin{align*}
       s^{*}\; EM^{F}_{X} \in DM^{w \leq 0}\;\;\text{and}\;\;s^{!}\;EM^{F}_{X} \in DM^{w \geq 0}
   \end{align*}
   for all $s:\Spec\; K \hookrightarrow X$. Let $p:X' \rightarrow X$ be resolution of $X$. Fix a stratification $X=U \bigsqcup Z$ where $i:Z \hookrightarrow X$ is a closed immersion with $\dim Z=0$, and $j:U \hookrightarrow X$ its open complement such that for every point $s \in U$, the fibre $p^{-1}(s)$ has dimension at most one.
   
By Corollary \ref{normalization}, and the fact that pushforward along a proper map is weight exact (see \cite[Theorem 3.7 ]{hebert2011structure}), we may assume that $X$ is normal. After possibly shrinking $U$, Sections \ref{IM on U} and \ref{IM isomorphic to EM on U} ensure that the motivic intersection complex $IM_{U}$ exists and is isomorphic to $EM^{F}_{U}$. This implies that $EM^{F}_{U}$ is summand of $p_{*}1_{U'}$ where $U'$ denote the restriction of $X'$ to $U$. As the latter is of weight zero, we conclude that $EM^{F}_{U}$ is also of weight $0$. Then the isomorphism $j^{*}EM^{F}_{X} \cong EM^{F}_{U}$ completes the argument for points $s \in U$.

For points $s: \Spec K \hookrightarrow X$ lying in $Z$, we will use the refined Morel's t-structure as defined in Section \ref{refinement motivically}. Assume that the fibre $F$ at $s$ has dimension $2$. The case when $\dim F=1$ is simpler and follows analogously, so we omit it. 

Consider the decomposition triangle:
\[w_{\leq 1+alg}\,s^{*}EM^{F}_{X} \rightarrow s^{*}EM^{F}_{X} \rightarrow w_{> 1+alg}\,s^{*}EM^{F}_{X} \rightarrow  \]
To show $s^{*}EM^{F}_{X}$ has weight $\leq 0$, it suffices to prove that both the first and third terms have weight $\leq 0$. From Corollary \ref{s upper star and shriek acting on EM} we know that
\begin{align*}
    s^{*}EM^{F}_{X} \cong w_{\leq 2}\, s^{*}j_{*}EM^{F}_{U}.
\end{align*}
Thus, to verify the weight of the third term, it is enough to show that the weight of $w_{\leq 1+alg}\,w_{\leq 2}\, s^{*}j_{*}p_{*}1_{U'}$ is $\leq 0$ since $EM^{F}_{U}$ is summand of $p_{*}1_{U'}$. Now, consider the localization triangle for $p_{*}1_{X'}$:
\begin{align*}
    i_{*} i^{!}\; p_{*}1_{X'} \rightarrow p_{*}1_{X'} \rightarrow j_{*}j^{*}p_{*} 1_{X'} \rightarrow
\end{align*}
Applying $s^{*}$ and using proper base change, we obtain:
\begin{align}\label{first}
 p_{*}\, s^{!}1_{X'} \rightarrow  p_{*}\,s^{*}1_{X'} \rightarrow s^{*}\, j_{*} p_{*} 1_{U'} \rightarrow
\end{align}
Since $w_{>1+alg}\,w_{\leq2}$ is a triangulated functor, we deduce:
\begin{align}\label{second}
    w_{>1+alg}\, w_{\leq 2}\,p_{*} s^{!}1_{X'}\rightarrow  w_{>1+alg}\, w_{\leq 2}\,p_{*}s^{*} 1_{X'} \rightarrow  w_{>1+alg}\,w_{\leq 2}\,s^{*}j_{*}p_{*}1_{U'}  \rightarrow
\end{align}
We now compute the weight of $ w_{>1+alg}\,w_{\leq 2}\;p_{*}s^{*}1_{X'}$. By Hironaka's resolution of singularity, we may assume that the fibre $F$ is a simple normal crossing divisor. Let $F'$ be disjoint union of irreducible components of $F$, and let $q:F' \rightarrow F$ be the morphism induced by the inclusions of the components into $F$. Denote by $F_{(1)}$ the union of $1$-fold intersection of irreducible components of $F$. Then, we have the following Cartesian diagram 
\[ \begin{tikzcd}
F'_{(1)} \arrow{r}{} \arrow[swap]{d}{q'} & F' \arrow{d}{q} \\%
F_{(1)} \arrow{r}{i_{1}}& F
\end{tikzcd}
\]
Consider the blowup triangle:
\begin{align}\label{blowup triangle}
1_{F} \rightarrow {i_{1}}_{*}1_{F_{(1)}} \oplus q_{*}1_{F'} \rightarrow {i_{1}}_{*}\,q'_{*}1_{F'_{(1)}} \rightarrow
\end{align}
 Applying the triangulated functor $w_{\leq 2}\;p_{*}$ to above triangle yields:
  \begin{align*}
  w_{\leq 2}\,p_{*} s^{*}1_{X'}\rightarrow w_{\leq 2}\,p_{*}{i_{1}}_{*}1_{F_{(1)}} \oplus w_{\leq 2}\, h(F') \rightarrow w_{\leq 2}\,p_{*}\,{i_{1}}_{*}q'_{*}1_{F'_{(1)}} \rightarrow
\end{align*}
Since both $F_{(1)}$ and $F'_{(1)}$ are curves, applying the functor $w_{>1+alg}$ to the triangle gives:
\begin{equation*}
    w_{>1+alg}w_{\leq 2}\;p_{*}s^{*}1_{X'} \rightarrow h^{2}_{tr}(F')[-2] \rightarrow {0} \rightarrow
\end{equation*}
Therefore, 
\[w_{>1+alg}\,w_{\leq 2}\,p_{*}s^{*}1_{X'} \cong h^{2}_{tr}(F')[-2].\]
On the other hand, from Proposition \ref{SNC divisor}, we know that 
\[w_{>1+alg}\,w_{\leq 2}\,p_{*}s^{!}1_{X'} =0.\]
Hence, using triangle (\ref{second}), we obtain
\begin{align*}
w_{>1+alg}\, w_{\leq 2}\,s^{*}j_{*}p_{*}1_{U'} \cong w_{>1+alg}\,w_{\leq 2}\;p_{*}s^{*}1_{X'} \cong h^{2}_{tr}(F')[-2].
\end{align*}
Since $h^{2}_{tr}(F')[-2]$ has weight $0$, it follows that $w_{> 1+alg}\,s^{*}EM^{F}_{X}$ also has weight $0$.

To show that $w_{\leq 1+alg}\,s^{*}EM^{F}_{X}$ is of weight $\leq 0$, we first note from Corollary \ref{leq 1+alg is in conservative part} that it lies in the category $DM^{Ab}(K)$. By weight conservativity, it then suffices to show that the realization $R_{\mathcal{M}}\left(w_{\leq 1+alg}\,s^{*}EM^{F}_{X}\right)$ has weight $\leq 0$.

Since the realization functor is triangulated, we obtain a distinguished triangle 
\[R_{\mathcal{M}}\left(w_{\leq 1+alg}\,s^{*}EM^{F}_{X}\right) \rightarrow R_{\mathcal{M}}\left(s^{*}EM^{F}_{X}\right) \rightarrow R_{\mathcal{M}}\left(w_{> 1+alg}\,s^{*}EM^{F}_{X}\right) \rightarrow  \]
From Proposition \ref{realizations of EM at each point}, the middle term has weight $\leq 0$, and the third term has weight $0$, since $w_{> 1+alg }\,s^{*}EM^{F}_{X}$ is of weight $0$. It follows that $R_{\mathcal{M}}(w_{\leq 1+alg}\,s'^{*}IM_{X})$ has weight $\leq 0$, completing the proof in the case of the $s^{*}$.\\

The proof of $s^{!}EM^{F}_{X} \in DM^{w \geq 0}(K)$ is similar to the $s^{*}$ case, with slight modifications. In this setting, we use the twisted refined Morel's t-structure, as defined in \ref{twisted refined functor}.

Consider the decomposition triangle:
\begin{align*}
    w_{\leq 1+alg(-1)}\,s^{!}EM^{F}_{X} \rightarrow s^{!}EM^{F}_{X} \rightarrow w_{>1+alg(-1)}\,s^{!}EM^{F}_{X} \rightarrow
\end{align*}
We claim that both the first and third terms lie in $DM^{w \geq 0}(K)$. This would imply that $s^{!}EM^{F}_{X}$ has weight $\geq 0$ as required. 

By Corollary \ref{s upper star and shriek acting on EM}, we have the isomorphism
\begin{align*}
        s^{!} EM^{F}_{X} \cong w_{>2}\,s^{*}j_{*} EM^{F}_{U} [-1].
\end{align*}
Therefore, to show that the third term has weight $\geq 0$, it suffices to prove that 
\[w_{>1+alg(-1)}\,w_{>2}\;s^{*}j_{*}p_{*}1_{U'}[-1] \in DM^{w \geq 0}(K).\] Applying the functor $w_{>1+alg(-1)}\,w_{>2}$ to the triangle (\ref{first}), we obtain the distinguished triangle:
\begin{align*}
  w_{>1+alg(-1)}\,  w_{>2}\;p_{*} s^{!}1_{X'} \rightarrow  w_{>1+alg(-1)}\, w_{>2}\;p_{*}s^{*}1_{X'} \rightarrow w_{>1+alg(-1)}\, w_{>2}\;s^{*} j_{*} p_{*} 1_{U'} \rightarrow
\end{align*}
Now we compute the weight of $w_{>1+alg(-1)}\,w_{>2}\;p_{*}s^{*}1_{X'}$. Proceeding as in the previous case, we apply the functor $w_{>2}p_{*}$ to the triangle (\ref{blowup triangle}), yielding:
\begin{align*}
  w_{> 2}\,p_{*} s^{*}1_{X'}\rightarrow w_{> 2}\,p_{*}{i_{1}}_{*}1_{F_{(1)}} \oplus w_{> 2}\, h(F') \rightarrow w_{> 2}\;p_{*}\; {i_{1}}_{*}q'_{*}1_{F'_{(1)}} \rightarrow
\end{align*}
Since $F_{(1)}$ and $F'_{(1)}$ are curves, it follows that
  \begin{align*}
        w_{>2}\,p_{*}s^{*}1_{X'} \cong w_{>2}\, h(F') \cong h^{3}(F')[-3] \oplus h^{4}(F')[-4].
    \end{align*}
We know that $h^{3}(F')\cong h^{1}(F')(-1)$ and $h^{4}(F') \cong h^{0}(F')(-2)$. Therefore,
    \begin{align*}
        w_{>1+alg(-1)}\,w_{>2}\,p_{*}{s}^{*}1_{X'} \cong 0.
    \end{align*}  
    As a result, we obtain:
   \begin{align*}
       w_{>1+alg(-1)}\,w_{>2}\,s^{*} j_{*} p_{*} 1_{U'} \cong w_{>1+alg(-1)}\,w_{>2}\,p_{*}{s}^{!}1_{X'}[1].
   \end{align*}
From Proposition \ref{SNC divisor} and \ref{objects in 1+alg}, it follows that
\[w_{>1+alg(-1)}\,w_{>2}\,p_{*}s^{!}1_{X'}\in DM^{w \geq 0}(K).\] 
 Therefore, we conclude that $ w_{>1+alg(-1)}\,w_{>2}\,s^{*} j_{*} p_{*} 1_{U'}[-1]$ has weight $\geq 0$, as desired.\\

It remains to show that the $ w_{\leq 1+alg(-1)}\,s^{!}EM^{F}_{X}$ lies in the category $DM^{w \geq 0}(K)$.  From Corollary \ref{leq 1+alg is in conservative part}, we know that $w_{\leq 1+alg(-1)}\;{s}^{!}EM^{F}_{X} \in DM^{Ab}(K)$. Therefore, by weight conservativity, it suffices to prove that \[R_{\mathcal{M}}(w_{\leq 1+alg(-1)}\,s^{!}EM^{F}_{X})\]has weight $\geq 0$. Since $R_{\mathcal{M}}$ commutes with truncation functor $w_{\leq 1+alg(-1)}$ (see Proposition \ref{mixed realisation commute with 1+alg}), then Proposition \ref{realizations of EM at each point}, implies that
\begin{align*}
R_{\mathcal{M}}\left(w_{\leq 1+alg(-1)}\,s^{!}EM^{F}_{X}\right) \cong w_{\leq 1+alg(-1)}\,s^{!}IC^{\mathcal{M}}_{X}.
\end{align*}
As $s^{!}IC^{\mathcal{M}}_{X}$ has weight $\geq 0$, it follows that its truncation $w_{\leq 1+alg(-1)}\,s^{!}IC^{\mathcal{M}}_{X}$ also has weight $\geq 0$. This proves the claim. 

Since the category $DM^{w \geq 0}$ is closed under extensions, we conclude that $s^{!}EM^{F}_{X} \in DM^{w\geq 0}(K)$, as required.
\end{proof}

\bibliographystyle{alpha}
\bibliography{Ref}{}

@incollection{murre_kahn_pedrini,
	author = {Kahn, Bruno and Murre, Jacob P. and Pedrini, Claudio},
	booktitle = {Algebraic cycles and motives. {V}ol. 2},
	date-added = {2025-06-06 01:13:21 +0530},
	date-modified = {2025-06-06 01:13:35 +0530},
	isbn = {978-0-521-70175-4},
	mrclass = {14C15 (14F42)},
	mrnumber = {2187153},
	mrreviewer = {Kay\ M.\ R\"ulling},
	pages = {143--202},
	publisher = {Cambridge Univ. Press, Cambridge},
	series = {London Math. Soc. Lecture Note Ser.},
	title = {On the transcendental part of the motive of a surface},
	volume = {344},
	year = {2007}}

@misc{vaish2018motivic,
	archiveprefix = {arXiv},
	author = {Vaibhav Vaish},
	date-added = {2025-06-06 01:07:15 +0530},
	date-modified = {2025-06-06 01:07:15 +0530},
	eprint = {1801.09243},
	primaryclass = {math.AG},
	title = {Motivic Intersection Complex of Certain Shimura varieties},
	year = {2018}}

@book{levine,
	author = {Levine, Marc},
	date-added = {2025-06-06 00:49:35 +0530},
	date-modified = {2025-06-06 00:49:40 +0530},
	doi = {10.1090/surv/057},
	isbn = {0-8218-0785-4},
	mrclass = {14F99 (14C15 14C25 14C35 18E30 19E08)},
	mrnumber = {1623774},
	mrreviewer = {Luca\ Barbieri Viale},
	pages = {x+515},
	publisher = {American Mathematical Society, Providence, RI},
	series = {Mathematical Surveys and Monographs},
	title = {Mixed motives},
	url = {https://doi.org/10.1090/surv/057},
	volume = {57},
	year = {1998},
	bdsk-url-1 = {https://doi.org/10.1090/surv/057}}

@article{hanamura2,
	author = {Hanamura, Masaki},
	date-added = {2025-06-06 00:48:36 +0530},
	date-modified = {2025-06-06 00:48:42 +0530},
	doi = {10.1007/s00222-004-0369-0},
	fjournal = {Inventiones Mathematicae},
	issn = {0020-9910,1432-1297},
	journal = {Invent. Math.},
	mrclass = {14C25 (14C35 18E30 19E15)},
	mrnumber = {2090362},
	mrreviewer = {Salman\ Abdulali},
	number = {1},
	pages = {105--179},
	title = {Mixed motives and algebraic cycles. {II}},
	url = {https://doi.org/10.1007/s00222-004-0369-0},
	volume = {158},
	year = {2004},
	bdsk-url-1 = {https://doi.org/10.1007/s00222-004-0369-0}}

@article{hanamura,
	author = {Hanamura, Masaki},
	date-added = {2025-06-06 00:46:53 +0530},
	date-modified = {2025-06-06 00:46:59 +0530},
	doi = {10.4310/MRL.1995.v2.n6.a12},
	fjournal = {Mathematical Research Letters},
	issn = {1073-2780},
	journal = {Math. Res. Lett.},
	mrclass = {14C25 (14C35 19E15)},
	mrnumber = {1362972},
	mrreviewer = {Stefan\ M\"uller-Stach},
	number = {6},
	pages = {811--821},
	title = {Mixed motives and algebraic cycles. {I}},
	url = {https://doi.org/10.4310/MRL.1995.v2.n6.a12},
	volume = {2},
	year = {1995},
	bdsk-url-1 = {https://doi.org/10.4310/MRL.1995.v2.n6.a12}}

@article{wildeshaus_conservativity,
	author = {Wildeshaus, J.},
	date-added = {2018-01-25 07:25:27 +0000},
	date-modified = {2018-01-25 07:28:15 +0000},
	journal = {arXiv preprint arXiv:1509.03532},
	title = {Weights and conservativity},
	year = {2015}}

@article{wildeshaus_shimura_2012,
	author = {Wildeshaus, J.},
	date-added = {2018-01-25 07:23:29 +0000},
	date-modified = {2018-01-25 12:47:16 +0000},
	journal = {Advances in Mathematics},
	pages = {515--600},
	publisher = {Elsevier},
	title = {Intermediate extension of {C}how motives of {A}belian type},
	volume = {305},
	year = {2017}}

@article{cisinski2016etale,
	author = {Cisinski, D. C. and D{\'e}glise, F.},
	date-added = {2018-01-22 05:48:16 +0000},
	date-modified = {2018-01-25 07:27:45 +0000},
	journal = {Compositio Mathematica},
	number = {3},
	pages = {556--666},
	publisher = {London Mathematical Society},
	title = {{\'E}tale motives},
	volume = {152},
	year = {2016}}

@incollection{ekedahlAdic,
	author = {Ekedahl, T.},
	booktitle = {The {G}rothendieck {F}estschrift, {V}ol.\ {II}},
	date-added = {2018-01-09 12:06:21 +0000},
	date-modified = {2018-01-09 12:06:34 +0000},
	mrclass = {14F20 (14F32)},
	mrnumber = {1106899},
	mrreviewer = {Gerd Faltings},
	pages = {197--218},
	publisher = {Birkh\"auser Boston, Boston, MA},
	series = {Progr. Math.},
	title = {On the adic formalism},
	volume = {87},
	year = {1990}}

@article{cortiHanamura,
	author = {Corti, A. and Hanamura, M.},
	date-added = {2018-01-09 11:59:36 +0000},
	date-modified = {2018-01-09 12:00:06 +0000},
	fjournal = {Duke Mathematical Journal},
	issn = {0012-7094},
	journal = {Duke Math. J.},
	mrclass = {14F42 (14C15 19E20 55N33)},
	mrnumber = {1763656},
	mrreviewer = {Luca Barbieri Viale},
	number = {3},
	pages = {459--522},
	title = {Motivic decomposition and intersection {C}how groups. {I}},
	url = {https://doi.org/10.1215/S0012-7094-00-10334-1},
	volume = {103},
	year = {2000},
	bdsk-url-1 = {https://doi.org/10.1215/S0012-7094-00-10334-1}}

@article{vaish2017punctual,
	abstract = {We formulate a notion of "punctual gluing" of},
	author = {Vaish, V.},
	date-added = {2017-08-04 08:18:03 +0000},
	date-modified = {2017-08-04 08:18:03 +0000},
	eprint = {1705.00790},
	journal = {Arxiv preprint},
	month = {05},
	title = {Punctual gluing of t-structures and weight structures},
	url = {https://arxiv.org/abs/1705.00790},
	volume = {arXiv:1705.00790},
	year = {2017},
	bdsk-url-1 = {https://arxiv.org/abs/1705.00790}}

@article{vaish2017weight,
	abstract = {Using punctual gluing of},
	author = {Vaish, V.},
	date-added = {2017-08-04 08:17:42 +0000},
	date-modified = {2017-08-04 08:17:42 +0000},
	eprint = {1705.00789},
	journal = {Arxiv preprint},
	month = {05},
	title = {On weight truncations in the motivic setting},
	url = {https://arxiv.org/abs/1705.00789},
	volume = {arXiv:1705.00789},
	year = {2017},
	bdsk-url-1 = {https://arxiv.org/abs/1705.00789}}

@article{bondarko2014weights,
	author = {Bondarko, M. V.},
	date-added = {2016-11-18 06:03:49 +0000},
	date-modified = {2018-01-09 16:04:30 +0000},
	journal = {International Mathematics Research Notices},
	number = {17},
	pages = {4715--4767},
	publisher = {Oxford University Press},
	title = {Weights for relative motives: relation with mixed complexes of sheaves},
	volume = {2014},
	year = {2014}}

@article{ivorra2007realisation,
	author = {Ivorra, F.},
	date-added = {2016-11-18 05:12:40 +0000},
	date-modified = {2016-11-18 05:12:40 +0000},
	journal = {Doc. Math.},
	pages = {607--671},
	title = {R{\'e}alisation {$l$}-adique des motifs triangul{\'e}s g{\'e}om{\'e}triques. {I}},
	volume = {12},
	year = {2007}}

@article{bondarko_weights_2010,
	abstract = {The main goal of this paper is to define the so-called Chow weight structure for the category of Beilinson motives over any 'reasonable' base scheme $S$ (this is the version of Voevodsky's motives over $S$ defined by Cisinski and Deglise). We also study the functoriality properties of the Chow weight structure (they are very similar to the well-known functoriality of weights for mixed complexes of sheaves). As shown in a preceding paper, the Chow weight structure automatically yields an exact conservative weight complex functor (with values in $K^b(Chow(S))$). Here $Chow(S)$ is the heart of the Chow weight structure; it is 'generated' by motives of regular schemes that are projective over $S$. Besides, Grothendiek's group of $S$-motives is isomorphic to $K_0(Chow(S))$; we also define a certain 'motivic Euler characteristic' for $S$-schemes. We obtain (Chow)-weight spectral sequences and filtrations for any cohomology of motives; we discuss their relation with Beilinson's 'integral part' of motivic cohomology and with weights of mixed complexes of sheaves. For the study of the latter we introduce a new formalism of relative weight structures.},
	author = {Bondarko, M. V.},
	date-added = {2016-11-18 04:39:27 +0000},
	date-modified = {2016-11-18 04:39:27 +0000},
	eprint = {1007.4543},
	month = {07},
	title = {Weights for relative motives; relation with mixed complexes of sheaves},
journal={International Mathematics Research Notices},
	url = {http://arxiv.org/abs/1007.4543},
	volume = {arXiv:1007.4543},
	year = {2010},
	bdsk-url-1 = {http://arxiv.org/abs/1007.4543}}

@article{jardine_2000,
	author = {Jardine, J. F.},
	date-added = {2016-11-18 04:31:28 +0000},
	date-modified = {2016-11-18 04:31:28 +0000},
	fjournal = {Documenta Mathematica},
	issn = {1431-0635},
	journal = {Doc. Math.},
	mrclass = {55P42 (14F42 55U35)},
	mrnumber = {1787949 (2002b:55014)},
	mrreviewer = {Jianqiang Zhao},
	pages = {445--553 (electronic)},
	title = {Motivic symmetric spectra},
	volume = {5},
	year = {2000},
	bdsk-url-1 = {http://www.ams.org/mathscinet-getitem?mr=1787949}}

@incollection{beilinson1987height,
	author = {Beilinson, A. A.},
	booktitle = {K-theory, Arithmetic and Geometry},
	date-added = {2016-11-18 04:30:54 +0000},
	date-modified = {2016-11-18 06:01:09 +0000},
	pages = {1--26},
	publisher = {Springer},
	title = {Height pairing between algebraic cycles},
	year = {1987}}

@article{voevodsky_morel_99,
	author = {Morel, F. and Voevodsky, V.},
	coden = {PMIHA6},
	date-added = {2016-11-18 04:30:47 +0000},
	date-modified = {2016-11-18 04:30:47 +0000},
	fjournal = {Institut des Hautes {\'E}tudes Scientifiques. Publications Math{\'e}matiques},
	issn = {0073-8301},
	journal = {Inst. Hautes {\'E}tudes Sci. Publ. Math.},
	mrclass = {14F35 (19E08)},
	mrnumber = {1813224 (2002f:14029)},
	mrreviewer = {Marc Levine},
	pages = {45--143 (2001)},
	title = {{${\bf A}^1$}-homotopy theory of schemes},
	url = {http://www.numdam.org/item?id=PMIHES_1999__90__45_0},
	year = {1999},
	bdsk-url-1 = {http://www.ams.org/mathscinet-getitem?mr=1813224}}

@article{bondarko_weights,
	author = {Bondarko, M. V.},
	date-added = {2016-10-31 19:44:10 +0000},
	date-modified = {2016-10-31 19:44:10 +0000},
	doi = {10.1017/is010012005jkt083},
	fjournal = {Journal of K-Theory. K-Theory and its Applications in Algebra, Geometry, Analysis \& Topology},
	issn = {1865-2433},
	journal = {J. K-Theory},
	mrclass = {18E30 (14C15 18G40 19E08 19E15)},
	mrnumber = {2746283},
	mrreviewer = {Florence Lecomte},
	number = {3},
	pages = {387--504},
	title = {Weight structures vs. {$t$}-structures; weight filtrations, spectral sequences, and complexes (for motives and in general)},
	url = {http://dx.doi.org/10.1017/is010012005jkt083},
	volume = {6},
	year = {2010},
	bdsk-url-1 = {http://www.ams.org/mathscinet-getitem?mr=2746283}}

@incollection{wildeshaus_ic,
	address = {Providence, RI},
	author = {Wildeshaus, J.},
	booktitle = {Regulators},
	date-added = {2016-10-31 18:34:47 +0000},
	date-modified = {2016-10-31 18:34:47 +0000},
	doi = {10.1090/conm/571/11332},
	mrclass = {14F43 (14C25 14F42 19E15)},
	mrnumber = {2953419},
	mrreviewer = {Florence Lecomte},
	pages = {255--276},
	publisher = {Amer. Math. Soc.},
	series = {Contemp. Math.},
	title = {Motivic intersection complex},
	url = {http://dx.doi.org/10.1090/conm/571/11332},
	volume = {571},
	year = {2012},
	bdsk-url-1 = {http://www.ams.org/mathscinet-getitem?mr=2953419}}

@article{fangzhou2016borel,
	author = {Fangzhou, J.},
	date-added = {2016-10-31 18:10:34 +0000},
	date-modified = {2016-11-01 04:45:06 +0000},
	journal = {Mathematische Zeitschrift},
	pages = {1--35},
	publisher = {Springer},
	title = {Borel--{M}oore motivic homology and weight structure on mixed motives},
	year = {2016}}

@incollection{voevodsky,
	address = {Princeton, NJ},
	author = {Voevodsky, V.},
	booktitle = {Cycles, transfers, and motivic homology theories},
	date-added = {2016-10-31 17:46:59 +0000},
	date-modified = {2016-10-31 17:46:59 +0000},
	mrclass = {14F42 (14C25)},
	mrnumber = {1764202},
	pages = {188--238},
	publisher = {Princeton Univ. Press},
	series = {Ann. of Math. Stud.},
	title = {Triangulated categories of motives over a field},
	volume = {143},
	year = {2000},
	bdsk-url-1 = {http://www.ams.org/mathscinet-getitem?mr=1764202}}

@article{ayoub_thesis_1,
	author = {Ayoub, J.},
	date-added = {2016-10-31 17:40:46 +0000},
	date-modified = {2016-10-31 17:40:46 +0000},
	fjournal = {Ast{\'e}risque},
	isbn = {978-2-85629-244-0},
	issn = {0303-1179},
	journal = {Ast{\'e}risque},
	mrclass = {14F20 (14C25 14F42 18A40 18F10 18F20 18G55 19E15)},
	mrnumber = {2423375 (2009h:14032)},
	mrreviewer = {Christian Haesemeyer},
	pages = {x+466 pp. (2008)},
	title = {Les six op{\'e}rations de {G}rothendieck et le formalisme des cycles {\'e}vanescents dans le monde motivique. {I}},
	year = {2007},
	bdsk-url-1 = {http://www.ams.org/mathscinet-getitem?mr=2423375}}

@article{ayoub_thesis_2,
	author = {Ayoub, J.},
	date-added = {2016-10-31 17:40:46 +0000},
	date-modified = {2016-10-31 17:40:46 +0000},
	fjournal = {Ast{\'e}risque},
	isbn = {978-2-85629-245-7},
	issn = {0303-1179},
	journal = {Ast{\'e}risque},
	mrclass = {14C25 (14F20 14F42 18A40 19E15)},
	mrnumber = {2438151 (2009m:14007)},
	mrreviewer = {Christian Haesemeyer},
	pages = {vi+364 pp. (2008)},
	title = {Les six op{\'e}rations de {G}rothendieck et le formalisme des cycles {\'e}vanescents dans le monde motivique. {II}},
	year = {2007},
	bdsk-url-1 = {http://www.ams.org/mathscinet-getitem?mr=2438151}}

@article{ayoub20091motivic,
	author = {Ayoub, J. and Barbieri-Viale, L.},
	coden = {JPAAA2},
	date-added = {2016-10-30 11:15:27 +0000},
	date-modified = {2016-11-01 04:35:41 +0000},
	doi = {10.1016/j.jpaa.2008.10.002},
	fjournal = {Journal of Pure and Applied Algebra},
	issn = {0022-4049},
	journal = {J. Pure Appl. Algebra},
	mrclass = {14F05 (14C15 14C22 14F42 18E30 18F20)},
	mrnumber = {2494373},
	mrreviewer = {Michael J. Paluch},
	number = {5},
	pages = {809--839},
	title = {1-motivic sheaves and the {A}lbanese functor},
	url = {http://dx.doi.org/10.1016/j.jpaa.2008.10.002},
	volume = {213},
	year = {2009},
	bdsk-url-1 = {http://www.ams.org/mathscinet-getitem?mr=2494373}}

@article{morelThesis,
	author = {Morel, S.},
	date-added = {2016-10-17 09:13:17 +0000},
	date-modified = {2016-10-17 09:13:17 +0000},
	doi = {10.1090/S0894-0347-06-00538-8},
	fjournal = {Journal of the American Mathematical Society},
	issn = {0894-0347},
	journal = {J. Amer. Math. Soc.},
	mrclass = {11F75 (11G18 14G35)},
	mrnumber = {MR2350050 (2008i:11070)},
	mrreviewer = {Min Ho Lee},
	number = {1},
	pages = {23--61 (electronic)},
	title = {Complexes pond\'er\'es sur les compactifications de {B}aily-{B}orel: le cas des vari\'et\'es de {S}iegel},
	url = {http://dx.doi.org/10.1090/S0894-0347-06-00538-8},
	volume = {21},
	year = {2008},
	bdsk-url-1 = {http://dx.doi.org/10.1090/S0894-0347-06-00538-8}}

@article{ayoub2012relative,
	author = {Ayoub, J. and Zucker, S.},
	date-added = {2016-10-17 09:05:39 +0000},
	date-modified = {2016-11-01 04:44:11 +0000},
	journal = {Inventiones mathematicae},
	number = {2},
	pages = {277--427},
	publisher = {Springer},
	title = {Relative {A}rtin motives and the reductive {B}orel--{S}erre compactification of a locally symmetric variety},
	volume = {188},
	year = {2012}}

@article{vaish2016motivic,
	author = {Vaish, V.},
	date-added = {2016-10-17 09:03:16 +0000},
	date-modified = {2016-11-01 04:47:33 +0000},
	journal = {Advances in Mathematics},
	pages = {316--373},
	publisher = {Elsevier},
	title = {Motivic weightless complex and the relative {A}rtin motive},
	volume = {292},
	year = {2016}}

@article{hebert2011structure,
	author = {H{\'e}bert, D.},
	date-added = {2016-10-17 08:52:03 +0000},
	date-modified = {2016-11-01 04:45:29 +0000},
	journal = {Compositio Mathematica},
	number = {05},
	pages = {1447--1462},
	publisher = {Cambridge Univ Press},
	title = {Structure de poids {\`a} la {B}ondarko sur les motifs de {B}eilinson},
	volume = {147},
	year = {2011}}

@phdthesis{vvthesis,
	author = {Vaish, V.},
	date-added = {2016-09-26 08:38:05 +0000},
	date-modified = {2016-09-26 08:38:05 +0000},
	school = {TIFR},
	title = {Weight-truncated cohomology of algebraic varieties},
	year = {2013}}

@incollection{BBD,
	address = {Paris},
	author = {Be{\u \i}linson, A. A. and Bernstein, J. and Deligne, P.},
	booktitle = {Analysis and topology on singular spaces, {I} ({L}uminy, 1981)},
	date-added = {2016-09-02 05:25:52 +0000},
	date-modified = {2016-09-02 05:25:52 +0000},
	mrclass = {32C38},
	mrnumber = {751966 (86g:32015)},
	mrreviewer = {Zoghman Mebkhout},
	pages = {5--171},
	publisher = {Soc. Math. France},
	series = {Ast{\'e}risque},
	title = {Faisceaux pervers},
	volume = {100},
	year = {1982},
	bdsk-url-1 = {http://ams.u-strasbg.fr/mathscinet-getitem?mr=751966}}

@inproceedings{scholl1994classical,
	author = {Scholl, A. J.},
	booktitle = {Proc. Symp. Pure Math},
	date-added = {2016-08-18 03:13:51 +0000},
	date-modified = {2016-11-01 04:36:41 +0000},
	pages = {163--187},
	title = {Classical motives},
	volume = {55},
	year = {1994}}

@article{murre1993conjectural,
	author = {Murre, J. P.},
	date-added = {2016-08-18 03:03:23 +0000},
	date-modified = {2016-11-01 04:46:33 +0000},
	journal = {Indagationes Mathematicae},
	number = {2},
	pages = {177--188},
	publisher = {Elsevier},
	title = {On a conjectural filtration on the {C}how groups of an algebraic variety: Part I. The general conjectures and some examples},
	volume = {4},
	year = {1993}}

@article{murre1990motive,
	author = {Murre, J. P.},
	date-added = {2016-08-18 03:02:23 +0000},
	date-modified = {2016-11-01 04:37:41 +0000},
	journal = {J. reine angew. Math},
	number = {190-204},
	pages = {249},
	title = {On the motive of an algebraic surface},
	volume = {409},
	year = {1990}}

@misc{andré2002nilpotenceradicauxetstructures,
      title={Nilpotence, radicaux et structures mono\"{i}dales}, 
      author={Yves André and Bruno Kahn and Peter O'Sullivan},
      year={2002},
      eprint={math/0203273},
      archivePrefix={arXiv},
      primaryClass={math.CT},
      url={https://arxiv.org/abs/math/0203273}, 
}

@article {MR1273954,
    AUTHOR = {Barbieri Viale, L. and Srinivas, V.},
     TITLE = {The {N}\'eron-{S}everi group and the mixed {H}odge structure
              on {$H^2$}. {A}ppendix to: ``{O}n the {N}\'eron-{S}everi group
              of a singular variety'' [{J}. {R}eine {A}ngew.\ {M}ath.\ {\bf
              435} (1993), 65--82; {MR}1203911 (94a:14019)]},
   JOURNAL = {J. Reine Angew. Math.},
  FJOURNAL = {Journal f\"ur die Reine und Angewandte Mathematik. [Crelle's
              Journal]},
    VOLUME = {450},
      YEAR = {1994},
     PAGES = {37--42},
      ISSN = {0075-4102,1435-5345},
   MRCLASS = {14C30 (14F25)},
  MRNUMBER = {1273954},
MRREVIEWER = {P.\ Abellanas},
       DOI = {10.1515/crll.1994.450.37},
       URL = {https://doi.org/10.1515/crll.1994.450.37},
}

@article {MR2107443,
    AUTHOR = {Kimura, Shun-Ichi},
     TITLE = {Chow groups are finite dimensional, in some sense},
   JOURNAL = {Math. Ann.},
  FJOURNAL = {Mathematische Annalen},
    VOLUME = {331},
      YEAR = {2005},
    NUMBER = {1},
     PAGES = {173--201},
      ISSN = {0025-5831,1432-1807},
   MRCLASS = {14C15 (14C25)},
  MRNUMBER = {2107443},
MRREVIEWER = {Laurent\ Evain},
       DOI = {10.1007/s00208-004-0577-3},
       URL = {https://doi.org/10.1007/s00208-004-0577-3},
}

@misc{osullivan2009algebraiccyclesabelianvariety,
      title={Algebraic cycles on an abelian variety}, 
      author={Peter O'Sullivan},
      year={2009},
      eprint={0908.0626},
      archivePrefix={arXiv},
      primaryClass={math.AG},
      url={https://arxiv.org/abs/0908.0626}, 
}

@article {MR2735752,
    AUTHOR = {Ayoub, Joseph},
     TITLE = {The {$n$}-motivic {$t$}-structures for {$n=0$}, {$1$} and
              {$2$}},
   JOURNAL = {Adv. Math.},
  FJOURNAL = {Advances in Mathematics},
    VOLUME = {226},
      YEAR = {2011},
    NUMBER = {1},
     PAGES = {111--138},
      ISSN = {0001-8708,1090-2082},
   MRCLASS = {14F42 (18E10 19E15)},
  MRNUMBER = {2735752},
MRREVIEWER = {Sung\ Myung},
       DOI = {10.1016/j.aim.2010.06.011},
       URL = {https://doi.org/10.1016/j.aim.2010.06.011},
}

@article {MR4033829,
    AUTHOR = {Pepin Lehalleur, Simon},
     TITLE = {Constructible 1-motives and exactness of realisation functors},
   JOURNAL = {Doc. Math.},
  FJOURNAL = {Documenta Mathematica},
    VOLUME = {24},
      YEAR = {2019},
     PAGES = {1721--1737},
      ISSN = {1431-0635,1431-0643},
   MRCLASS = {14C15 (14F08 14F42 19E15)},
  MRNUMBER = {4033829},
MRREVIEWER = {Thomas\ Benedict\ Williams},
}

@incollection {MR3751289,
    AUTHOR = {Ayoub, Joseph},
     TITLE = {Motives and algebraic cycles: a selection of conjectures and
              open questions},
 BOOKTITLE = {Hodge theory and {$L^2$}-analysis},
    SERIES = {Adv. Lect. Math. (ALM)},
    VOLUME = {39},
     PAGES = {87--125},
 PUBLISHER = {Int. Press, Somerville, MA},
      YEAR = {2017},
      ISBN = {978-1-57146-351-7},
   MRCLASS = {14F42 (18E10 19E15)},
  MRNUMBER = {3751289},
MRREVIEWER = {Takao\ Yamazaki},
}

@article {MR1047415,
    AUTHOR = {Saito, Morihiko},
     TITLE = {Mixed {H}odge modules},
   JOURNAL = {Publ. Res. Inst. Math. Sci.},
  FJOURNAL = {Kyoto University. Research Institute for Mathematical
              Sciences. Publications},
    VOLUME = {26},
      YEAR = {1990},
    NUMBER = {2},
     PAGES = {221--333},
      ISSN = {0034-5318,1663-4926},
   MRCLASS = {14D07 (14C30 32J25)},
  MRNUMBER = {1047415},
MRREVIEWER = {Min\ Ho\ Lee},
       DOI = {10.2977/prims/1195171082},
       URL = {https://doi.org/10.2977/prims/1195171082},
}

@article {MR1775312,
    AUTHOR = {Huber, Annette},
     TITLE = {Realization of {V}oevodsky's motives},
   JOURNAL = {J. Algebraic Geom.},
  FJOURNAL = {Journal of Algebraic Geometry},
    VOLUME = {9},
      YEAR = {2000},
    NUMBER = {4},
     PAGES = {755--799},
      ISSN = {1056-3911,1534-7486},
   MRCLASS = {14F42 (14F30 18E30)},
  MRNUMBER = {1775312},
MRREVIEWER = {Yuichiro\ Takeda},
}

@book {MR1439046,
    AUTHOR = {Huber, Annette},
     TITLE = {Mixed motives and their realization in derived categories},
    SERIES = {Lecture Notes in Mathematics},
    VOLUME = {1604},
 PUBLISHER = {Springer-Verlag, Berlin},
      YEAR = {1995},
     PAGES = {xvi+207},
      ISBN = {3-540-59475-2},
   MRCLASS = {14F99 (18E30 19E20)},
  MRNUMBER = {1439046},
MRREVIEWER = {Luca\ Barbieri Viale},
       DOI = {10.1007/BFb0095503},
       URL = {https://doi.org/10.1007/BFb0095503},
}

@article{Ayoub2010,
  author       = {Joseph Ayoub},
  title        = {Note sur les opérations de Grothendieck et la réalisation de Betti},
  journal      = {Journal of the Institute of Mathematics of Jussieu},
  volume       = {9},
  number       = {2},
  pages        = {225--263},
  year         = {2010},
  publisher    = {Cambridge University Press},
  doi          = {10.1017/S1474748009000127},
  language     = {English}
}

@book{Cisinski_2019,
   title={Triangulated Categories of Mixed Motives},
   ISBN={9783030332426},
   ISSN={2196-9922},
   url={http://dx.doi.org/10.1007/978-3-030-33242-6},
   DOI={10.1007/978-3-030-33242-6},
   journal={Springer Monographs in Mathematics},
   publisher={Springer International Publishing},
   author={Cisinski, Denis-Charles and Déglise, Frédéric},
   year={2019} }

@misc{drew2018motivichodgemodules,
      title={Motivic Hodge modules}, 
      author={Brad Drew},
      year={2018},
      eprint={1801.10129},
      archivePrefix={arXiv},
      primaryClass={math.AG},
      url={https://arxiv.org/abs/1801.10129}, 
}
\end{document}